\let\mathcal\mathscr
\let\cal\mathcal
\newtheorem{theorem}[equation]{Theorem}
 \newtheorem{lemma}[equation]{Lemma}
 \newtheorem{proposition}[equation]{Proposition}
 \newtheorem{corollary}[equation]{Corollary}
\theoremstyle{definition}
\theoremstyle{remark}
\newtheorem{remark}[equation]{Remark}
\newtheorem*{acknowledgments}{Acknowledgments}
\newtheorem*{structure}{Structure of the paper}
\def\jcdot{\scriptscriptstyle\bullet}
\def\invlim{\mathop{\vtop{\ialign{##\crcr$\hfill{\lim}\hfil$\crcr
\noalign{\kern1pt\nointerlineskip}\leftarrowfill\crcr\noalign
{\kern -3pt}}}}\limits}
\def\dirlim{\mathop{\vtop{\ialign{##\crcr$\hfill{\lim}\hfil$\crcr
\noalign{\kern1pt\nointerlineskip}\rightarrowfill\crcr\noalign
{\kern -3pt}}}}\limits} 
\def\lomapr#1{\smash{\mathop{\relbar\joinrel\longrightarrow}\limits^{#1}}\,}
 \def\verylomapr#1{\smash{\mathop{\relbar\joinrel\relbar\joinrel\relbar\joinrel\longrightarrow}\limits^{#1}}\,}
\def\veryverylomapr#1{\smash{\mathop{\relbar\joinrel\relbar\joinrel\relbar
\joinrel\relbar\joinrel\relbar\joinrel\longrightarrow}\limits^{#1}}\,}
\def\phi{\varphi}
\def\epsilon{\varepsilon}
\def\bwedge{\underset{\widehat{\phantom{\ \ }}}{\phantom{\_}}}
\newcommand{\ovk}{\overline{K} }
\newcommand{\wlim}{\operatorname{lim} }
\newcommand{\dr}{\operatorname{dR} } 
  \newcommand{\hk}{\operatorname{HK} }   
  \newcommand{\LL}{\mathrm {L} }
   \newcommand{\nr}{\operatorname{nr} }   
 \newcommand{\colim}{\operatorname{colim} }
 \newcommand{\proeet}{\operatorname{pro\acute{e}t} } 
 \newcommand{\eet}{\operatorname{\acute{e}t} }
 \newcommand{\conv}{\operatorname{conv} }
 \newcommand{\Spec}{\operatorname{Spec} }
 \newcommand{\Spf}{\operatorname{Spf} }
 \newcommand{\Hom}{{\rm{Hom}} }
 \newcommand{\Gal}{\operatorname{Gal} }
 \newcommand{\can}{ \operatorname{can} }
 \newcommand{\id}{ \operatorname{Id} }
\newcommand{\synt}{ \operatorname{syn} }
\newcommand{\Lie}{ \operatorname{Lie} }
 \newcommand{\Cone}{\operatorname{Cone} }
  \newcommand{\End}{\operatorname{End} }
\newcommand{\st}{\operatorname{st} }
 \newcommand{\kker}{\operatorname{Ker} }
 \newcommand{\crr}{\operatorname{cr} }
  \newcommand{\sem}{\operatorname{ss} }
 \newcommand{\gr}{\operatorname{gr} }
  \newcommand{\rig}{\operatorname{rig} }
 \newcommand{\im}{\operatorname{Im} }
  \newcommand{\gk}{\rm GK }
 \newcommand{\kr}{^{\jcdot }}
 \newcommand{\sff}{{\mathcal{F}}}  
 \newcommand{\sy}{{\mathcal{Y}}}
 \newcommand{\sg}{{\mathcal{G}}}
 \newcommand{\sv}{{\mathcal{V}}}
 \newcommand{\su}{{\mathcal{U}}}
 \newcommand{\sbb}{{\mathcal{B}}}
 \newcommand{\scc}{{\mathcal{C}}}
 \newcommand{\sk}{{\mathcal{K}}}
 \newcommand{\sll}{{\mathcal{L}}}
 \newcommand{\so}{{\mathcal O}}
 \newcommand{\sj}{{\mathcal J}}
 \newcommand{\se}{{\mathcal{E}}}
 \newcommand{\sa}{{\mathcal{A}}}
 \newcommand{\sx}{{\mathcal{X}}}
\newcommand{\sd}{{\mathcal{D}}} 
\newcommand{\sm}{{\mathcal{M}}}
 \newcommand{\wt}{\widetilde}
 \newcommand{\wh}{\widehat}
\newcommand{\Z}{ {\mathbf Z} }
\newcommand{\laz}{ {\mathcal  L}\hskip-.05cm{ az} }
   \newcommand{\Q}{ {\mathbf Q}}
   \newcommand{\N}{{\mathbf N}}
   \def\cdot{{\scriptscriptstyle\bullet}}
\def\C{{\bf C}}
\def\R{{\mathrm R}}
\def\epsilon{\varepsilon}
\def\bcris{{\bf B}_{{\rm cris}}}
\def\bbA{{\mathbb A}} 
\def\Ainf{{\mathbb A}_{\rm inf}}
\def\Acris{{\mathbb A}_{\rm cr}}
\def\Bcris{{\mathbb B}_{{\rm cr}}} \def\Bdr{{\mathbb B}_{{\rm dR}}}
\def\Bst{{\mathbb B}_{{\rm st}}}
\def\E{{\bf E}} \def\A{{\bf A}} 
\def\B{{\bf B}}
\let\cal\mathcal
\def\Q{{\bf Q}} \def\Z{{\bf Z}}
\def\C{{\bf C}}
\def\N{{\bf N}}
\def\epsilon{\varepsilon}
\def\bcris{{\bf B}_{{\rm cris}}}
\def\E{{\bf E}} \def\A{{\bf A}} \def\B{{\bf B}}
\def\rg{{\rm R}\Gamma}
\def\OK{{$\so_K^\times$, $\so_K^0$}}
\def\Xn{{$X_0$, $X_1$, $X_n$, $X_1^0$, $X_h$, $X_h^{\rm o}$}}
\def\Smo{{${\rm Sm}$, ${\rm Sm}^\dagger$}}
\def\Perf{{${\rm Perf}_C$}}
\def\CK{{$C_K$, $C_{\Q_p}$}}
\def\Dcal{{$\sd(-)$}}
\def\HHH{{$\wt{H}$}}
\def\PCK{{${\rm PC}_K$, ${\rm PD}_K$}}
\def\LHK{{$LH(C_K)$}}
\def\SSS{{$S$, $S_K$, $S^0$, $\overline{S}$}}
\def\rpd{{$r_F^{\rm PD}$, $r_K^{\rm PD}$}}
\def\Lcal{{${\cal L}$, ${\cal L}_K$, ${\cal L}^0_K$, $\overline{\cal L}$, $\overline{\cal L}^0$, ${\cal L}_{\crr}$, ${\cal L}_\varphi$}}
\def\BST{{$\B^+_{\st}$, $\widehat{\B}^+_{\st}$, $\B_{\st}^{\leq r}$, $\widehat{\B}^+_{l,\st}$, $\widehat{\B}^+_{p,\st}$}}
\def\Nnilp{{$(-)^{N{\text{-nilp}}}$, $(-)^{N_l{\text{-nilp}}}$}}
\def\slambda{{$s_\lambda$}}
\def\iotap{{$\iota$, $\iota_p$}}
\def\iotahk{{$\iota_{\rm HK}$, $\iota^\dagger_{\rm HK}$, $\iota_{\proeet}$, $\iota_{\rm syn}^\dagger$}}
\def\ilst{{$i_l$, $\iota_l$}}
\def\thetalambda{{$\theta_\lambda$}}
\def\basechange{{$\beta$}}
\def\epshk{{$\epsilon_{\st}^{\rm HK}$, $\epsilon_{\crr}^{\rm HK}$, $\epsilon_{\dr}^{\rm HK}$, $\epsilon_{\B_{\dr}^+}^{\rm HK}$}}
\def\rgcris{{$\rg_{\crr}$, $\rg_{\crr}^{\rm rel}$, $\rg_{{\crr},\ovk}$, $\rg_{\crr}(X_1/R)_l$}}
\def\rghk{{$\rg_{\rm HK}$, $\rg^\dagger_{\rm HK}$, $\rg_{\rm HK}^{\rm GK}$, $\rg_{{\rm HK},\breve{F}}$, $\rg_{{\rm HK},\breve{F}}^{\rm GK}$}}
\def\rgproet{{$\rg_{\proeet}$}}
\def\rgrig{{$\rg_{{\rm rig},\ovk}$}}
\def\rgsyn{{$\rg_{\rm syn}^{\rm GK}$}}
\def\rgconv{{$\rg_{{\rm conv},\ovk}$}}
\def\rgdr{{$\rg_{\dr}$, $\rg_{\dr}(X/\B_{\dr}^+)$, $\rg_{\dr}^\dagger(X/\B_{\dr}^+)$, $\rg_{\dr}^{\rm BMS}$, $\rg_{\dr}^{\rm Guo}$}}
\def\rginf{{$\rg_{\rm inf}(X/\B_{\dr}^+)$}}
\def\alphar{{$\alpha_r$, $\widehat\alpha_r$, $\alpha_r^\dagger$}}
\def\alphahk{{$\alpha_{\rm HK}$, $\alpha^\dagger_{\rm HK}$, $\alpha_{{\rm HK},\breve{F}}$}}
\def\Mss{{$\sm^{\rm ss}$, $\sm^{{\rm ss}, b}$}}
\def\acalcris{{${\cal A}_{\crr}$, ${\cal A}_{\crr}^{\rm rel}$, ${\cal A}_{{\crr},\ovk}$, ${\cal A}_{\crr}^{\bwedge}$}}
\def\acaldr{{${\cal A}_{\dr}$}}
\def\acalhk{{${\cal A}_{\rm HK}^c$}}
\def\tensor{{$\widehat\otimes^{\rm R}_{F^{\rm nr}}$}}
\def\thetavar{{$\vartheta$}}
\def\BETAXX{{$\beta_X$}}
\def\iotabk{{$\iota_{\rm BK}$, $\iota^\dagger_{\rm BK}$, $\hat\iota_{\rm BK}$, $\iota_{{\rm BK},r}$}}
\def\hkr{{${\rm HK}(-,-)$, $\widetilde{\rm HK}(-,-)$}}
\def\drr{{${\rm DR}(-,-)$}}
\numberwithin{equation}{section}
\begin{document}
\title[On the cohomology of $p$-adic analytic spaces, I: The basic comparison theorem.]
 {On the cohomology of $p$-adic analytic spaces, I:  The basic comparison theorem.}
 \author{Pierre Colmez} 
\address{CNRS, IMJ-PRG, Sorbonne Universit\'e, 4 place Jussieu, 75005 Paris, France}
\email{pierre.colmez@imj-prg.fr} 
\author{Wies{\l}awa Nizio{\l}}
\address{CNRS, IMJ-PRG, Sorbonne Universit\'e, 4 place Jussieu, 75005 Paris, France}
\email{wieslawa.niziol@imj-prg.fr}
 \date{\today}
\thanks{The authors' research was supported in part by the grant ANR-19-CE40-0015-02 COLOSS  and the NSF grant No. DMS-1440140.}
 \begin{abstract}
The purpose of this paper is to prove a basic $p$-adic comparison theorem 
for smooth rigid analytic and dagger varieties over the algebraic closure $C$ of a $p$-adic field:  
$p$-adic pro-\'etale cohomology, in a stable range, can be expressed as a filtered Frobenius 
eigenspace of de Rham cohomology (over $\B^+_{\dr}$). 
The key computation is the passage from absolute crystalline 
cohomology to Hyodo-Kato cohomology and  the construction of the related Hyodo-Kato isomorphism. 
We also ``geometrize'' our comparison theorem by turning $p$-adic pro-\'etale and syntomic cohomologies 
into sheaves on the category ${\rm Perf}_C$ of perfectoid spaces over $C$ and the period morphisms into maps between  such sheaves (this geometrization
will be crucial in our study of the $C_{\rm st}$-conjecture in the sequel to this paper and in the formulation
of duality for geometric $p$-adic pro-\'etale cohomology).
 \end{abstract}

\maketitle

 \tableofcontents
\section{Introduction}
Let $\so_K$ be a complete discrete valuation ring with fraction field
$K$  of characteristic 0 and with perfect
residue field $k$ of characteristic $p$.  Let $\ovk$ be an algebraic closure of $K$, let $C$ be its $p$-adic completion, and let $\so_{\ovk}$ denote the integral closure of $\so_K$ in $\ovk$. Let
$W(k)$ be the ring of Witt vectors of $k$ with 
 fraction field $F$ (i.e, $W(k)=\so_F$) and 
let $\varphi$ be the absolute
Frobenius on $W(\overline {k})$. Set $\sg_K=\Gal(\overline {K}/K)$.

  In a joint work with Gabriel Dospinescu \cite{CDN1}, \cite{CDN3} we have computed the $p$-adic (pro-)\'etale cohomology of certain $p$-adic symmetric spaces. A key ingredient of these computations was a one-way (de Rham-to-\'etale) comparison theorem for rigid analytic Stein varieties over $K$ with a semistable formal model over $\so_K$. This theorem had two parts: first, it  related  (pro-)\'etale cohomology to rigid analytic  syntomic cohomology and, then,  it expressed  rigid analytic  syntomic cohomology as a filtered Frobenius eigenspace associated to  de Rham cohomology (tensored with  $\B^+_{\dr}$). From these two parts it is the second one  that had much harder proof. 
  
    The current paper 
is the second one in a series  extending such comparison theorems  to smooth rigid analytic varieties over $K$ or $C$ (without any assumption on the existence of a nice integral model).
 While in the first paper \cite{CN3} we have focused on the arithmetic case,  here we focus on  the geometric case.
 Moreover, in comparison with \cite{CDN1} and \cite{CN3}, we  significantly simplify the passage from rigid analytic  syntomic cohomology to  a filtered Frobenius eigenspace associated to  $\B^+_{\dr}$-cohomology\footnote{If the variety is defined over $K$, its $\B^+_{\dr}$-cohomology  is just de Rham cohomology tensored with $\B^+_{\dr}$.}.
 This requires a foundational work on Hyodo-Kato cohomology and Hyodo-Kato morphism, which occupies a good portion of this paper. 

In \cite{CN5}, the third paper in the series, 
we will use the results of this paper to prove the $C_{\rm st}$-conjecture for classes of smooth (dagger)
varieties over $C$ including quasi-compact varieties and some classes of
holomorphically convex varieties (hopefully, this conjecture should hold for general smooth 
partially proper varieties). 
This includes a description of the $\B^+_{\dr}$-cohomology (with its extra-structures, namely
Frobenius and monodromy) in terms of the $p$-adic pro-\'etale cohomology and, conversely,
a description of the $p$-adic pro-\'etale cohomology in terms of differential forms
(the $\B^+_{\dr}$-cohomology and the de Rham complex). To this end, 
the comparison isomorphisms proved here are "geometrized", i.e., we view them as
$C$-points of isomorphisms between Vector Spaces. This geometrization is also  essential in the formulation of duality for  geometric $p$-adic  pro-\'etale cohomology  \cite{CGN2}. 

      \subsection{Main results}
   
\subsubsection{The basic comparison theorem for rigid analytic varieties} 
We start the survey of our main results with 
    the following  comparison theorem: 

  \begin{theorem}{\rm ({\rm Basic comparison theorem})}
  \label{basic}
  Let $X$ be a smooth rigid analytic  variety over $C$. Let $r\geq 0$. There is a natural  strict quasi-isomorphism\footnote{All cohomology complexes live in the bounded below derived $\infty$-category of locally convex topological vector spaces over $\Q_p$. Quasi-isomorphisms in this category we call {\em strict quasi-isomorphisms}. See Section 1.2.1 for details.}
 {\rm (period isomorphism)}:
  \begin{equation}
  \label{cos1}
\tau_{\leq r}\rg_{\proeet}(X,\Q_p(r))\simeq \tau_{\leq r}  
\big[[\rg_{\hk}(X)\wh{\otimes}_{F^{\nr}}\B^+_{\st}]^{N=0,\phi=p^r}\lomapr{\iota_{\hk}}\rg_{\dr}(X/\B^+_{\dr})/F^r\big],
  \end{equation}
 where  the brackets $[...]$ denote the fiber. 
 \end{theorem}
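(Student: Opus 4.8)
The plan is to build the comparison isomorphism \eqref{cos1} by composing several known comparison results, localizing on the rigid analytic variety and then globalizing via descent. The first step is to reduce to a local situation: by the hypercovering/descent arguments for pro-\'etale, Hyodo-Kato, and de Rham cohomologies (all satisfy \v{C}ech descent for suitable covers), it suffices to construct the isomorphism functorially on a basis of affinoids admitting nice models, and then sheafify. So assume $X$ is a small smooth affinoid, or more precisely a dagger affinoid with a semistable (or even smooth) formal model $\mathfrak{X}$ over $\mathcal{O}_C$; the truncation $\tau_{\leq r}$ and the Tate twist $\Q_p(r)$ are exactly what is needed to make syntomic methods apply in the stable range.

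The core of the argument is then the chain: first, identify $\tau_{\leq r}\rg_{\proeet}(X,\Q_p(r))$ with (truncated) syntomic cohomology $\tau_{\leq r}\rg_{\syn}(X,r)$. This is the ``first part'' alluded to in the introduction --- a comparison between $p$-adic pro-\'etale cohomology and syntomic cohomology in the stable range, following the Fontaine--Messing / Colmez--Nizio{\l} approach, using the fundamental exact sequence of $p$-adic Hodge theory and the fact that the relevant syntomic sheaves compute $\Q_p(r)$ on the pro-\'etale site up to degree $r$. Second, re-express syntomic cohomology as a homotopy fiber: by definition (or by the comparison with absolute crystalline cohomology), $\rg_{\syn}(X,r)$ sits in a fiber sequence relating a Frobenius-eigenspace of crystalline/Hyodo-Kato cohomology to de Rham cohomology modulo $F^r$. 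The key input proved in this paper is the passage from absolute crystalline cohomology to Hyodo-Kato cohomology together with the Hyodo-Kato isomorphism $\iota_{\hk}$; this lets us rewrite the Frobenius eigenspace of crystalline cohomology as $[\rg_{\hk}(X)\wh{\otimes}_{F^{\nr}}\B^+_{\st}]^{N=0,\varphi=p^r}$, and the crystalline-to-de Rham map as $\iota_{\hk}$ landing in $\rg_{\dr}(X/\B^+_{\dr})/F^r$. Assembling these identifications, and checking that the resulting maps agree with the period maps, yields \eqref{cos1} on the local model.

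The final step is to globalize: one checks that all the complexes appearing --- $\rg_{\proeet}(X,\Q_p(r))$, $\rg_{\hk}(X)\wh{\otimes}_{F^{\nr}}\B^+_{\st}$, and $\rg_{\dr}(X/\B^+_{\dr})$ --- are compatible with the descent along a hypercover by affinoids of the above type, and that the local isomorphisms are functorial enough to glue. Since the category in which we work is the derived $\infty$-category of locally convex $\Q_p$-vector spaces, one must be careful that the quasi-isomorphisms are \emph{strict}, i.e.\ that the topologies match; this is where the finiteness/spectral-sequence degeneration properties of these cohomologies in the stable range (again the role of $\tau_{\leq r}$) are used to ensure the transition maps are strict and the homotopy fibers compute what they should.

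The main obstacle, and the part requiring the bulk of the technical work, is the middle step: the construction of the Hyodo-Kato cohomology $\rg_{\hk}(X)$ for a general smooth rigid analytic (or dagger) variety over $C$ with no good global integral model, the proof that it carries the expected $(\varphi,N)$-structure and $F^{\nr}$-linearity, and the construction of the Hyodo-Kato isomorphism $\iota_{\hk}$ comparing it, after base change to $\B^+_{\dr}$, with $\rg_{\dr}(X/\B^+_{\dr})$ --- all done in a way that is functorial and compatible with the period maps from syntomic cohomology. Controlling the convergence and the locally convex topologies here (so that the eventual quasi-isomorphism is strict), and reconciling the absolute crystalline description with the Hyodo-Kato one, is the crux; by contrast the pro-\'etale--to--syntomic comparison and the descent arguments are, while nontrivial, of a more standard nature.
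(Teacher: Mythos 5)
Your proposal follows essentially the same route as the paper: define syntomic cohomology by $\eta$-\'etale descent from the Fontaine--Messing crystalline syntomic cohomology of local semistable formal models, compare it with pro-\'etale cohomology in degrees $\leq r$ via the (descended) Fontaine--Messing period map, and then rewrite the crystalline fiber sequence as the Hyodo--Kato/$\B^+_{\dr}$ fiber sequence using the refined (Beilinson-style) Hyodo--Kato morphism, which is indeed where the bulk of the paper's work lies. One minor caveat: for general rigid analytic varieties the local cohomologies are infinite-dimensional and non-Hausdorff, so strictness is not obtained from finiteness or spectral-sequence degeneration as you suggest, but rather from Banach-space/open-mapping-theorem arguments at the level of the local models.
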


  Most of the paper is devoted to the definition of the objects appearing in (\ref{cos1}) as well as the period morphism itself. 
This can be summed up in the following  theorem-construction from which Theorem \ref{basic} follows immediately. As before in \cite{CDN3}, \cite{CN3}, there are two steps: passage from pro-\'etale cohomology to syntomic cohomology (easier) and a passage from syntomic cohomology to Frobenius eigenspaces of de Rham cohomology over $\B^+_{\dr}$ (more difficult). 
    \begin{theorem}
 \label{main1}
 To any smooth rigid analytic  variety $X$ over $C$ there are  naturally  associated:
  \begin{enumerate}
\item A (rigid analytic) syntomic cohomology $\rg_{\synt}(X,\Q_p(r))$, $r\in \N$,  with a natural period morphism
 \begin{equation}
 \label{period-now}
 \alpha_r: \rg_{\synt}(X,\Q_p(r))\to \rg_{\proeet}(X,\Q_p(r)),
 \end{equation}
 which  is a strict quasi-isomorphism after truncation $\tau_{\leq r}$. 
 \item A Hyodo-Kato cohomology 
$\rg_{\hk}(X)$. This is  a dg $F^{\nr}$-algebra
equipped with a Frobenius $\phi$ and a monodromy operator $N$. We have  natural Hyodo-Kato strict quasi-isomorphisms
$$
  \iota_{\hk}: \rg_{\hk}(X)\wh{\otimes}^{\R}_{F^{\nr}}C\stackrel{\sim}{\to} \rg_{\dr}(X),\quad   \iota_{\hk}: \rg_{\hk}(X)\wh{\otimes}^{\R}_{F^{\nr}}\B^+_{\dr}\stackrel{\sim}{\to} \rg_{\dr}(X/\B^+_{\dr}).
 $$
 \item A  distinguished triangle
  \begin{equation}
  \label{triangle11}
 \rg_{\synt}(X,\Q_p(r))\lomapr{}  [\rg_{\hk}(X)\wh{\otimes}_{F^{\nr}}\B^+_{\st}]^{N=0,\phi=p^r}\lomapr{\iota_{\hk}} \rg_{\dr}(X/\B^+_{\dr})/F^r
 \end{equation}
  that can be lifted to the derived category of Vector Spaces. 
 \end{enumerate}
 \end{theorem}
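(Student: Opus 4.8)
The plan is to construct each of the three items \emph{locally} --- on a cover of $X$ by admissible opens that are Stein or affinoid and for which a good integral (semistable) model is available --- and then to glue, keeping track throughout of the locally convex topologies so that all maps are strict.

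\textbf{(1) Syntomic cohomology and the period map.} On a piece $U$ admitting a semistable (or, more flexibly, a ``basic'' dagger) model I would take $\rg_{\synt}(U,\Q_p(r))$ to be the syntomic descent presentation: the homotopy fiber of $\varphi-p^r$ acting on the $N=0$ part of the relevant $\B^+_{\st}$-linear Hyodo--Kato complex, twisted by the Hodge filtration. The local period map $\alpha_r$ to $\rg_{\proeet}(U,\Q_p(r))$ is the one built in \cite{CDN1,CDN3,CN3} via the $p$-adic comparison machinery; that it is a $\tau_{\leq r}$-quasi-isomorphism is precisely the local comparison theorem there, ultimately resting on the $p$-adic Poincar\'e lemma. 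I would then glue over the cover using hyperdescent for both syntomic and pro-\'etale cohomology; the points to verify are the compatibility of the local $\alpha_r$'s on overlaps and the strictness of the transition maps, which is where the bookkeeping of topologies costs real work.

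\textbf{(2) Hyodo--Kato cohomology and $\iota_{\hk}$.} On a piece with semistable reduction I would take the (overconvergent) log-crystalline cohomology of the special fiber over the log point, base changed along $W(k)\to F^{\nr}$; functoriality of log-crystalline cohomology supplies the Frobenius $\phi$ and the monodromy $N$, making $\rg_{\hk}$ a dg $F^{\nr}$-algebra. Locally, $\iota_{\hk}$ is Hyodo--Kato's comparison with the de Rham cohomology of the generic fiber, extended to $\B^+_{\dr}$-coefficients by base change along $F^{\nr}\to\B^+_{\dr}$ (a choice of uniformizer being used to trivialize $N$), and the isomorphism $\rg_{\hk}(X)\wh{\otimes}^{\R}_{F^{\nr}}C\stackrel{\sim}{\to}\rg_{\dr}(X)$ is obtained from it by reduction along $\theta$. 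The essential new step --- the ``passage from absolute crystalline cohomology to Hyodo--Kato cohomology'' --- is to show this local construction glues, i.e.\ is independent of the chosen model up to coherent homotopy, by exhibiting $\rg_{\hk}(X)$ as a functor of the absolute crystalline (or $\Ainf$-)cohomology of $X$ itself. This is the technical heart of the paper, and I expect it to be the main obstacle: it requires comparing different integral models and carrying out the gluing compatibly with $\phi$, $N$ and the topology.

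\textbf{(3) The distinguished triangle and its lift.} Given (1) and (2), the triangle \eqref{triangle11} is the definition of $\rg_{\synt}$ rewritten through $\iota_{\hk}$: the $N=0$, $\phi=p^r$ eigenspace of $\rg_{\hk}(X)\wh{\otimes}_{F^{\nr}}\B^+_{\st}$ maps to $\rg_{\hk}(X)\wh{\otimes}_{F^{\nr}}\B^+_{\dr}$ along $\B^+_{\st}\to\B^+_{\dr}$, hence by $\iota_{\hk}$ to $\rg_{\dr}(X/\B^+_{\dr})$, and composing with the projection modulo $F^r$ gives the second map, of which $\rg_{\synt}$ is the fiber. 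To lift the whole triangle to the derived $\infty$-category of Vector Spaces (the geometrization over perfectoid spaces, i.e.\ Banach--Colmez spaces), I would observe that $\rg_{\hk}(X)$ is a complex of $F^{\nr}$-Banach spaces and that $\B^+_{\st}$, $\B^+_{\dr}$ and the relevant eigenspaces and quotients are (pro-)Vector Spaces, so the whole diagram is the image of a diagram of complexes under the symmetric monoidal functor ``associated Vector Space'', and the homotopy fiber is formed there. The delicate point is the exactness/strictness of $\varphi-p^r$ and of $F^r\hookrightarrow\rg_{\dr}(X/\B^+_{\dr})$, so that fibers are preserved; this again reduces to the local Stein/affinoid case and the finiteness available there. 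Theorem \ref{basic} then follows by combining the $\tau_{\leq r}$-quasi-isomorphism $\alpha_r$ of (1) with the triangle of (3).
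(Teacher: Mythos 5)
Your outline reproduces the architecture of the paper in broad strokes, but it leaves the central step as an acknowledged hole, and that hole is the theorem. In (2) you propose to take the classical Hyodo--Kato comparison on each local semistable model, ``a choice of uniformizer being used to trivialize $N$,'' and then to show that the local constructions glue. This is precisely the approach that fails: the classical Hyodo--Kato section depends on the chosen uniformizer, the uniformizer varies from one local model to the next (and over $C$ there is no preferred one at all), so there is nothing coherent to glue until that dependence has been removed. The missing idea is Beilinson's rigidity argument, which the paper adapts to formal schemes: since Frobenius is invertible up to isogeny on $\rg_{\hk}(X_1^0)$ (Hyodo--Kato), the vanishing $\R\Hom(Q,\wh{P})_\Q=0$ of Lemma~\ref{Newton1} forces the Frobenius-equivariant section of $i_l^*:\rg_{\crr}(X_1/R)_{l,\Q}\to\rg_{\hk}(X_1^0)_\Q$ to exist and to be \emph{unique} in the isogeny category (Theorem~\ref{section1}); uniqueness is what makes the section canonical, functorial, compatible with the monodromy (Proposition~\ref{monodromy1}) and independent of choices, and only after that does $\eta$-\'etale gluing become possible. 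Over $C$ one additionally needs the Frobenius-twisted descent data of Section~\ref{pro1} to compare the sections attached to models defined over varying finite extensions of $K$. Saying ``exhibit $\rg_{\hk}(X)$ as a functor of absolute crystalline cohomology'' names the goal, not the mechanism.

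Two smaller points. First, your order of construction in (1) is circular as written: you define $\rg_{\synt}$ locally by the Bloch--Kato presentation (the fiber over the $N=0$, $\phi=p^r$ part of the $\B^+_{\st}$-linear Hyodo--Kato complex), which presupposes the glued Hyodo--Kato theory of (2). The paper instead defines $\rg_{\synt}(X,\Q_p(r))$ by $\eta$-\'etale descent of the Fontaine--Messing complexes $[F^r\rg_{\crr}(\sx)\to\rg_{\crr}(\sx)]$, which glue with no input from Hyodo--Kato theory, and only afterwards identifies the result with the Bloch--Kato presentation via $\epsilon^{\hk}_{\st}$ (Theorem~\ref{HK-crr}, Corollary~\ref{presentation}); the stable-range comparison with pro-\'etale cohomology then comes from Tsuji's computation of $p$-adic nearby cycles, not only from the Poincar\'e lemma. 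Second, for the lift to Vector Spaces the triangle itself does geometrize by tensoring the period rings as you say, but the period morphism of (1) does not come for free: $\Lambda\mapsto\overline{R}_\Lambda$ is not functorial enough, and the paper must reroute the Fontaine--Messing map through $(\phi,\Gamma)$-module complexes and almost \'etale descent (Section~\ref{Geometrization}) to exhibit it as a morphism of VS's.
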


 \subsubsection{Dagger varieties}
Set
$${\rm HK}_r^i(X):=H^i [\rg_{\hk}(X)\wh{\otimes}_{F^{\nr}}\B^+_{\st}]^{N=0,\phi=p^r},\quad
{\rm DR}_r^i(X):=H^i(\rg_{\dr}(X/\B^+_{\dr})/F^r).$$
 The distinguished triangle (\ref{triangle11}) yields a long exact sequence of cohomology groups
 \begin{equation}
 \label{rigid1}
\cdots\to {\rm DR}_r^{i-1}(X)\to  H^i_{\synt}(X,\Q_p(r))\lomapr{} {\rm HK}_r^i(X)
\lomapr{\iota_{\hk}} {\rm DR}_r^i(X)\to\cdots,
\end{equation}
which,  together with the period isomorphism 
$$
H^i_{\synt}(X,\Q_p(r))\stackrel{\sim}{\to} H^i_{\proeet}(X,\Q_p(r)),\quad i\leq r,
$$
obtained from (\ref{period-now}),   is a starting point for our work on generalizations of the 
$C_{\rm st}$-conjecture to rigid analytic varieties (see the sequel to this paper \cite{CN5}). This sequence is, however,  difficult to use since, locally, the rigid analytic de Rham cohomology and Hyodo-Kato cohomology are, in general,  very ugly:   infinite dimensional and not Hausdorff. But
 we are mainly interested in partially proper  rigid analytic varieties and  these varieties have a canonical overconvergent (or dagger) structure\footnote{Recall that a dagger variety is a rigid analytic variety equipped with an overconvergent structure sheaf. See \cite{GK0} for the basic definitions and properties.}. Moreover, a dagger affinoid has de Rham cohomology that is a finite rank vector space with its natural Hausdorff topology.
 
 Hence we are led to study dagger varieties.  We prove an analog of  Theorem \ref{main1} for smooth dagger varieties. The dagger version of (\ref{rigid1}) is the  long exact sequence:
 $$
 \cdots \to {\rm DR}_r^{i-1}(X)\to  H^i_{\synt}(X,\Q_p(r))\lomapr{} (H^i _{\hk}(X)\wh{\otimes}_{F^{\nr}}\B^+_{\st})^{N=0,\phi=p^r}\lomapr{\iota_{\hk}} {\rm DR}_r^i(X)\to \cdots
$$
But now, if $X$ is a dagger affinoid,  both cohomologies $H^i _{\hk}(X)$ and $ H^i_{\dr}(X/\B^+_{\dr})$ are (free) of finite rank. 
If $X$ is a  dagger variety the overconvergent  constructions are compatible with the rigid analytic constructions for $\wh{X}$, the completion of $X$. If $X$ is partially proper the two sets of constructions are 
  strictly quasi-isomorphic.

\subsubsection{Geometrization}
We show in~\cite{CN5} that the above long exact sequence (\ref{rigid1}), in a stable range,  splits into short exact sequences 
if $X$ is proper or, more generally, dagger quasi-compact or "small", or if $X$ is Stein. In order to do so, we need to put some extra-structure
on the terms of the exact sequence.  In~\cite{CN1}, we treated the proper case (with a semi-stable model)
by using the fact that the terms in the exact sequence outside of the $H^i_{\synt}(X,\Q_p(r))$'s
were naturally $C$-points of Banach-Colmez spaces (called BC's in what follows). That this is also the
case of the $H^i_{\synt}(X,\Q_p(r))$'s, for $i\leq r$,  follows from the comparison with pro-\'etale cohomology
and Scholze's theorem~\cite{Sch} which  states that these cohomology groups are in fact finite dimensional
over $\Q_p$ and independent of the field $C$: hence they are the $C$ points of quite trivial BC's.
Then the basic theory of BC's~\cite{CB,CF} could be used to show that the long exact sequence splits in a stable range.
(Actually, putting a BC structure on syntomic cohomology can be done directly~\cite{NFF}, but to
prove the splitting of (\ref{rigid1}), one still needs Scholze's finiteness theorem, if one is to stick
to the methods of~\cite{CN1}).

In our present situation, the $H^i_{\proeet}(X,\Q_p(r))$'s are very much not finite dimensional over $\Q_p$
and depend on the field $C$.  Hence they are not obviously $C$-points of anything sensible.
But one can turn them into $C$ points of sheaves on ${\rm Perf}_C$, and this is a category
of geometric objects (the category of Vector Spaces, VS's for short)
that contains naturally the category of BC's as was advocated in Le Bras' thesis~\cite{lebras}. 

One turns the $p$-adic pro-\'etale cohomology into a sheaf on ${\rm Perf}_C$ by taking
the sheaf associated to the presheaf $S\mapsto \rg_{\proeet}(X_S,\Q_p(r))$, for perfectoid algebras
$S$ over $C$. 
Likewise, one geometrizes syntomic cohomology by geometrizing the period rings; for example, $\B_{\crr}$ becomes the functor 
$S\mapsto \bcris(S)$.  We extend the proof of Theorem \ref{main1}
to this geometrized context to obtain:
\begin{theorem}\label{cosgeo}
The quasi-isomorphisms from Theorem~\ref{basic} and~(1) of Theorem~\ref{main1}
are the evaluations on ${\rm Spa}(C,\so_C)$
of quasi-isomorphisms of Vector Spaces.
\end{theorem}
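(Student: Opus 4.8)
The plan is to upgrade every object and every morphism constructed in the proof of Theorem~\ref{main1} from the level of complexes of locally convex $\Q_p$-vector spaces to the level of complexes of sheaves on $\mathrm{Perf}_C$ (the Vector Spaces of Le Bras), and to check that the quasi-isomorphisms of Theorems~\ref{basic} and~\ref{main1}(1) are precisely the sections over $\Spec C$ (or rather $\mathrm{Spa}(C,\so_C)$) of quasi-isomorphisms already valid at the sheaf level. Concretely, for a perfectoid $C$-algebra $S$ one forms $X_S$, the base change of $X$ to $\mathrm{Spa}(S,\so_S)$, and one sheafifies the presheaf $S\mapsto \rg_{\proeet}(X_S,\Q_p(r))$ on the (small, or pro-étale/$v$-) site $\mathrm{Perf}_C$; likewise one replaces each period ring ($\B^+_{\st}$, $\B^+_{\dr}$, $\bcris$, etc.) by its functor-of-points version $S\mapsto \B^+_{\st}(S)$, $S\mapsto \bdrp(S)$, and so on, which are already VS's by the theory recalled from \cite{lebras,CB,CF}. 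The Hyodo-Kato complex $\rg_{\hk}(X)$ does not move (it is a $F^{\nr}$-complex not depending on $S$), so the only objects that genuinely become sheaves are the pro-étale complex, the $\B^+_{\dr}$-de Rham complex $\rg_{\dr}(X_S/\bdrp(S))$, and the syntomic complex built from these via the period rings; the Hyodo-Kato piece enters only through the constant-in-$S$ tensor factor.

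The key steps, in order, are the following. First, establish that $S\mapsto \rg_{\proeet}(X_S,\Q_p(r))$, after sheafification, lands in the derived $\infty$-category of VS's; here one uses that for each fixed cohomological degree the pro-étale cohomology of a smooth rigid variety over a perfectoid field is, by the fundamental exact sequences of \cite{CDN3,CN3} type (relating it to $\B^+_{\dr}$-cohomology and differentials), built out of pieces that are manifestly representable by VS's --- the constant $\Q_p$-local systems' cohomology, the $\bdrp$-de Rham cohomology, and $H^i_{\dr}$ twisted by the various period sheaves. Second, geometrize the syntomic complex: since $\rg_{\synt}(X,\Q_p(r))$ is by construction (Theorem~\ref{main1}(3)) a homotopy fiber built from $[\rg_{\hk}(X)\wh{\otimes}_{F^{\nr}}\B^+_{\st}]^{N=0,\phi=p^r}$ and $\rg_{\dr}(X/\bdrp)/F^r$ through $\iota_{\hk}$, and since these inputs have just been geometrized, one defines $\sg_{\synt}(X,\Q_p(r))$ as the corresponding homotopy fiber of sheaves; the operators $N$, $\phi$ and the filtration are all functorial in $S$, hence geometrize automatically. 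Third, geometrize the period morphism $\alpha_r$ of \eqref{period-now}: this is the most delicate point. One must check that the construction of $\alpha_r$ --- which goes through a chain of maps involving $\Acris(S)$, syntomic descent, and the comparison with $\B^+_{\dr}$-cohomology --- is functorial in the perfectoid base $S$, so that it assembles into a map of presheaves, hence (after sheafification) a map of VS's $\sg_{\synt}(X,\Q_p(r))\to \R\proeet(X,\Q_p(r))$. Fourth, verify that its truncation $\tau_{\leq r}$ is a quasi-isomorphism of sheaves: it suffices to check this on a basis of $\mathrm{Perf}_C$, i.e. on affinoid perfectoids $\mathrm{Spa}(S,\so_S)$ with $S$ an algebraically closed perfectoid field, and for such $S$ the statement is exactly Theorem~\ref{main1}(1) applied to $X_S$ over $S$ (all the arguments there work verbatim over any complete algebraically closed nonarchimedean field of residue characteristic $p$, not just over $C$). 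Finally, evaluating on $\mathrm{Spa}(C,\so_C)$ recovers the original quasi-isomorphisms, giving Theorem~\ref{cosgeo}.

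The main obstacle I expect is the third step: showing that the period morphism $\alpha_r$, and the Hyodo-Kato isomorphism $\iota_{\hk}$ over $\bdrp$, are genuinely functorial in the perfectoid base $S$ in a way compatible with sheafification --- i.e. that one has not merely a collection of morphisms for each $S$ but a morphism of (pre)sheaves of complexes in the $\infty$-categorical sense. The difficulty is bookkeeping: $\alpha_r$ is defined via several intermediate comparisons (crystalline-to-Hyodo-Kato, Hyodo-Kato-to-de Rham, syntomic descent against the period sheaves $\Acris$, $\B^+_{\st}$), each of which must be checked to be natural in $S$ with all the coherence data, and the period rings' functoriality in $S$ (especially for $\Acris$ and $\B^+_{\st}$, which involve Witt vectors and the element $t$) has to be spelled out carefully. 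A secondary technical point is ensuring the sheafification does not destroy the exactness properties: one must know the relevant higher direct images along $\mathrm{Perf}_C\to *$ vanish in the right range, which again reduces to the finiteness/acyclicity inputs from Scholze's theory \cite{Sch} and the structure of VS's from \cite{lebras}. Once functoriality is in place, the reduction to the field case in step four is routine, since the entire proof of Theorem~\ref{main1} is insensitive to replacing $C$ by any other complete algebraically closed nonarchimedean extension.
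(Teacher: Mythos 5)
Your overall architecture --- geometrize the period rings, the syntomic and pro-\'etale complexes, and the period morphism, then check that the lifted morphism is a quasi-isomorphism and evaluate at $C$ --- matches the paper's. But the step you dismiss as routine is precisely where the paper's work lies, and your proposed mechanism for it does not go through. First, in this paper a VS is merely a functor on perfectoid $C$-Banach algebras, not a sheaf, so the lifted period map must be a quasi-isomorphism for \emph{every} perfectoid $\Lambda$; algebraically closed perfectoid fields do not form a basis for any topology in which such a check could be localized, so your reduction to geometric points is unavailable. Second, even at such a point $S$, the geometrized syntomic complex is by definition $[F^r\rg_{\crr}(X)\wh{\otimes}^{\R}_{\B^+_{\crr}}\Bcris^+(S)\to\cdots]$, a completed tensor product of the complex over $C$, and not the intrinsic $\rg_{\synt}(X_S,\Q_p(r))$; identifying the two is a nontrivial base-change statement essentially equivalent to what is being proved, so invoking Theorem~\ref{main1}(1) ``applied to $X_S$'' is circular. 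Third, you correctly flag functoriality of $\alpha_r$ in $\Lambda$ as the main obstacle but offer no way to resolve it: the Fontaine--Messing construction passes through $\Acris(\overline{R}_\Lambda)$ and the PD-envelope $\E^{\rm PD}_{\overline{R}_\Lambda}$, and, as the paper stresses, $\Lambda\mapsto\overline{R}_\Lambda$ is not functorial enough to yield a map of VS's, let alone a quasi-isomorphism, by mere bookkeeping.

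What the paper actually does to close this gap is rewrite the local Fontaine--Messing map, for each $\Lambda$, as a chain of $p^{cr}$-quasi-isomorphisms of Koszul-type complexes in the style of $(\phi,\Gamma)$-modules, following \cite{CN1} and \cite{SG}: almost \'etale descent replaces continuous $G_{R_\Lambda}$-cohomology of $\overline{R}_\Lambda$-objects by $\Gamma_R$-cohomology over the perfectoid tower $R^\infty_\Lambda$; the base-change isomorphism $\Ainf(R^{\infty})\wh{\otimes}_{\A_{\inf}}\Ainf(\Lambda)\stackrel{\sim}{\to}\Ainf(R^{\infty}_{\Lambda})$ of Lemma~\ref{truc} then isolates the $\Lambda$-dependence into a completed tensor factor $\bbA^{[u,v]}(\Lambda)$, which is manifestly a VS; decompletion, Lazard-type isomorphisms and filtered Poincar\'e lemmas complete the comparison with the $\Lambda$-linearized syntomic complex. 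This is the content your argument is missing; without it, what you have established is the statement at $\Lambda=C$ only, which is Theorem~\ref{main1}(1) itself.
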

This promotes the exact sequence
(\ref{rigid1}) to a sequence of VS's which can be analyzed using the geometric point of view
on BC's developed in~\cite{lebras} (this analysis is quite involved and is postponed to~\cite{CN5}).

 \subsection{Proof of Theorems~\ref{basic} and~\ref{main1}} 
 We will now sketch  how Theorem \ref{basic}  and Theorem~\ref{main1} are proved. 
 
 (i)  {\em Rigid-analytic varieties. }
Recall that \cite[Sec.\,2]{CN3}, using the rigid analytic \'etale local alterations of Hartl and Temkin \cite{Urs}, \cite{Tem},  one can equip the \'etale topology of $X$ with  a (Beilinson) base\footnote{This should be distinguished from a Verdier base; in a Beilinson base the condition on fullness of the base morphisms is dropped.
 See \cite[2.1]{CN3}.} consisting of semistable  formal schemes (always assumed to be of finite type) over  $\so_C$.
 This allows us to define sheaves by specifying them on such integral models and then sheafifying for the $\eta$-\'etale topology\footnote{Here $\eta$-\'etale means topology induced from the \'etale topology of the rigid analytic  generic fiber.}.
 For example, in (1)  the syntomic cohomology $\rg_{\synt}(X,\Q_p(r))$ of a rigid analytic variety $X$ is defined by  $\eta$-\'etale descent  from the crystalline syntomic cohomology of Fontaine-Messing.
 Recall that the latter is defined as   the fiber  ($\sx$ is a semistable formal scheme over $\so_C$ equipped with its canonical log-structure)
  $$\rg_{\synt}(\sx,\Q_p(r)):=[F^r\rg_{\crr}(\sx)\lomapr{\phi-p^r}\rg_{\crr}(\sx)], 
  $$ where the (logarithmic) crystalline cohomology is absolute (i.e., over $\Z_p$).
 By definition, it fits into the distinguished triangle
  \begin{equation}
  \label{triangle22}
  \rg_{\synt}(X,\Q_p(r))\to [\rg_{\crr}(X)]^{\phi=p^r}\to \rg_{\crr}(X)/F^r,
  \end{equation}
  which looks different than the triangle (\ref{triangle11}) that we want in (3).
 However, we easily find\footnote{The easiest way to see it is by interpreting, locally, both sides as derived de Rham cohomology.} that  $\rg_{\crr}(X)/F^r\simeq \rg_{\dr}(X/\B^+_{\dr})/F^r$.
 Here $\rg_{\dr}(X/\B^+_{\dr})$ is the $\B^+_{\dr}$-cohomology as defined by Bhatt-Morrow-Scholze in \cite{BMS1}, which we have redefined in the paper as $\eta$-\'etale descent  of Hodge-completed rational absolute crystalline cohomology of semistable schemes.
 But the construction of an isomorphism between the middle terms in (\ref{triangle22}) and  (\ref{triangle11}) requires a refined version of the Hyodo-Kato morphism. 

   The period map in (1),  is  defined   by  $\eta$-\'etale descent of  Fontaine-Messing period map $$\alpha_r: \rg_{\synt}(\sx,\Q_p(r))\to \rg_{\eet}(\sx_C,\Q_p(r)), $$
   for a semistable formal scheme $\sx$ over $\so_C$.
  The fact that it is a strict quasi-isomorphism in a stable range  follows from the computations of $p$-adic nearby cycles via syntomic complexes done by Tsuji  in \cite{Ts}.
 However, to lift it to the derived category of Vector Spaces we use its reinterpretation via $(\phi,\Gamma)$-modules by  Colmez-Nizio{\l} and Gilles in \cite{CN1}, \cite{SG}.
 This new interpretation of the period morphism is then 
      lifted  from $C$  to perfectoid spaces over $C$ to prove Theorem~\ref{cosgeo}.

  The construction  of the Hyodo-Kato morphism in (2) is quite involved; in fact, a detailed study of Hyodo-Kato cohomology and its relation to $\B^+_{\dr}$- and de Rham cohomologies occupies a large portion of this paper. 
  The original Hyodo-Kato morphism \cite{HK} works for semistable (formal) schemes. It can not be transferred to rigid analytic varieties because, a priori, it is dependent on the choice of the uniformizer of the base field (which varies for local semistable models). Moreover, a key map in the construction\footnote{For experts:  the section of the projection $T\mapsto 0$.} is defined as an element of the classsical derived category. A more careful data keeping allowed Beilinson \cite{Bv2} to make the Hyodo-Kato morphism  independent of choices in the case of proper schemes. We adapt here his technique to formal schemes and along the way lift the morphism to derived $\infty$-category. As a byproduct  we get the identification
  $$
  [\rg_{\crr}(X)]^{\phi=p^r}\simeq [\rg_{\hk}(X)\wh{\otimes}_{F^{\nr}}\B^+_{\st}]^{N=0,\phi=p^r}
  $$
  and an identification of (\ref{triangle22}) with (\ref{triangle11}), as wanted.

   (ii) {\em Dagger varieties.}
The pro-\'etale cohomology in (1) is defined in the most naive way: if $X$ is a smooth dagger affinoid with a \index{Xn@\Xn}presentation $\{X_h\}_{h\in\N}$ by a pro-affinoid rigid analytic variety, we set 
 $$\rg_{\proeet}(X,\Q_p(r)):=\colim_h\rg_{\proeet}(X_h,\Q_p(r));$$
  then, we globalize. From this description it is clear that we have a natural map $$\rg_{\proeet}(X,\Q_p(r))\to
 \rg_{\proeet}(\wh{X},\Q_p(r)),
 $$ where $\wh{X}$ is the completion of $X$ (a rigid analytic variety).   It is easy to see that in the case $X$ is partially proper, this morphism is a strict quasi-isomorphism (see \cite[Prop. 3.17]{CN3}).

  The other overconvergent cohomologies (Hyodo-Kato, de Rham, $\B^+_{\dr}$-, syntomic)  and morphisms between them can be  defined in an analogous way  without difficulties. In some cases though, they do however already have independent definitions: Hyodo-Kato and de Rham cohomologies were  defined by Grosse-Kl\"onne in \cite{GKFr} and we define syntomic cohomology as the fiber giving  the following  distinguished triangle
 \begin{equation}
 \label{triangle33}
   \rg_{\synt}(X,\Q_p(r))\lomapr{}  [\rg_{\hk}(X)\wh{\otimes}_{F^{\nr}}\B^+_{\st}]^{N=0,\phi=p^r}\lomapr{\iota_{\hk}} \rg_{\dr}(X/\B^+_{\dr})/F^r.
\end{equation}
  In these cases, we prove that the two sets of definitions yield  strictly quasi-isomorphic objects. 
  As an illustration of the power of the new definitions of overconvergent  cohomologies, let us look at the simple proof of the  following fact, whose arithmetic analog 
   was  the main technical result of \cite{CN3}:
   \begin{proposition}
   \label{first4}Let $r\geq 0$. Let $X$ be 
a smooth dagger variety over $K$. There is a natural morphism $$\rg_{\synt}(X,\Q_p(r))\to \rg_{\synt}(\wh{X},\Q_p(r)).$$ 
It is a strict quasi-isomorphism if $X$ is partially proper. 
   \end{proposition}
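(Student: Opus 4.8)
The plan is to deduce both assertions formally from the fact that, in the dagger setting as well as the rigid one, syntomic cohomology is \emph{defined} as a homotopy fiber. First I would record that $\rg_{\synt}(X,\Q_p(r))$ is, by its definition via~(\ref{triangle33}), the homotopy fiber of $\iota_{\hk}\colon[\rg_{\hk}(X)\wh{\otimes}_{F^{\nr}}\B^+_{\st}]^{N=0,\phi=p^r}\to\rg_{\dr}(X/\B^+_{\dr})/F^r$, and likewise $\rg_{\synt}(\wh X,\Q_p(r))$ is the homotopy fiber of the corresponding rigid map~(\ref{triangle11}); both fiber sequences are functorial in the variety. A smooth dagger variety $X$ with presentation $\{X_h\}_{h\in\N}$ carries a canonical morphism to its rigid completion $\wh X$, and the overconvergent Hyodo-Kato and $\B^+_{\dr}$-cohomologies are constructed by exactly the same recipe ($\eta$-\'etale descent from semistable formal models, then $\hocolim$ over a presentation, then globalization) as their rigid counterparts. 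Hence this morphism induces natural transformations $\rg_{\hk}(X)\to\rg_{\hk}(\wh X)$ and $\rg_{\dr}(X/\B^+_{\dr})\to\rg_{\dr}(\wh X/\B^+_{\dr})$, the first respecting $\phi$ and $N$ over $F^{\nr}$, the second filtered, and both compatible with $\iota_{\hk}$ (since the dagger $\iota_{\hk}$ is built from the rigid one). Applying $(-)\wh{\otimes}_{F^{\nr}}\B^+_{\st}$ and then $(-)^{N=0,\phi=p^r}$ to the first and $(-)/F^r$ to the second produces a commutative square, and taking horizontal homotopy fibers gives the desired natural morphism $\rg_{\synt}(X,\Q_p(r))\to\rg_{\synt}(\wh X,\Q_p(r))$, functorial in $X$.

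For the second assertion I would use the two-out-of-three property of fiber sequences in the derived $\infty$-category of locally convex $\Q_p$-vector spaces: if both vertical maps above are strict quasi-isomorphisms, so is the induced map on horizontal fibers. When $X$ is partially proper, the de Rham vertical map is controlled by Grosse-Kl\"onne's comparison~\cite{GKFr}: for smooth partially proper dagger varieties $\rg_{\dr}(X)\stackrel{\sim}{\to}\rg_{\dr}(\wh X)$ is a filtered strict quasi-isomorphism, and since over $K$ the $\B^+_{\dr}$-cohomology is $\rg_{\dr}$ base-changed to $\B^+_{\dr}$ with the tensor filtration, it follows that $\rg_{\dr}(X/\B^+_{\dr})/F^r\stackrel{\sim}{\to}\rg_{\dr}(\wh X/\B^+_{\dr})/F^r$. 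For the Hyodo-Kato vertical map, I would invoke the identification of the dagger and rigid Hyodo-Kato constructions in the partially proper case (established in the body of the paper), which gives a strict quasi-isomorphism $\rg_{\hk}(X)\stackrel{\sim}{\to}\rg_{\hk}(\wh X)$ of $(\phi,N)$-modules over $F^{\nr}$; since $(-)\wh{\otimes}_{F^{\nr}}\B^+_{\st}$ and the Frobenius eigenspace functor $(-)^{N=0,\phi=p^r}$ — a finite homotopy limit — preserve strict quasi-isomorphisms of the complexes involved (which are of finite rank, or Fr\'echet limits thereof), the middle vertical map is a strict quasi-isomorphism and the proposition follows.

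The genuine content is thus entirely in those component-wise comparisons; the proposition itself is meant to illustrate that, with syntomic cohomology packaged as a homotopy fiber, nothing more than functoriality and two-out-of-three is needed. Among the inputs, the de Rham comparison is Grosse-Kl\"onne's theorem, so the one point that requires real care is the Hyodo-Kato comparison in the partially proper case — matching not just the underlying complexes but the Frobenius and monodromy. If one prefers to derive it rather than cite it, the route is via the Hyodo-Kato isomorphism $\iota_{\hk}\colon\rg_{\hk}(-)\wh{\otimes}^{\R}_{F^{\nr}}C\stackrel{\sim}{\to}\rg_{\dr}(-)$ combined with the de Rham comparison and conservativity of $-\wh{\otimes}^{\R}_{F^{\nr}}C$ on the relevant complexes; the subtlety there, and the only place where topology genuinely intervenes, is keeping the topologies on the completed tensor products under control, which is harmless here precisely because in the partially proper dagger case the Hyodo-Kato and de Rham cohomologies are finite rank (or Fr\'echet limits of finite-rank pieces).
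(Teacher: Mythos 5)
Your proposal is correct and follows essentially the same route as the paper: both sides are written as homotopy fibers of $\iota_{\hk}$ over the twisted Hyodo--Kato term and $\rg_{\dr}(-/\B^+_{\dr})/F^r$, the map is induced by functoriality of the two components, and in the partially proper case one concludes by two-out-of-three from the component-wise comparisons, with the Hyodo--Kato comparison deduced from the de Rham one via the Hyodo--Kato isomorphism and the splitting $C\simeq \breve{F}\oplus W$ (your ``conservativity of $-\wh{\otimes}^{\R}_{F^{\nr}}C$''). The only points you elide are bookkeeping the paper carries out explicitly: the identification of the rigid Fontaine--Messing fiber with the Bloch--Kato-style fiber (the map $\iota_2$), and the passage between $\B^+_{\st}$ and $\wh{\B}^+_{\st}$ in the two fiber presentations.
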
This proposition  is proved  by representing, using distinguished triangles (\ref{triangle11}) and (\ref{triangle33}),  both sides of the morphism by means of  the rigid analytic  and the overconvergent Hyodo-Kato cohomology, respectively, then  passing  through the rigid analytic  and the overconvergent Hyodo-Kato quasi-isomorphisms (that are compatible by construction) to the de Rham cohomology, where the result is known. 

\begin{remark}
The approach we have taken here to deal with dagger varieties is very different from the one in \cite{CDN3} or \cite{CN3} (these two approaches also differing  between themselves). That is, we do not use Grosse-Kl\"onne's overconvergent Hyodo-Kato cohomology nor the related Hyodo-Kato morphism (which is difficult to work with and is also very different from the rigid analytic  version making checking the overconvergent-rigid analytic compatibility a bit of a nightmare). Instead, we induce all the overconvergent cohomologies from their rigid analytic analogs; hence, by definition, the two constructions are compatible. This was only possible because we have constructed a functorial, $\infty$-category version of the Hyodo-Kato morphism. 
\end{remark}
\begin{structure} Sections 2 and 4 are devoted to a definition of a functorial, $\infty$-categorical Hyodo-Kato quasi-isomorphism. In Section 3 we present our definition of $\B^+_{\dr}$-cohomology. Section 5 puts the above things together and introduces overconvergent geometric syntomic cohomology. In Section 6 we define comparison morphisms and in Section 7 we put a geometric structure on them. 
\end{structure}
 \begin{acknowledgments}W.N. would like to thank MSRI, Berkeley, and the Isaac Newton Institute, Cambridge, for hospitality during Spring 2019 and Spring 2020 semesters, respectively,  when parts of this paper were written. We would like to thank  
 Piotr Achinger, Guido Bosco, Gabriel Dospinescu,  Ofer Gabber, Sally Gilles, Veronika Ertl,   
Matthew Morrow, Michael Temkin, and Peter Scholze for helpful discussions related to the content of this paper and  Shane Kelly for patiently explaining to us $\infty$-categorical constructions described in Section \ref{sketchy}. 

 Special thanks go to the referee for a very careful reading of the manuscript and many suggestions that have improved the presentation of the material.
  \end{acknowledgments}
   \subsubsection*{Notation and conventions.}\label{Notation}
 Let $\so_K$ be a complete discrete valuation ring with fraction field
$K$  of characteristic 0 and with perfect
residue field $k$ of characteristic $p$. Let $\ovk$ be an algebraic closure of $K$ and let $\so_{\ovk}$ denote the integral closure of $\so_K$ in $\ovk$. Let $C=\wh{\ovk}$ be the $p$-adic completion of $\ovk$.  Let
$W(k)$ be the ring of Witt vectors of $k$ with 
 fraction field $F$ (i.e., $W(k)=\so_F$); let $e=e_K$ be the ramification index of $K$ over $F$.   Set $\sg_K=\Gal(\overline {K}/K)$ and 
let $\phi$ be the absolute
Frobenius on $W(\overline {k})$. 
We will denote by $\A_{\crr}, \B_{\crr}, \B_{\st},\B_{\dr}$ the crystalline, semistable, and  de Rham period rings of Fontaine \cite{bures}. 

 We will denote by $\so_K$,
\index{OK@\OK}$\so_K^{\times}$, and $\so_K^0$, depending on the context,  the scheme $\Spec ({\so_K})$ or the formal scheme $\Spf (\so_K)$ with the trivial, the canonical (i.e., associated to the closed point), and the induced by $\N\to \so_K, 1\mapsto 0$,
log-structure, respectively.  Unless otherwise stated all   formal schemes are $p$-adic, locally of finite type, and equidimensional. For a ($p$-adic formal) scheme $X$ over $\so_K$, let 
\index{Xn@\Xn}$X_0$ denote
the special fiber of $X$; let $X_n$ denote its reduction modulo $p^n$. For an $\so_K$-module $M$, we set $M_n:=M\otimes^{\rm L}_{\so_K}\so_{K}/p^n$. 

All rigid analytic spaces considered will be over $K$ or $C$. 
We assume that they are separated, taut, and countable at infinity. 
If $L=K,C$, we \index{Smo@\Smo}let ${\rm Sm}_L$ (resp.~${\rm Sm}_L^\dagger$)
be the category of smooth rigid analytic (resp.~dagger) varieties over $L$, and 
we denote \index{Perf@\Perf}by ${\rm Perf}_C$ the category of perfectoid spaces over $C$.

    Unless otherwise stated, we work in the derived (stable) $\infty$-category 
\index{Dcal@\Dcal}$\sd(A)$ of left-bounded complexes of a quasi-abelian category $A$ (the latter will be clear from the context).
  Many of our constructions will involve (pre)sheaves of objects from $\sd(A)$. 
  We will use a shorthand for certain homotopy limits:
 if $f:C\to C'$ is a map  in the derived $\infty$-category of a quasi-abelian  category, we set
$$[\xymatrix{C\ar[r]^f&C'}]:=\lim(C\to C^{\prime}\leftarrow 0).$$ 
We also set
$$
\left[\begin{aligned}
\xymatrix{C_1\ar[d]\ar[r]^f & C_2\ar[d]\\
C_3\ar[r]^g & C_4
}\end{aligned}\right]
:=[[C_1\stackrel{f}{\to} C_2]\to [C_3\stackrel{g}{\to} C_4]],
$$ 
 where the diagram in the brackets is a commutative diagram in the same $\infty$-category.
For an operator $F$ acting on $C$, we will use the brackets $[C ]^F$ to denote the derived eigenspaces and,  if $C$ is a concrete complex and $F$ an operator acting on $C$, the brackets $(C)^F$ or simply $C^F$,   to denote the non-derived ones.

  Our cohomology groups will be equipped with a canonical topology. To talk about it in a systematic way,  we will work  rationally in the category of locally convex $K$-vector spaces and integrally in the category of pro-discrete $\so_K$-modules. For details  the reader may consult \cite[Sec.\,2.1, 2.2]{CDN3}. To summarize quickly:
   \begin{enumerate}
   \item \index{CK@\CK}$C_K$ is the category of convex $K$-vector spaces; 
  it  is a quasi-abelian category.
   We will denote the left-bounded derived $\infty$-category of $C_K$ by 
\index{Dcal@\Dcal}$\sd(C_K)$.
  A morphism of complexes that is a quasi-isomorphism in $\sd(C_K)$, i.e., its cone is strictly exact,  will be called a {\em strict quasi-isomorphism}.
 The associated cohomology objects are denoted \index{LHK@\LHK}by\footnote{$LH$ stands for ``left heart''.} 
\index{HHH@\HHH}$\wt{H}^n(E)\in {LH}(C_K)$; they are called {\em classical} if the natural map $\wt{H}^n(E)\to {H}^n(E)$ is an isomorphism\footnote{In our situations this is usually equivalent to $H^n(E)$ being separated.}.
\item We will often work in a slightly more general setting. Let $A_K:={\rm LH}(C_K)$. It is an abelian category and we have $\sd(C_K)\stackrel{\sim}{\to} \sd(A_K)$. Let $B\in C_K$ be a topological algebra over $K$. We will denote by $A_B$ the  abelian subcategory of $A_K$ of $B$-modules.  We set $\sd(C_B):= \sd(A_B)$. 
\item For the  default tensor product (over $K$) in $C_K$ we have chosen the projective tensor product (which commutes with projective limits). It is left exact. 
\item Objects in the category \index{PCK@\PCK}$PD_K$ of pro-discrete $\so_K$-modules are topological $\so_K$-modules that are countable inverse limits, as topological $\so_K$-modules, of discrete $\so_K$-modules $M^i$, $i\in \N$. 
It is a quasi-abelian category.
  Inside  $PD_K$ we distinguish the category \index{PCK@\PCK}$PC_K$ of pseudocompact $\so_K$-modules, i.e., pro-discrete modules $M\simeq\wlim_iM_i$ such that each $M_i$ is of finite length (we note that if $K$ is a finite extension of $\Q_p$ this is equivalent to $M$ being profinite).
 It is an abelian category.
\item   There is a tensor product functor from the category of pro-discrete $\so_K$-modules to convex $K$-vector spaces:
  $$(-){\otimes}K: PD_K\to C_{K}, \quad M\mapsto M\otimes _{\so_K}K.
  $$
 Since $C_K$ admits filtered inductive limits, the functor $(-){\otimes}K$ extends 
to a functor $(-){\otimes}K: {\rm Ind}(PD_K)\to C_{K}$.   The  functor $(-){\otimes}K$ is right exact but not, in general, left exact.
  We will consider its (compatible) left derived functors
  $$
  (-){\otimes}^{\LL}K: \sd^{-}(PD_K)\to {\rm Pro}(\sd^{-}(C_K)),\quad  (-){\otimes}^{\LL}K: \sd^{-}({\rm Ind}(PD_K))\to {\rm Pro}(\sd^{-}(C_K)).
  $$
 If $E$ is a complex of torsion free and $p$-complete (i.e., $E\simeq \wlim_n E/p^n$) modules from $PD_K$ then the natural map
  $$
   E{\otimes}^{\LL}K\to  E{\otimes}K
  $$ is a strict quasi-isomorphism \cite[Prop. 2.6]{CDN3}. 
 \end{enumerate}

    Finally, we will use freely the notation and results from \cite{CN3}.

\section{Hyodo-Kato rigidity revisited} 
The original Hyodo-Kato morphism \cite{HK} works for semistable (formal) schemes.
 It can not be transferred to rigid analytic varieties because, a priori, it is dependent on the choice of the uniformizer of the base field (which varies for local semistable models).
 A more careful data keeping allowed Beilinson \cite{Bv2} to make it independent of choices in the case of proper schemes.
 We adapt here his technique to semistable formal schemes and add some extra functoriality by
 lifting the morphism to the derived $\infty$-category.  

  This gives us local
Hyodo-Kato morphisms for rigid analytic varieties; the extra functoriality will be crucial
for the globalization of these maps for
rigid analytic and dagger varieties discussed in Chapter \ref{hihi1} (it makes it possible to glue
local maps from an hypercover by semistable formal schemes).

\subsection{Preliminaries} We gather in this section basic properties of period rings, isogenies, and $\phi$-modules that we will need in the paper. 
\subsubsection{Period rings} \label{period-rings}  
We will review first  the definitions of  the  rings of periods that we will need. We follow here Beilinson \cite[1.14, 1.19]{Bv2}, where the reader can find more details. Beilinson's definitions are a slight modification of the classical ones; they stress the dependence on choices in a better way. 

 (i) {\em Arithmetic setting.} Let \index{SSS@\SSS}$S=S_K:=\Spf \so_K^{\times}$, $S^0:=\Spf \so_F^0$.  We denote the corresponding log-structures by \index{Lcal@\Lcal}$\sll=\sll_K$ and $\sll^0_K$, respectively. Note that the second log-structure can be conveniently described by the pre-log structure
$\so_K\setminus \{0\}\to \so_F, a\mapsto [\overline{a}]$, where $\overline{a}:=a \mod {\mathfrak m}_K$ and $[-]$ denotes the Teichm\"uller lift.

Consider the algebra $\so_F[T]$
with the log-structure associated to $T$.
  We denote by $r_F^{\rm PD}$ the associated $p$-adic divided powers polynomial algebra.
 In a more natural in $K$ way, we can write
\index{rpd@\rpd}$r_F^{\rm PD}$ as $r_K^{\rm PD,0}$ -- the $p$-adic completion of $\so_F<t_a>$, the divided powers polynomial algebra generated by  $t_a, a\in ({\mathfrak m}_K/{\mathfrak m}^2_K)\setminus \{0\}$, with $t_{a^{\prime}}=[a^{\prime}/a]t_a$.
   We denote by  \index{rpd@\rpd}$r_K^{\rm PD}$ the $p$-adic completion of the subalgebra of the PD algebra $\so_F<t_a>$ generated by $t_a$ and $t^{ne}_a/n!, n\geq 1$.
 The log-structure is induced by the $t_a$'s, Frobenius action by $t_a\mapsto t_a^p$, and monodromy by the derivation sending $t_a\mapsto t_a$.
 Set $E=E_K:= \Spf r^{\rm PD}_K$, $E^0=E^0_K:= \Spf r^{\rm PD,0}_K$.
 We have  canonical exact embeddings $i_0: S^0\hookrightarrow E, i^*_0(t_a)=[a]\in \sll^0_K$, 
$i^0_0: S^0\hookrightarrow E^0, i^{0,*}_0(t_a)=[a]\in \sll^0_K$. 

We have an exact closed embedding $S^0_1\hookrightarrow S_1$.  
Retractions $\pi_l$
are given by maps $\pi^*_l:\sll^0_1\to\sll_1, a\mapsto l_a,$ with $l_{a^{\prime}}=[a^{\prime}/a]l_a$.
Every  retraction $\pi_l: S_1\to S^0_1$ yields a $k^0$-structure on $S_1$,  hence an 
exact closed embedding $i_l:S_1\hookrightarrow E_1$, $i^*_l(t_a)=l_a$.  

    (ii) {\em Geometric setting.} Let \index{SSS@\SSS}$\overline{S}:=\Spf\so^{\times}_{C}$. We denote its log-structure by \index{Lcal@\Lcal}$\overline{\sll}$.  We normalize the valuation on $C$ by $v(p)=1$. 
 Let \index{Lcal@\Lcal}$\overline{\sll}^0$ be the log-structure on $\overline{S}^0:=\Spf W(\overline{k})$ generated by the pre-log structure $\so_{C}\setminus \{0\}\to W(\overline{k}), a\mapsto [\overline{a}]$, 
$\overline{a}:= a \mod {\mathfrak m}_{\ovk}$.
 Then $\overline{\sll}^0$ has  a natural Frobenius action compatible with the Frobenius: $\phi([a])=[a^p]$.
 There is an exact embedding $\overline{S}^0_1\hookrightarrow \overline{S}_1$.

    We will denote by  $\A^{\times}_{\crr}$ the period ring  $\A_{\crr}$ equipped with the unique log-structure \index{Lcal@\Lcal}$\sll_{\crr}$ extending the one on $\so^{\times}_{C,1}$. Let $J_{\crr}$ be the PD-ideal, $\A_{\crr}/J_{\crr}\simeq \so_{C,1}$. 
    Set $\se_{\crr}:=\Spf \A_{\crr}^{\times}$.
 The exact embedding  $\Spec \so_{C,1}^{\times}\hookrightarrow \se_{\crr,n}$ given by    the Fontaine map $\theta: \A_{\crr}\to \so_C$ is
    a PD-thickening in the crystalline site of
   $\so_{{F},1}$.
    
    Recall  the definition of the period ring \index{BST@\BST}$\B^+_{\st}$.
 Let $\log: \A_{\crr}^{*}/\overline{k}^{*}\to \B^+_{\crr}$ be the logarithm: the unique homomorphism which extends the logarithm on $(1+J_{\crr})^{*}$, 
where $J_{\crr}=(p,{\rm Ker}\,\theta)$.
  Then $\B^+_{\st}$ is defined as the universal $\B^+_{\crr}$-algebra equipped with a homomorphism of monoids $\log: \sll_{\crr}/\overline{k}^{*}\to\B^+_{\st}$ extending  the above $\log$ on $\A^{*}_{\crr}$.
  Since $v:\sll_{\crr}/\A^{*}_{\crr}\stackrel{\sim}{\to}\Q_{\geq 0}$, it is clear  that, for any $\lambda\in\sll_{\crr}/\overline{k}^{*}$ with $v(\lambda)\neq 0$, the element $\log(\lambda)$ freely generates $\B^+_{\st}$ over $\B^+_{\crr}$, i.e., $\B^+_{\crr}[\log(\lambda)]\stackrel{\sim}{\to}\B^+_{\st}$.
 The Frobenius action extends to $\B^+_{\st}$ via universality. 
   The monodromy $N$ is the $\B^+_{\crr}$-derivation on $\B^+_{\st}$ such that $N(\log(\lambda))=-v(\lambda)$.
 We have  $N\phi=p\phi N$.
 Moreover, any $\lambda$ as above yields a retraction 
\index{slambda@\slambda}$s^*_{\lambda}:\B^+_{\st}\to \B^+_{\crr}, s^*_{\lambda}(\log(\lambda))=0$.
 If \index{Lcal@\Lcal}$\lambda\in\sll_{\phi}:=\{\lambda\in\sll_{\crr}:\phi(\lambda)=\lambda^p\}$ then $s^*_{\lambda}$ is compatible with Frobenius action. 
   
Now, recall  the definition of  the   period ring  \index{BST@\BST}$\wh{\B}^+_{l,\st}$.
Let $r\in\Q_{>0}$.  Denote by ${\mathbb A}^{(r)}_{W(\overline{k})}$, the log affine space, i.e.,  the formal scheme $\Spf W(\overline{k})\{t_a\}, a\in\tau_r$, with $t_{a^{\prime}}=[a^{\prime}/a]t_a$.
 Here $\tau_{r}:=\{a\in \overline{\sll}_1^0: v(a)=r\}$.
 The log-structure is generated by the $t_a$'s.
 The map $i_r: \overline{S}^0_1\to {\mathbb A}^{(r)}_{W(\overline{k})}$, $i_r^*(t_a)=a,$ can be extended  to a 
\index{ilst@\ilst}map $i_l:\overline{S}_1\to {\mathbb A}^{(r)}_{W(\overline{k})}$ by choosing  $l_a:=i_l^*(t_a)\in\overline{\sll}_1$ that lifts $a$.
  
  We have the commutative diagram 
  $$\xymatrix{& {\mathbb A}^{(r)}_{W(\overline{k})}\times_{W(\overline{k})}\se_{\crr,n}\ar[d]\\
  \overline{S}_1\ar@{^(->}[r]\ar@{^(->}[ru]^{(i_l,\theta)}   & \se_{\crr,n}
  }
  $$
 Let $i_{l,\st}: \overline{S}_1\hookrightarrow \se_{l,\st,n}$ be the  PD-envelope of $(i_l,\theta)$ over $\se_{\crr,n}$. We write $\se_{l,\st,n}=\Spec \wh{\A}_{l,\st,n}$ and set 
 $$\wh{\A}_{l,\st}:=\wlim_n\wh{\A}_{l,\st,n},\quad   \wh{\B}^+_{l,\st}:=\wh{\A}_{l,\st}[\tfrac{1}{p}], \quad \se_{l,st}:=\Spf \wh{\A}_{l,\st}.
 $$ We note that  $\wh{\B}^+_{l,\st}$ is a Banach space over $F$ (which makes it easier to handle topologically than $\B^+_{\st}$). Frobenius action is given by $t_a\mapsto t_a^p$ and the  monodromy
operator by $N_l:=t_a\partial_{t_a}$.  We have the exact sequence 
\begin{equation}
\label{Breuil}
0\to \A_{\crr,n}\to \wh{\A}_{l,\st,n}\stackrel{N_l}{\to} \wh{\A}_{l,\st,n}\to  0.
\end{equation}
Every lifting of $l$ to $\lambda\in\sll_{\phi}$ yields a map $s^*_{\lambda}: \wh{\A}_{l,\st,n}\to \A_{\crr,n}, s^*_{\lambda}(t_a):=\lambda_a$, which is compatible with Frobenius action, and an identification $\wh{\A}_{l,\st,n}\stackrel{\sim}{\to}\A_{\crr,n}<t_a\lambda^{-1}_a-1>$.

    Let $\wh{\A}_{l,\st}^{N_l{\text{-nilp}}}$ 
\index{Nnilp@\Nnilp}be the $\A_{\crr}$-subalgebra of $\wh{\A}_{l,\st}$ 
formed by the elements killed by a power of $t_a\partial_{t_a}$. It is the  divided powers polynomial algebra $\A_{\crr}<\log(t_a\lambda_a^{-1})>$. There is a   $\B^+_{\crr}$-linear isomorphism 
$$
\kappa_l: \B^+_{\st}\stackrel{\sim}{\to}\wh{\A}_{l,\st,\Q_p}^{N_l{\text{-nilp}}}
$$
which sends a generator $\log(\lambda)$ of $\B^+_{\st}$, where $\lambda$ lifts $l$, to $-\log(t_a\lambda^{-1}_a)\in \wh{\A}_{l,\st}^{N_l{\text{-nilp}}}$. 
It is compatible  with the action of $\sg_K$, Frobenius, and it identifies $N$ on $ \B^+_{\st}$ with the action of $rt_a\partial_{t_a}$.

    Finally we have maps to $\B^+_{\dr}$. We will normalize them for the rest of the paper at $p$. 
That is, we fix a lift $[\tilde{p}]\in \sll_{\phi}$ of $p$ and define the 
\index{iotap@\iotap}maps: 
 $$
 \iota=\iota_p: \wh{\B}^+_{p,\st}\to \B^+_{\dr},\quad \iota=\iota_p:=\iota_p\kappa_p: \B^+_{\st}\to\B^+_{\dr}.
 $$
 The first map is obtained by sending $t_p$ to $p$;  the second map, by sending $\log([\tilde{p}])$ to $-\log(p/[\tilde{p}])$.  Otherwise saying, we can \index{BST@\BST}set 
 \begin{align*}
&  \wh{\B}^+_{\st}:=\wh{\B}^+_{p,\st}:=\A_{\crr}<t_p[\wt{p}]^{-1}-1>^{\bwedge}[\tfrac{1}{p}],
\quad \B^+_{\st}:=\B^+_{p,\st}:=\B^+_{\crr}[\log([\wt{p}])],\\
& \kappa: \B^+_{\st}\to\wh{\B}^+_{\st}, \,\log([\wt{p}])\mapsto -\log(t_p[\wt{p}]^{-1}),\quad  \iota: \B^+_{\st}\to\B^+_{\dr},\, \log([\wt{p}])\mapsto -\log(p[\wt{p}]^{-1}),\\
&  \iota: \wh{\B}^+_{\st}\to\B^+_{\dr},\, t_p[\wt{p}]^{-1}\mapsto p[\wt{p}]^{-1}.
\end{align*}

\subsubsection{Tensoring with period rings}

    (1)  Let $M$ be a bounded complex of Banach spaces, which are topological $\B^+_{\crr}$-modules. We define the topological tensor product $M\otimes_{\B^+_{\crr}}C$ as the algebraic tensor product equipped with the quotient topology induced from $M$ via the map $\theta$.
     This product tends to be  compatible with strict quasi-isomorphisms: 
     \begin{lemma}\label{trick1}
     Let $M, M^{\prime}$ be bounded complexes of Banach spaces, which are flat $\B^+_{\crr}$-modules. Let $\alpha: M\to M^{\prime}$ be a  strict quasi-isomorphism. Then the induced morphism
     $$
     \alpha\otimes\id: M\otimes_{\B^+_{\crr}}C\to M^{\prime}\otimes_{\B^+_{\crr}}C
     $$
     is a strict quasi-isomorphism as well.  
          \end{lemma}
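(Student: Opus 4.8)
The plan is to realize the topological tensor product $-\otimes_{\B^+_{\crr}}C$ as the underived avatar of a Koszul-type derived functor that visibly respects strict quasi-isomorphisms. Fix a generator $\xi$ of $\kker(\theta\colon\B^+_{\crr}\to C)$; recall that $\xi$ is a non-zero-divisor on $\B^+_{\crr}$ and that $\theta$ induces $\B^+_{\crr}/\xi\B^+_{\crr}\stackrel{\sim}{\to}C$. For a bounded complex $N$ of Banach $\B^+_{\crr}$-modules, write $N\otimes^{\LL}_{\B^+_{\crr}}C$ for the mapping cone $\Cone(\xi\colon N\to N)$ of multiplication by $\xi$, regarded as an object of $\sd(C_{\Q_p})$ through the degreewise Banach structures; it is functorial in $N$, and via the Koszul resolution $\Cone(\xi\colon\B^+_{\crr}\to\B^+_{\crr})\stackrel{\sim}{\to}C$ it computes the derived tensor product.

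First I would observe that $N\mapsto N\otimes^{\LL}_{\B^+_{\crr}}C$ sends strict quasi-isomorphisms to strict quasi-isomorphisms: if $\alpha\colon M\to M'$ is one, then $\Cone(\alpha)$ is strictly exact, hence $\simeq 0$ in $\sd(C_{\Q_p})$; since $\Cone(\alpha\otimes^{\LL}\id)$ identifies with $\Cone(\xi\colon\Cone(\alpha)\to\Cone(\alpha))$, it is likewise $\simeq 0$, so $\alpha\otimes^{\LL}\id$ is a strict quasi-isomorphism.

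Next I would prove that when $M$ is a bounded complex of \emph{flat} Banach $\B^+_{\crr}$-modules the natural map $M\otimes^{\LL}_{\B^+_{\crr}}C\to M\otimes_{\B^+_{\crr}}C$, with the quotient topology on the target, is a strict quasi-isomorphism. This suffices: in the commutative square
$$\xymatrix{
M\otimes^{\LL}_{\B^+_{\crr}}C\ar[r]^-{\alpha\otimes^{\LL}\id}\ar[d]^{\wr}&M'\otimes^{\LL}_{\B^+_{\crr}}C\ar[d]^{\wr}\\
M\otimes_{\B^+_{\crr}}C\ar[r]^-{\alpha\otimes\id}&M'\otimes_{\B^+_{\crr}}C
}$$
the top arrow and the two vertical arrows are strict quasi-isomorphisms, hence so is the bottom one. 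The claim is degreewise, so it reduces to the assertion that, for a flat Banach $\B^+_{\crr}$-module $P$, multiplication by $\xi$ is a strict monomorphism $P\to P$ — that is, $\xi P$ is closed in $P$ and $\xi$ induces a homeomorphism $P\stackrel{\sim}{\to}\xi P$. Indeed, given this: $\xi$ is injective by flatness, so $\Cone(\xi\colon P\to P)\to P/\xi P=P\otimes_{\B^+_{\crr}}C$ (target in degree $0$, with its quotient topology) is a quasi-isomorphism, and it is strict because $\xi$ is a strict monomorphism and $P\to P/\xi P$ a strict epimorphism; one then totalizes over the bounded complex.

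The hard part will be the last assertion, that $\xi$ acts with closed image on an arbitrary flat Banach $\B^+_{\crr}$-module; I would prove it by descending to an integral model. After rescaling the norm, write $P=P^0\ip$ with unit ball $P^0$ a $p$-torsion-free, $p$-adically complete, $\A_{\crr}$-flat module (for the modules appearing in this paper such a model is part of their construction). Since $\xi$ is a non-zero-divisor on $\A_{\crr}$ with $\A_{\crr}/\xi\A_{\crr}\simeq\so_C$ and $\so_C$ is $p$-torsion-free, one checks in turn: $\xi$ is a non-zero-divisor on each $P^0/p^nP^0$, so multiplication by $\xi$ is a homeomorphism of $P^0$ onto its image for the $p$-adic topology and $P^0/\xi P^0$ is $p$-torsion-free; hence $\xi P^0\simeq P^0$ is $p$-adically complete, so closed in $P^0$; and $p$-torsion-freeness of $P^0/\xi P^0$ forces $\xi P\cap P^0=\xi P^0$ (and likewise after rescaling), so $\xi P$ is closed in the Banach space $P$. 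The open mapping theorem then upgrades the continuous bijection $P\to\xi P$ into a homeomorphism, and assembling the three steps gives Lemma~\ref{trick1}.
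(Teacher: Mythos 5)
Your proof is correct, but it is organized differently from the one in the paper. You resolve $C$ by the Koszul complex $[\B^+_{\crr}\xrightarrow{\ \xi\ }\B^+_{\crr}]$, observe that the resulting derived tensor product trivially preserves strict quasi-isomorphisms (the cone of an endomorphism of a strictly acyclic complex is strictly acyclic), and then do the real work in showing that for flat Banach terms the derived and underived tensor products agree, which you reduce to multiplication by $\xi$ being a strict monomorphism with closed image. The paper instead argues directly on the underived side: the mapping fiber $C(\alpha)$ is a bounded acyclic complex of flat $\B^+_{\crr}$-modules, so $C(\alpha)\otimes_{\B^+_{\crr}}C$ is acyclic algebraically, and then the Open Mapping Theorem is invoked once, via the general fact that an acyclic complex of Banach spaces is automatically strictly acyclic. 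Both arguments hinge on the same unstated topological point, namely that $(\kker\theta)M^i$ is closed in $M^i$ so that the quotient is an honest Banach space: the paper simply asserts this ("quotients of Banach spaces by closed subspaces"), whereas you prove it by descending to a $p$-adically complete, $\A_{\crr}$-flat integral model and using $p$-torsion-freeness of $P^0/\xi P^0$ to identify the subspace and intrinsic topologies on $\xi P^0$. That is a genuine strengthening, though it quietly adds a hypothesis (existence of such a model) not literally contained in the statement of the lemma; as you note, it is satisfied by all complexes to which the lemma is applied. In exchange, the paper's route is shorter and needs only one application of the OMT at the level of the whole complex, while yours localizes the topological input precisely in the strictness of $\xi$ on each term and makes explicit that the topological tensor product computes the derived one.
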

     \begin{proof}
     Let $C(\alpha)$ denote the mapping fiber of $\alpha$. It is a bounded complex of Banach spaces. We claim that the complex
     $$
     C(\alpha\otimes\id)=[M\otimes_{\B^+_{\crr}}C\lomapr{ \alpha\otimes\id} M^{\prime}\otimes_{\B^+_{\crr}}C]\simeq C(\alpha)\otimes_{\B^+_{\crr}}C
     $$
     is strictly acyclic. Indeed, since $M, M^{\prime}$ are bounded and built from flat $\B^+_{\crr}$-modules, 
this is  so  algebraically.  
%
Now the terms of $C(\alpha\otimes\id)$ are Banach spaces as quotients of Banach spaces by closed subspaces 
and the Open Mapping Theorem implies that a complex of Banach spaces is strictly acyclic if and only if
it is acyclic (apply the OMT to the isomorphism
${\rm Im}(d_i)\to{\rm Ker}(d_{i+1})$ which are both Banach spaces since $d_i$
and $d_{i+1}$ are continuous).
     \end{proof}
     
      (2) Similarly, for a bounded complex $M$ of Banach spaces, which are topological $\B^+_{\crr}$-modules, we define the topological tensor product $M\otimes_{\B^+_{\crr}}(\B^+_{\crr}/F^i)$, $i\geq 0$,  as the algebraic tensor product equipped with the quotient topology induced from $M$. We have analog of the Lemma \ref{trick1} in this setting. 
      
     We will denote this tensor product by 
     $$
     M\wh{\otimes}^{\LL}_{\B^+_{\crr}}(\B^+_{\crr}/F^i),\quad i\geq 0. 
     $$
     
      (3) For a bounded complex $M$ of Banach spaces, which are topological $\B^+_{\crr}$-modules, 
      we define 
     $$
      M\wh{\otimes}^{\R}_{\B^+_{\crr}}\B^+_{\dr}:=\R\wlim_i(M\wh{\otimes}^{\LL}_{\B^+_{\crr}}(\B^+_{\crr}/F^i)).
     $$
 We have analog of the Lemma \ref{trick1} in this setting as well.

\subsubsection{Isogenies}
We recall now  some terminology from \cite[Sec.\,1.1]{Beco} (see also \cite[Sec.\,2.3]{AMMN}).

Let $\scc$ be  an additive category (or $\infty$-category). A map $f: P\to Q$ is  an {\em isogeny} if there exists $g: Q\to P$ and an integer $N >0$ such that $gf=N\id_P$ and $fg=N\id_Q$ (in the homotopy category.) 
 An object $X\in\scc$ is {\em bounded torsion}  if it is killed by some $N$, i.e., if  $N\id_X=0$ (also in the homotopy category). If  $\scc$ is  an additive category, we denote by $\scc\otimes\Q$ the category 
 with the same objects as $\scc$, with a functor $\scc\to \scc\otimes\Q, X \mapsto X_{\Q}$, and 
 with $\Hom(X_{\Q},Y_{\Q})=\Hom(X,Y)\otimes\Q$.  
Then $\scc\otimes\Q$ is the localization of $\scc$ with respect to isogenies; 
for $X\in\scc$, we have $X_{\Q}=0$, i.e., $X$ is isogenous to $0$, 
if and only if $X$ is a bounded torsion object. If $\scc$ is abelian then $\scc\otimes \Q$ is abelian as well and it equal to the quotient $\scc_{\Q}$ of $\scc$ modulo the Serre subcategory of bounded torsion objects. 

   Let $\scc$ be a stable $\infty$-category equipped with a $t$-structure.  If a  map  is an isogeny then it induces isogenies on all cohomology groups $H^n$, $n\in\Z$, in the heart $\scc^{\heartsuit}$. For maps between bounded object  the opposite is true as well: the map $f: P\to Q$ of  bounded objects is an isogeny, if, for each $n$, the map $H^nP\to H^n Q$  is an isogeny. 
In particular,  $X\in\scc$ is isogeneous to $0$ if each $H^n(X)$ is a bounded torsion group.  
\begin{remark}\label{westin}Consider the tensor product functor in the top row of the diagram:
$$
\xymatrix{
\sd({\rm PD}_K)\ar[r]^{(-)\otimes^{\LL}_K}  \ar[d]_{\can}& \sd(C_K)\\
 \sd({\rm PD}_K)_{\Q}\ar@{-->}[ur]_{(-)_{K}}
}
$$
It factors naturally through the isogeny category; we will denote so obtained functor from $ \sd({\rm PD}_K)_{\Q}$ to $\sd(C_K)$ by $(-)_{K}$. 
\end{remark}
\subsubsection{$\phi$-modules}\label{Fr-modules}
A Frobenius on  an $\so_F$-module is a $\phi_F$-linear endomorphism. Let $R$ be an $\so_F$-algebra equipped with a Frobenius $\phi_R$. For an $R$-module $M$,  a Frobenius on $M$ compatible with the $R$-module structure is an $R$-linear map $\phi_M: \phi_R^*M\to M$. Pairs $(M,\phi_M)$ form an abelian tensor $\Z_p$-category $R_{\phi}$-${\rm Mod}$. Let $\sd_{\phi}(R)$ be its bounded derived $\infty$-category. 
\begin{remark}\label{E1}By \cite[Rem. 7.1.1.16]{Lur1}, we may (and often will) identify the $\infty$-category $\sd_{\phi}(R)$ with the $\infty$-category of left modules from $\sd(R)$ over the $E_1$-ring $R\{\phi\}$ defined as the abelian group 
$R[\phi]$  with multiplication rules  $\phi a=\phi(a)\phi,$ for $ a\in R$. We will do the same for other categories of left modules over associative rings that appear in this paper. This sometimes may entail unbounding the derived category to the left but this will not cause problems. 
\end{remark}
  Consider the bounded derived $\infty$-categories $\sd_{\phi}(R), \sd_{\phi}(R)_{\Q}$ of bounded  complexes of $\phi$-modules over $R$. Then $\sd_{\phi}(R)_{\Q}$ is the quotient of $\sd_{\phi}(R)$ modulo the full subcategory of complexes with bounded torsion cohomology.

We need to discuss projective resolutions.  For an $R$-module $M$, set $M_{\phi}:=\oplus_{n\geq 0}\phi^{n*}_RM$ and equip it with  the evident Frobenius.  The functor $R$-${\rm Mod}\to R_{\phi}$-${\rm Mod}$, $M\mapsto M_{\phi}$, is left adjoint to the forgetful functor $R_{\phi}$-${\rm Mod}\to R$-${\rm Mod}, $ $(M,\phi_M)\mapsto M$. If follows that, for a projective $R$-module $M$, the $\phi$-module $M_{\phi}$ is a projective object of 
$R_{\phi}$-${\rm Mod}$. 

 For every $M=(M,\phi_M)\in R_{\phi}$-${\rm Mod}$, there is a natural short exact sequence
 $$
 0\to (\phi^*_RM)_{\phi}\lomapr{\delta} M_{\phi}\lomapr{\epsilon} M \to 0
 $$
 in $R_{\phi}$-${\rm Mod}$. The maps  $\epsilon$ and $\delta$ are  induced, respectively,  by adjunction from $\id_M$ and the map $\phi^*_RM\to M_{\phi}$ that sends $r\otimes m$ to $\phi_M(r\otimes m)-r\otimes m\in M\oplus \phi^*_RM\subset M_{\phi}$. Set $\wt{M}:=\Cone(\delta)$, so we have a resolution $\epsilon: \wt{M}\to M$. If $M$ is a projective $R$-module, this is a projective resolution in $R_{\phi}$-${\rm Mod}$. 
  
  We will need a version of the above constructions for derived $p$-complete  modules. Recall \cite[4.1]{BMS2} that, for a ring $R$ over $\Z_p$, $M\in\sd(R)$ is called $p$-completely flat if $M\otimes^{\LL}_RR/p\in\sd(R/p)$ is concentrated in degree $0$, where it is a flat $R/p$-module. If $R$ has 
 bounded $p^{\infty}$-torsion and an $R$-module $M$ is derived $p$-complete and $p$-completely flat then $M$  is a classically $p$-complete
$R$-module concentrated in degree $0$, with bounded $p^{\infty}$-torsion, such that $M/p^nM$ is flat over
$R/p^nR$, for all $n\geq 1$ (see \cite[Lemma 4.7]{BMS2}). And, conversely, if $M$  is a classically $p$-complete
$R$-module concentrated in degree $0$, with bounded $p^{\infty}$-torsion, such that $M/p^nM$ is flat over
$R/p^nR$, for all $n\geq 1$, then $M$ is $p$-completely flat.

 Now we specialize to $R=W(k)$. Then, a $W(k)$-module $M$ is derived $p$-complete and $p$-completely flat if and only if  $M$  is a classically $p$-complete
$W(k)$-module concentrated in degree $0$,  such that $M/p^nM$ is flat over $W_n(k)$, for all $n\geq 1$ (equivalently $M$ is $p$-torsion-free and   classically $p$-complete over 
$W(k)$).   We note that such a module $M$ can always be written as the $p$-adic completion of a free $W(k)$-module $M_0$ hence it is a projective object in the category of derived $p$-complete modules\footnote{We use here that, if a $W(k)$-module $N$ is derived $p$-complete then $\Hom_{W(k)}(M,N)\stackrel{\sim}{\to} \Hom_{W(k)}(M_0,N)$.}. 

   It follows that the above algebraic construction goes through once we derived  $p$-adically complete the objects. That is, if we denote by $\wh{\sd}(W(k))$ the full $\infty$-subcategory of $\sd(W(k))$ spanned by complexes built from derived $p$-complete modules\footnote{By \cite[Tag 09IU]{Sta}, this is the same as the full subcategory of $\sd(W(k))$ od derived $p$-complete complexes.} and, similarly,  if we denote by $\wh{\sd}_{\phi}(W(k))$ the full $\infty$-subcategory of $\sd_{\phi}(W(k))$ spanned by complexes built from derived $p$-complete $\phi$-modules
then, for a $\phi$-modules $M=M[0]\in \wh{\sd}_{\phi}(W(k))$, which is $p$-completely flat, the $p$-adic completion $\widehat{M}_{\phi}\in \wh{\sd}_{\phi}(W(k))$ of $M_{\phi}$ is a projective derived $p$-complete $\phi$-module and we have a projective resolution of derived $p$-complete $\phi$-modules
\begin{equation}\label{referee2}
 0\to \wh{(\phi^*_R{M})_{\phi}}\lomapr{\delta} \wh{M_{\phi}}\lomapr{\epsilon} M \to 0.
\end{equation}
We set $\wt{M}:={\rm Cone}(\delta)$. We note that, for a complex $M^{\jcdot}$ of $p$-completely flat, derived $p$-complete $\phi$-modules over $W(k)$,  the functor $M^{\jcdot}\mapsto \wt{M}^{\jcdot}$ preserves quasi-isomorphisms (since so does the functor $M^{\jcdot}\mapsto \wh{M_{\phi}}^{\jcdot}$ because 
$\wh{M_{\phi}}^{\jcdot}$ is equal to $\wh{M_{\phi}^{\jcdot}}$).

  We will often use the fact that the canonical functor $\sd^c_{\phi}(W(k))\to \wh{\sd}_{\phi}(W(k))$, where $\sd^c_{\phi}(W(k))$ is the full subcategory of $\wh{\sd}_{\phi}(W(k))$ spanned by complexes built from $p$-torsion free modules is an equivalence. Indeed, 
if  $M$ is a derived $p$-complete $W(k)$-module then we can find an exact sequence (see \cite[Tag 09AT]{Sta})
$$
0\to M_1\to M_0\to M\to 0,
$$
where $M_0, M_1$ are derived $p$-complete and torsion free. Moreover, if $M$ is a $\phi$-module, we can lift its Frobenius to this exact sequence. This and the existence of the projective resolutions \eqref{referee2} yield the desired equivalence of $\infty$-categories. 

\vskip.2cm
  Let $R:=r_K^{\rm PD}$.
Let  $P=(P,\phi_P)\in \wh{\sd}_{\phi}(R),$ $Q=(Q,\phi_Q)\in \sd^c_{\phi}(W(k))$. We assume that the complex $P$ is built from $p$-completely flat modules. Denote by $\overline{P}$ the  cofiber of  $
P\to P\wh{\otimes}^{\LL}_RW(k)$ viewed as an object of  $\sd^c_{\phi}(W(k))$. 
 \begin{lemma}\label{Newton1}If the Frobenius on $Q_\Q$ is invertible\footnote{This means that Frobenius map $\phi_{Q_{\Q}}:\phi^*_{W(k)}Q_{\Q}\to Q_{\Q}$ is a quasi-isomorphism in $\sd(W(k))_{\Q}$.} 
 then $$\R\Hom(Q,\overline{P})_\Q=\R\Hom(Q_\Q,\overline{P}_\Q)=0,$$
 where the  $\R\Hom$ is taken in $\sd^c_{\phi}(W(k))_\Q$.
 \end{lemma}
 \begin{proof}We claim that we have the short exact sequence  of complexes of  $\phi$-modules over $W(k)$ ($p$-torsion-free and $p$-complete)
 \begin{equation}
 \label{seq11}
 0\to IP\to P \to P\wh{\otimes}^{\LL}_RW(k)\to 0,
 \end{equation}
 where $I\subset R$ is the kernel of the projection $R\to W(k)$ and we set
  $IP:=I\wh{\otimes}^{\rm L}_RP\stackrel{\sim}{\to} I\wh{\otimes}_RP$. Indeed, because $P_n$ is a complex of flat  $R_n$-modules we have a compatible family of exact sequences
 $$
 0\to P_n\otimes_{R}I_n\to P\otimes_RR_n\to P_n\otimes_RW_n(k)\to 0
 $$
 Passing to the  limit we get the short exact sequence
 $$
0\to  \wlim_n(P_n\otimes_{R}I_n)\to \wlim_n(P\otimes_RR_n)\to \wlim_n(P_n\otimes_RW_n(k))\to 0
 $$
 Since $P$ is derived $p$-complete and its terms are $p$-completely flat modules, the natural morphism $P\to  \wlim_n(P\otimes_RR_n)$ is a quasi-isomorphism (in fact, an isomorphism) \cite[Tag 091Z]{Sta} and the above exact sequence  yields the exact sequence (\ref{seq11}). 
 
      From the exact sequence (\ref{seq11}) we get a distinguished triangle
 $$
 \R\Hom(Q,IP)\to \R\Hom(Q,P)\to \R\Hom(Q,P\wh{\otimes}^{\LL}_RW(k)),
 $$
 where
 $\R\Hom$ is computed in $\sd^c_{\phi}(W(k))$.   To prove the lemma, it suffices to show that 
  $\R\Hom(Q,IP)_\Q=0$.

  Assume, for a moment,   that $Q$ is  concentrated in degree $0$. 
    For  any  derived $p$-complete $\phi$-module $M$ over $W(k)$, we can  compute $\R\Hom(Q,M)$ using the 
 projective resolution $\wt{Q}$ of $Q$ from \eqref{referee2}.
     We get a two-term complex $C(Q,M)$ with 
    \begin{align*}C^0(Q,M) & =\Hom(\wh{Q_{\phi}},M)\simeq \Hom_{W(k)_{\phi}}({Q_{\phi}},M)\simeq\Hom_{W(k)}(Q,M),\\
     C^1(Q,M) & =\Hom(\wh{\phi^*(Q)_{\phi}},M)\simeq \Hom_{W(k)_{\phi}}({\phi^*(Q)_{\phi}},M)\simeq \Hom_{W(k)}(\phi^*(Q),M),
     \end{align*}
      and the differential $d=d_1-d_2: C^0(Q,M)\to C^1(Q,M)$, where $d_1(A)=A\phi_Q$, $d_2(A)=\phi_M\phi^*(A)$.
 Let $C_{*}(Q,M)$ be the complex with the same terms as $C(Q,M)$ but the differential simply $d_1$.
 Since we assumed that the Frobenius action on $Q_{\Q}$ is invertible, the complex $C_*(Q,M)_\Q$ is acyclic.

    We go back now to general $Q$. To show that $\R\Hom(Q,IP)_\Q=0$ we     we may  assume  that $P$ is  concentrated in degree $0$.  Indeed, since $P$ is bounded, we can detach one  term of the complex after another using the fact that $IP=I\wh{\otimes}^{\rm L}_RP$. We will denote by $C(Q,M)$, for $M$ as above,  the total complex of the double complex obtained by applying $C(-,M)$ to all the terms of $Q$.    We will  prove that $C(Q,IP)_\Q$ is acyclic by defining a finite filtration on $IP$, by derived $p$-complete $W(k)$-submodules,  such that $C(Q,\gr^j IP)\simeq C_*(Q,\gr^j IP)$.  (Note that then the gradings $\gr^i IP$ are also derived $p$-complete and, since $Q$ is built from projective modules,  the functor $C(Q,-)$ is exact.)  The latter complex is acyclic by the argument presented above. 
    
   Let $I^{[j]}$, $j\geq 1$, be the ideal of $R$ formed by series $\sum a_it^{[i]}$ with $a_0=\ldots=a_{j-1}=0$. We have  $I=I^{[1]}$.   We set
  $$I^{[j]}P:=I^{[j]}\wh{\otimes}^{\rm L}_RP\stackrel{\sim}{\to} I^{[j]}\wh{\otimes}_RP. $$
  Since $R/I^{[j]}\simeq W(k)^{\oplus j-1}$, $I^{[j]}P\hookrightarrow P$ (argue as in the proof of \eqref{seq11} above), and it is a derived $p$-complete module. 
    Since $\phi(I^{[j]})\subset I^{[pj]}$, one has $C(Q,I^{[j]}P/I^{[j+1]}P)=C_*(Q,I^{[j]}P/I^{[j+1]}P)$. It remains to  show that $C(Q,I^{[n]}P)$ is quasi-isomorphic to $C_*(Q,I^{[n]}P)$ for $n$  sufficiently large (then the soughed-after finite filtration is $I^{[j]}P$, $j\leq n.$)
  
  By assumption, for $m$ sufficiently large, there is $\psi:Q\to \phi^*Q$ such that $\phi_Q\psi=p^m\id_Q, \psi\phi_Q=p^m\id_{\phi^*(Q)}$. For $n$ sufficiently large, we have  $\phi(I^{[n]})\subset p^{m+1}I^{[pn]}$. Hence $d_2$ on $C(Q,I^{[n]}P)$ is divisible by $p^{m+1}$. Set $f:=\psi^{\tau}(p^{-m-1}d_2)\in\End(C^0(Q,I^{[n]}P))$; then $d_2=pd_1f$, i.e., $d=d_1(1-pf)$. We used here that, since $Q$ is built from   projective modules and $I^{[n]}P$ is derived $p$-complete, we have  
  \begin{align*}C^0(Q,I^{[n]}P)=\Hom_{W(k)}(Q,I^{[n]}P) & \simeq\R\Hom_{W(k)}(Q,I^{[n]}P),\\
  C^1(Q,I^{[n]}P)=\Hom_{W(k)}(\phi^*(Q),I^{[n]}P) & \simeq\R\Hom_{W(k)}(\phi^*(Q),I^{[n]}P).
  \end{align*} Moreover, since $\R\Hom_{W(k)}(Q,I^{[n]}P)$ is derived $p$-complete \cite[Tag 0A6E]{Sta}, it follows that $(1-pf)$ is a quasi-isomorphism (use derived Nakayama Lemma \cite[Tag 0G1U]{Sta}). This yields the quasi-isomorphism $C(Q,I^{[n]}P)\stackrel{\sim}{\to} C_*(Q,I^{[n]}P)$, as wanted. 
 \end{proof}

\subsection{Hyodo-Kato rigidity} Now we pass to the  main constructions. 
\subsubsection{The Hyodo-Kato section}  In this section we will prove the existence of the Hyodo-Kato section in the derived $\infty$-category. We follow faithfully the arguments of Beilinson from \cite[Sec.\,1.14]{Bv2} with the following modifications:
\begin{enumerate}
\item Beilinson works in the setting of proper log-smooth log-schemes hence all of his cohomology complexes are perfect; we replace them with a weaker condition of derived $p$-complete and $p$-completely flat,
\item to prove that the Hyodo-Kato section (when linearized) is a quasi-isomorphism Beilinson uses finiteness of Hyodo-Kato cohomology; we replace his argument with the original one due to Hyodo-Kato \cite{HK}.
\end{enumerate}
Since the argument of Beilinson can only be found in a preliminary version of a published paper, for the benefit of the reader and the authors, we supply all the details.

  Let $f: X_1\to S_1$ be a log-smooth map with  Cartier type reduction, with $X_1$ fine. 
\index{Xn@\Xn}Let $f^0: X_1^0\to S^0_1$ be its pullback to $S^0_1$.
 Let $R:=r_K^{\rm PD}$.  Recall the definition and basic properties of the arithmetic Hyodo-Kato cohomology and the associated $r^{\rm PD}$ cohomology (in the terminology\footnote{The notation we use here is a bit different 
than the one we used in \cite{CN3}.
 This is because we have adopted here Beilinson's approach to the Hyodo-Kato morphism and with it his notation.
 The advantage of Beilinson's notation is that it keeps better track of the underlying data.} 
\index{rgcris@\rgcris}\index{rghk@\rghk}from \cite[4.2]{CN3}):
\begin{align}
\label{fifa1}
 & \rg_{\crr}(X_1/R)_{l,n}:=\rg_{\crr}(X_1/(S_1,E_n)),\quad i_l: S_1\hookrightarrow E_1,\quad \mbox{in } \sd_{\phi}(R_n);\\
& \rg_{\hk}(X^0_1)_n:=\rg_{\crr}(X_1^0/(S^0_1,S^0_n)),\quad \mbox{in } \sd_{\phi}(W_n(k));\notag\\
& \rg_{\crr}(X_1/R)_l:=\lim_n\rg_{\crr}(X_1/R)_{l,n},\quad \mbox{in } \wh{\sd}_{\phi}(R); \notag\\
&  \rg_{\hk}(X^0_1):=\lim_n\rg_{\hk}(X^0_1)_n, \quad \mbox{in } \sd^c_{\phi}(W(k)). \notag
\end{align}
The embedding $i:X^0_1\hookrightarrow X_1$  over $i_{l,n}: (S^0_1,S^0_n)\hookrightarrow (S_1,E_n)$ yields compatible morphisms $i^*_{l,n}: \rg_{\crr}(X_1/R)_{l,n}\to \rg_{\hk}(X^0_1)_n$,
$i^*_{l}: \rg_{\crr}(X_1/R)_{l}\to \rg_{\hk}(X^0_1)$ in $\sd_{\phi}(W_n(k))$ and $\sd^c_{\phi}(W(k))$, respectively. These constructions are functorial in $X_1$: this is standard (see \cite[1.6]{Bv2} and use the functorial PD-envelopes from \cite[1.4]{Bv2}).

   Moreover,
\begin{enumerate}
\item  $\rg_{\hk}(X^0_1)$ is a complex of derived $p$-complete, $p$-completely flat modules over $W(k)$  and 
\begin{equation}
\label{leg1}
\rg_{\hk}(X^0_1)_n\simeq \rg_{\hk}(X^0_1)\wh{\otimes}^{\LL}_{W(k)}W_n(k),\quad \mbox{in } \sd_{\phi}(W_n(k));
\end{equation}
\item  $\rg_{\crr}(X_1/R)_{l}$ is a complex of derived $p$-complete, $p$-completely flat modules over $R$ and 
\begin{equation}
\label{leg2}
\rg_{\crr}(X_1/R)_{l,n}\simeq \rg_{\crr}(X_1/R)_{l}\wh{\otimes}^{\LL}_{R}R_n,\quad \mbox{in } \sd_{\phi}(R_n);
\end{equation}
\item we have a quasi-isomorphism 
\begin{equation}
\label{leg3}
i^*_l: \rg_{\crr}(X_1/R)_l\wh{\otimes}^{\LL}_RW(k)\stackrel{\sim}{\to}\rg_{\hk}(X^0_1),\quad \mbox{in } \sd^c_{\phi}(W(k)).
\end{equation}
\end{enumerate}

Now we present the key construction in the Hyodo-Kato theory.
\begin{theorem}\label{section1}
\begin{enumerate}
\item The Frobenius action on $\rg_{\hk}(X^0_1)_\Q$ is invertible in $\sd^c(W(k))_{\Q}$.
\item The map $i^*_{l}: \rg_{\crr}(X_1/R)_{l,\Q}\to \rg_{\hk}(X^0_1)_\Q$ admits a unique natural $W(k)$-linear section $\iota_{l}$ in  $\sd^c_{\phi}(W(k))_{\Q}$. Its $R$-linear extension   is a 
\index{ilst@\ilst}quasi-isomorphism in $\wh{\sd}_{\phi}(R)_{\Q}$:
$$
\iota_{l}: (R\wh{\otimes}^{\LL}_{W(k)}\rg_{\hk}(X^0_1))_\Q\stackrel{\sim}{\to} \rg_{\crr}(X_1/R)_{l,\Q}.
$$
\end{enumerate}
\end{theorem}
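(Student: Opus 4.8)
The plan is to follow Beilinson's strategy from \cite[1.14]{Bv2}, adapted to the weaker hypotheses (derived $p$-complete, $p$-completely flat in place of perfect). For part (1), the invertibility of Frobenius on $\rg_{\hk}(X^0_1)_\Q$ is the classical Hyodo-Kato result \cite{HK}: one reduces modulo $p$, where the log-crystalline cohomology of $X_1^0/k^0$ computes (via the Cartier-type hypothesis) a de Rham-Witt complex on which Frobenius is known to be a quasi-isogeny by Hyodo-Kato's original argument (the relative Frobenius is an isomorphism up to a bounded power of $p$, controlled by the dimension), then one lifts this back using derived $p$-completeness and derived Nakayama. I expect this step to be essentially a citation plus a bookkeeping check that the finiteness assumptions in \cite{HK} can be dropped because one only needs a quasi-isogeny statement, which is stable under the limit $\holim_n$.

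For part (2), the core is the construction of the section $\iota_l$. The idea is the following obstruction-theoretic argument. Consider the exact sequence (\ref{seq11}) applied to $P = \rg_{\crr}(X_1/R)_l$, giving a distinguished triangle $IP \to P \to \rg_{\hk}(X^0_1)$ in $\sd_\phi(W(k))$ after identifying $P\wh\otimes^{\LL}_R W(k)\simeq \rg_{\hk}(X^0_1)$ via property (3); here $\wh P \simeq IP[1]$ in the notation preceding Lemma~\ref{Newton1}. A $W(k)$-linear Frobenius-equivariant section of $i^*_l$, up to the relevant derived subtleties, amounts to a splitting of this triangle, and the obstruction to such a splitting — as well as the ambiguity in the choice — lives in $\R\Hom_{\sd_\phi(W(k))}(\rg_{\hk}(X^0_1), \wh P)$. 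By part (1) the Frobenius on $Q := \rg_{\hk}(X^0_1)_\Q$ is invertible, so Lemma~\ref{Newton1} (with this $P$ and $Q$) gives precisely $\R\Hom(Q,\wh P)_\Q = 0$. Hence after $\otimes\Q$ there is a unique section $\iota_l: \rg_{\hk}(X^0_1)_\Q \to \rg_{\crr}(X_1/R)_{l,\Q}$ commuting with Frobenius; naturality follows from uniqueness (any two functorial choices agree because the $\Hom$ group vanishes). I should be a little careful about whether "section in $\sd_\phi(W(k))_\Q$" is literally a splitting of the triangle or whether one needs to work with a specific model — here is where Beilinson's more careful data-keeping and the $\infty$-categorical refinement matter, and I would phrase the vanishing of Lemma~\ref{Newton1} as the vanishing of the space of such sections minus one point, i.e. the section is unique in the homotopical sense.

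Finally, once $\iota_l$ exists, its $R$-linear extension $(R\wh\otimes^{\LL}_{W(k)}\rg_{\hk}(X^0_1))_\Q \to \rg_{\crr}(X_1/R)_{l,\Q}$ is checked to be a strict quasi-isomorphism by reduction modulo the ideal $I$, or rather by using the filtration $I^{(j)}$ from the proof of Lemma~\ref{Newton1}: base-changing along $R \to W(k)$ recovers $i^*_l$ composed with $\iota_l$, which is the identity on $\rg_{\hk}(X^0_1)_\Q$ by construction, and since both source and target are derived $p$-complete and $p$-completely flat over $R$ (properties (1)–(2) above), derived Nakayama over $R$ upgrades this to a strict quasi-isomorphism. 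The passage from quasi-isomorphism to \emph{strict} quasi-isomorphism uses that the objects live in the relevant category of (pro-discrete, or after $\otimes K$ convex) topological modules where, as in Lemma~\ref{trick1} and \cite[Prop.\,2.6]{CDN3}, the derived tensor products compute the naive ones. The main obstacle I anticipate is not any single step but the careful reconciliation of the $\infty$-categorical section with Beilinson's classical-derived-category construction, together with keeping the topological (strictness) bookkeeping consistent throughout the limit over $n$; the actual homological input is cleanly packaged in Lemma~\ref{Newton1} and part (1).
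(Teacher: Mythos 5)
Your treatment of part (1) (citation of Hyodo--Kato) and of the existence/uniqueness of the section in part (2) matches the paper's proof: the paper also takes $P=\rg_{\crr}(X_1/R)_l$ in Lemma~\ref{Newton1} and reads off the unique Frobenius-compatible right inverse of $P_\Q\to (P\wh{\otimes}^{\LL}_RW(k))_\Q$ from the vanishing $\R\Hom(Q,\wh{P})_\Q=0$, exactly via the obstruction-theoretic mechanism you describe (with $\wh{P}\simeq IP[1]$).

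There is, however, a genuine gap in your last step, where you claim that the $R$-linearization of $\iota_l$ is a quasi-isomorphism ``by reduction modulo $I$ \dots and derived Nakayama over $R$.'' Derived Nakayama along $I=\kker(R\to W(k))$ requires the cone of $\iota_l$ to be derived $I$-complete, and the hypotheses you invoke --- derived $p$-completeness and $p$-complete flatness --- say nothing about completeness along $I$. In fact $R=r_K^{\rm PD}$ is a $p$-adically completed divided-power algebra, and it is \emph{not} complete for the filtration $I^{(j)}$: the limit $\wlim_j R/I^{(j)}$ is the full formal power series ring, strictly larger than $R$. So knowing that the base change of the cone along $R\to W(k)$ vanishes does not force the cone to vanish; the associated-graded comparison you sketch cannot be assembled without a completeness or convergence input that is absent here. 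This is not an incidental bookkeeping issue: the paper is explicit (in the preamble to this section) that Beilinson proves this step using finiteness of Hyodo--Kato cohomology in the proper case, and that in the present local setting the authors instead invoke the original Hyodo--Kato argument, which establishes the quasi-isomorphism through the explicit log-de~Rham--Witt presentation of $\rg_{\hk}(X_1^0)$ and its Frobenius-contraction properties (the same kind of input that makes the tail estimate $\phi(I^{(n)})\subset p^{m+1}I^{(pn)}$ work in the proof of Lemma~\ref{Newton1}). A purely formal Nakayama argument would make the Hyodo--Kato isomorphism a triviality, which it is not; you need to either import the de~Rham--Witt computation of \cite{HK} or supply a contraction argument of that type for this step.
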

\begin{proof}
Claim (1) is proved in \cite[2.24]{HK}. In fact, Hyodo-Kato prove more: they show that there exists a $p^d$-inverse of Frobenius, where $d=\dim X^0_1$. 

For the existence part of claim (2),  recall that Beilinson \cite[1.14]{Bv2} proved it in the case $X_1$ is proper. We will adapt his argument to our  (general) local situation.

  Take $P=\rg_{\crr}(X_1/R)_l$ in Lemma \ref{Newton1}. By claim (1)  the Frobenius action on $(P\wh{\otimes}_RW(k))_\Q$ is invertible.  Moreover,  $P$ is derived $p$-complete and  a complex of completely $p$-adically 
  flat $R$-modules. Lemma \ref{Newton1} implies that the morphism $P_\Q\to (P\wh{\otimes}^{\LL}_RW(k))_\Q$ in $\sd^c_{\phi}(W(k))_{\Q}$ admits a unique right inverse $\iota_l$, as wanted.  We will now show the functoriality of $\iota_l$ with respect to the maps $f: X_1\to S_1, i_l: S_1\to E$. Consider a commutative diagram of such maps
  $$
  \xymatrix{
  X^{\prime}_1\ar[d]^{\pi_X}\ar[r]^{f^{\prime}} & S_1^{\prime}\ar[d]^{\pi_S} \ar@{^(->}[r]^{i_{l^{\prime}}} & E^{\prime}\ar[d]^{\pi_E}\\
   X_1\ar[r]^{f} & S_1 \ar@{^(->}[r]^{i_l} & E
  }
  $$
It yields  the commutative diagram in $\sd^c_{\phi}(W(k))$
   $$
  \xymatrix{
  \rg_{\crr}(X^{\prime}_1/R)_{l^{\prime}} \ar[r]^{i^*_{l^{\prime}}} & \rg_{\hk}(X^{\prime,0}_1)\\
  \rg_{\crr}(X_1/R)_l\ar[u]^{\pi_{\crr}^*} \ar[r]^{i^*_{l}} & \rg_{\hk}(X^0_1)\ar[u]^{\pi_{\hk}^*}
  }
  $$
  and the induced  diagram in $\sd^c_{\phi}(W(k))_{\Q}$
  $$
  \xymatrix{
  \rg_{\crr}(X^{\prime}_1/R)_{l^{\prime},\Q}  & \rg_{\hk}(X^{\prime,0}_1)_{\Q}\ar[l]-_{\iota^*_{l^{\prime}}}\\
  \rg_{\crr}(X_1/R)_{l,\Q}\ar[u]^{\pi_{\crr}^*}  & \rg_{\hk}(X^0_1)_{\Q}\ar[u]^{\pi_{\hk}^*}\ar[l]_{\iota^*_{l}}\ar@/_9pt/[lu]^-{f_{l^{\prime}}}\ar@/^10pt/[lu]^-{f_{l}},
  }
  $$
  where $f_l=\pi_{\crr}^*\iota^*_{l}$ and $f_{l^{\prime}}=\iota^*_{l}\pi_{\hk}^*$.  Hence the left and right corner triangles in the last diagram commute.  It suffice thus to show that $f_l=f_{l^{\prime}}$. 
  But we have 
  \begin{align*}
 i^*_{l^{\prime}} f_l  & =  i^*_{l^{\prime}}\pi_{\crr}^*\iota^*_{l}=\pi_{\hk}^* i^*_{l}\iota^*_{l}=\pi_{\hk}^*,\\
  i^*_{l^{\prime}} f_{l ^{\prime}} & =  i^*_{l^{\prime}}\iota^*_{l^{\prime}}\pi_{\hk}^*=\pi_{\hk}^*.
  \end{align*}
  Hence, if $\overline{P}$ denotes the cofiber of the map $ i^*_{l^{\prime}}$ is suffices to show that $\R\Hom_{W(k)_{\phi}}( \rg_{\hk}(X^0_1),\overline{P})_{\Q}=0$. But this can be done by the same arguments as in the proof of Lemma \ref{Newton1}.
  
  Consider its $R$-linearization
 $$
 \iota_l:(R\wh{\otimes}^{\LL}_{W(k)}P\wh{\otimes}^{\LL}_RW(k))_\Q\to P_\Q.
 $$
We need to show that this is a quasi-isomorphism.  But this was done by  Hyodo-Kato \cite[Lemma 4.16, Prop. 4.8]{HK} using the explicit de Rham-Witt presentation of the Hyodo-Kato complex. We are done.
\end{proof}
\subsubsection{The Hyodo-Kato morphism} Now, as usual, the Hyodo-Kato morphism can be obtained from the section constructed in Theorem \ref{section1}. Let $X$ be a fine logarithmic formal scheme log-smooth over $S$. Assume that $X_1$ has  Cartier type reduction over $S_1$. Let $\varpi$  be a uniformizing parameter of $\so_K$. 
\begin{corollary}
There is a natural  quasi-isomorphism in $\sd^c(\so_K)_{\Q}$
$$
i_{\varpi}: \quad (\rg_{\hk}(X^0_1)\wh{\otimes}^{\LL}_{W(k)}\so_K)_{\Q}\stackrel{\sim}{\to}\rg_{\dr}(X)_{\Q}. 
$$
\end{corollary}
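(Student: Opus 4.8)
The plan is to deduce this corollary from Theorem \ref{section1}(2) by base-changing the Hyodo-Kato section along a morphism $R=r_K^{\rm PD}\to \so_K$ determined by the choice of uniformizer $\varpi$. First I would recall that a uniformizer $\varpi$ of $\so_K$ gives an element $a=\overline{\varpi}\in ({\mathfrak m}_K/{\mathfrak m}_K^2)\setminus\{0\}$, hence (by the description of $r_K^{\rm PD}$ in \S\ref{period-rings}) a ring homomorphism $r_K^{\rm PD}\to \so_K$ sending $t_a\mapsto \varpi$ and the divided powers $t_a^{ne}/n!\mapsto \varpi^{ne}/n!$ (which makes sense since $\varpi^e$ is, up to a unit, $p$, so these elements land in $\so_K$ after the obvious PD-structure computation). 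Equivalently, this is the map classifying the exact closed embedding $S_1=\Spf\so_{K,1}^\times\hookrightarrow E_1$ given by $i_l$ for the choice $l_a=\varpi\bmod p$, composed appropriately; in Beilinson's language it is the PD-thickening $i_\varpi: S_1\hookrightarrow E_1$. The key point is that under this map $\rg_{\crr}(X_1/R)_l$ pulled back to $\so_K$ computes de Rham cohomology, i.e. $\rg_{\crr}(X_1/R)_l\otimes^{\LL}_R\so_K\simeq \rg_{\dr}(X/\so_K)=:\rg_{\dr}(X)$. This is the standard comparison between crystalline cohomology relative to a PD-base and de Rham cohomology of a lift, applied to the semistable formal scheme $X$ over $S$; locally one interprets both sides as derived de Rham cohomology (the same trick footnoted in the introduction for $\rg_{\crr}(X)/F^r\simeq\rg_{\dr}(X/\B^+_{\dr})/F^r$).

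Granting that, the corollary is immediate: apply $(-)\wh{\otimes}^{\LL}_R\so_K$ to the strict quasi-isomorphism
$$
\iota_l:(R\wh{\otimes}^{\LL}_{W(k)}\rg_{\hk}(X^0_1))_\Q\stackrel{\sim}{\to}\rg_{\crr}(X_1/R)_{l,\Q}
$$
of Theorem \ref{section1}(2). The left-hand side becomes $(\so_K\wh{\otimes}^{\LL}_{W(k)}\rg_{\hk}(X^0_1))_\Q$ (using associativity of derived completed tensor products and $R\wh{\otimes}^{\LL}_R\so_K\simeq\so_K$), the right-hand side becomes $\rg_{\dr}(X)_\Q$ by the previous paragraph, and base change preserves strict quasi-isomorphisms — here one can invoke the $p$-completely flat, derived $p$-complete hypotheses together with Lemma \ref{trick1}-type arguments, or simply note that $(-)\wh{\otimes}^{\LL}_R\so_K$ is an exact functor on the derived $\infty$-category so it sends the strictly acyclic mapping fiber of $\iota_l$ to a strictly acyclic complex. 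Define $i_\varpi$ to be this composite. Naturality in $X$ follows from naturality of $\iota_l$ (Theorem \ref{section1}(2) gives a \emph{unique natural} section) together with functoriality of the crystalline-to-de Rham comparison.

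The main obstacle, and the point that requires care rather than being routine, is the identification $\rg_{\crr}(X_1/R)_l\wh{\otimes}^{\LL}_R\so_K\simeq\rg_{\dr}(X)_\Q$ with its topology: one must check that the base-change map $R\to\so_K$ is compatible with the PD-structures and the log-structures (the log-structure on $E$ is generated by $t_a$, which must map to the log-structure on $S$ generated by $\varpi$), and that the resulting comparison is not merely an abstract quasi-isomorphism but a \emph{strict} one in $\sd(C_K)$ — i.e. that the de Rham cohomology of the formal scheme $X$ carries the expected (separated, in the affinoid-by-affinoid sense) topology and that no completion issues intervene when passing from the $R_n$-level statements $\rg_{\crr}(X_1/R)_{l,n}\simeq\rg_{\crr}(X_1/R)_l\otimes^{\LL}_RR_n$ to the limit. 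This is where the derived $p$-completeness and $p$-complete flatness of $\rg_{\crr}(X_1/R)_l$ (listed as property (2) before Theorem \ref{section1}) do the work, via \cite[Tag 091N]{Sta}, exactly as in the proof of Lemma \ref{Newton1}. I expect that once these bookkeeping points are set up the argument is short; the substance has all been packaged into Theorem \ref{section1}.
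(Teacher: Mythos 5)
Your proposal is correct and follows essentially the same route as the paper: one takes the embedding $i_{\varpi}:S\hookrightarrow E$ determined by $l=\varpi\bmod p\mathfrak{m}_K$ (equivalently the map $R\to\so_K$, $t_a\mapsto\varpi$), applies $(-)\wh{\otimes}^{\LL}_R\so_K$ to the strict quasi-isomorphism of Theorem \ref{section1}(2), and identifies $\rg_{\crr}(X_1/R)_l\wh{\otimes}^{\LL}_R\so_K\simeq\rg_{\crr}(X_1/\so_K^{\times})\simeq\rg_{\dr}(X)$ by base change. The additional bookkeeping you flag (strictness, $p$-complete flatness) is consistent with how the paper handles these points elsewhere.
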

\begin{proof} 
Take  $E$ with $l:=\varpi \mod p\mathfrak{m}_K$. This yields an embedding $i_{\pi}: S\hookrightarrow E, i_{\varpi}^*(t_a)=\varpi, a:=\varpi \mod {\mathfrak m}^2_K$. 
We start with the  quasi-isomorphism from Theorem \ref{section1}
\begin{align*}
 \iota_l: (\rg_{\hk}(X^0_1)\wh{\otimes}^{\LL}_{W(k)}R)_{\Q}\to \rg_{\crr}(X_1/R)_{l,\Q}.
\end{align*}
Tensoring it with $\so_K$ (over $R$) we obtain the  quasi-isomorphisms
$$
( \rg_{\hk}(X^0_1)\wh{\otimes}^{\LL}_{W(k)}\so_K)_{\Q}\stackrel{\sim}{\to} (\rg_{\crr}(X_1/R)_{l}\wh{\otimes}^{\LL}_{R}\so_K)_{\Q}\simeq \rg_{\crr}(X_1/\so^{\times}_K)_{\Q}\simeq \rg_{\dr}(X)_{\Q}.
$$
This is the Hyodo-Kato quasi-isomorphism $i_{\varpi}$ we wanted.
\end{proof}
\subsubsection{Monodromy action revisited} A {\em  $(\phi,N)$-module over $W(k)$} is a triple $(M,\phi,N)$ with $(M,\phi)$ --  a $\phi$-module
over  $W(k)$ and $N: M\to M$ -- a $W(k)$-linear  endomorphism, called {\em monodromy} operator, such that
$N\phi = p\phi N$. The category of $(\phi,N)$-modules over $W(k)$ is abelian. We will denote by
$\sd_{\phi,N}(W(k))$  the corresponding derived $\infty$-category. Using Remark \ref{E1},  we will identify this $\infty$-category with the $\infty$-category of left modules from $\sd(W(k))$ over the associative ring $W(k)\{\phi, N\}$ defined as the abelian group $W(k)[\phi, N]$ with multiplication rules
$\phi a=\phi(a)\phi, Na=aN, N\phi=p\phi N$, for $a\in W(k)$. We denote by $\sd^c_{\phi,N}(W(k))$ the full $\infty$-subcategory of $\sd_{\phi,N}(W(k))$ spanned by complexes of $p$-torsion-free and $p$-complete modules. We have similar structures over $W_n(k)$. 

The constructions in \eqref{fifa1} live in respective $\sd^c_{\phi,N}(-)$ $\infty$-categories and are functorial in $X_1$. The subsequent base changes \eqref{leg1}, \eqref{leg2}, \eqref{leg3} also lift to the $\infty$-categories $\sd^c_{\phi,N}(-)$. One way to see this is to use the description of the monodromy action in the paragraphs that follow.

The purpose of this section is to prove the following:
\begin{proposition}\label{monodromy1}
The section
$$
\iota_{l}:  \rg_{\hk}(X^0_1)_\Q\to \rg_{\crr}(X_1/R)_{l,\Q}
$$
from Theorem \ref{section1} commutes with monodromy, i.e., it can be lifted to a section in $\sd^c_{\phi,N}(W(k))_{\Q}$. 
\end{proposition}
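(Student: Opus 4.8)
The plan is to deduce the monodromy-compatibility of $\iota^*_l$ from its uniqueness property in Theorem \ref{section1}(2), exactly as one does for Frobenius. First I would recall the relation $N\phi = p\phi N$ (equivalently, on the relevant complexes, $N\phi_{\hk} = p\phi_{\hk} N$), and the fact that the monodromy operator $N$ acts $\phi_F$-semilinearly-compatibly on all of $\rg_{\hk}(X^0_1)$, $\rg_{\crr}(X_1/R)_l$ and is respected by $i^*_l$. The key point is that $N$ (or rather $pN$, to correct the failure of $N$ to commute with $\phi$) can be viewed as an endomorphism in an appropriate category where $\iota_l$ was characterized uniquely.

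Concretely, here is the argument I would carry out. The operator $N$ on $\rg_{\hk}(X^0_1)_\Q$ is nilpotent (this is part of the Hyodo-Kato package, coming from the de Rham--Witt description, and anyway $N$ is topologically nilpotent). Consider the composite
$$
\delta := N\circ\iota^*_l - \iota^*_l\circ N : \rg_{\hk}(X^0_1)_\Q \to \rg_{\crr}(X_1/R)_{l,\Q}.
$$
I want to show $\delta = 0$. Applying $i^*_l$ and using that $i^*_l$ intertwines the two monodromy operators together with $i^*_l\iota^*_l = \id$, we get $i^*_l\circ\delta = N - N = 0$, so $\delta$ lands in the kernel of $i^*_l$, i.e.\ factors through $\rg_{\crr}(X_1/R)_{l,\Q} \to \rg_{\hk}(X^0_1)_\Q$ has $\delta$ in its "defect" part. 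More precisely, using the short exact sequence (the rigidity input $IP \to P \to P\wh\otimes^{\LL}_R W(k)$ from the proof of Lemma \ref{Newton1}, with $P = \rg_{\crr}(X_1/R)_l$), $\delta$ lifts to a map $\bar\delta: \rg_{\hk}(X^0_1)_\Q \to (IP)_\Q$. Now I would track Frobenius: since $\iota^*_l$ commutes with $\phi$ and $N\phi = p\phi N$ on both sides, one computes that $\delta$ satisfies $\delta\phi_{\hk} = p\,\phi\,\delta$, i.e.\ $\delta$ is a morphism of $\phi$-modules after twisting, $\delta \in \R\Hom_{\phi}(Q(1), \wh{P})_\Q$ in the notation of Lemma \ref{Newton1}, where $Q = \rg_{\hk}(X^0_1)$ and the Tate twist just rescales $\phi_Q$ by $p$. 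Since the Frobenius on $Q_\Q$ is invertible (Theorem \ref{section1}(1)), so is the Frobenius on $Q(1)_\Q$, and Lemma \ref{Newton1} gives $\R\Hom(Q(1)_\Q, \wh{P}_\Q) = 0$. Hence $\delta = 0$, which is the claim. (Alternatively, and perhaps more cleanly, one argues directly by uniqueness: $N\iota^*_l p^{-1}$ and $\iota^*_l N p^{-1}$... but the twist is exactly what makes "$pN$" behave like a $\phi$-module endomorphism, so the $\R\Hom$ vanishing is the natural home for the argument.)

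The main obstacle I expect is making the twisting bookkeeping precise at the level of the derived $\infty$-category: one must check that $\delta$, a priori just a map in $\sd(C_K)$ or $\sd_\phi(W(k))_\Q$, genuinely upgrades to a class in the derived $\Hom$ of $\phi$-modules to which Lemma \ref{Newton1} applies, i.e.\ that the relation $\delta\phi_{\hk} = p\phi\,\delta$ holds not just after passing to cohomology but coherently. This requires that $\iota^*_l$ itself was constructed as a genuine morphism of $\phi$-modules (which it was, being the right inverse produced inside $\sd_\phi(W(k))_\Q$ in the proof of Theorem \ref{section1}), and that $N$ is promoted to such a structured operator on the cohomology complexes — which is where Beilinson's careful data-keeping, adapted here, pays off. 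Once these compatibilities are in place, the vanishing is immediate from Lemma \ref{Newton1}. A secondary, more routine point is to verify that $N$ on $\rg_{\crr}(X_1/R)_l$ restricts correctly to $IP$ so that the factorization $\delta = \iota$-of-something-through-$(IP)_\Q$ is legitimate; this follows from $N$ being $R$-linear-compatible and preserving the ideal filtration $I^{(j)}$ up to the Frobenius twist, exactly as in the filtration argument in the proof of Lemma \ref{Newton1}.
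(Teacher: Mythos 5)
Your high-level mechanism is the right one --- the rigidity supplied by Lemma \ref{Newton1} is indeed what forces monodromy-compatibility, and your commutator computation $\delta\phi=p\phi\delta$ is correct as formal algebra (though you should twist the target, $P(-1)$ with Frobenius $p\phi_P$, rather than rescale $\phi_Q$ by $p^{-1}$, to stay in the integral category where the lemma is stated). But the step you flag as ``the main obstacle'' and then defer --- promoting $N$ to an endomorphism in $\sd_{\phi}(W(k))$, i.e.\ making the relation $N\phi=p\phi N$ hold coherently rather than merely on cohomology, so that $\delta$ is an honest element of $\Hom_{\sd_\phi(W(k))_\Q}(Q,P(-1))$ --- is not a routine verification; it is the entire content of the proposition. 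The paper is explicit that the naive monodromy operator is only defined as an element of the classical derived category, and this is precisely why the argument cannot be run with $N$ directly. Without a construction of this coherence datum, the vanishing $\R\Hom(Q,\wh{P}(-1))_\Q=0$ cannot be applied to $\delta$, and the proof is incomplete.

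The paper fills this gap by a structurally different manoeuvre: it never works with $N$ as an operator at all. Instead it ``integrates'' the monodromy to a ${\mathbb G}_m^{\natural}$-action, realized concretely on the \v{C}ech nerves $(S_1,E_{n*})=[(S_1,E_n)/{\mathbb G}_m^{\natural}]$ and $(S^0_1,S^0_{n*})$, which is an honest, strictly Frobenius-compatible structure on the crystalline complexes. It then reruns the whole of Theorem \ref{section1} (via Lemma \ref{Newton1}) inside $\sd_{\phi,{\mathbb G}_m^{\natural}}(W(k))$ to produce an \emph{equivariant} section, identifies its underlying map with your $\iota_l$ by the uniqueness statement, and finally applies the Lie functor to extract $N$-compatibility. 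So rather than showing the commutator defect of the given section vanishes, it builds a monodromy-compatible section from scratch and matches it to the old one. If you want to salvage your route, you would essentially have to reconstruct this equivariant (or some equivalent coherent) enhancement of $N$ first --- at which point you have reproduced the paper's proof.
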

Recall that the monodromy on $\rg_{\crr}(X_1/R)_{l}$ is defined as the Gauss-Manin connection and the one on $ \rg_{\hk}(X^0_1)$ as  its residue at $t=0$. However, to prove Proposition \ref{monodromy1} we will work with the "integration" of the monodromy action. The argument  follows that of Beilinson in  \cite[Sec.\,1.16]{Bv2} with the modifications mentioned earlier.

 (i) {\em Equivariant structures.} Let $A^*$ be a cosimplicial algebra. An  {\em $A^*$-complex} is a complex $M^*$ of cosimplicial $A^*$-modules such that,  for every cosimplicial structure map $M^a\to M^b$, its $A^b$-linearization  
 $A^b\otimes^{\LL}_{A^a}M^a\to M^b$ is a quasi-isomorphism. Denote by $\sd(A^*)$ the  derived $\infty$-category of bounded below $A^*$-complexes. 
 We think of an element of $\sd(A^*)$ as an $A^*$-complex with values in $\sd(A^a)$ in degree $a$. 
   For an endomorphism $T$ of $A^*$, $\sd_{T}(A^*)$ will denote  the derived $\infty$-category of  bounded below $A^*$-complexes equipped with a $T$-action.   We think of an element of $\sd_T(A^*)$ as an $A^*$-complex with values in 
   $\sd_{T^a}(A^a)$ in degree $a$ (the derived $\infty$-category of $A^a\{T^a\}$-modules).

 Fix an  affine scheme $S$ as a base. Let $G$ be an affine group scheme acting  on $X=\Spec A$. Let $[X/G]:=EG\times_GX=\Spec A^*_G$ be the  simplicial quotient. We have $[X/G]_m=X\times G^m$. Set $\sd_G(A):=\sd(A^*_G)$.
 
 Let $\mathfrak{g}^{\vee}=\mathfrak{m}_e/\mathfrak{m}_e^2$ be the Lie coalgebra of $G$. Let $[X/{\mathfrak{g}}]:=\Spec A^*_{\mathfrak{g}}$ be the closed subscheme of $[X/G]$ defined by the simplicial ideal generated by $\sk^2$, where $\sk$ is the ideal of $[X/G]_{0}\subset [X/G]_{1}$, i.e., $\sk=\mathfrak{m}_e\otimes A\subset \so(G\times X)$. We set $\sd_{\mathfrak{g}}(A):=\sd(A^*_{\mathfrak{g}})$, etc. There is a canonical conservative restriction functor 
    $${\rm Lie}: \sd_{G}(A)\to\sd_{\mathfrak{g}}(A).$$
    Moreover:
    \begin{enumerate}
    \item Compatible endomorphisms $T_G$ and $T_X$ of $G$ and $X$ yield an endomorphism  of $[X/G]$. We have 
    $$\sd_{T,G}(A):=\sd_{T}(A^*_G), \quad \sd_{T,\mathfrak{g}}(A):=\sd_{T}(A^*_{\mathfrak{g}}). $$
    \item  For a group scheme $G$, we denote by $G^{\natural}$ its PD-completion at the unit \cite[Sec.\,1.2]{Bv2}; this is a group PD-scheme, i.e., a scheme equipped with a PD-ideal. For example, we have ${\mathbb G}_m^{\natural}((U,T))=\Gamma(T,(1+\sj_T)^*)$. If $G$ is a group PD-scheme with PD-ideal $\mathfrak{m}_e$, then, in the above,  we can also consider the Lie coalgebra in PD-sense $\mathfrak{g}^{\vee}:=\mathfrak{m}_e/\mathfrak{m}_e^{[2]}$. 
    \end{enumerate}
     Objects of $\sd_{T,G}(A)$, $\sd_{T,\mathfrak{g}}(A)$ are called $G$-, resp. {\em $\mathfrak{g}$-equivariant $A$-complexes}. For an $A$-complex $M$, a {\em $G$-equivariant structure} on it is an object $M^*_G\in \sd_G(A)$ together with a quasi-isomorphism $M^0_G\stackrel{\sim}{\to} M$. 

  (ii) {\em Equivariant structures on crystalline cohomology.} Let us go back to  the setting of Proposition \ref{monodromy1}. We note that the objects $(S_1,E_n)\in (S_1/W_n(k))_{\crr}$ and $(S^0_1,S_n^0)\in (S^0_1/W_n(k))_{\crr}$ have natural ${\mathbb G}_m^{\natural}$-actions:  $(S_1,E_n)$ is a coordinate thickening (with coordinate $t_a$),  ${\mathbb G}_m^{\natural}$ acts on it by homotheties, and we equip $S_n^0\subset (S_1,E_n)$ with the induced action.  To see the latter action explicitly, we note that, for $(U,T)\in (S^0_1/W_n)_{\crr}$, a map $f: (U,T)\to (S^0_1,S^0_n)$ amounts to a lifting $f([a])$ of $a\in (\mathfrak{m}_K/\mathfrak{m}^2_K)\setminus \{0\}\subset \sll_1^0$ to $\sll_T^0$; these liftings form a ${\mathbb G}_m^{\natural}((U,T))$-torsor yielding our action.
  This ${\mathbb G}_m^{\natural}$-action is compatible with the Frobenius action  ($\phi$ acts on ${\mathbb G}_m^{\natural}$ as $\phi^*(t):=t^p$).
 
   We will now show that the crystalline cohomology complexes $\rg_{\crr}(X_1/R)_{l,n}$, $\rg_{\hk}(X^0_1)_n$ are naturally equipped with ${\mathbb G}_m^{\natural}$-equivariant structures.  
   Take the simplicial objects $(S_1,E_{n*})$ and $(S^0_1,S^0_{n*})$. Here, for $(U,T)\in (Z/S)_{\crr}$, we wrote  $(U,T_*):=\check{C}((U,T)/Z)$ for the \v{C}ech nerve of the crystalline open $(U,T)\in(Z/S)_{\crr}$; it is a  simplicial object of $(Z/S)_{\crr}$ with terms $(U,T_a):=(U,T)^{a+1}$ (we use the crystalline site product).
 It is easy to see \cite[Exercise 1.7]{Bv2} that $(S_1,E_{n*})=[(S_1,E_n)/{\mathbb G}_m^{\natural}]$ and $(S^0_1,S^0_{n*})=[(S^0_1,S^0_{n})/{\mathbb G}_m^{\natural}]$.  Consider the objects $\R f_{\crr}(\so_{X_1/W_n(k)})$ and $\R f^0_{\crr}(\so_{X^0_1/W_n(k)})$. They are equipped with a Frobenius action. Restricting them to our simplicial objects, we get:
   \begin{align}\label{structures-Paris}
   &\rg_{\crr}(X_1/R)_{l,n}^*:=\R f_{\crr*}(\so_{X_1/W_n(k)})_{(S_1,E_{n*})}\in\wh{\sd}_{\phi,{\mathbb G}_m^{\natural}}(R_n),\\ 
      &\rg_{\hk}(X^0_1)_{n}^*:=\R f^0_{\crr*}(\so_{X^0_1/W_n(k)})_{(S^0_1,S^0_{n*})}\in\sd^c_{\phi,{\mathbb G}_m^{\natural}}(W_n(k^{\prime})).  \notag
      \end{align}
      Since $(\R f_{\crr*}(\so_{X_1/W_n(k)})_{(S_1,E_{n*})})^0\simeq \rg_{\crr}(X_1/R)_{l,n}$ and $(\R f^0_{\crr*}(\so_{X^0_1/W_n(k)})_{(S^0_1,S^0_{n*})})^0\simeq \rg_{\hk}(X^0_1)_n$, these are the ${\mathbb G}_m^{\natural}$-equivariant structures we wanted. 

  We are actually interested in $\mathfrak{n}$-action  that comes from the above ${\mathbb G}_m^{\natural}$-action, where  $\mathfrak{n}$ is the Lie algebra of ${\mathbb G}_m^{\natural}$ in PD-sense (it is a line). The objects from (\ref{structures-Paris}) form projective systems with respect to $n$. Applying $\R\lim_n$, we get natural  $\mathfrak{n}$-structures on $\rg_{\hk}(Z^0_1)$ and $\rg_{\crr}(Z_1/R)_{l}$.
Set $N=e^{-1}t\partial_t$, $e=[K^{\prime}:F^{\prime}]$; it is a generator of $\mathfrak{n}\otimes\Q$.  An $\mathfrak{n}_\Q$-equivariant structure on $W(k^{\prime})_{\Q}$-complex amounts  to an endomorphism $N$. The equality $N\phi=p\phi N$ comes from the compatibility of the ${\mathbb G}_m^{\natural}$-action with Frobenius. 
\begin{proof}({\em of Proposition \ref{monodromy1}}) 
We proceed as in the proof of Theorem \ref{section1} but work in the ${\mathbb G}_m^{\natural}$-equivariant setting. Namely, we start with the  natural map 
$i^*_l: \rg_{\crr}(X_1/R)_{l,\Q}^*\to \rg_{\hk}(X^0_1)_{\Q}^*$, that lifts the map $i^*_l: \rg_{\crr}(X_1/R)_{l,\Q}\to \rg_{\hk}(X^0_1)_{\Q}$, and we look for its ${\mathbb G}_m^{\natural}$-equivariant section  (this will be a ${\mathbb G}_m^{\natural}$-equivariant lift of the section in our proposition). This is supplied by Lemma \ref{section1B} below.  The induced map ${\rm Lie}( \iota_l)$ yields a section between the corresponding $\mathfrak{n}_{\Q}$-equivariant structures. Since  it lifts the original section $\iota_l$ we get  the wanted compatibility of the latter with monodromy. 
\end{proof}
The following lemma was used in the above proof: 
\begin{lemma}\label{section1B}
\begin{enumerate}
\item The Frobenius action on $\rg_{\hk}(X^0_1)^*_\Q$ is invertible in $\sd^c_{{\mathbb G}_m^{\natural}}(W(k^{\prime}))_{\Q}$.
\item The map $i^*_{l}: \rg_{\crr}(X_1/R)^*_{l,\Q}\to \rg_{\hk}(X^0_1)^*_\Q$ admits a  natural $W(k^{\prime})$-linear section $\iota_{l}$ in  $\sd^c_{\phi,{\mathbb G}_m^{\natural}}(W(k^{\prime}))_{\Q}$  
\begin{equation}\label{nie1}
\iota^*_{l}: \rg_{\hk}(X^0_1)^*_\Q{\to} \rg_{\crr}(X_1/R)^*_{l,\Q}.
\end{equation}
\end{enumerate}
\end{lemma}
\begin{proof} In claim (1) we need to proof the invertibility, up to a controlled denominator, of the Hyodo-Kato Frobenius. Since, by  \eqref{structures-Paris},
\begin{equation}
\label{pada1}
\rg_{\hk}(X^0_1)^*_a=\R f^0_{\crr *}(\so_{X^0_1/W(k)})_{(S^0_1,S^0_a)}\simeq \R\Gamma_{\crr}(X^0_1/S^0_a),
\end{equation} where $(S^0_1,S^0_a)$ is the crystalline product $(S^0_1,S^0)^{a+1}$,  we can use again \cite[2.24]{HK}. And, recall  that, Hyodo-Kato prove more: they show that there exists a $p^d$-inverse of Frobenius, where $d=\dim X^0_1$.

  To prove claim (2), take $P=\rg_{\crr}(X_1/R)_{l}^*$ and $Q=\rg_{\hk}(X^0_1)^*$.    We have 
 \begin{align*}
 & Q_a\simeq  \R\Gamma_{\crr}(X^0_1/S^0_a),\\
 & P_a=\rg_{\crr}(X_1/R)_{l,a}^*=\R f_{\crr*}(\so_{X_1/W(k)})_{(S_1,E_{a})}\simeq \R \Gamma_{\crr}(X_1/E_{a})_l,
 \end{align*}
 where $(S_1,E_a)$ is the crystalline product $(S_1,E)^{a+1}$. More explicitly,
 \begin{align*}
 S^0_a & =\Spf W(k)<u_1-1,\ldots, u_a-1>, \\
 E_a & =\Spf W(k)<t,u_1-1,\ldots, u_a-1>; \quad i_{b,0}: S^0_b\hookrightarrow E_a,\, t_s\mapsto [s],  u_i\mapsto u_i,
 \end{align*}
 and the log-structure is induced by $t_s$ (and its reduction). In particular, the Frobenius on the ideal of the embedding  $i_{a,0}$ is highly topologically nilpotent. This implies, by an argument identical to the one used in the proof of Theorem \ref{section1}, that we have a unique section in $\sd^c_{\phi}(W(k^{\prime}))_{\Q}$
 $$
 \iota_{l,a}: Q_{a,\Q}\to P_{a,\Q}
 $$
 of the canonical  projection $$P_{a,\Q} \to Q_{a,\Q}$$ and that this section is functorial with respect to all the cosimplicial maps. Hence it yields the section $\iota^*_l$ from \eqref{nie1}.
 Functoriality of this section follows from the functoriality of the individual sections $\iota_{l,a}$. 
 \end{proof}
\subsection{Geometric absolute crystalline cohomology and Hyodo-Kato cohomology}\label{lambda} 
We are now ready to prove the existence of geometric Hyodo-Kato quasi-isomorphisms.
\subsubsection{The comparison theorem}
 Let now $\overline{f}: X_1\to \overline{S}_1$ be a map of log-schemes with $X_1$  integral and quasi-coherent. Assume that $\overline{f}$ is the base change of a log-scheme $f: Z_1\to S_1$, which is log-smooth and with Cartier type reduction.  Choose $l$, hence $(S_1,E)$, as in Section \ref{period-rings}. 
\index{thetalambda@\thetalambda}Choose a Frobenius compatible map $\theta_{\lambda}: (\overline{S}_1,\se_{\crr})\to (S_1,E)$ of PD-thickenings that extends the map $\theta_1$, where $\theta$ is the  canonical  map $\theta:\overline{S}\to S$. 
This amounts to a choice of $\lambda_a:=\theta_{\lambda}(t_a)\in \sll_{\phi}$ that lifts $l_a\in\sll_1\subset \overline{\sll}_1$.

 The following well-known corollary of Theorem \ref{section1} describes geometric absolute crystalline cohomology $\R\Gamma_{\crr}(X_1):=\R\Gamma_{\crr}(X_1/W(k))$ via Hyodo-Kato cohomology (but losing the Galois action).
\begin{corollary}
\begin{enumerate}
\item There is a functorial system of compatible quasi-isomorphisms in $\sd_{\phi}(\A_{\crr,n})$
$$
\epsilon^R_{\lambda,n}:\rg_{\crr}(Z_1/R)_{l,n}\otimes^{\LL}_{R_n}\A_{\crr,n}\stackrel{\sim}{\to} \rg_{\crr}(X_1)_n.
$$
Here the tensor product is taken with respect to the map
$\theta^*_{\lambda,n}: R_n\to \A_{\crr,n}$.
\item There is a natural quasi-isomorphism in $\sd_{\phi}(\A_{\crr})$
$$
\epsilon^R_{\lambda}:\rg_{\crr}(Z_1/R)_l\wh{\otimes}^{\LL}_R\A_{\crr}\stackrel{\sim}{\to} \rg_{\crr}(X_1).
$$
\item There is a natural strict quasi-isomorphism  in $\sd_{\phi}(C_{\B^+_{\crr}})$
$$
\epsilon^{\hk}_{\lambda}:(\rg_{\hk}(Z_1^0)\wh{\otimes}^{\LL}_{W(k)}\A_{\crr})_{\Q}\stackrel{\sim}{\to}\rg_{\crr}(X_1)_\Q.
$$
\end{enumerate}
\end{corollary}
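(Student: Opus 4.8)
The plan is to deduce (1) and (2) from the base-change theorem for log-crystalline cohomology, and then (3) from (2) together with Theorem~\ref{section1}. For (1): by construction $\theta_{\lambda,n}$ is a Frobenius-compatible morphism of PD-thickenings $(\overline{S}_1,\se_{\crr,n})\to(S_1,E_n)$ lying over $\theta_1\colon\overline{S}_1\to S_1$ and given on coordinates by $\theta^*_{\lambda,n}(t_a)=\lambda_a$. Since $X_1=Z_1\times_{S_1}\overline{S}_1$ and $f\colon Z_1\to S_1$ is log-smooth of Cartier type, the relative cohomology $\rg_{\crr}(Z_1/(S_1,E_n))$ is, locally on $Z_1$, a perfect complex of $R_n$-modules, so the derived base-change map is computed correctly even though $\A_{\crr,n}$ is not flat over $R_n$; applying the base-change theorem along $\theta^*_{\lambda,n}$ then yields the natural Frobenius-compatible isomorphism $\epsilon^R_{\lambda,n}$, whose target is the geometric log-crystalline cohomology $\rg_{\crr}(X_1)_n$ --- which, in the definition adopted here, is the crystalline cohomology of $X_1$ over the Fontaine base $\se_{\crr,n}$. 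Functoriality in $X_1$ and compatibility of the $\epsilon^R_{\lambda,n}$ over varying $n$ are built in, base change being functorial in $\theta_{\lambda,\bullet}$ and compatible with reduction mod $p^m$; one then globalizes along an $\eta$-\'etale hypercover by semistable formal schemes, keeping track of the \v{C}ech data as in the earlier parts of Section~\ref{lambda}.

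For (2): both sides of (1) form projective systems in $n$. One has $\rg_{\crr}(Z_1/R)_l\otimes^{\LL}_R R_n\simeq\rg_{\crr}(Z_1/R)_{l,n}$, with $\rg_{\crr}(Z_1/R)_l$ derived $p$-complete and $p$-completely flat over $R$, while $\A_{\crr}$ is $p$-torsion free and $p$-adically complete; hence $\R\lim_n\bigl(\rg_{\crr}(Z_1/R)_{l,n}\otimes^{\LL}_{R_n}\A_{\crr,n}\bigr)\simeq\rg_{\crr}(Z_1/R)_l\wh{\otimes}^{\LL}_R\A_{\crr}$ and $\R\lim_n\rg_{\crr}(X_1)_n=\rg_{\crr}(X_1)$. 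Passing to $\R\lim_n$ in (1) produces the natural Frobenius-compatible map $\epsilon^R_\lambda$, and its strictness follows, exactly as in the proof of Lemma~\ref{trick1}, from the Open Mapping Theorem applied to the Banach spaces involved. For (3): one starts from the Frobenius-compatible strict quasi-isomorphism $\iota_l\colon(R\wh{\otimes}^{\LL}_{W(k)}\rg_{\hk}(Z_1^0))_\Q\stackrel{\sim}{\to}\rg_{\crr}(Z_1/R)_{l,\Q}$ of Theorem~\ref{section1}(2), applies $(-)\wh{\otimes}^{\LL}_R\A_{\crr}$ along $\theta^*_\lambda$ (still a strict quasi-isomorphism, by the analogue of Lemma~\ref{trick1}), and uses transitivity of the completed derived tensor product, together with the fact that $W(k)\to R\stackrel{\theta^*_\lambda}{\to}\A_{\crr}$ is the structure map, to identify the source with $(\rg_{\hk}(Z_1^0)\wh{\otimes}^{\LL}_{W(k)}\A_{\crr})_\Q$; composing with the rationalization of $\epsilon^R_\lambda$ gives $\epsilon^{\hk}_\lambda$, Frobenius-compatible because $\iota_l$, $\theta^*_\lambda$ and $\epsilon^R_\lambda$ all are.

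I expect the main obstacle to be (1): one has to check carefully that the logarithmic crystalline base-change theorem genuinely applies over the non-smooth PD-bases $E_n$ and $\se_{\crr,n}$ in the Cartier-type setting --- in particular that the relevant relative cohomology is perfect, so that $\otimes^{\LL}$ is as harmless as it looks --- and that the whole construction is functorial and descends along $\eta$-\'etale hypercovers while carrying the Frobenius action. Parts (2) and (3) are then essentially formal, modulo the usual topological bookkeeping (derived $p$-completeness, transitivity of $\wh{\otimes}$, and the open-mapping strictness argument).
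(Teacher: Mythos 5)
Your proposal follows the same route as the paper's (three-sentence) proof: claim (1) is log-crystalline base change along the PD-morphism $\theta_{\lambda,n}\colon(\overline{S}_1,\se_{\crr,n})\to(S_1,E_n)$, claim (2) is obtained by passing to $\R\wlim_n$, and claim (3) is the composition of (2) with the $R$-linearized section of Theorem \ref{section1}(2) extended along $\theta^*_\lambda$. One justification in your step (1) is wrong, though the conclusion survives: for a non-proper (e.g.\ affine) $Z_1$ the complex $\rg_{\crr}(Z_1/(S_1,E_n))$ is \emph{not} perfect over $R_n$ — indeed the whole point of this section of the paper is to replace Beilinson's perfectness hypothesis (valid only in the proper case) by derived $p$-completeness and $p$-complete flatness. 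Perfectness is also not what makes base change work here: one computes with the de Rham complex of a log-smooth lifting of $Z_1$ over $E_n$, whose terms are flat over $R_n$ because $f$ is integral and log-smooth, so the underived and derived tensor products with $\A_{\crr,n}$ agree term by term and the result computes $\rg_{\crr}(X_1/(\overline{S}_1,\se_{\crr,n}))\simeq\rg_{\crr}(X_1)_n$. Finally, the $\eta$-\'etale hypercover descent you invoke at the end of (1) is not part of this corollary, which is a purely local statement about a fixed $X_1$ descending to some $Z_1/S_1$; globalization happens only later.
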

\begin{remark}
\begin{enumerate}
\item The functor $(-)_\Q: \sd^c(\Z_p)\to \sd({C_{\Q_p}})$ in (3)  is induced from the functor $(-)_{\Q}$ from Remark \ref{westin} via the map  $ \sd^c(\Z_p)\to   \sd({\rm PD}_{\Q_p})$.
\item We set $\sd_{\phi}(C_{\B^+_{\crr}}):={\rm LMod}_{\B^+_{\crr}\{\phi\}} \sd(C_{\B^+_{\crr}})$.
\end{enumerate}
\end{remark}
\begin{proof}
Since $Z_1$ is log-smooth, claim (1) follows from the log-smooth base change\footnote{The proof of which is almost identical to the proof of smooth base change  in the case without log-structures, see \cite[2.3.5]{BBM} or \cite[V.3.5.1]{B}.} (recall that $\rg_{\crr}(X_1)_n\simeq \rg_{\crr}(X_1/\A_{\crr,n})$). Claim (2) follows from claim (1) by taking limits. Claim (3) follows from claim (2) and Theorem \ref{section1}.
\end{proof}

 Let $\overline{f}^0: X_1^0\to \overline{S}_1^0$ be the pullback of $\overline{f}$ to $\overline{S}^0_1$.  We have the {\em completed geometric Hyodo-Kato \index{rghk@\rghk}cohomology}
$$
\rg_{\hk}(X^0_1):=\rg_{\crr}(X_1^0/\overline{S}^0).
$$
It is a $W(\overline{k})$-module. 
It compares with the arithmetic Hyodo-Kato cohomology via 
the log-smooth base change \index{basechange@\basechange}quasi-isomorphism in $\sd_{\phi}(W(\overline{k}))$
\begin{equation} \label{beta}
\beta: \quad \rg_{\hk}(Z^0_1)\wh{\otimes}^{\LL}_{W(k_L)}W(\overline{k})\stackrel{\sim}{\to} \rg_{\hk}(X^0_1).
\end{equation}
\begin{theorem}
\label{HK-crr}
There is a natural strict  \index{epshk@\epshk}quasi-isomorphism in $\sd_{\phi,N}(C_{\B^+_{\st}})$ 
$$
\epsilon^{\hk}_{\st}:\rg_{\hk}(X_1^0)_{\Q_p}\wh{\otimes}_{\breve{F},\iota}\B^+_{\st}\stackrel{\sim}{\to} \rg_{\crr}(X_1)_{\Q_p}\wh{\otimes}_{\B^+_{\crr},\iota}\B^+_{\st} 
$$
such that $\epsilon^{\hk}_{\lambda}=s^*_{\lambda}\epsilon^{\hk}_{\st}\beta$. 
\end{theorem}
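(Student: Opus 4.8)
\noindent\emph{Strategy.} The plan is to glue the $\A_{\crr}$-level comparisons $\epsilon^{\hk}_{\lambda}$ furnished by the preceding Corollary -- one for each admissible lift $\lambda$ -- into a single map over $\B^+_{\st}$ by ``integrating'' the monodromy, since $\B^+_{\st}$ is precisely the period ring that records this whole family through the retractions $s^{*}_{\lambda}$. Composing claim (3) of that Corollary with the base-change quasi-isomorphism $\beta$ of (\ref{beta}) yields, for every admissible $\lambda$, a Frobenius-compatible strict quasi-isomorphism over $\A_{\crr}$
$$
\epsilon^{\hk}_{\lambda}\colon\ \bigl(\rg_{\hk}(X_1^0)\wh{\otimes}^{\LL}_{W(\overline k)}\A_{\crr}\bigr)_{\Q}\ \stackrel{\sim}{\to}\ \rg_{\crr}(X_1)_{\Q},
$$
which, by its construction through $\epsilon^R_{\lambda}$, is the base change along $\theta^{*}_{\lambda}\colon R\to\A_{\crr}$ of the \emph{unique} Frobenius-equivariant section $\iota_{l}$ of Theorem \ref{section1} (also $N$-equivariant, by Proposition \ref{monodromy1}). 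Fixing a choice $\lambda_0$ normalized at $p$, so that $s^{*}_{\lambda_0}$ kills $\log[\wt p]$, I would set $\epsilon^{\hk}_{\st}:=(\epsilon^{\hk}_{\lambda_0}\wh{\otimes}_{\A_{\crr}}\id)\circ\exp(-N\otimes\log[\wt p])$, a map $\rg_{\hk}(X_1^0)_{\Q_p}\wh{\otimes}_{\breve F}\B^+_{\st}\to\rg_{\crr}(X_1)_{\Q_p}\wh{\otimes}_{\B^+_{\crr}}\B^+_{\st}$, the first factor being the base change of $\epsilon^{\hk}_{\lambda_0}$ along $\iota\colon\A_{\crr}\hookrightarrow\B^+_{\st}$ and $N$ the Hyodo-Kato monodromy.

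\noindent\emph{Checking the properties.} It is a strict quasi-isomorphism because the base change of $\epsilon^{\hk}_{\lambda_0}$ is one by Lemma \ref{trick1} (and its $\wh{\B}^+_{l,\st}$- and $\B^+_{\dr}$-analogues from Section \ref{period-rings}) while $\exp(-N\otimes\log[\wt p])$ is invertible. Compatibility with $\wh{\otimes}$ and $\B^+_{\st}$-linearity are built in. For Frobenius: $\epsilon^{\hk}_{\lambda_0}$ is $\phi$-equivariant, and since $\phi(\log[\wt p])=p\log[\wt p]$ and $N\phi=p\phi N$ the operator $N\otimes\log[\wt p]$ commutes with $\phi$, hence so does its exponential. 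For monodromy: from $N(\log[\wt p])=-1$ one computes $[\,N\otimes\log[\wt p],\,1\otimes N\,]=-(N\otimes1)$, so $\exp(-N\otimes\log[\wt p])$ conjugates the source monodromy $N\otimes1+1\otimes N$ to $1\otimes N$, which $\epsilon^{\hk}_{\lambda_0}\wh{\otimes}\id$ intertwines with the target monodromy $1\otimes N$ ($\rg_{\crr}(X_1)$ carrying no monodromy). Finally, $s^{*}_{\lambda}\epsilon^{\hk}_{\st}\beta=\epsilon^{\hk}_{\lambda}$ for every admissible $\lambda$: for $\lambda=\lambda_0$ it holds since $s^{*}_{\lambda_0}$ kills $\log[\wt p]$ and hence the twist; for general $\lambda$ it reduces -- using the compatibility of $s^{*}_{\lambda}$ with $\kappa_{l}$ and the explicit value of $s^{*}_{\lambda}(\log[\wt p])\in\B^+_{\crr}$ -- to the statement that $\epsilon^{\hk}_{\lambda}$ is obtained from $\epsilon^{\hk}_{\lambda_0}$ by that same monodromy twist, which in turn follows from the \emph{uniqueness} of the Frobenius-equivariant section in Theorem \ref{section1}(2) applied one level higher (over $\wh{\A}_{l,\st}$). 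This uniqueness also makes $\epsilon^{\hk}_{\st}$ independent of the auxiliary $l$, as $\B^+_{\st}$-linearity, $N$-equivariance and the collection $\{s^{*}_{\lambda}\}$ pin it down.

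\noindent\emph{The main obstacle.} The one delicate point is giving meaning to the monodromy twist at the level of complexes: the series $\exp(-N\otimes\log[\wt p])$ is infinite in the $\log[\wt p]$-direction and, a priori, lands in the PD-completion $\wh{\B}^+_{l,\st}$ rather than in $\B^+_{\st}=\B^+_{\crr}[\log[\wt p]]$. The cleanest remedy -- which also produces the quasi-isomorphism property over the Banach ring $\wh{\B}^+_{l,\st}$ (where Lemma \ref{trick1} applies verbatim) and the $l$-independence at once -- is to run the construction first over $\wh{\B}^+_{l,\st}$: identify $\rg_{\crr}(X_1)_{\Q}\wh{\otimes}_{\A_{\crr}}\wh{\A}_{l,\st}$ with the crystalline cohomology of $X_1$ over the PD-thickening $\se_{l,\st}$ by crystalline base change ($\phi$-equivariantly), construct the Frobenius- and $N_{l}$-equivariant section at that level by rerunning the argument of Lemma \ref{Newton1}, Theorem \ref{section1} and Proposition \ref{monodromy1}, and then pass to $N_{l}$-nilpotent vectors, transporting along $\kappa_{l}$ and Breuil's exact sequence (\ref{Breuil}). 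What has to be verified along the way is that the $N_{l}$-nilpotent vectors of $\rg_{\hk}(X_1^0)_{\Q_p}\wh{\otimes}_{\breve F}\wh{\B}^+_{l,\st}$ are exactly $\rg_{\hk}(X_1^0)_{\Q_p}\wh{\otimes}_{\breve F}\B^+_{\st}$ -- here one needs (local) nilpotence of the Hyodo-Kato monodromy in the appropriate derived sense -- and that, $\rg_{\crr}(X_1)$ having trivial monodromy, the $N_{l}$-nilpotent part of $\rg_{\crr}(X_1)_{\Q_p}\wh{\otimes}_{\B^+_{\crr}}\wh{\B}^+_{l,\st}$ is $\rg_{\crr}(X_1)_{\Q_p}\wh{\otimes}_{\B^+_{\crr}}\B^+_{\st}$; both are routine given the setup of Section \ref{period-rings}.
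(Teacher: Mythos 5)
Your construction reproduces, essentially, the second half of the paper's argument (Section~\ref{pro2}): there, for a fixed descent datum $\xi=Z_1/K'$ and a chosen $l$, one base-changes the section $\iota_l$ of Theorem~\ref{section1} along $(\overline S_1,\se_{l,\st,n})\to(S_1,E_n)$, passes to $N_l$-nilpotent vectors inside $\wh{\A}_{l,\st}$, and transports along $\kappa_l$ --- which is exactly your ``run the construction over $\wh{\B}^+_{l,\st}$ and restrict to the nilpotent part'' remedy, and which also disposes of your convergence worry about $\exp(-N\otimes\log[\wt p])$ (Remark~\ref{identification} invokes Mokrane to make $N$ globally nilpotent, so the series is finite and already lands in $\B^+_{\st}$). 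Your equivariance checks are the standard ones, modulo a sign: with $N(\log[\wt p])=-v(p)=-1$ one gets $[\,N\otimes\log[\wt p],\,1\otimes N\,]=+\,N\otimes 1$, which is precisely what makes $\exp(-\mathrm{ad})$ cancel the $N\otimes 1$ summand of the source monodromy; with your sign the conjugation would produce $2N\otimes1+1\otimes N$. The identity $\epsilon^{\hk}_{\lambda}=s^*_{\lambda}\epsilon^{\hk}_{\st}\beta$ comes out as in the paper.

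The genuine gap is naturality, which is the content of the word ``natural'' in the statement and the reason the paper's proof spends all of Section~\ref{pro1} on it. Your $\epsilon^{\hk}_{\st}$ is manufactured from a fixed triple $(L,f_L,\theta_L)\in\Sigma$ together with $l$ and $\lambda_0$. The uniqueness in Theorem~\ref{section1}(2) is relative to a fixed $R=r^{\rm PD}_{K'}$ and a fixed arithmetic model, so it does not by itself identify the maps produced by two different models over two different finite extensions of $K$, and it gives no functoriality for morphisms $X_1\to X_1'$ that respect no choice of descent data --- which is exactly the situation in the $\eta$-\'etale hypercoverings over which the theorem is subsequently globalized (this is the very obstruction, recalled in the introduction, that makes the classical Hyodo--Kato map unusable for rigid analytic varieties). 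The paper resolves it by building $\epsilon^{\hk}_{\st}$ out of the filtered set $\Sigma^0$ of residue-field descent data and the Frobenius-twisted data $\Psi_{\xi^0}$ (the maps $\pi$ through ${\rm Fr}^{n_\pi}$, the transition maps $\mu_m$, and the untwisting $\phi^{n_\pi}(\cdot)\phi^{-n_\pi}$), and then uses your ``arithmetic'' map only to verify, via the uniqueness of the section, that the natural map so obtained is a quasi-isomorphism. Without some version of this mechanism your map is defined only after choices that cannot be made compatibly on a hypercovering, so the theorem as stated and as used is not established.
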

\begin{remark}
\begin{enumerate}
\item
Here, for $M=\rg_{\hk}(X_1^0),  \rg_{\crr}(X_1)$, we have defined\footnote{This is the only context in the paper where we use inductive tensor products.}
\begin{equation}
\label{def10}
M_{\Q_p}\wh{\otimes}_{\breve{F},\iota}\B^+_{\st}:=\LL\colim_r(M_{\Q_p}\wh{\otimes}^{\R}_{\breve{F}}\B^{\leq r}_{\st}),\quad 
M_{\Q_p}\wh{\otimes}_{\B^+_{\crr},\iota}\B^+_{\st}:=\LL\colim_r(M_{\Q_p}\wh{\otimes}^{\R}_{\B^+_{\crr}}\B^{\leq r}_{\st}),
\end{equation}
respectively, 
where \index{BST@\BST}$\B^{\leq r}_{\st}:=\oplus_{i=0}^r\B^+_{\crr}u^i_{\lambda}$,   
$u_{\lambda}=\log(\lambda)$, for fixed $\lambda$.
\item We set $\sd_{\phi,N}(C_{\B^+_{\st}}):={\rm LMod}_{\B^+_{\st}\{\phi,N\}}\sd(C_{\B^+_{\st}})$, where the ring $\B^+_{\st}\{\phi,N\}$ is defined as the abelian group $\B^+_{\st}[\phi,N]$ with multiplication rules $\phi a=\phi(a)\phi, Na-aN=N(a), N\phi=p\phi N$, 
for $a\in\B^+_{\st}$. 
\end{enumerate}
\end{remark}
\begin{proof} 
The proof of the theorem runs over sections~\ref{pro1} (construction on the map) 
and~\ref{pro2} (compatibility
with all structures).
\end{proof}
\subsubsection{Construction of the quasi-isomophism}~\label{pro1}\\
\noindent $\bullet$ {\it The index sets}.
Recall that we have assumed that one can find a  finite extension $L/K$ such that
$\overline{f}$ is the base change of 
 a fine log-scheme  ${f}_L:Z_{1}\to \Spec(\so_{L,1})^{\times}$, log-smooth and of Cartier type,  by the natural map $\theta: \overline{S}_1\to\Spec(\so_{L,1})^{\times}$.
 That is, we have a map $\theta_{L}: X_1\to Z_{1}$ such that the square $(\overline{f},{f}_L,\theta,\theta_{L})$ is Cartesian.
Such data $\Sigma:=\{(L,f_L,\theta_{L})\}$ clearly form a filtered set\footnote{In \cite[4.3.1]{CN3}, in the case of a semistable formal scheme $\sx$ over $\so_C$,  we have used a different index set $\Sigma$, call it $\Sigma^{\rm old}$. It is easy to see that we obtain the same theory with both choices of the index set: if $\sx$ is affine then the canonical map $\Sigma^{\rm old}\to\Sigma$ makes $\Sigma^{\rm old}$ cofinal in $\Sigma$. }.  
 
  We have similar data $\Sigma^0:=\{(\theta^0,f^0,\theta^0_L)\}$:
 \begin{enumerate}
 \item $\theta^0: \overline{S}^0\to S^{\prime,0}$ is a map of log-schemes over $S^0_K$ with $S^{\prime,0}=\Spf W(k^{\prime})$, where $k^{\prime}\subset \overline{k}$ is finite over $k$ and the log-structure of $S^{\prime,0}$ is generated by one element; the Frobenius on the log-scheme $S^{\prime,0}$ is induced, via the map $\theta^0$, from the Frobenius on $ \overline{S}^0$;
 \item $f^0: Z^0_1\to S^{\prime,0}_1$ is log-smooth, fine and  integral, of Cartier type;
 \item $\theta^0_X: X^0_1\to Z^0_1$ is such that the square $(\overline{f}^0, f^0, \theta^0_1,\theta^0_{X})$ is Cartesian.
 \end{enumerate}
 Such data again form a filtered set. There is a map of filtered  sets $\Sigma\to\Sigma^0, Z_1/S_L\mapsto Z^0_1/S^0_L$; it is cofinal. These filtered sets are clearly functorial with respect to the maps  $\overline{f}$.

\vskip.2cm
\noindent $\bullet$
{\em Construction of $\epsilon^{\hk}_{\st}$.} 
Let us first construct $\epsilon^{\hk}_{\st}$. For $\xi^0=Z^0_1/S^{\prime,0}_1\in\Sigma^0$, $S^{\prime}=\Spf \so_{K^{\prime}}$,  let $\Psi_{\xi^0}$ be the set of triples\footnote{We like to call them {\em Frobenius-twisted descent data}.} $\pi=(\pi,\pi_S,n_{\pi})$, where $n_{\pi}\in\N$ and $\pi, \pi_S$ are maps such that the diagram
\begin{equation}
\label{correction1}
\xymatrix@R=6mm{
X_1^0\ar@{^(->}[r]\ar[d]^-{\theta^0_X} &  X_1\ar[r]\ar[d]^-{\pi} & \overline{S}_1\ar[d]^-{\pi_S}\\
Z^0_1\ar[r]^{{\rm Fr}^{n_{\pi} }} & Z^0_1\ar[r] & S^{\prime,0}_1
}
\end{equation}
commutes. Here we denoted by ${\rm Fr}$ he absolute Frobenius. The set  $\Psi_{\xi^0}$ is ordered: $\pi_1\leq \pi_2$ means $m=n_{\pi_2}/n_{\pi_1}\in\Z$ and $\pi_2={\rm Fr}^m\pi_1$. We claim that the set $\Psi_{\xi^0}$ is  filtered. For that it suffices to show that, for $n\geq e_{K^{\prime}}$, any two triples 
$\pi_1=(\pi_1,\pi_{S,1},n)$ and $\pi_2=(\pi_2,\pi_{S,2},n)$ are in fact equal, that is, $\pi_1=\pi_2$ and $\pi_{S,1}=\pi_{S,2}$. But, for $n$ as above, we have the diagram ($\pi=\pi_1,\pi_2$)
$$
\xymatrix@R=7mm{
X_1^0\ar@{^(->}[drr]\ar[ddr]_{\theta^0_X} \\
& X_1 \ar@{-->}[ul]^f\ar[r]_{{\rm Fr}^{n}} \ar[d]^-{\pi}&  X_1\ar[r]\ar[d]^-{\pi} & \overline{S}_1\ar[d]^-{\pi_S}\\
& Z^0_1\ar[r]^{{\rm Fr}^{n }} & Z^0_1\ar[r] & S^{\prime,0}_1
}
$$
in which the  two small squares, the square with corner $X_1^0$,  and the top triangle commute. 
This implies that  ${\rm Fr}^{n}\pi={\rm Fr}^{n }\theta^0_Xf$. Since there are no nilpotents in $\so_{X^0_1}$, we get $\pi=\theta^0_Xf$, hence $\pi_1=\pi_2$, as wanted. 

  Similarly, we have the diagram
\begin{equation}
\label{smart}
\xymatrix@R=7mm{\overline{S}_1 \ar[ddr]_{\pi_S}\ar[rrd]^{{\rm Fr}^{n }}\ar@{-->}[dr]^{f_S}\\
& \overline{S}^{0}_1 \ar@{^(->}[r]\ar[d]^-{\theta^0}   & \overline{S}_1\ar[d]^-{\pi_S}\\
& S^{\prime,0}_1 \ar[r]^{{\rm Fr}^{n }}  & S^{\prime,0}_1
}
\end{equation}
where the  square with vertex $\overline{S}_1 $ and  the top triangle commute. The small square commutes as well:  map  the commutative diagram
$$
\xymatrix@R=6mm{
X_1^0\ar@{^(->}[r]\ar[d]^-{\theta^0_X} &  X_1\ar[d]^-{\pi} \\
 Z^0_1\ar[r]^{{\rm Fr}^{n }} & Z^0_1
}
$$
to it using the canonical maps and use the fact that $ \overline{S}^{0}_1$ is a field. Diagram (\ref{smart}) now implies that 
 ${\rm Fr}^{n }\pi_S={\rm Fr}^{n }\theta^0f_S$. Since $S^{\prime,0}_1 $ is a field we get $\pi_S=\theta^0f_S$, hence $\pi_{S,1}=\pi_{S,2}$, as wanted.

 Let now $e$ be the ramification index of $\so_{K^{\prime}}$. Denote by ${\mathbb A}^{(1/e)}_{W({k}^{\prime})}$ the formal scheme $\Spf W({k}^{\prime})\{t_a\}, $ where $a\in\tau_{1/e}$ is such that $[a]$ lies in the image of the embedding $\theta^{0,*}:\sll^{\prime,0}\hookrightarrow\overline{\sll}^0$, with $t_{a^{\prime}}=[a^{\prime}/a]t_a$. The log-structure is generated by $t_a$. We have an embedding 
  $i: {S}^{\prime,0}\hookrightarrow {\mathbb A}^{(1/e)}_{W({k}^{\prime})}$, $i^*(t_a)=[a]$.
 Let $R^{\prime,0}:=r^{{\rm PD},0}_{K^{\prime}}$, $E^0:=\Spf R^{\prime,0}$. We have the  PD-thickenings $i^0_0: (S^{\prime,0}_1, E^0_n)$, $i^{0,*}_0(t_a)=a\in\sll^{\prime,0}_1$. The map $\pi_l:=i\pi_S: \overline{S}_1\to {\mathbb A}^{(1/e)}_{W(k^{\prime})}$ induces a map $i_l: \overline{S}_1\to {\mathbb A}^{(r)}_{W(\overline{k})}$, for $r=p^{n_{\pi}}/e$, which corresponds (see Section \ref{period-rings}) to a class
 $l_{\pi}\in \Lambda$ such that   $v(l_{\pi})=r$. The map  $\pi_S$ extends  canonically to a map of PD-thickenings $\pi_{\st}: (\overline{S}_1,\se_{l_{\pi},\st,n})\to (S^{\prime,0}_1,E^0_{n})$, i.e., we have the following commutative diagram
 $$
 \xymatrix@R=7mm{
 \overline{S}_1\ar@{^(->}[r]^{i_l} \ar[d]^-{\pi_S}& \se_{l_{\pi},\st,n}\ar[d]^-{\pi_E} \\
  S^{\prime,0}_1\ar@{^(->}[r]^{i^0_0} & E^0_{n},
 }
 $$
 where the map $\pi_E$ sends $t_a\mapsto t_{a^{p^{n_{\pi}}}}$.
 
  We have the maps
  \begin{align}
  \label{maps}
  \rg_{\hk}(Z^0_1)_n\stackrel{i^*}{\leftarrow}\rg_{\crr}(Z^0_1/(S^{\prime,0}_1,E^0_n))\verylomapr{(\pi^*,\pi^*_{\st})}\rg_{\crr}(X_1/(\overline{S}_1,\se_{l_{\pi},\st,n}))\stackrel{\sim}{\leftarrow}\rg_{\crr}(X_1)_n\otimes^{\LL}_{\A_{\crr,n}}\wh{\A}_{l_{\pi},\st,n}.
  \end{align}
  By applying $\R\wlim_n$ to these complexes we can remove $n$. Now, $i^*_{\Q}$ has a  section $\iota$ (use Theorem \ref{section1}  for $E=E^0$). Composing it with the rest of the maps from (\ref{maps}) we get a map in  $\sd^c_{\phi,N}(W(k))_{\Q}$
  $$
  \epsilon_{\xi^0,\pi}: \rg_{\hk}(Z^0_1)_\Q\to (\rg_{\crr}(X_1)\wh{\otimes}^{\LL}_{\A_{\crr}}\wh{\A}_{l_{\pi},\st})_{\Q}.
  $$

   Before proceeding let us make the following remark.
\begin{remark}
\label{identification}
Let $M$ be a complex equipped with an $N$-action. 
Let \index{Nnilp@\Nnilp}$M^{N{\text{-nilp}}}:=[M\to M[N^{-1}]]$, where 
$$
M[N^{-1}]:=\LL\colim(M\lomapr{N}M\lomapr{N}\cdots).
$$
For  $M=\rg_{\hk}(X_1^0)$, we have strict quasi-isomorphisms in  $\sd_{\phi,N}(C_{\B^+_{\st}})$
\begin{equation}
\label{Paris-hot}
M_{\Q_p}\wh{\otimes}_{\breve{F},\iota}\B^+_{\st}\stackrel{\sim}{\leftarrow} (M_{\Q_p}\wh{\otimes}_{\breve{F},\iota}\B^+_{\st})^{N{\text{-nilp}}}\stackrel{\sim}{\to} 
(M_{\Q_p}\wh{\otimes}^{\R}_{\breve{F}}\wh{\B}^+_{l,\st})^{N{\text{-nilp}}}\simeq (M\wh{\otimes}^{\LL}_{W(\overline{k})}\wh{\A}_{l,\st})^{N{\text{-nilp}}}_{\Q_p}.
\end{equation}
The last quasi-isomorphism in (\ref{Paris-hot}) holds because $M$ is built from torsion-free and $p$-complete modules.   The previous two quasi-isomorphisms are clear algebraically because we can assume that $N$ is globally nilpotent on $M$ (see \cite[0.1]{Mok}); it is also clear topologically because $M_{\Q_p}$ is built from Banach spaces and $\wh{\B}^+_{l,\st}$ is a Banach space (so the derived tensor product is given by the tensor product itself).

     Similarly, for $M=\rg_{\crr}(X_1)$ (note that now the action of $N$ on $M$ is trivial), we have strict quasi-isomorphisms in  $\sd_{\phi,N}(C_{\B^+_{\st}})$
$$
M_{\Q_p}\wh{\otimes}_{\B^+_{\crr},\iota}\B^+_{\st}\stackrel{\sim}{\leftarrow} (M_{\Q_p}\wh{\otimes}_{\B^+_{\crr},\iota}\B^+_{\st})^{N{\text{-nilp}}}\stackrel{\sim}{\to} (M\wh{\otimes}^{\LL}_{\A_{\crr}}\wh{\A}_{l,\st})_{\Q_p}^{N{\text{-nilp}}}.
$$
\end{remark}

   Extending the map $ \epsilon_{\xi^0,\pi}$ by $\wh{\A}_{l_{\pi},\st}$-linearity and using the quasi-isomorphism (\ref{beta}) we get a map in  $\sd_{\phi,N}(C_{\breve{F}})$
   $$
 \wh {\epsilon}_{\st,\xi^0,\pi}:\ \quad (\rg_{\hk}(X_1^0)\wh{\otimes}^{\LL}_{W(\overline{k})}\wh{\A}_{l_{\pi},\st})_{\Q}\to (\rg_{\crr}(X_1)\wh{\otimes}^{\LL}_{\A_{\crr}}\wh{\A}_{l_{\pi},\st})_{\Q}.
$$

   Now, we define the map   in  $\sd_{\phi,N}(C_{\B^+_{\st}})$
   $$
  \epsilon_{\st,\xi^0,\pi}:=(\wh{\epsilon}_{\st,\xi^0,\pi})^{N_{l_{\pi}}{\text{-nilp}}}: \quad      (\rg_{\hk}(X_1^0)\wh{\otimes}^{\LL}_{W(\overline{k})}\wh{\A}_{l_{\pi},\st})_{\Q_p}^{N_{l_{\pi}}{\text{-nilp}}}
\to (\rg_{\crr}(X_1)\wh{\otimes}_{\A_{\crr}}\wh{\A}_{l_{\pi},\st})_{\Q_p}^{N_{l_{\pi}}{\text{-nilp}}}.$$
We can use the quasi-isomorphisms from Remark \ref{identification}  and get the  map
    $$
    \epsilon_{\st,\xi^0,\pi}: \quad \rg_{\hk}(X_1^0)_{\Q_p}\wh{\otimes}_{\breve{F},\iota}\B^+_{\st}\to \rg_{\crr}(X_1)_{\Q_p}\wh{\otimes}_{\B^+_{\crr},\iota}\B^+_{\st}.
$$
Finally, since the Frobenius is invertible on the Hyodo-Kato cohomology, we can take the map  in  $\sd_{\phi,N}(C_{\breve{F}})$
 $$
  \wt{\epsilon}^{\hk}_{\st,\xi^0,\pi}:=\phi^{n_{\pi}}{\epsilon}_{\st,\xi^0,\pi}\phi^{-n_{\pi}}: \quad \rg_{\hk}(X_1^0)_{\Q_p}\to \rg_{\crr}(X_1)\wh{\otimes}_{\B^+_{\crr},\iota}\B^+_{\st}.
$$
  Its $\B^+_{\st}$-linearization  in  $\sd_{\phi,N}(C_{\B^+_{\st}})$
      \begin{equation}
     \label{def11}
  {\epsilon}^{\hk}_{\st,\xi^0,\pi}: \quad \rg_{\hk}(X_1^0)_{\Q_p}\wh{\otimes}_{\breve{F},\iota}\B^+_{\st}\to \rg_{\crr}(X_1)_{\Q_p}\wh{\otimes}_{\B^+_{\crr},\iota}\B^+_{\st}
\end{equation}
is the map we want.

\vskip.2cm
\noindent $\bullet$
{\em Independence of the choice of $\pi$ and $\xi^0$.}  
      Fix $\xi^0$.  To show that the map ${\epsilon}^{\hk}_{\st,\xi^0,\pi}$ is  independent of Frobenius twists, that is of the choice\footnote{Up to a contractible set of choices, of course.} of $\pi\in\Psi_{\xi^0}$, we note that, for $m\in\Z_{>0}$, there are natural compatible with Frobenius transition maps $\mu_m:\se_{l_{\pi},\st}\to \se_{l^m_{\pi},\st}, \mu^*_m(t_{a^m})=t^m_a$. Moreover,  $\mu^{m_1}\mu^{m_2}=\mu^{m_1m_2}$ and $\mu^*_m\kappa_{l^m}=\kappa_l$. Then the
  transition map from $\epsilon^{\hk}_{\st,\xi^0,\pi_1}$ to $\epsilon^{\hk}_{\st,\xi^0,\pi_2}$, for $\pi_2\geq \pi_1$, is given by $(\phi^{n})^*$ acting on $\rg_{\crr}(Z^0_1/(S^{\prime,0}_1,E^0_n))$ and $\mu^*_n$, for $n=n_{\pi_2}-n_{\pi_1}$. This suffices since the set $\Psi_{\xi^0}$ is filtered. We set ${\epsilon}^{\hk}_{\st,\xi^0}:={\epsilon}^{\hk}_{\st,\xi^0,\pi}$, for any $\pi\in\Psi_{\xi^0}$. 
  
    To show  that $\epsilon^{\hk}_{\st,\xi^0}$ does not depend on the choice of  $\xi^0\in\Sigma^0$, we use
the above maps $\mu$ to identify $\epsilon^{\hk}_{\st,\xi^0_1}$ and $\epsilon^{\hk}_{\st,\xi^0_1}$, for $\xi^0_1\leq \xi^0_2$. This suffices because
the set  $\Sigma^0$ is filtered. We set ${\epsilon}^{\hk}_{\st}:={\epsilon}^{\hk}_{\st,\xi^0}$, for any $\xi^0\in\Sigma^0$. This map is clearly functorial with respect to $X_1$. 
  
\subsubsection{ Compatibility of the arithmetic and geometric maps  ${\epsilon}^{\hk}$.}~\label{pro2}
It remains to prove  that the map ${\epsilon}^{\hk}_{\st}$ is a quasi-isomorphism and that the last claim of our corollary holds. For that, assume that  $\xi^0$ comes from $\xi=Z_1/K^{\prime}\in\Sigma$. Choose $l\in\sll_1/k^{\prime,*}\subset \overline{\sll}_1/\overline{k}^*$. We get a map of PD-thickenings $(\overline{S}_1,\se_{l,\st,n})\to (S_1,E_n)$ that identifies the $t_a$'s. This yields the base change quasi-isomorphisms  in  $\sd_{\phi,N}(W_n(\overline{k})))$
  $$
  \rg_{\crr}(Z_1/R)_l{\otimes}^{\rm L}_{R}\wh{\A}_{l,\st,n}\stackrel{\sim}{\to}\rg_{\crr}(X_1/(\overline{S}_1,\se_{l,\st,n}))\stackrel{\sim}{\leftarrow}\rg_{\crr}(X_1){\otimes}^{\LL}_{W(\overline{k})}\wh{\A}_{l,\st,n}.
  $$
By applying   $\R\wlim_n$ we remove  $n$. Composing with the $\wh{\A}_{l,\st}$-linear extension of $\iota_l$ from Theorem \ref{section1}   we get the strict quasi-isomorphism  in  $\sd_{\phi,N}(C_{\breve{F}})$
  $$
  \widehat{\epsilon}_{\st,l}: (\rg_{\hk}(X_1^0)\wh{\otimes}^{\LL}_{W(\overline{k})}\wh{\A}_{l,\st})_{\Q}\stackrel{\sim}{\to} (\rg_{\crr}(X_1)\wh{\otimes}^{\LL}_{W(\overline{k})}\wh{\A}_{l,\st})_{\Q}
  $$ Denote by $$ 
  {\epsilon}_{\st,l}^{\hk}:=\kappa_l^{-1}( \widehat{\epsilon}_{\st,l})^{N{\text{-nilp}}}\kappa_l:  \quad \rg_{\hk}(X_1^0)_{\Q_p}\wh{\otimes}_{\breve{F},\iota}\B^+_{\st}\to \rg_{\crr}(X_1)_{\Q_p}\wh{\otimes}_{\B^+_{\crr},\iota}\B^+_{\st}
  $$
  the associated map  in  $\sd_{\phi,N}(C_{\B^+_{\st}})$. It is a strict quasi-isomorphism. 
  Now, choose $m$ large enough so  that the action of $Fr^m$ on $Z_1$ factors as $Z_1\lomapr{F_m} Z^0_1\hookrightarrow Z_1$. Take  $\pi:=(F_m\theta_X,F_{S,m}\theta_1,m)\in\Psi_{\xi^0}$. It is easy to see, using the uniqueness statement from Theorem \ref{section1},  that the associated  map $  {\epsilon}^{\hk}_{\st,\xi^0,\pi}$  equals $  {\epsilon}^{\hk}_{\st,l}$. In particular, the map ${\epsilon}^{\hk}_{\st}$ is a strict quasi-isomorphism, as wanted.
  
  The final claim  of the theorem
 follows since $ {\epsilon}^{\hk}_{\st,\lambda}=  s^*_{\lambda} {\epsilon}_{\st,l}^{\hk}. $

\subsubsection{Comparison between Hyodo-Kato and de Rham cohomologies}
Theorem \ref{HK-crr} implies the following Hyodo-Kato-to-de Rham quasi-isomorphisms:
     \begin{corollary}
     \label{local1}
We  have   natural strict \index{epshk@\epshk}quasi-isomorphisms
\begin{align}
\label{zaby1}
& {\epsilon}^{\hk}_{\dr}:\quad \rg_{\hk}(X^0_1)_{\Q_p}\wh{\otimes}^{\R}_{\breve{F}}C  \stackrel{\sim}{\to}\rg_{\crr}(X_1/\overline{S})_{\Q_p} \quad \mbox{ in } \sd(C_C),\\
& {\epsilon}^{\hk}_{\B^+_{\dr}}:\quad \rg_{\hk}(X^0_1)_{\Q_p}\wh{\otimes}^{\R}_{\breve{F}}\B^+_{\dr} \stackrel{\sim}{\to}\rg_{\crr}(X_1)_{\Q_p}\wh{\otimes}^{\R}_{\B^+_{\crr}}\B^+_{\dr} \quad \mbox{ in } \sd(C_{\B^+_{\dr}}).\notag
\end{align}
They are compatible via the maps $\theta: \B^+_{\dr}\to C$ and $\rg_{\crr}(X_1)\to \rg_{\crr}(X_1/\overline{S})$.
     \end{corollary}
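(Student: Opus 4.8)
The plan is to deduce both quasi-isomorphisms from the map $\epsilon^{\hk}_{\st}$ of Theorem~\ref{HK-crr} by base change along the ring homomorphisms $\iota\colon\B^+_{\st}\to\B^+_{\dr}$ (the map normalized at $p$) and $\theta\colon\B^+_{\dr}\to C$, and to read off the stated compatibility as a formal consequence of the construction. First I would construct $\epsilon^{\hk}_{\B^+_{\dr}}$. Since $\epsilon^{\hk}_{\st}$ is compatible with the tensor structure it is in particular $\B^+_{\st}$-linear, so I apply $(-)\wh{\otimes}^{\R}_{\B^+_{\st},\iota}\B^+_{\dr}$ to it. On the source I unwind the defining colimit $\rg_{\hk}(X^0_1)_{\Q_p}\wh{\otimes}_{\breve{F},\iota}\B^+_{\st}=\LL\colim_r\rg_{\hk}(X^0_1)_{\Q_p}\wh{\otimes}^{\R}_{\breve{F}}\B^{\leq r}_{\st}$ and use associativity of the completed derived tensor product together with $\B^+_{\st}\wh{\otimes}_{\B^+_{\st},\iota}\B^+_{\dr}\simeq\B^+_{\dr}$ and the fact that $\iota$ restricts to the canonical embedding $\breve{F}\hookrightarrow\B^+_{\dr}$; this identifies the source with $\rg_{\hk}(X^0_1)_{\Q_p}\wh{\otimes}^{\R}_{\breve{F}}\B^+_{\dr}$. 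Since $\iota$ is also $\B^+_{\crr}$-linear, the target $\rg_{\crr}(X_1)_{\Q_p}\wh{\otimes}_{\B^+_{\crr},\iota}\B^+_{\st}$ becomes $\rg_{\crr}(X_1)_{\Q_p}\wh{\otimes}^{\R}_{\B^+_{\crr}}\B^+_{\dr}$. Strictness is preserved because $\epsilon^{\hk}_{\st}$ is a strict quasi-isomorphism of complexes built from flat Banach modules, using the analogue of Lemma~\ref{trick1} for the functor $(-)\wh{\otimes}^{\R}_{\B^+_{\crr}}\B^+_{\dr}$ recorded earlier. This produces $\epsilon^{\hk}_{\B^+_{\dr}}$; the monodromy does not survive, as $\B^+_{\dr}$ carries none.

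Next I would set $\epsilon^{\hk}_{\dr}:=\epsilon^{\hk}_{\B^+_{\dr}}\wh{\otimes}^{\R}_{\B^+_{\dr}}C$, i.e.\ reduction modulo $F^1$. The source becomes $\rg_{\hk}(X^0_1)_{\Q_p}\wh{\otimes}^{\R}_{\breve{F}}C$ ($C$ being a field extension of $\breve{F}$, nothing is derived). For the target I need $\rg_{\crr}(X_1)_{\Q_p}\wh{\otimes}^{\R}_{\B^+_{\crr}}C\simeq\rg_{\crr}(X_1/\overline{S})_{\Q_p}$, which is the rationalization of the standard crystalline base change $\rg_{\crr}(X_1)\wh{\otimes}^{\LL}_{\A_{\crr}}\so_C\simeq\rg_{\crr}(X_1/\overline{S})$ along Fontaine's map $\theta\colon\se_{\crr}\to\overline{S}$ (equivalently, locally, the comparison of the two derived de Rham cohomologies), together with the observation that smashing the tower $\R\wlim_i\bigl((-)\wh{\otimes}^{\LL}_{\B^+_{\crr}}\B^+_{\crr}/F^i\bigr)$ defining $(-)\wh{\otimes}^{\R}_{\B^+_{\crr}}\B^+_{\dr}$ with $\B^+_{\dr}/F^1$ gives back $(-)\wh{\otimes}^{\LL}_{\B^+_{\crr}}C$ (the higher $\operatorname{Tor}$-terms form a tower with zero transition maps, $F^1$ being generated by a non-zero-divisor, so they have vanishing $\R\wlim$). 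Strictness passes through by the analogue of Lemma~\ref{trick1} for $(-)\wh{\otimes}^{\LL}_{\B^+_{\crr}}C$. As a consistency check: since $\theta(p[\widetilde{p}]^{-1})=1$ one has $\log(p[\widetilde{p}]^{-1})\in F^1$, so modulo $F^1$ the map $\iota$ coincides with $s^*_{[\widetilde{p}]}$, and $\epsilon^{\hk}_{\dr}$ also equals the base change along $\theta$ of the map $\epsilon^{\hk}_\lambda$ for $\lambda=[\widetilde{p}]$ from the preceding corollary; in particular the $\B^+_{\st}$-route yields the correctly (choice-freely) normalized $\B^+_{\dr}$-version.

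Finally, the asserted compatibility is tautological: $\epsilon^{\hk}_{\dr}$ is by construction the base change of $\epsilon^{\hk}_{\B^+_{\dr}}$ along $\theta$, the left vertical of the compatibility square is the $\theta$-reduction on the source, and the right vertical is the $\theta$-reduction on the target followed by the identification $\rg_{\crr}(X_1)_{\Q_p}\wh{\otimes}^{\R}_{\B^+_{\crr}}C\simeq\rg_{\crr}(X_1/\overline{S})_{\Q_p}$, which is functorial in $X_1$ and hence compatible with $\rg_{\crr}(X_1)\to\rg_{\crr}(X_1/\overline{S})$. I expect the only delicate point to be the bookkeeping for the completions and $\R\wlim$'s under base change --- in particular verifying that reducing the $\B^+_{\dr}$-statement modulo $F^1$ recovers the $C$-statement, and that the inductive colimits over the $\B^{\leq r}_{\st}$ interact correctly with $(-)\wh{\otimes}_{\B^+_{\st},\iota}\B^+_{\dr}$; the rest is formal manipulation of the already-constructed $\epsilon^{\hk}_{\st}$.
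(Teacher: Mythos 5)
Your proposal is correct and rests on the same master idea as the paper's proof --- specialize the $\B^+_{\st}$-linear quasi-isomorphism $\epsilon^{\hk}_{\st}$ of Theorem \ref{HK-crr} along ring maps out of $\B^+_{\st}$ and obtain the compatibility formally --- but you run the two constructions in the opposite order and through a different specialization. The paper first collapses $\B^+_{\st}$ to $\B^+_{\crr}$ via the splitting $\log(\lambda_p)\mapsto 0$, producing an intermediate $\epsilon^{\hk}_{\crr}:\rg_{\hk}(X_1^0)_{\Q_p}\wh{\otimes}^{\R}_{\breve{F}}\B^+_{\crr}\stackrel{\sim}{\to}\rg_{\crr}(X_1)_{\Q_p}$ whose target carries no tensor factor; it then gets $\epsilon^{\hk}_{\dr}$ by applying $-\wh{\otimes}_{\B^+_{\crr}}C$ and the crystalline base change to $\rg_{\crr}(X_1/\overline{S})_{\Q_p}$ (exactly as you do), and $\epsilon^{\hk}_{\B^+_{\dr}}$ by tensoring with $\B^+_{\crr}/F^i$ and taking $\R\wlim_i$. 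You instead base-change along the Galois-equivariant embedding $\iota:\B^+_{\st}\to\B^+_{\dr}$ first and reduce modulo $F^1$ afterwards. Both routes are valid. Yours has the small advantage that Galois equivariance of $\epsilon^{\hk}_{\dr}$ is automatic, whereas the paper must observe separately that $\sigma(\log\lambda_p)-\log\lambda_p\in{\rm Ker}\,\theta$; it has the small disadvantage that the bookkeeping you defer --- commuting the $\LL\colim_r$ over the $\B^{\leq r}_{\st}$ against the $\R\wlim_i$ over the $F^i$ in your associativity step --- is exactly the point the paper's detour through $\B^+_{\crr}$ is designed to avoid: once the target is the plain complex $\rg_{\crr}(X_1)_{\Q_p}$ of flat Banach $\B^+_{\crr}$-modules, Lemma \ref{trick1} and its filtered variants apply directly, with no interaction between a colimit and a limit. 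Note finally that your $\epsilon^{\hk}_{\B^+_{\dr}}$ and the paper's are not literally the same map: they are the specializations of $u=\log([\wt{p}])$ to $-\log(p[\wt{p}]^{-1})$ and to $0$ respectively, so they differ by the automorphism $\exp\bigl(\log(p[\wt{p}]^{-1})N\bigr)\otimes 1$ of the source; since $\log(p[\wt{p}]^{-1})\in F^1\B^+_{\dr}$ they agree after $\theta$, and either choice satisfies the corollary as stated.
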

     \begin{proof} From   Theorem \ref{HK-crr} 
        we have  a natural strict quasi-isomorphism in $\sd_{\phi,N}(C_{\breve{F}})$
\begin{equation}
\label{twisted1}
\epsilon^{\hk}_{\st}:\rg_{\hk}(X_1^0)_{\Q_p}\wh{\otimes}_{\breve{F},\iota}\B^+_{\st}\stackrel{\sim}{\to} \rg_{\crr}(X_1)_{\Q_p}\wh{\otimes}_{\B^+_{\crr},\iota}\B^+_{\st}. 
\end{equation}
Take the  map $\B^+_{\st}\to \B^+_{\crr}$ given by sending $\log(\lambda_p)\mapsto 0$. It is not Galois equivariant but this will not be a problem for us. Applying it to the quasi-isomorphism (\ref{twisted1}), which is $\B^+_{\st}$-linear,  we get a strict quasi-isomorphism in $\sd(C_{\B^+_{\crr}})$
\begin{equation}
\label{twisted12}
\epsilon^{\hk}_{\crr}:\quad \rg_{\hk}(X_1^0)_{\Q_p}\wh{\otimes}^{\R}_{\breve{F}}\B^+_{\crr}\stackrel{\sim}{\to} \rg_{\crr}(X_1)_{\Q_p}. 
\end{equation}
We tensor it now over $\B^+_{\crr}$ with C. By Lemma \ref{trick1}, 
 we obtain the  strict quasi-isomorphism in $\sd(C_C)$
 \begin{equation}
\label{twisted13}
\tilde{\epsilon}^{\hk}_{\dr}:\quad \rg_{\hk}(X_1^0)_{\Q_p}\wh{\otimes}^{\R}_{\breve{F}}C\stackrel{\sim}{\to} \rg_{\crr}(X_1)_{\Q_p}\wh{\otimes}^{\LL}_{\B^+_{\crr}}C
\end{equation}
and, composing with the strict quasi-isomorphism in $\sd(C_C)$
$$
 \rg_{\crr}(X_1)_{\Q_p}\wh{\otimes}^{\LL}_{\B^+_{\crr}}C\stackrel{\sim}{\to} \rg_{\crr}(X_1/\overline{S})_{\Q_p},
$$
the quasi-isomorphism $\epsilon^{\hk}_{\dr}$ from our corollary. 
We note that $\epsilon^{\hk}_{\dr}$  is compatible with the Galois action
because $\sigma(\log(\lambda_p))-\log(\lambda_p)\in{\rm Ker}\,\theta$.

   Proceeding as above we get 
   the  strict quasi-isomorphism in $\sd(C_{\B^+_{\crr}})$
 \begin{equation}
\tilde{\epsilon}^{\hk}_{\dr}:\quad \rg_{\hk}(X_1^0)_{\Q_p}\wh{\otimes}^{\R}_{\breve{F}}(\B^+_{\crr}/F^i)\stackrel{\sim}{\to} \rg_{\crr}(X_1)_{\Q_p}\wh{\otimes}^{\LL}_{\B^+_{\crr}}(\B^+_{\crr}/F^i),\quad i\geq 0.
\end{equation}
Taking $\R\wlim_i$ of both sides gives us now the second strict quasi-isomorphism of the theorem. 
      \end{proof}

\section{$\B^+_{\dr}$-cohomology} 
This section is devoted to the definitions of rigid analytic 
and overconvergent $\B^+_{\dr}$-cohomologies $\rg_{\rm dR}(X/\B^+_{\dr})$, for $X\in{\rm Sm}_C$
or $X\in{\rm Sm}^\dagger_C$, and to the study of their basic properties. 
These cohomologies are replacements for $\rg_{\rm dR}(X)\wh{\otimes}_C^\R\B^+_{\dr}$ which does not
exist since there is no continuous ring morphism $C\to \B^+_{\dr}$ although $\ovk$ is naturally
a subring of $\B^+_{\dr}$: if $X$ is defined over $K$, then
$\rg_{\rm dR}(X/\B^+_{\dr})\simeq \rg_{\rm dR}(X)\wh{\otimes}_K^\R\B^+_{\dr}$.
In general, we have the relation  $\rg_{\rm dR}(X/\B^+_{\dr})\wh{\otimes}_{\B^+_{\dr}}^{\R}\C\simeq \rg_{\rm dR}(X)$
(see Proposition~\ref{projection}
and Proposition~\ref{prison} for this comparison and analogous results concerning filtrations).

In the next chapter, using the Hyodo-Kato map, we will prove that, 
if $X\in {\rm Sm}_C$ is partially proper, then the rigid analytic
and overconvergent $\B^+_{\dr}$-cohomologies give the same result:
if $X^\dagger$ is the associated dagger variety,
the natural map $\rg_{\rm dR}(X^\dagger/\B^+_{\dr})\to \rg_{\rm dR}({X}/\B^+_{\dr})$ is a
strict quasi-isomorphism (Corollary~\ref{roznosci1}).

Our rigid analytic $\B^+_{\dr}$-cohomology is  defined by, locally, Hodge-completing absolute crystalline cohomology, but it  gives
the same object (see Proposition~\ref{BMSG}) 
as the  constructions of Bhatt-Morrow-Scholze~\cite{BMS1} and Guo~\cite{Guo}  via
the infinitesimal site.

\subsection{CliffsNotes}
  For a quick reference, we will recall now some results from \cite{CN3} and add few complements. 
  \subsubsection{Review} We start with a review of \cite{CN3}. 
 \begin{proposition}{\rm (Colmez-Nizio\l, \cite[Th. 1.1]{CN3})}
 \label{main00}
 \begin{enumerate}
 \item {\bf Dagger varieties}:
 To any smooth dagger  variety $X$ over $L=K,C$ there are  naturally  \index{rgproet@\rgproet}associated:
 \begin{enumerate}
 \item A pro-\'etale cohomology $\rg_{\proeet}(X,\Q_p(r))\in \sd(C_{\Q_p})$, $r\in \Z$.
 \item For $L=C$, a $\ovk$-valued rigid 
\index{rgrig@\rgrig}cohomology $\rg_{\rig,\ovk}(X)\in \sd(C_{\ovk})$ and a natural strict quasi-isomorphism\footnote{See \cite[Prop. 5.20]{CN3} for the definition of the tensor product.} in $\sd(C_{\ovk})$
 $$
 \rg_{\rig,\ovk}(X)\wh{\otimes}^{\R}_{\ovk}C\simeq \rg_{\dr}(X).
 $$
 This defines a natural $\ovk$-structure on the de Rham cohomology\footnote{By the same procedure one can define a $F^{\nr}$-valued rigid cohomology $\rg_{\rig,F^{\nr}}(X)$ and a natural strict quasi-isomorphism
  $ \rg_{\rig,F^{\nr}}(X)\wh{\otimes}^{\R}_{F^{\nr}}C\simeq \rg_{\dr}(X).
 $}.
\item A Hyodo-Kato \index{rghk@\rghk}cohomology\footnote{To distinguish this overconvergent Hyodo-Kato cohomology -- which was  defined by Grosse-Kl\"onne -- from the Hyodo-Kato cohomology defined later in this paper we will add the subscript $\gk$ to the former. Similarly, we will distinguished the induced  overconvergent syntomic cohomology.}
$\rg^{\gk}_{\hk}(X)\in \sd_{\phi,N}(C_{F_L})$, where $F_L=F$   if $L=K$
and   $F_L=F^{\nr}$if $L$=$C$.
For $L=C$, we have  natural Hyodo-Kato strict \index{iotahk@\iotahk}quasi-isomorphisms\footnote{See \cite[Sec. 5.3.3]{CN3} for the definition of  tensor products.} in, resp.,  $\sd(C_{\ovk}), \sd(C_C)$
$$
 \iota_{\hk}: \rg^{\gk}_{\hk}(X)\wh{\otimes}_{F^{\nr}}\ovk\stackrel{\sim}{\to} \rg_{\rig,\ovk}(X),\quad  \iota_{\hk}: \rg^{\gk}_{\hk}(X)\wh{\otimes}^{\R}_{F^{\nr}}C\stackrel{\sim}{\to} \rg_{\dr}(X).
 $$
  \item For $L=K$, a syntomic cohomology $\R\Gamma^{\gk}_{\synt}(X,\Q_p(r))\in \sd(C_{\Q_p})$, 
\index{rgsyn@\rgsyn}$r\in\N$, that fits into a distinguished triangle
  $$
  \R\Gamma^{\gk}_{\synt}(X,\Q_p(r))\lomapr{} [\rg_{\hk}^{\rm GK}(X)]^{N=0,\phi=p^r}\lomapr{\iota_{\hk}} \rg_{\dr}(X)/F^r, 
  $$
 and  
  a natural period \index{alphar@\alphar}map in $\sd(C_{\Q_p})$
    $$
  \alpha_r: \R\Gamma^{\gk}_{\synt}(X,\Q_p(r))\to \R\Gamma_{\proeet}(X,\Q_p(r)).
    $$
 It is  a strict quasi-isomorphism after truncation $\tau_{\leq r}$.
 \item {\rm (Local-global compatibility)} In the case $X$ has a semistable weak formal model the above constructions are compatible with their analogs defined using the model. 
 \end{enumerate}
 \item  {\bf Rigid analytic varieties}: To any smooth rigid analytic  variety $X$ over $L=K,C$ there are  naturally  associated:
 \begin{enumerate} 
 \item For $L=C$, a $\ovk$-valued convergent cohomology $\rg_{\conv,\ovk}(X)\in \sd(C_{\ovk})$ 
\index{rgconv@\rgconv}and a natural strict quasi-isomorphism in $\sd(C_C)$
  $$
 \rg_{\conv,\ovk}(X)\wh{\otimes}^{\R}_{\ovk}C\simeq \rg_{\dr}(X).
 $$
 This defines a natural $\ovk$-structure on the de Rham cohomology.
 \item A Hyodo-Kato \index{rghk@\rghk}cohomology 
$\rg_{\hk}(X)\in \sd_{\phi,N}(C_{F_L})$. For $L=C$, we have  natural Hyodo-Kato strict 
\index{iotahk@\iotahk}quasi-isomorphisms in, resp., $\sd(C_{\ovk}), \sd(C_C)$
$$
 \iota_{\hk}: \rg_{\hk}(X)\wh{\otimes}_{F^{\nr}}\ovk\stackrel{\sim}{\to} \rg_{\conv,\ovk}(X),\quad  \iota_{\hk}: \rg_{\hk}(X)\wh{\otimes}^{\R}_{F^{\nr}}C\stackrel{\sim}{\to} \rg_{\dr}(X).
 $$
 \item For $L=K,C$, a natural period \index{alphar@\alphar}map in $\sd(C_{\Q_p})$
    $$
  \alpha_r: \R\Gamma_{\synt}(X,\Q_p(r))\to \R\Gamma_{\proeet}(X,\Q_p(r)).
    $$
 It is a  strict quasi-isomorphism after truncation $\tau_{\leq r}$.
 \item {\rm(Local-global compatibility)} In the case $X$ has a semistable  formal model the  constructions in {\rm (a), (b)} are compatible with their analogs defined using the model. This is also the case in {\rm (c)}, for $L=K$.
\end{enumerate}
 \item {\bf Compatibility}: For $L=K,C$, let $X$ be a smooth dagger variety over $L$ and let $\wh{X}$ denote its completion. Then:
 \begin{enumerate}
 \item There exists a natural \index{iotahk@\iotahk}map \cite[Sec.\,3.2.4]{CN3} in $\sd(C_{\Q_p})$
 $$
 \iota_{\proeet}: \rg_{\proeet}(X,\Q_p(r))\to \rg_{\proeet}(\wh{X},\Q_p(r))\quad r\in \Z.
 $$
 It is a strict quasi-isomorphism if $X$ is partially proper. 
 \item There exists  natural maps in, resp.,    $\sd_{\phi,N}(C_{F_L})$, $ \sd\sff(C_{F_L})$
$$
  \rg^{\gk}_{\hk}(X)\to \rg_{\hk}(\wh{X}),\quad  \rg_{\dr}(X)\to \rg_{\dr}(\wh{X}).
$$
Here  $\sd\sff(C_{F_L})$ is the  filtered  $\infty$-category of $\sd(C_{F_L})$\footnote{Recall that, for a stable $\infty$-category $\scc$ having sequential limits, the  filtered $\infty$-category $\sd\sff(\scc)$  was  defined in  \cite[Thm. 2.5]{GP18}. It is a stable $\infty$-category.}.
{\rm If $X$ is partially proper},
 the second map is a strict quasi-isomorphism; the first map is a quasi-isomorphism if $L=K$ or 
if $X$ comes from a dagger variety defined over a finite extension of $K$. 
\item For $L=K$, there is a natural map in $\sd(C_{\Q_p})$
$$
\iota^{\rm GK}: \rg^{\gk}_{\synt}(X,\Q_p(r))\to \rg_{\synt}(\wh{X},\Q_p(r))
$$
and  the following diagram commutes
$$
\xymatrix@R=6mm{
\rg^{\gk}_{\synt}(X,\Q_p(r))\ar[r]^{\alpha_r}\ar[d]^{\iota^{\rm GK}} 
& \rg_{\proeet}(X,\Q_p(r))\ar[d]^{\iota_{\proeet}}\\
\rg_{\synt}(\wh{X},\Q_p(r))\ar[r]^{\alpha_r} & \rg_{\proeet}(\wh{X},\Q_p(r))
}
$$
\end{enumerate}
\end{enumerate}
 \end{proposition}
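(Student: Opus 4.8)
This is \cite[Th.\,1.1]{CN3}, so the plan is simply to invoke it; for the reader's orientation I recall how its objects are built, since the present paper will refine precisely the most delicate step. The whole construction is local: by the rigid analytic \'etale local alterations of Hartl and Temkin \cite{Urs,Tem}, the \'etale site of a smooth rigid analytic (resp.\ dagger) variety $X$ over $L=K,C$ carries a Beilinson base of semistable formal schemes over $\so_L$ (semistable \emph{weak} formal schemes in the dagger case). On such a model all the theories --- absolute crystalline, Hyodo--Kato, rigid/convergent, de Rham, Fontaine--Messing syntomic --- have their classical definitions, the Hyodo--Kato morphism of \cite{HK} is available, and in the overconvergent case one uses Grosse--Kl\"onne's constructions \cite{GKFr}; one then obtains the global theories by $\eta$-\'etale descent. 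Pro-\'etale cohomology is treated directly: for a dagger affinoid with presentation $\{X_h\}$ one sets $\rg_{\proeet}(X,\Q_p(r)):=\hocolim_h\rg_{\proeet}(X_h,\Q_p(r))$ and globalizes.

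Next I would produce the period morphism $\alpha_r$ by $\eta$-\'etale descent from the Fontaine--Messing period map on semistable models, and reduce the assertion that it is a strict quasi-isomorphism after $\tau_{\leq r}$, again by descent, to Tsuji's computation of $p$-adic nearby cycles through syntomic complexes \cite{Ts}. The Hyodo--Kato quasi-isomorphisms $\iota_{\hk}$ onto $\rg_{\rig,\ovk}(X)$, $\rg_{\conv,\ovk}(X)$ and $\rg_{\dr}(X)$ are then read off from the model-level ones (Grosse--Kl\"onne's in the dagger case, a separate limit construction in the rigid analytic case), which also gives the $\ovk$-structure on de Rham cohomology. Finally, the local--global compatibilities of (1)(e), (2)(d) and all of part (3) --- the maps $\iota_{\proeet}$, $\iota^{\rm GK}$, the commuting square, and the partial-properness statements --- are obtained by transporting everything through the Hyodo--Kato maps (compatible by construction) to the de Rham realization, where the required quasi-isomorphisms are classical.

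The hard part, and the bulk of \cite{CN3}, is the distinguished triangle in (1)(d) attaching $\rg^{\rm GK}_{\synt}(X,\Q_p(r))$ to $[\rg^{\rm GK}_{\hk}(X)]^{N=0,\phi=p^r}\to\rg_{\dr}(X)/F^r$: it identifies Fontaine--Messing syntomic cohomology (the homotopy fiber of $\phi-p^r$ on filtered absolute crystalline cohomology) with a Hyodo--Kato Frobenius eigenspace, and for this one needs a form of the Hyodo--Kato morphism that is independent of the local uniformizers and compatible with $\eta$-\'etale descent. In \cite{CN3} this was achieved by adapting Beilinson's rigidity argument \cite{Bv2}; the present paper redoes this step functorially in the derived $\infty$-category (Sections~2 and~4), which is what ultimately makes the geometric and geometrized versions (Theorems~\ref{main1} and~\ref{cosgeo}) accessible.
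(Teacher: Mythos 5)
The paper gives no proof of this proposition --- it is stated purely as a recollection of \cite[Th.\,1.1]{CN3} in the ``CliffsNotes'' section --- and your proposal correctly treats it as a citation, with an orientation sketch that matches the paper's own description of how the cited theorem is established (Beilinson base of semistable models, $\eta$-\'etale descent, Fontaine--Messing plus Tsuji for the period map). One small correction to your last paragraph: in \cite{CN3} the uniformizer-independent Hyodo--Kato morphism was obtained by a ``killing nilpotents'' trick rather than by adapting Beilinson's rigidity argument; it is the present paper that adapts Beilinson's method, precisely because (as the authors remark after this proposition) the \cite{CN3} construction was not functorial enough to transfer to the geometric setting.
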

 \begin{remark}
 (i) Below, in Section \ref{period-dagger-section},  we will define the overconvergent period map in 1d over $C$ and, in Proposition \ref{lwiatko11},  we will remove the condition $L=K$ in  3c. 
To do this we could not use the constructions from \cite{CDN3} and \cite{CN3}: the first one was not functorial enough, the second one, using a ``killing nilpotents'' trick, 
just did not transfer to the geometric setting.  
This depressing state of affairs
made us take a break of more than a year from the project before coming back to it with an
  approach that adapts to the analytic setting
an early construction by  Beilinson of the Hyodo-Kato quasi-isomorphism.
 
 (ii) The local-global compatibility for rigid analytic geometric syntomic cohomology also holds. This will be proved in Proposition \ref{synt-locglob} using local-global compatibility for Hyodo-Kato  and $\B^+_{\dr}$-cohomologies. 
 \end{remark}
 \subsubsection{Complements} \label{sketchy} Now we pass to complementary results. 
 
 (1) {\em $\eta$-\'etale descent.} The following proposition should have been included in \cite{CN3}. 
 \begin{proposition}
 Let $(\sbb,F)$ be a Beilinson base\footnote{Such a base was introduced by Beilinson in \cite[2.1]{BE0}; it is a slightly more general notion than that of a Verdier base which is commonly used.} of an essentially small site  $\sv$. Then: 
 \begin{enumerate}
 \item The functor $F:\sbb\to\sv$ is continuous.
 \item $F$ induces an equivalence of topoi
 $$
 {\rm Sh}(\sbb)\stackrel{\sim}{\to} {\rm Sh}(\sv).
 $$
 \item Let $\sd$ be a presentable  $\infty$-category. Then $F$ induces an equivalence of $\infty$-categories
 $$
 {\rm Sh}^{\rm hyp}(\sbb,\sd)\stackrel{\sim}{\to} {\rm Sh}^{\rm hyp}(\sv,\sd)
 $$
of hypersheaves. 
\end{enumerate}
 \end{proposition}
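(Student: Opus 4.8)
The plan is to treat the three assertions in order, isolating the single cofinality statement that the axioms of a Beilinson base \cite[2.1]{BE0} (see also \cite[2.1]{CN3}) are designed to provide, and then propagating it from presheaves to sheaves and finally to hypersheaves. For (1), I would first recall that $\sbb$ carries the topology \emph{induced} by $F$: a sieve $S$ on $b\in\sbb$ is covering if and only if the sieve $\bar S$ it generates on $F(b)$ is covering in $\sv$. With this convention $F$ tautologically carries covers to covers, and continuity reduces to showing that $G\circ F^{op}$ is a sheaf on $\sbb$ whenever $G$ is a sheaf on $\sv$. The only thing to check is that, for a covering sieve $S$ on $b$, the restriction functor $S\to\bar S$, $(b'\to b)\mapsto (F(b')\to F(b))$, is initial, so that $\holim_{\bar S} G=\holim_{S} G\circ F^{op}$ and the sheaf property for $\bar S$ transports to $S$. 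This initiality is exactly what the base axioms give: every $v\to F(b)$ in $\bar S$ is, after a further cover, dominated by morphisms $F(b')\to F(b)$ with $b'\to b$ in $S$ — and, crucially, in a Beilinson (as opposed to Verdier) base these dominating morphisms are \emph{not} required to be images of morphisms of $\sbb$ lying over the given one, which costs nothing for a cofinality argument.

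For (2) the plan is to build a quasi-inverse to the restriction functor $F_s\colon {\rm Sh}(\sv)\to{\rm Sh}(\sbb)$ by gluing along base covers: given a sheaf $H$ on $\sbb$ and $v\in\sv$, set $\widetilde H(v):=\holim_{(b\to v)} H(b)$, the limit taken over a covering sieve of $v$ by objects of $\sbb$, which exists by the base axioms. Independence of the chosen sieve and the sheaf property of $\widetilde H$ follow from the cofinality statement used in (1) (any two base covers of $v$ admit a common refinement), and functoriality in $v$ is checked the same way. The unit $H\to F_s\widetilde H$ is then an isomorphism because $b$ is covered by its identity, and the counit $\widetilde{F_sG}\to G$ because a sheaf $G$ already satisfies $G(v)\stackrel{\sim}{\to}\holim_{(b\to v)} G(F(b))$ over any base cover of $v$. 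This is in substance Beilinson's comparison lemma; the point to stress is that fullness of $F$ is never used.

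For (3), I would work with the description of $\sd$-valued hypersheaves on a site as the functors $\sv^{op}\to\sd$ taking every hypercover $U_\bullet\to v$ to a limit diagram $G(v)\stackrel{\sim}{\to}\holim_{[n]\in\Delta} G(U_n)$ (extending $G$ over coproducts as usual), and similarly over $\sbb$. The geometric heart is the claim that every hypercover of an object $v\in\sv$ admits a refinement $W_\bullet\to v$ in which each $W_n$ is a coproduct of objects of $\sbb$; I would prove this by induction on simplicial degree, at each stage covering the relevant matching object — which is built from fibre products that in general do not lie in $\sbb$ — by objects of $\sbb$, which is precisely the base axiom allowing fibre products of base objects to be covered by base objects (and again the covering maps need not lift to $\sbb$). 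Granting this, hyperdescent along $\sv$ is equivalent to hyperdescent along hypercovers supported on $\sbb$, which via $F$ are exactly the hypercovers of $\sbb$ for the induced topology, so $F^*$ preserves hypersheaves; one then reruns (2) levelwise, defining the inverse of $F^*$ by $\widetilde H(v):=\holim_{[n]} H(b^v_n)$ for a base hypercover $b^v_\bullet\to v$ and checking well-definedness through the cofinality of base hypercovers inside all hypercovers of $v$.

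I expect the main obstacle to be the two cofinality statements — base covers cofinal among all covers of a given object, and base hypercovers cofinal among all hypercovers — since this is where the weakening from a Verdier base to a Beilinson base must be absorbed: one can no longer form fibre products inside $\sbb$ but must cover them, and one must verify both that the levelwise choices assemble into an honest refinement of simplicial objects and that the resulting totalizations are insensitive to all the choices. The $1$-categorical instance of this is classical (Beilinson); the $\infty$-categorical bookkeeping for hypercovers needed in (3) is the part that genuinely requires care.
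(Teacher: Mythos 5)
Your treatment of (1) and (2) is in substance the comparison lemma of Beilinson, which is exactly what the paper invokes (it simply cites the proof of Prop.~2.2 of \cite{CN3} rather than rewriting the cofinality argument); your identification of the one place where the Beilinson/Verdier distinction matters — the dominating maps covering fibre products need not lift to $\sbb$, and a cofinality argument does not care — is the right point. Where you genuinely diverge is (3). You propose to prove hyperdescent by hand: refine an arbitrary hypercover of $v\in\sv$ degree by degree into one with terms in $\sbb$, establish cofinality of base hypercovers among all hypercovers, and then rerun the gluing of (2) levelwise. This can be made to work (it is essentially the Dugger--Hollander--Isaksen ``dense set of hypercovers'' argument), but, as you yourself note, the bookkeeping — that the levelwise covers of matching objects assemble into an honest simplicial refinement, that matching objects computed in ${\rm Psh}(\sbb)$ and ${\rm Psh}(\sv)$ are compatibly covered, and that totalizations are insensitive to the choices — is the entire difficulty. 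The paper avoids all of it: it deduces (3) formally from (2) using the fact that the Jardine local model structure on simplicial presheaves presents the hypercomplete $\infty$-topos (\cite[Prop. 6.5.2.14]{Lur}). Since local weak equivalences in that model structure are detected by sheaves of homotopy groups, which live entirely in the $1$-topos, the equivalence ${\rm Sh}(\sbb)\simeq{\rm Sh}(\sv)$ of (2) immediately yields an equivalence of hypercomplete $\infty$-topoi, and $\sd$-valued hypersheaves follow. Your route buys an explicit description of the inverse functor $\widetilde H(v)=\holim_{[n]}H(b^v_n)$ (useful if one wants to compute, and it is in effect what the paper uses later when it defines $\eta$-\'etale descent by $\LL\colim$ over hypercoverings from the base); the paper's route buys a two-line proof at the cost of explicitness. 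If you keep your route, the one statement you must actually prove, and which your sketch only asserts, is the cofinality of base hypercovers inside all hypercovers of a fixed $v$ (connectedness of the relevant refinement category), since without it the independence of $\widetilde H(v)$ from the choice of $b^v_\bullet$ does not follow.
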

 \begin{proof}
 Claims (1) and (2) were shown in the proof of \cite[Prop. 2.2]{CN3}. 
 For claim (3), recall that, for a site $\scc$, the $\infty$-category of hypersheaves is defined as
 $$
  {\rm Sh}^{\rm hyp}(\scc,\sd) :={\rm Sh}^{\rm hyp}(\scc,{\rm Ani})\otimes \sd, 
 $$
 where ${\rm Ani}$ is the $\infty$-category of anima and $\otimes$ 
 denotes the tensor product of $\infty$-categories \cite[Sec. 4.8.1]{Lur1}. Hence it suffices to prove claim (3) for the $\infty$-category of anima and in that case it follows easily from (2) and the fact that the Brown-Joyal-Jardine model structure on simplicial presheaves presents the $\infty$-topos of hypercomplete sheaves (see \cite[Prop. 6.5.2.14]{Lur}). 
 \end{proof}
 \begin{remark} To lighten up the terminology, in the rest of the paper we will call "hypersheaves" "sheaves" and a "hypersheafification" a  "sheafification". 
 \end{remark}
\begin{remark}   The example most relevant for this paper is the following: $\sv={\rm Sm}_{C,{\eet}}$, 
the site of smooth rigid analytic varieties over $C$ equipped with the \'etale topology. $\sv$  has a Beilinson base $(\sm, F_{\eta})$, where $\sm$ is the category of basic semistable formal models 
\index{Mss@\Mss}$\sm^{{\rm ss}, b}_C$ or semistable formal models $\sm^{\rm ss}_C$  and $F_{\eta}$ is the forgetful functor $\sx\to \sx_{\eta}$ from formal schemes to their rigid analytic generic fibers (see \cite[Prop. 2.8]{CN3}). We have similar constructions for the site $\sv={\rm Sm}^{\dagger}_{C,\eet}$ of smooth dagger varieties over $C$ with the corresponding categories $\sm^{\dagger, {\rm ss}, b}_C$ and $\sm^{\dagger, \rm ss}_C$ of basic semistable and semistable weak formal models.

 If $\sff\in\sd$, for a presentable $\infty$-category $\sd$,  is a presheaf on a Beilinson base $\sbb$, then the presheaf on $\sv$ defined by 
 $$
 U\mapsto (\sff^a(U):=\LL \colim \sff(V_{\jcdot})),
 $$
 where the colimit is taken over hypercoverings\footnote{Here and below, to simplify notation,  we write $\sff(V_{\jcdot})$ for $\lim\sff(V_{\jcdot})$.} $V_{\cdot}\to U$  from $\sbb$, 
 defines a hypersheaf on $\sv$. In the context of the above example of a Beilinson  base we call it {\em $\eta$-\'etale descent of $\sff$}. 
 \end{remark}
 
 (2)  The following  corollary removes the condition $L=K$ in  3b of Proposition \ref{main00} and could have been included in \cite{CN3}. 
 \begin{corollary}
 \label{ogrod1}
 Let $X$ be a smooth partially proper dagger variety over $C$ and let $\wh{X}$ denote its completion. Let $W$ be a Fr\'echet space over $\breve{F}$. Then the natural map\footnote{See the point (4) below for the reminder on the definition of the tensor products used.} in $\sd(C_{\breve{F}})$
$$
  \rg^{\gk}_{\hk}(X)\wh{\otimes}^{\R}_{F^{\nr}}W\to \rg_{\hk}(\wh{X})\wh{\otimes}^{\R}_{F^{\nr}}W
$$
is a strict quasi-isomorphism. 
 \end{corollary}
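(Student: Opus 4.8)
The plan is to remove the hypothesis ``$L=K$ or $X$ defined over a finite extension of $K$'' in part~(3b) of Proposition~\ref{main00}: I claim that, for $X$ partially proper over $C$, the natural map $\rg^{\gk}_{\hk}(X)\to\rg_{\hk}(\wh{X})$ is already a strict quasi-isomorphism. Granting this, the corollary is immediate, because $(-)\wh{\otimes}^{\R}_{F^{\nr}}W$ (see point~(3) below) is a derived functor and hence preserves strict quasi-isomorphisms. So I set $\sm:=\Cone(\rg^{\gk}_{\hk}(X)\to\rg_{\hk}(\wh{X}))$, and the goal becomes $\sm\simeq 0$.

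First I would compute $\sm$ after base change to $C$ and identify it with a de Rham cone that is known unconditionally. Exhausting $X$ by dagger affinoids (whose Hyodo--Kato cohomology is of finite rank over $\breve{F}=\wh{F^{\nr}}$, by comparison with de Rham cohomology) and passing to $\R\lim$, both $\rg^{\gk}_{\hk}(X)$ and $\rg_{\hk}(\wh{X})$, hence $\sm$, are complexes of Fr\'echet $\breve{F}$-vector spaces. By Proposition~\ref{main00}(1c),(2b) the Hyodo--Kato maps give strict quasi-isomorphisms $\iota_{\hk}\colon\rg^{\gk}_{\hk}(X)\wh{\otimes}^{\R}_{F^{\nr}}C\stackrel{\sim}{\to}\rg_{\dr}(X)$ and $\iota_{\hk}\colon\rg_{\hk}(\wh{X})\wh{\otimes}^{\R}_{F^{\nr}}C\stackrel{\sim}{\to}\rg_{\dr}(\wh{X})$, compatible with the comparison maps of~(3b); so $\sm\wh{\otimes}^{\R}_{F^{\nr}}C\simeq\Cone\!\big(\rg_{\dr}(X)\to\rg_{\dr}(\wh{X})\big)$. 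Since $X$ is partially proper, the de Rham map $\rg_{\dr}(X)\to\rg_{\dr}(\wh{X})$ is a strict quasi-isomorphism by Proposition~\ref{main00}(3b) (with no hypothesis on $L$), so $\sm\wh{\otimes}^{\R}_{F^{\nr}}C\simeq 0$.

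It then remains to descend from $C$ to $\breve{F}$, and this is the only genuine point. The field $\breve{F}$ is complete and discretely valued, hence spherically complete, while $C$ is a Banach space over $\breve{F}$ in which $\breve{F}$ sits as a closed subspace; so Ingleton's non-archimedean Hahn--Banach theorem gives a continuous $\breve{F}$-linear retraction $C\to\breve{F}$, whence a topological splitting $C\simeq\breve{F}\oplus C'$ over $\breve{F}$ with $C'$ a closed complement. As $\wh{\otimes}^{\R}_{F^{\nr}}$ commutes with this finite direct sum, $\sm\wh{\otimes}^{\R}_{F^{\nr}}C\simeq\sm\oplus\big(\sm\wh{\otimes}^{\R}_{F^{\nr}}C'\big)$, so $\sm$ is a direct summand, in the derived category, of the strictly acyclic complex $\sm\wh{\otimes}^{\R}_{F^{\nr}}C$; since the left heart is abelian, each $\wt{H}^n(\sm)$ is a summand of $0$ and $\sm\simeq 0$. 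Applying $(-)\wh{\otimes}^{\R}_{F^{\nr}}W$ then finishes the proof. The things to check with care are (i) that the $\iota_{\hk}$'s are compatible with the maps $\rg^{\gk}_{\hk}(X)\to\rg_{\hk}(\wh{X})$ and $\rg_{\dr}(X)\to\rg_{\dr}(\wh{X})$ --- this is built into their construction by $\eta$-\'etale descent from common local data in \cite{CN3} --- and (ii) that $\sm$ is genuinely assembled from Banach/Fr\'echet $\breve{F}$-spaces, so that the splitting $C\simeq\breve{F}\oplus C'$ is preserved by $\wh{\otimes}^{\R}$; both are routine.
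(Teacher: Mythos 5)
Your reduction does not close, and it breaks exactly at the point that forces the hypothesis ``$L=K$ or $X$ defined over a finite extension of $K$'' in part (3b) of Proposition \ref{main00}. The complexes $\rg^{\gk}_{\hk}(X)$ and $\rg_{\hk}(\wh{X})$ are complexes of $F^{\nr}$-vector spaces, and $F^{\nr}$ is \emph{not} complete; so your cone $\sm$ is not a complex of Fr\'echet $\breve{F}$-spaces, and, more importantly, the direct summand you extract from the splitting $C\simeq\breve{F}\oplus C'$ is $\sm\wh{\otimes}^{\R}_{F^{\nr}}\breve{F}$, not $\sm$ itself. Your argument therefore shows at best that the \emph{completed} cone $\sm\wh{\otimes}^{\R}_{F^{\nr}}\breve{F}$ is strictly acyclic; it does not give $\sm\simeq 0$, i.e.\ it does not remove the hypothesis from (3b) --- and if it did, the paper would not carry that hypothesis. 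A second, related problem is your opening step: for the nonstandard tensor products used here (defined as $\LL\colim$ over hypercoverings of local projective tensor products; see the Warning in Remark \ref{wrzask1}), the functor $(-)\wh{\otimes}^{\R}_{F^{\nr}}W$ is not known to preserve arbitrary strict quasi-isomorphisms of global complexes, so even a correct untensored statement would not ``immediately'' yield the corollary.

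The paper's proof avoids both issues by localizing \emph{before} descending: cover $X$ by dagger affinoids and replace them by the interiors of their completions, which is again an admissible covering because $X$ is partially proper. Each such interior, and each finite intersection of them, is partially proper \emph{and} defined over a finite extension $L$ of $K$ (by Elkik's theorem together with the finiteness theorems of Grauert--Remmert--Gruson), so Proposition \ref{main00}(3b) applies to it directly; there the Hyodo--Kato complex carries a structure over the \emph{complete} field $F_L$, and the splitting trick $C\simeq F_L\oplus W'$ is legitimate. One then concludes by \v{C}ech descent. If you want to keep the spirit of your argument, the repair is precisely this localization: run the cone-and-splitting argument piecewise on opens defined over finite extensions of $K$, where the summand produced by the splitting really is the cone itself.
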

 \begin{proof} 
 Find an admissible covering of $X$ by dagger affinoids and then look at the set of their naive interiors (a {\em naive interior} of a smooth dagger  affinoid is a Stein subvariety whose complement is open and quasi-compact\footnote{We have an analogous definition of a naive interior of a rigid analytic affinoid.}). By the definition of partially proper dagger varieties this is an admissible  covering of $X$ as well. The individual varieties in the covering are partially proper and, moreover, are  defined over a finite extension of $K$. The latter fact is true because   the corresponding rigid affinoids are defined over a finite extension of $K$
 by Elkik's theorem \cite[Th. 7, Rem. 2]{Elk} (the finite presentation condition there is satisfied in our case by the finiteness theorems of Grauert-Remmert-Gruson and Gruson-Raynaud \cite[Th. 3.1.17, Th. 3.2.1]{Lit}).  Same can be said about the intersections of a  finite number of them. 
 
 Now, taking the associated \v{C}ech cover and evaluating on it  the morphism from the corollary we get a strict quasi-isomorphism by point 3b of Proposition \ref{main00}. We conclude by rigid analytic descent.
 \end{proof}

    (3) We will recall now a result from \cite{CN3} together with a new proof (since the proof supplied in loc. cit. is a bit  sketchy). This proof will serve us as a template for  proofs of analogous claims.
    \begin{proposition}{\rm(Local-global compatibility, \cite[Prop. 4.23]{CN3})}\label{stara1}
  Let  $\sx\in \sm^{\sem,b}_C$.
The natural map in $\sd(C_{\ovk})$
  \begin{align*}
 \rg_{\conv,\ovk}(\sx_1)\to \rg_{\conv,\ovk}(\sx_C)
  \end{align*}
  is a  strict quasi-isomorphism. 
\end{proposition}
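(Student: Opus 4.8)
Recall that the rigid analytic convergent cohomology $\rg_{\conv,\ovk}(-)$ of Proposition~\ref{main00}(2a) is constructed, together with its comparison with $\rg_{\dr}(-)$, by $\eta$-\'etale descent from the model-level presheaf $\sff\colon \sy\mapsto \rg_{\conv,\ovk}(\sy_1)$ on the Beilinson base $\sm^{\sem,b}_C$ and from the model-level comparison $\rg_{\conv,\ovk}(\sy_1)\wh{\otimes}^{\R}_{\ovk}C\stackrel{\sim}{\to}\rg_{\dr}(\sy_C)$ (a form of Berthelot's comparison between the convergent cohomology of the special fibre of a semistable formal model and the de Rham cohomology of its generic fibre). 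In these terms the map of the proposition is the canonical map $\sff(\sx)\to\sff^a(\sx)$ from the presheaf to its $\eta$-\'etale sheafification, evaluated at $\sx$, i.e.\ the map into $\LL\colim_{\sy_\bullet}\holim_\bullet \rg_{\conv,\ovk}(\sy_{\bullet,1})$, the colimit ranging over $\eta$-\'etale hypercovers $\sy_\bullet\to\sx$ in $\sm^{\sem,b}_C$. The plan is: first to show that this map is a strict quasi-isomorphism after applying $(-)\wh{\otimes}^{\R}_{\ovk}C$, and then to descend that statement along the field extension $\ovk\subset C$.

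For the first step, consider the triangle with vertices $\rg_{\conv,\ovk}(\sx_1)\wh{\otimes}^{\R}_{\ovk}C$, $\rg_{\conv,\ovk}(\sx_C)\wh{\otimes}^{\R}_{\ovk}C$ and $\rg_{\dr}(\sx_C)$, in which the horizontal arrow is $(-)\wh{\otimes}^{\R}_{\ovk}C$ applied to the map of the proposition and the two slanted arrows are the comparison quasi-isomorphisms with de Rham cohomology (the first at the level of the model, the second provided by Proposition~\ref{main00}(2a)). This triangle commutes: by the construction recalled above, the second comparison is the $\eta$-\'etale sheafification of the first, and de Rham cohomology of rigid analytic varieties already satisfies \'etale hyperdescent, so no sheafification is visible at the de Rham vertex. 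Since both slanted arrows are strict quasi-isomorphisms, so is the horizontal one.

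For the second step, note that since $\sx$ is a \emph{basic} semistable model we may assume $\sx$ affine, so that $\rg_{\conv,\ovk}(\sx_1)$ and $\rg_{\conv,\ovk}(\sx_C)$ are represented by bounded below complexes built from $\ovk$-Banach spaces by means of $\R\wlim$. On such objects the functor $(-)\wh{\otimes}^{\R}_{\ovk}C$ is exact and conservative: algebraically, because $\ovk\subset C$ is faithfully flat; topologically, by an Open Mapping argument exactly as in the proof of Lemma~\ref{trick1}, together with the usual Milnor-sequence comparison for $\R\wlim$ (cf.\ the discussion of completed derived tensor products in Section~\ref{Notation} and \cite[Prop.~2.6]{CDN3}). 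Applying this to the cone of the map of the proposition, which is strictly acyclic after $(-)\wh{\otimes}^{\R}_{\ovk}C$ by the first step, shows that the map itself is a strict quasi-isomorphism.

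I expect the main difficulty to lie in the commutativity of the triangle in the first step, i.e.\ in making precise that the model-level Berthelot comparison $\rg_{\conv,\ovk}(\sy_1)\wh{\otimes}^{\R}_{\ovk}C\stackrel{\sim}{\to}\rg_{\dr}(\sy_C)$ really does sheafify, for the $\eta$-\'etale topology, to the comparison of Proposition~\ref{main00}(2a). This is exactly the point at which the discrepancy between the $\eta$-\'etale topology — whose covers come from \'etale alterations of formal models and are badly behaved on special fibres, which is precisely why one works over a Beilinson base — and the topology of the formal models gets dissolved; once everything has been transported to the de Rham side, its descent theory is classical. The same two-step pattern (base-change to $C$ in order to land in the de Rham world, then recover the original object from its base changes — here through faithful flatness of $\ovk\subset C$, and for $\B^+_{\dr}$-cohomology through a filtration argument on $\B^+_{\dr}$) will serve as the template for the analogous local-global compatibility statements for Hyodo--Kato and $\B^+_{\dr}$-cohomology.
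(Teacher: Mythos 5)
Your overall strategy (base change to $C$ so as to land in de Rham cohomology, where \'etale hyperdescent is known, then descend back) is the same as the paper's, and your first step is fine. The gap is in your second step. You claim that $(-)\wh{\otimes}^{\R}_{\ovk}C$ is exact and conservative, justified ``algebraically, because $\ovk\subset C$ is faithfully flat'' and ``topologically, by an Open Mapping argument''. Neither justification applies to the \emph{completed} tensor product over the \emph{non-complete} field $\ovk$: faithful flatness only controls the algebraic tensor product, and the Open Mapping Theorem in the form of Lemma~\ref{trick1} is a statement about complexes of Banach spaces over a complete field, not about conservativity of a completed base change along the dense extension $\ovk\subset C$. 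In particular, the natural way to prove such a conservativity statement — exhibiting the base field as a closed topological direct summand of $C$ — is unavailable here, because $\ovk$ is dense in $C$, hence not closed and not complemented; an algebraic splitting $C\simeq\ovk\oplus(\cdot)$ exists but is not continuous and does not interact with $\wh{\otimes}$.

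The paper circumvents exactly this difficulty by an intermediate reduction that your proposal omits: after refining the hypercover, passing to $\wt{H}$-cohomology and truncating, one may assume that all the (finitely many) formal schemes in each degree and all the maps between them are defined over a common \emph{finite} extension $L$ of $K$. One then only needs the implication ``$A\wh{\otimes}^{\R}_L C$ strictly acyclic $\Rightarrow$ $A$ strictly acyclic'' for $A\in\sd(C_L)$, and this is immediate because $L$ is complete and one-dimensional over itself, so $C\simeq L\oplus W$ as topological $L$-vector spaces with $W$ Banach, whence $A\wh{\otimes}^{\R}_LC\simeq A\oplus(A\wh{\otimes}^{\R}_LW)$ and $A$ is a direct summand of a strictly acyclic complex. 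You should insert this reduction to a finite extension $L$ before attempting the descent; with it, your argument closes. (Your worry about the commutativity of the comparison triangle is legitimate but is the easier of the two points: once everything lives over $L$, the model-level comparison with de Rham cohomology is functorial in the hypercover by construction.)
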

    \begin{proof} It suffices to show that, for any $\eta$-\'etale hypercovering $\su_{\cdot}$ of $\sx$  from $\sm^{\sem, b}_C$, the natural map in $\sd(C_{\ovk})$
$$
 \R\Gamma_{\conv,\ovk}(\sx_1)\to  \R\Gamma_{\conv,\ovk}(\su_{\cdot,1})
$$
is a strict quasi-isomorphism (modulo taking a refinement of $\su_{\cdot}$). We may assume that in every degree of the hypercovering we have a finite number of formal models. Passing to cohomology ($\wt{H}(-)$-cohomology) and then to a truncated hypercovering we can assume that all the formal schemes mod $p$  and maps between them that are involved  are defined over a common field $L$ (we will denote them with subscript $\so_L$), a finite extension of $K$. We may leave that way the category of semistable models but we will still be in the category of log-smooth models (with Cartier type reduction).
We are reduced to showing that the map 
  \begin{equation}
  \label{wreszcie}
 \alpha:  \R\Gamma_{\conv}(\sx_{\so_L,1}/S_L)\to  \R\Gamma_{\conv}(\su_{\cdot,\so_{L}, 1}/S_L)
\end{equation}
is a strict quasi-isomorphism.  

     Tensoring both sides of (\ref{wreszcie}) with $C$ over $L$ we obtain a commutative diagram
$$
\xymatrix@R=6mm{
 \R\Gamma_{\conv}(\sx_{\so_L,1}/S_L)\wh{\otimes}^{\R}_LC\ar[d]^{\wr}\ar[r]^{\alpha_C} &  \R\Gamma_{\conv}(\su_{\cdot,{\so_L},1}/S_L)\wh{\otimes}^{\R}_LC \ar[d]^{\wr}\\
 \R\Gamma_{\dr}(\sx_{C})\ar[r]^{\sim} &  \R\Gamma_{\dr}(\su_{\cdot,C}).
 }
$$
Since the bottom map is a 
strict quasi-isomorphism (by \'etale descent) so is the top map $\alpha_C$. We claim that this, in turn, implies that the map $\alpha$ itself is a strict-quasi-isomorphism. Indeed, passing to fibers of the horizontal arrows in the commutative  diagram
$$
\xymatrix@R=6mm{
 \R\Gamma_{\conv}(\sx_{\so_L,1}/S_L)\ar[d]\ar[r]^{\alpha} &  \R\Gamma_{\conv}(\su_{\cdot,{\so_L},1}/S_L)\ar[d]\\
 \R\Gamma_{\conv}(\sx_{\so_L,1}/S_L)\wh{\otimes}^{\R}_LC\ar[r]^{\alpha_C} &  \R\Gamma_{\conv}(\su_{\cdot,{\so_L},1}/S_L)\wh{\otimes}^{\R}_LC, 
}
$$
 we see that it suffices to prove the following claim: 
\begin{center}
if $A\in \sd(C_L)$ is a complex such that  $A\wh{\otimes}^{\R}_LC$ is strictly acyclic then $A$ is  strictly acyclic as well.
\end{center}
To show this,  write $C\simeq L\oplus W$, for  a Banach space $W\in C_L$, and conclude. 
    \end{proof}
  
   (4) Let $X$ be a smooth rigid analytic  variety. In \cite{CN3}, we have considered a number of nonstandard tensor products. For example, 
   we have defined (see \cite[4.21]{CN3}) in, resp., $\sd(C_{\ovk})$, $\sd(C_C)$:
 \begin{align*}
  & \R\Gamma_{\hk}(X)\wh{\otimes}_{F^{\nr}}{\ovk}:=\LL\colim((\R\Gamma_{\hk}{\otimes}_{F^{\nr}}{\ovk})(\su_{\cdot,1})),\\
 & \R\Gamma_{\hk}(X)\wh{\otimes}^{\R}_{F^{\nr}}C:=\LL\colim((\R\Gamma_{\hk}\wh{\otimes}^{\R}_{F^{\nr}}{C})(\su_{\cdot,1})),\notag
 \end{align*}
 where the homotopy colimit is taken over affinoid $\eta$-\'etale hypercoverings $\su_{\cdot}$ from $\sm^{\sem, b}_C$. These tensor products satisfy local-global compatibility. A fact  that  can be proved as in the following example:
\begin{proposition}\label{product11}{\rm (Local-global compatibility for tensor products)} Let $\sx\in \sm^{\sem, b}_C$. The canonical maps in, resp., $\sd(C_{\ovk})$, $\sd(C_C)$
\begin{align*}
 & \R\Gamma_{\hk}(\sx_1)\wh{\otimes}_{F^{\nr}}{\ovk} \to \R\Gamma_{\hk}(X)\wh{\otimes}_{F^{\nr}}{\ovk},\\
  & \R\Gamma_{\hk}(\sx_1)\wh{\otimes}^{\R}_{F^{\nr}}C \to \R\Gamma_{\hk}(X)\wh{\otimes}^{\R}_{F^{\nr}}C
\end{align*}
are strict quasi-isomorphisms.
\end{proposition}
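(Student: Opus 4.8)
The plan is to follow closely the template of the proof of Proposition~\ref{stara1}, replacing convergent cohomology by Hyodo-Kato cohomology and replacing the comparison $\rg_{\conv,\ovk}(-)\wh{\otimes}^{\R}_{\ovk}C\simeq\rg_{\dr}(-)$ by the arithmetic Hyodo-Kato--to--de Rham quasi-isomorphism (the Corollary to Theorem~\ref{section1}) together with the base change quasi-isomorphism $\beta$ of~(\ref{beta}). I will spell out the assertion about $\wh{\otimes}^{\R}_{F^{\nr}}C$; the one about $\wh{\otimes}_{F^{\nr}}\ovk$ is entirely analogous, with $C$ replaced by $\ovk$ and $\rg_{\dr}$ by the $\ovk$-valued convergent cohomology $\rg_{\conv,\ovk}$.

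First I would reduce to: for every affinoid $\eta$-\'etale hypercovering $\su_{\cdot}$ of $\sx$ from $\sm^{\sem,b}_C$, the canonical map
$$\alpha\colon \rg_{\hk}(\sx_1)\wh{\otimes}^{\R}_{F^{\nr}}C\;\longrightarrow\;(\rg_{\hk}\wh{\otimes}^{\R}_{F^{\nr}}C)(\su_{\cdot,1})$$
is a strict quasi-isomorphism (modulo passing to a refinement of $\su_{\cdot}$). This is enough: the constant hypercovering $\sx$ is initial in the filtered poset of such hypercoverings, so if all transition maps out of it are strict quasi-isomorphisms then the $\LL\colim$ defining $\rg_{\hk}(X)\wh{\otimes}^{\R}_{F^{\nr}}C$ agrees with its value $\rg_{\hk}(\sx_1)\wh{\otimes}^{\R}_{F^{\nr}}C$ at $\sx$.

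To prove this, I would, exactly as in Proposition~\ref{stara1}, pass to $\wt{H}$-cohomology and then to a truncated hypercovering, using that each $\wt{H}^n$ depends on only finitely many degrees of $\su_{\cdot}$, each of which involves only finitely many formal models; this allows one to assume all the formal schemes mod $p$ and all maps between them occurring are defined over a common finite extension $L/K$ (written with subscript $\so_L$). One leaves the semistable category but stays within log-smooth formal schemes of Cartier type over $S_L$, which is all that matters below. Applying the base change quasi-isomorphism $\beta$ over each $\so_L$-model rewrites the resulting (finite-degree) map as $\alpha'_L\wh{\otimes}^{\R}_{F_L}C$, where $F_L=W(k_L)[1/p]$ is the unramified part of $L$ and $\alpha'_L$ is the arithmetic map between the totalizations of the cosimplicial complexes $[a]\mapsto\rg_{\hk}(\su^0_{a,\so_L,1}/S^0_L)_{\Q_p}$. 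The functorial quasi-isomorphism $i_{\varpi}$ of the Corollary to Theorem~\ref{section1}, with $\varpi$ a fixed uniformizer of $\so_L$, gives $\rg_{\hk}(\su^0_{a,\so_L,1}/S^0_L)_{\Q_p}\wh{\otimes}^{\R}_{F_L}L\simeq\rg_{\dr}(\su_{a,\so_L})_{\Q_p}$ compatibly in $a$; tensoring further over $L$ with $C$ (de Rham base change to $\so_C$) then identifies $\alpha'_L\wh{\otimes}^{\R}_{F_L}C$ with the truncated de Rham descent map $\rg_{\dr}(\sx)\to\rg_{\dr}(\su_{\cdot})$ for the formal schemes over $\so_C$, which is a strict quasi-isomorphism in the relevant range by \'etale descent for rigid analytic de Rham cohomology. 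Hence $\wt{H}^n$ of the homotopy fiber of $\alpha$ vanishes for every $n$, so $\alpha$ is a strict quasi-isomorphism. (If one prefers to stay closer to Proposition~\ref{stara1}, one first proves that the un-tensored $\alpha'_L$ is a strict quasi-isomorphism: its homotopy fiber $A\in\sd(C_{F_L})$ becomes strictly acyclic after $\wh{\otimes}^{\R}_{F_L}C$ by the above, and writing $C\simeq F_L\oplus W$ as a topological $F_L$-vector space with $W$ a Banach space exhibits $A$ as a direct summand of $A\wh{\otimes}^{\R}_{F_L}C$, hence $A$ is strictly acyclic.)

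The step I expect to be the main obstacle is purely a matter of careful bookkeeping rather than a new idea: checking that $\beta$ and $i_{\varpi}$ are natural enough to be strictly compatible with the simplicial face maps of an $\eta$-\'etale hypercovering, so that the identification of $\alpha'_L\wh{\otimes}^{\R}_{F_L}C$ with the de Rham \v{C}ech complex of $\su_{\cdot}$ holds on the nose and not just degreewise. This is exactly the functoriality that the $\infty$-categorical constructions of Sections~2 and~4 are designed to supply, so nothing new should be required; the remaining ingredients---the reduction to a single field $L$ and the splitting $C\simeq F_L\oplus W$---are routine and already appear in the proof of Proposition~\ref{stara1}.
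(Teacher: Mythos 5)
Your proposal is correct and follows essentially the same route as the paper: reduce (as in Proposition \ref{stara1}) to a single hypercovering defined over a common finite extension $L$, then use the Hyodo--Kato quasi-isomorphism to transport the descent map to de Rham cohomology, where \'etale descent is known; for the $\ovk$-version the paper likewise lands on $\rg_{\dr}(\sx_L)\to\rg_{\dr}(\su_{\cdot,L})$ and passes to the colimit over finite extensions $L\subset\ovk$. The functoriality worry you flag at the end is precisely what the $\infty$-categorical construction of the Hyodo--Kato morphism in Sections 2 and 4 is designed to resolve, so nothing further is needed.
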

\begin{proof}For the second morphism, proceeding as in the proof of Proposition \ref{stara1} and using its notation,  we reduce to showing that the canonical map in $\sd(C_C)$
$$
\R\Gamma_{\hk}(\sx_{\so_L,1})\wh{\otimes}^{\R}_{F_L}C  \to \R\Gamma_{\hk}(\su_{\cdot, \so_L,1})\wh{\otimes}^{\R}_{F_L}C
$$
is a strict quasi-isomorphism. But this is clear since, via the Hyodo-Kato morphism, this map is strictly quasi-isomorphic to the map in $\sd(C_C)$
$$
\R\Gamma_{\dr}(\sx_{C}) \to \R\Gamma_{\dr}(\su_{\cdot,C}),
$$
which is a strict quasi-isomorphism. 

 For the first morphism, we proceed in the same way ending up with the  strict quasi-isomorphism in $\sd(L)$ 
 $$
\R\Gamma_{\dr}(\sx_{L}) \to \R\Gamma_{\dr}(\su_{\cdot,L}).
$$
Passing to homotopy colimit over finite extensions of $L$ in $\ovk$, we finish the argument.
\end{proof}
\begin{remark} (1) The local-global compatibility of nonstandard tensor products (see \cite[5.16]{CN3}) also holds for the dagger varieties and the Grosse-Kl\"onne Hyodo-Kato cohomology. The proof  of this fact is a simple analog of the proof of Proposition \ref{product11}.

 (2) In Proposition \ref{product11} we can replace $C$ with any Fr\'echet space $B$ over $\breve{F}$.  This requires just a slight modification of the proof: pass from $\rg_{\hk}(-)\wh{\otimes}^{\R}_{F_L}B$ to 
 $(\rg^{\rm GK}_{\hk}(-)\wh{\otimes}^{\R}_{F_L}C)\wh{\otimes}^{\R}_{F_L}B$, 
 use the Hyodo-Kato morphism to pass to de Rham cohomology $\rg_{\dr}((-)_C)\wh{\otimes}^{\R}_{F_L}B$, use \'etale descent for de Rham cohomology, and go back to $\rg_{\hk}(-)\wh{\otimes}^{\R}_{F_L}B$ via $C\simeq F_L\oplus W$.
\end{remark}
\subsection{Geometric crystalline cohomology} Our rigid analytic $\B^+_{\dr}$-cohomology will be defined locally as a completion of the absolute crystalline cohomology. We will start then by recalling the definition of  the latter. 
\subsubsection{Relative crystalline cohomology}  Let  ${\overline{f}}:X_1\to \overline{S}_1$ be a map of log-schemes, with integral quasi-coherent source.
 Suppose  that $\overline{f}$ is the base change of 
 a fine log-smooth log-scheme  ${f}_L:Z_{1}\to S_{L,1}$, by the natural map $\theta: \overline{S}_1\to S_{L,1}$, for a finite extension $L/K$.
 That is, we have a map $\theta_{L}: X_1\to Z_{1}$ such that the square $(\overline{f},{f}_L,\theta,\theta_{L})$ is Cartesian.
Such data $\Sigma_1:=\{(L,f_L,\theta_{L})\}$  form a filtered set.

     (a) {\em $C$-version.} Let \index{acalcris@\acalcris}$\sa^{\rm rel}_{\crr}$ 
be the $\eta$-\'etale sheafification of the presheaf
  $\sx\mapsto \rg_{\crr}(\sx/\overline{S})_{\Q_p}$ on $\sm_{C}^{\sem,b}$. Note that $\rg_{\crr}(\sx/\overline{S})_{\Q_p}\in \sd\sff(C_C)$.  For $X\in {\rm Sm}_C$, we 
\index{rgcris@\rgcris}set\footnote{Here we think of $X$ as an $\eta$-\'etale sheaf on $\sm_{C}^{\sem,b}$.} in $\sd\sff(C_C)$
$$\rg^{\rm rel}_{\crr}(X):=\rg_{\eet}(X,\sa^{\rm rel}_{\crr}).$$ 
It is a filtered dg $C$-algebra  equipped with a continuous action of
 $\sg_K$ if $X$ is defined over $K$. It is equipped with the topology induced   from the topology of the $\R\Gamma_{\crr}(\sx/\overline{S})$'s. Since the models $\sx$ are log-smooth over $\overline{S}$, we have natural (strict) 
\index{acaldr@\acaldr}quasi-isomorphisms (the first one in the category of sheaves with values in $\sd\sff(C_C)$, the second one -- in $\sd\sff(C_C)$).
 \begin{equation}
 \label{deszcz1}
 \sa^{\rm rel}_{\crr}\simeq \sa_{\dr},\quad \rg^{\rm rel}_{\crr}(X)\simeq\rg_{\dr}(X).
 \end{equation}
 
     (b) {\em $\ovk$-version.} Let $\sa_{\crr,\ovk}$ 
\index{acalcris@\acalcris}be the $\eta$-\'etale sheafification of the presheaf 
$\sx\mapsto \rg_{\crr,\ovk}(\sx)$ on $\sm_{C}^{\sem,b}$, where we 
\index{rgcris@\rgcris}set in $\sd(C_{\ovk})$
  $$ \R\Gamma_{\crr,\ovk}(\sx):= \LL\colim_{\Sigma_1}\R\Gamma_{\crr}(Z_1/S_L)_{\Q_p}.
  $$
For $X\in {\rm Sm}_C$, we set $\rg_{\crr,\ovk}(X):=\rg_{\eet}(X,\sa_{\crr,\ovk})$ in $\sd(C_{\ovk})$. It is a dg $\ovk$-algebra  equipped with a continuous action of
 $\sg_K$ if $X$ is defined over $K$ (this action is smooth if $X$ is quasi-compact). It is equipped  with the topology induced  from the topology of the $\R\Gamma_{\crr}(Z_1/S_L)$'s. 
 There are natural continuous morphisms  (the first one in the category of sheaves with values in $\sd(C_{\ovk})$, the second one -- in $\sd(C_{\ovk})$)
$$
  \sa_{\crr,\ovk}  \to \sa^{\rm rel}_{\crr},\quad \rg_{\crr,\ovk}(X)\to \rg^{\rm rel}_{\crr}(X).
$$
\begin{lemma}\label{etale-descent201}  \begin{enumerate}
\item {\rm ({\em Local-global compatibility})} Let  $\sx\in\sm^{{\rm ss},b}_C$.
The natural map in $\sd(C_{\ovk})$
$$
\rg_{\crr,\ovk}(\sx)\stackrel{\sim}{\to} \rg_{\crr,\ovk}(\sx_C)
$$
is a strict quasi-isomorphism.
\item {\rm ({\em Product formula})} For $X\in {\rm Sm}_K$, the natural map in $\sd(C_{\ovk})$
$$
\rg_{\dr}(X)\wh{\otimes}_K\ovk\to  \rg_{\crr,\ovk}(X_C)
$$
is a  strict quasi-isomorphism.
\end{enumerate}
\end{lemma}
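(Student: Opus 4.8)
For part~(1), the plan is to run the argument of the proof of Proposition~\ref{stara1} verbatim, with $\rg_{\conv,\ovk}$ replaced by $\rg_{\crr,\ovk}$. By $\eta$-\'etale descent it is enough to show that, for every $\eta$-\'etale hypercovering $\su_{\cdot}$ of $\sx$ by objects of $\sm^{\sem,b}_C$, the canonical map $\rg_{\crr,\ovk}(\sx_1)\to\rg_{\crr,\ovk}(\su_{\cdot,1})$ is a strict quasi-isomorphism after passing to a refinement. Assuming finitely many models in each degree, passing to $\wt{H}(-)$-cohomology, and then to a truncation, one arranges that all the special fibres and all the transition maps involved are defined over a single finite extension $L/K$; this may leave the semistable category but stays among log-smooth models of Cartier type over $S_L$, and, by cofinality of the $L$-indexed subsystem of $\Sigma_1$, it suffices to prove that
$$
\alpha\colon\ \rg_{\crr}(\sx_{\so_L,1}/S_L)_{\Q_p}\longrightarrow \rg_{\crr}(\su_{\cdot,\so_L,1}/S_L)_{\Q_p}
$$
is a strict quasi-isomorphism. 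Since these models are log-smooth of Cartier type over $S_L$, both sides are computed by the corresponding log de Rham complexes, whose log structure restricts trivially to the smooth rigid generic fibres; hence $\alpha\wh{\otimes}^{\R}_LC$ is identified (via the comparison with de Rham cohomology and its compatibility with base change to $C$) with the map $\rg_{\dr}(\sx_C)\to\rg_{\dr}(\su_{\cdot,C})$, which is a strict quasi-isomorphism by \'etale descent for de Rham cohomology. Passing to homotopy fibres and using a decomposition $C\simeq L\oplus W$ into $L$-Banach spaces (so that $A\in\sd(C_L)$ with $A\wh{\otimes}^{\R}_LC$ strictly acyclic is itself strictly acyclic) shows that $\alpha$, and hence the original map, is a strict quasi-isomorphism.

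For part~(2), I would first observe that both sides of the map are the evaluations on $X$ of \'etale hypersheaves on ${\rm Sm}_{K,\eet}$: the left-hand side because $\rg_{\dr}$ is a hypersheaf and $(-)\wh{\otimes}_K\ovk=\colim_{L/K}(-)\otimes_KL$ is a filtered colimit of the exact functors $(-)\otimes_KL$, the right-hand side by construction of $\sa_{\crr,\ovk}$ together with part~(1). Hence, by the existence of a Beilinson base of ${\rm Sm}_{K,\eet}$ by rigid generic fibres of semistable formal schemes over finite extensions of $K$ (the arithmetic counterpart of \cite[Prop.~2.8]{CN3}), it suffices to treat the case where $X$ is the rigid generic fibre over $K'$ of a semistable formal scheme $\sy$ over $\so_{K'}$ for some finite $K'/K$; and since $\rg_{\dr}$ of a $K'$-variety is the same whether computed over $K$ or $K'$ (as $\Omega^1_{K'/K}=0$) and $K'\otimes_KC\simeq C^{[K':K]}$, one reduces further to $K'=K$, so $X=\sy_K$ with $\sy$ semistable over $\so_K$. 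Applying part~(1) to the semistable $\so_C$-model $\sy_{\so_C}$ obtained from $\sy$ by base change, and using that log-crystalline cohomology over $S_L$ inverted by $p$ of a semistable model is the de Rham cohomology of its smooth rigid generic fibre over $L$ (as in~(\ref{deszcz1})), I would compute, using cofinality inside $\Sigma_1$,
$$
\rg_{\crr,\ovk}(X_C)\ \simeq\ \LL\colim_{L/K}\rg_{\dr}(\sy_L)\ \simeq\ \LL\colim_{L/K}\bigl(\rg_{\dr}(X)\otimes_KL\bigr)\ \simeq\ \rg_{\dr}(X)\wh{\otimes}_K\ovk,
$$
where $\sy_L$ denotes the rigid generic fibre over $L$ of the base change of $\sy$ to $\so_L$; one checks that this chain of identifications is precisely the natural map.

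For the \emph{filtered} statement, I would carry the Hodge filtration through the whole argument: the PD-filtration on $\rg_{\crr}(\sy_{1,\so_L}/S_L)_{\Q_p}$ matches the Hodge filtration on $\rg_{\dr}(\sy_L)$ by the standard filtered comparison for log-smooth models, and this compatibility is preserved by the filtered colimit over $L$ and by the reductions of part~(1) applied degreewise (i.e., to each $\gr^i$). The step I expect to be the main obstacle is precisely this last point: ensuring that the crystalline--de Rham comparison one uses is a genuinely \emph{filtered} strict quasi-isomorphism and that the PD- and Hodge-filtrations remain identified under the colimit over the index set $\Sigma_1$; this is the only non-formal input, the remainder of the proof being the same pattern --- reduce to a common finite extension, base change to $C$, and descend via $C\simeq L\oplus W$ --- already carried out for Proposition~\ref{stara1}.
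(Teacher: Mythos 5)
Your argument is correct in substance, but it takes a genuinely different (and much longer) route than the paper. The paper's entire proof is a reduction: for $\sy\in\sm^{{\rm ss},b}_C$ the natural map $\rg_{\conv,\ovk}(\sy)\to\rg_{\crr,\ovk}(\sy)$ is a strict quasi-isomorphism (the standard rational comparison between convergent and crystalline cohomology for log-smooth Cartier-type models), so both claims follow immediately from the already-established analogues for convergent $\ovk$-cohomology (Proposition \ref{stara1} here, and \cite[Prop.\,4.23]{CN3}). You instead re-prove both statements from scratch: part (1) by rerunning the descent-plus-base-change-to-$C$ argument of Proposition \ref{stara1} with $\rg_{\crr}(-/S_L)$ in place of $\rg_{\conv}(-/S_L)$, and part (2) by a local computation $\rg_{\crr,\ovk}(X_C)\simeq\LL\colim_L\rg_{\dr}(X_L)\simeq\rg_{\dr}(X)\wh{\otimes}_K\ovk$. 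This works and has the virtue of being self-contained, but it duplicates arguments the paper already has on record; the paper's route buys brevity and also handles the filtered statement of (2) for free, since the filtered crystalline--de Rham comparison is packaged into the cited results. Two small cautions on your write-up: in (2), the justification that $\rg_{\dr}(-)\wh{\otimes}_K\ovk$ is a hypersheaf via "filtered colimit of exact functors" is not quite right as stated (filtered colimits need not commute with the totalizations appearing in hyperdescent); in this paper's conventions such tensor products are \emph{defined} by $\eta$-\'etale descent of the local ones, which is what actually licenses the reduction to a local model. And the reduction "to $K'=K$" is cleaner if phrased as reducing to the case where the base field itself carries the semistable model, keeping track of the decomposition $K'\otimes_K\ovk\simeq\prod_\sigma\ovk$ on both sides rather than literally assuming $K'=K$.
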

\begin{proof}  Since, for $\sy\in\sm^{{\rm ss},b}_C$,  the natural map in $\sd(C_{\ovk})$
$$
\rg_{\conv,\ovk}(\sy)\to \rg_{\crr,\ovk}(\sy)
$$
is a  strict quasi-isomorphism, it induces a strict quasi-isomorphism  in $\sd(C_{\ovk})$
$$
\rg_{\conv,\ovk}(X)\stackrel{\sim}{\to} \rg_{\crr,\ovk}(X), \quad X\in {\rm Sm}_C.
$$
 Hence 
our lemma  follows from  analogous claims  for convergent $\ovk$-cohomology which are  known (see \cite[the proof of Prop. 4.23]{CN3} or Proposition \ref{stara1}).
\end{proof}
\subsubsection{Absolute crystalline cohomology}  \label{absolute-crr}  
Let $\sx\in \sm^{\rm ss}_C$.
  Recall that we have the absolute crystalline cohomology $ \rg_{\crr}(\sx)_{\Q_p}\in \sd\sff_{\phi}(C_{\B^+_{\crr}})$  equipped with  the Hodge  filtration  $F^r\rg_{\crr}(\sx)_{\Q_p}:=\rg_{\crr}(\sx,\sj^{[r]})_{\Q_p}$, for $r\geq 0$.
 \index{acalcris@\acalcris}Let $\sa_{\crr}$ and $F^r\sa_{\crr}$, $r\geq 0$,  
be the $\eta$-\'etale sheafifications of the presheaves
  $\sx\mapsto \rg_{\crr}(\sx)_{\Q_p}$ and  $\sx\mapsto \rg_{\crr}(\sx,\sj^{[r]})_{\Q_p}$, respectively, on $\sm_{C}^{\sem}$.
    For $X\in {\rm Sm}_C$, we \index{rgcris@\rgcris}set in $\sd\sff_{\phi}(C_{\B^+_{\crr}})$
  $$\rg_{\crr}(X):=\rg_{\eet}(X,\sa_{\crr}),\quad F^r\rg_{\crr}(X):=\rg_{\eet}(X,F^r\sa_{\crr}),\quad r\geq 0.
  $$ It is a dg filtered $\B^+_{\crr}$-algebra  equipped with a continuous action of
 $\sg_K$ if $X$ is defined over $K$.
It is equipped  with the topology induced   from the topology of the $ \rg_{\crr}(\sx,\sj^{[r]})_{\Q_p}$'s.
 
   The local-global comparison requires the Hyodo-Kato quasi-isomorphism and will be proven  in Lemma \ref{abs-crr}  below (just the nonfiltered case). 
\subsection{Rigid analytic $\B^+_{\dr}$-cohomology}  We will define now rigid analytic $\B^+_{\dr}$-cohomology,  list its basic properties, and compare it with already existing definitions.  
 \subsubsection{Definition of rigid analytic $\B^+_{\dr}$-cohomology}   \label{defcr}
Let $\sx\in \sm^{\rm ss}_C$.  To define rigid analytic $\B^+_{\dr}$-cohomology, we start with   the absolute crystalline cohomology $ \rg_{\crr}(\sx)_{\Q_p}$ and complete it with respect to  the Hodge  filtration  $F^r\rg_{\crr}(\sx)_{\Q_p}$, $r\geq 0$: 
 \begin{align*}
 \rg_{\crr}(\sx)_{\Q_p}^{\bwedge}:=\R\wlim_r (\rg_{\crr}(\sx)_{\Q_p}/F^r), \quad  \rg_{\crr}(\sx,\sj^{[r]})_{\Q_p}^{\bwedge}:=\R\wlim_{j \geq r}(\rg_{\crr}(\sx,\sj^{[r]})_{\Q_p}/F^j).
\end{align*}
This is a  dg  filtered $\B^+_{\dr}$-algebra,  hence a complex in $\sd\sff(C_{\B^+_{\dr}})$.
The corresponding $\eta$-\'etale sheafifications on $\sm^{\rm ss}_C$ we will denote \index{acalcris@\acalcris}by 
$F^r\sa^{\bwedge}_{\crr}$, $r\geq 0$.  We have  canonical maps
$\kappa: F^r\sa_{\crr}\to F^r\sa^{\bwedge}_{\crr}$, $r\geq 0$. Moreover, the canonical 
\index{thetavar@\thetavar}map $\vartheta: F^r \rg_{\crr}(\sx)_{\Q_p}\to F^r\rg_{\crr}(\sx/\overline{S})_{\Q_p}$, compatible with the map $\theta: \B^+_{\crr}\to C$,  extends to a map 
$\vartheta: F^r \rg_{\crr}(\sx)_{\Q_p}^{\bwedge}\to F^r\rg_{\crr}(\sx/\overline{S})_{\Q_p}$, which, in turn,  globalizes  to a map 
$\vartheta: F^r\sa_{\crr}^{\bwedge}\to F^r\sa^{\rm rel}_{\crr}$.

  For $X\in {\rm Sm}_C$,  define the 
\index{rgdr@\rgdr}$\B^+_{\dr}$-cohomology in $\sd\sff(C_{\B^+_{\dr}})$:
$$\rg_{\dr}(X/\B^+_{\dr}):=\rg_{\eet}(X,\sa^{\bwedge}_{\crr}),\quad F^r\rg_{\dr}(X/\B^+_{\dr}):=\rg_{\eet}(X,F^r\sa^{\bwedge}_{\crr}), \quad r\geq 0.
$$
This is a  dg  filtered $\B^+_{\dr}$-algebra,  equipped with a continuous action of
 $\sg_K$ if $X$ is defined over $K$. It is equipped  with the topology induced   from the topology of the $\rg_{\crr}(\sx,\sj^{[r]})^{\bwedge}_{\Q_p}$'s.
 
    The local-global comparison requires product formula and  will be proven  in Lemma \ref{etale-descent20}  below.

 We have canonical \index{thetavar@\thetavar}maps (the first one in $\sd\sff(C_{\B^+_{\dr}})$, the second one -- in $\sd(C_{\B^+_{\dr}})$)
\begin{align*}
 \kappa: \quad & \rg_{\crr}(X)\to  \rg_{\dr}(X/\B^+_{\dr}),\\
\vartheta: \quad & F^r\rg_{\dr}(X/\B^+_{\dr})\to F^r\rg_{\dr}(X),\quad r\geq 0.
\end{align*}
 It is  immediate from the definitions that the first map 
yields a  strict quasi-isomorphism in $\sd\sff(C_{\B^+_{\dr}})$
  $$
 \kappa: \quad  \rg_{\crr}(X)^{\bwedge}\stackrel{\sim}{\to} \rg_{\dr}(X/\B^+_{\dr}),
  $$
where we set $ {\rg_{\crr}(X)}^{\bwedge}:=\R\lim_r( \rg_{\crr}(X)/F^r)$ in $\sd\sff(C_{\B^+_{\dr}})$.

    \subsubsection{Comparison results.} 
    (1)  We start with a comparison of $\B^+_{\dr}$- and de Rham cohomologies. 
    
    (i) {\em Projection from $\B^+_{\dr}$-cohomology to de Rham cohomology}.
 \begin{proposition}\label{projection}
 Let $X\in {\rm Sm}_C$. 
 \begin{enumerate}
 \item
We have  a natural  strict quasi-isomorphism in $\sd\sff(C_C)$
 $$
 \vartheta:\quad \rg_{\dr}(X/\B^+_{\dr})\wh{\otimes}^{\R}_{\B^+_{\dr}}C \stackrel{\sim}{\to}\rg_{\dr}(X).
 $$
 \item More generally, for $r\geq 0$, we have a natural distinguished triangle in $\sd(C_{\B^+_{\dr}})$
 \begin{equation}
 \label{detail1}
 F^{r-1}\rg_{\dr}(X/\B^+_{\dr})\lomapr{t} F^r\rg_{\dr}(X/\B^+_{\dr})\lomapr{\vartheta}F^r\rg_{\dr}(X)
 \end{equation}
 \item For $r\geq 0$, we have a natural distinguished 
\index{betax@\BETAXX}triangle in $\sd(C_{\B^+_{\dr}})$
 \begin{equation}
 \label{coffee1}
  F^{r+1}\rg_{\dr}(X/\B^+_{\dr})\to F^r\rg_{\dr}(X/\B^+_{\dr})\stackrel{\beta_X}{\to} \bigoplus_{i\leq r}\rg(X,\Omega^i_X)(r-i)[-i]
 \end{equation}
 \end{enumerate}
 \end{proposition}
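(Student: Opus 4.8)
The plan is to deduce all three assertions from a single local computation: the graded pieces of the Hodge filtration on rigid analytic $\B^+_{\dr}$-cohomology. Since $F^{\jcdot}\rg_{\dr}(X/\B^+_{\dr})$, $F^{\jcdot}\rg_{\dr}(X)$ and the complexes $\rg(X,\Omega^i_X)$ are all obtained by $\eta$-\'etale (hyper)descent from the corresponding functors on $\sm^{\rm ss}_C$, and $\eta$-\'etale sheafification commutes with the cofibre sequences $F^{r+1}\to F^r\to\gr^r_F$, it suffices to work locally with a semistable model $\sx\in\sm^{\rm ss}_C$. First I would prove, for such $\sx$, a natural strict quasi-isomorphism
$$
\gr^r_F\rg_{\crr}(\sx)^{\bwedge}_{\Q_p}\ \simeq\ \bigoplus_{i\leq r}\rg(\sx_C,\Omega^i_{\sx_C})(r-i)[-i].
$$
Hodge completion does not affect graded pieces (apply $\gr^r$ to the triangle $\R\wlim_{j}(F^{r+1}/F^{j})\to\R\wlim_j(F^r/F^j)\to F^r/F^{r+1}$, the last term having constant transition maps), so this reduces to the same statement for $\rg_{\crr}(\sx)_{\Q_p}$. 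Modelling the absolute log-crystalline cohomology \'etale-locally by the log-de~Rham complex $\Omega^{\jcdot,\log}_{\widetilde\sx/\A_{\crr}}$ of a log-smooth lift $\widetilde\sx$ over $\A_{\crr}$ (with $\widetilde\sx\otimes_{\A_{\crr},\theta}\so_C=\sx$), the Hodge filtration is the convolution of the divided-power filtration on $\A_{\crr}$ with the stupid filtration on $\Omega^{\jcdot,\log}_{\widetilde\sx/\A_{\crr}}$; since rationally $\gr^a_F\B^+_{\crr}\simeq C(a)$ and $\Omega^{b,\log}_{\widetilde\sx/\A_{\crr}}\otimes_{\A_{\crr},\theta}C\simeq \Omega^b_{\sx_C/C}$ (the log structure trivializes on the generic fibre), one gets the displayed formula. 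Alternatively one may invoke Proposition~\ref{BMSG} together with the analogous computation of Bhatt--Morrow--Scholze. Sheafifying gives a natural strict quasi-isomorphism $\beta_X:\gr^r_F\rg_{\dr}(X/\B^+_{\dr})\simeq\bigoplus_{i\leq r}\rg(X,\Omega^i_X)(r-i)[-i]$, and then~(\ref{coffee1}) is just the cofibre sequence $F^{r+1}\rg_{\dr}(X/\B^+_{\dr})\to F^r\rg_{\dr}(X/\B^+_{\dr})\to\gr^r_F\rg_{\dr}(X/\B^+_{\dr})$. This proves~(3).

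For~(2), fix $r\geq 1$. Multiplication by $t\in F^1\B^+_{\dr}$ carries $F^{r-1}\rg_{\dr}(X/\B^+_{\dr})$ into $F^r\rg_{\dr}(X/\B^+_{\dr})$, and since $\theta(t)=0$ the canonical map $\vartheta:F^r\rg_{\dr}(X/\B^+_{\dr})\to F^r\rg_{\dr}(X)$ annihilates $t\cdot F^{r-1}\rg_{\dr}(X/\B^+_{\dr})$ and thus factors through $Q_r:=\cofiber(t:F^{r-1}\rg_{\dr}(X/\B^+_{\dr})\to F^r\rg_{\dr}(X/\B^+_{\dr}))$, giving $\overline\vartheta:Q_r\to F^r\rg_{\dr}(X)$. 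Both sides carry complete filtrations (the quotient filtration from $F^r\rg_{\dr}(X/\B^+_{\dr})$ on the source, the stupid filtration on the target), so it is enough to check that $\overline\vartheta$ is a quasi-isomorphism on associated gradeds. Using~(3) and the fact that $t:\gr^{j-1}_F\rg_{\dr}(X/\B^+_{\dr})\to\gr^j_F\rg_{\dr}(X/\B^+_{\dr})$ carries the summand $\rg(X,\Omega^i_X)(j-1-i)[-i]$ isomorphically onto $\rg(X,\Omega^i_X)(j-i)[-i]$, one computes $\gr^j_F Q_r\simeq\rg(X,\Omega^j_X)[-j]$ for $j\geq r$ and $0$ otherwise, matching $\gr^j_F F^r\rg_{\dr}(X)$; moreover $\vartheta$ on $\gr^r_F$ is, after~(3), the projection onto the $i=r$ summand (because $\theta$ kills $\gr^{>0}_F\B^+_{\dr}$), so $\overline\vartheta$ realizes these isomorphisms. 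Hence $\overline\vartheta$ is a strict quasi-isomorphism and~(\ref{detail1}) is the resulting distinguished triangle. Assertion~(1) is the case $r=0$ of the same argument, with the convention $F^{-1}\rg_{\dr}(X/\B^+_{\dr})=F^0\rg_{\dr}(X/\B^+_{\dr})=\rg_{\dr}(X/\B^+_{\dr})$: as $\B^+_{\dr}$ is a discrete valuation ring with uniformizer $t$ and $C=\B^+_{\dr}/t\B^+_{\dr}$, we have $\rg_{\dr}(X/\B^+_{\dr})\wh{\otimes}^{\R}_{\B^+_{\dr}}C\simeq\cofiber(t:\rg_{\dr}(X/\B^+_{\dr})\to\rg_{\dr}(X/\B^+_{\dr}))$, and the argument gives $\overline\vartheta:\cofiber(t)\stackrel{\sim}{\to}\rg_{\dr}(X)$.

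I expect the main obstacle to be the local graded computation: one must make the Hodge filtration on \emph{absolute} log-crystalline cohomology explicit, control the behaviour of the divided-power filtration on $\A_{\crr}$ under the relevant tensor products once $p$ is inverted, and check that the log-differentials over $\A_{\crr}$ restrict to the honest differentials $\Omega^{\jcdot}_{\sx_C}$ on the generic fibre --- all functorially enough that the quasi-isomorphism $\beta_X$ survives $\eta$-\'etale sheafification, i.e.\ is genuinely natural rather than merely objectwise a quasi-isomorphism. A subsidiary point is to verify that the tensor product $\wh{\otimes}^{\R}_{\B^+_{\dr}}C$ of Section~\ref{period-rings}, applied to the $\eta$-\'etale descent defining $\rg_{\dr}(X/\B^+_{\dr})$, does compute $\cofiber(t)$ and commutes with descent; this should follow from boundedness of the local complexes and an argument as in Lemma~\ref{trick1}. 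Once these inputs and the completeness of the two Hodge filtrations are in hand, the passage to~(1), (2) and (3) is the purely formal manipulation with the cofibre of $t$ sketched above.
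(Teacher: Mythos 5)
Your proposal is correct, but it reorganizes the argument compared to the paper. The paper proves (1), (2) and (3) independently, each time by the same local move: \'etale-locally the model descends to a semistable $\sx_{\so_L}$ over a finite extension $L/K$, and the filtered quasi-isomorphism $\hat\iota_{\rm BK}\colon F^{\jcdot}(\rg_{\dr}(\sx_L)\wh{\otimes}^{\R}_L\B^+_{\dr})\stackrel{\sim}{\to}F^{\jcdot}\rg_{\crr}(\sx)^{\bwedge}_{\Q_p}$ turns all three assertions into elementary degreewise statements about $\Omega^i(\sx_L)\wh{\otimes}^{\R}_LF^{j}\B^+_{\dr}$; in particular the identification $g_\sx$ of $\gr^r$ with $\bigoplus_{i\leq r}\Omega^i(\sx_L)\wh{\otimes}^{\R}_Lt^{r-i}C[-i]$ is exactly your graded computation, obtained through the $L$-model rather than through a log-smooth lift over $\A_{\crr}$. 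You instead prove the graded statement (3) first, intrinsically over $\A_{\crr}$, and then deduce (2) and (1) by a filtered d\'evissage using completeness of the Hodge filtration and the fact that $t$ on graded pieces misses exactly the top summand. Both routes are sound. What the paper's route buys is that the maps $\hat\iota_{\rm BK}$ are not throwaway: they are reused immediately (Remark~\ref{gen-paris}, Lemma~\ref{sroka1}, Lemma~\ref{etale-descent20}) to prove the product formula and local-global compatibility, so the descent to $L$ is an investment rather than a detour. What your route buys is independence from the choice of $L$ and from the K\"unneth-type identification, at the cost of having to make the $\A_{\crr}$-lift functorial (best done by phrasing the graded computation crystalline-site-theoretically via $\sj^{[r]}/\sj^{[r+1]}$, as you anticipate) and of verifying completeness of the stupid filtration on $F^r\rg_{\dr}(X)$ (harmless, as it is locally finite). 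One caution: do not lean on Proposition~\ref{BMSG} as the ``alternative'' source of the graded computation, since it appears later in the paper and its proof draws on Lemma~\ref{sroka1}, whose proof in turn uses the $\hat\iota_{\rm BK}$ maps constructed inside the proof of the present proposition; your primary, self-contained argument is the one to keep.
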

  \begin{proof}  In the first claim, the tensor product is simply defined as the  cofiber of the map in $\sd(C_{\B^+_{\dr}})$
  $$
   \rg_{\dr}(X/\B^+_{\dr})\lomapr{t} \rg_{\dr}(X/\B^+_{\dr}).
  $$
  Hence it suffices to show that we have the distinguished triangle in $\sd(C_{\B^+_{\dr}})$
  $$
   \rg_{\crr}(X/\B^+_{\dr})\lomapr{t} \rg_{\crr}(X/\B^+_{\dr}) \lomapr{\vartheta} \rg_{\dr}(X). 
  $$
  \'Etale locally this translates into the  triangle in $\sd(C_{\B^+_{\dr}})$
$$
\rg_{\crr}(\sx)^{\bwedge}_{\Q_p}\lomapr{t} \rg_{\crr}(\sx)^{\bwedge}_{\Q_p}\lomapr{\vartheta} \rg_{\crr}(\sx/\overline{S})_{\Q_p},
$$
where  $\sx=\sx_{L,C}$, for a semistable affine model $\sx_{\so_L}$ over a finite extension $L$ of $K$. 

   This  triangle 
 fits into a commutative diagram:
\begin{equation}
\label{ncis1}
\xymatrix@R=6mm{
\rg_{\crr}(\sx)^{\bwedge}_{\Q_p}\ar[r]^{t} & \rg_{\crr}(\sx)^{\bwedge}_{\Q_p} \ar[r]^{\vartheta} & \rg_{\crr}(\sx/\overline{S})_{\Q_p}\\
\R\Gamma_{\dr}(\sx_L)\wh{\otimes}^{\R}_{L}{\B}^+_{\dr}\ar[r]^{1\otimes t} \ar[u]^{\wr}_{\hat{\iota}_{\rm BK}} &\R\Gamma_{\dr}(\sx_L)\wh{\otimes}^{\R}_{L}{\B}^+_{\dr}\ar[r]^{1\otimes\theta} \ar[u]^{\wr}_{\hat{\iota}_{\rm BK}} &
 \R\Gamma_{\dr}(\sx_L)\wh{\otimes}^{\R}_{L}C \ar[u]^{\wr}.
}
\end{equation}
Here the vertical 
\index{iotabk@\iotabk}maps 
$\hat{\iota}_{\rm BK}:=\R\wlim_r\iota_{{\rm BK},r} $, with  $\iota_{{\rm BK},r}$ defined as the composition
\begin{align}
\label{the-map}
\iota_{{\rm BK},r}:\quad  (\R\Gamma_{\dr}(\sx_L)\wh{\otimes}^{\R}_{L}{\B}^+_{\dr})/F^r& \stackrel{\sim}{\rightarrow}(\R\Gamma_{\crr}(\sx_{\so_L}/S_L)_{\Q_p}\wh{\otimes}^{\R}_L\R\Gamma_{\crr}(\overline{S}/S_L)_{\Q_p})/F^r\\
 &  \xrightarrow[\sim]{\cup} 
 \R\Gamma_{\crr}(\sx/S_L)_{\Q_p}/F^r 
     \stackrel{\sim}{\leftarrow}\R\Gamma_{\crr}(\sx)_{\Q_p}/F^r,\notag
\end{align}
where we set 
\begin{equation}
\label{expression}
 F^r(\R\Gamma_{\dr}(\sx_L)\wh{\otimes}_{L}^{\R}\B^+_{\dr}):=\R\wlim(\so(\sx_L)\wh{\otimes}^{\R}_LF^r\B^+_{\dr}\to \Omega^1(\sx_L)\wh{\otimes}^{\R}_LF^{r-1}\B^+_{\dr}\to\cdots).
 \end{equation}
 The first quasi-isomorphism in \eqref{the-map} follows from the fact that $\sx_{\so_L}$ is log-smooth over $\so_L$ and that, more generally,   derived de Rham complex computes crystalline cohomology for log-syntomic schemes (both Hodge completed) by \cite[1.9.2]{BEp}.  The second quasi-isomorphism is just log-smooth base change for crystalline cohomology (more explicitly, one can proceed as in \cite[Prop. 4.5.4]{Ts}). And the third quasi-isomorphism is a formal scheme version of \cite[Cor. 2.4]{NN} (the proof in {\em loc. cit.} goes through in our setting).

  The bottom row in diagram \eqref{ncis1}  is a distinguished triangle. It follows that the top row in our diagram is a distinguished triangle as well, as wanted. 

 The second claim, \'etale locally, reduces to showing that the triangle
$$
F^{r-1}\rg_{\crr}(\sx)^{\bwedge}_{\Q_p}\lomapr{t} F^r\rg_{\crr}(\sx)^{\bwedge}_{\Q_p}\lomapr{\vartheta} F^r\rg_{\crr}(\sx/\overline{S})_{\Q_p},
$$
where  $\sx=\sx_{L,C}$, for a semistable affine model $\sx_{\so_L}$ over a finite extension $L$ of $K$, is distinguished.
This  triangle 
 fits into a commutative diagram:
$$
\xymatrix@R=6mm{
F^{r-1}\rg_{\crr}(\sx)^{\bwedge}_{\Q_p}\ar[r]^{t} & F^r\rg_{\crr}(\sx)^{\bwedge}_{\Q_p} \ar[r]^{\vartheta} & F^r\rg_{\crr}(\sx/\overline{S})_{\Q_p}\\
F^{r-1}(\R\Gamma_{\dr}(\sx_L)\wh{\otimes}^{\R}_{L}{\B}^+_{\dr})\ar[r]^{1\otimes t} \ar[u]^{\wr}_{\hat{\iota}_{\rm BK}} &F^r(\R\Gamma_{\dr}(\sx_L)\wh{\otimes}^{\R}_{L}{\B}^+_{\dr})\ar[r]^{1\otimes\theta} \ar[u]^{\wr}_{\hat{\iota}_{\rm BK}} &
 F^r(\R\Gamma_{\dr}(\sx_L)\wh{\otimes}^{\R}_{L}C) \ar[u]^{\wr}.
}
$$
The bottom row is a distinguished triangle: use the expression (\ref{expression}) to reduce to showing that, for $r-1\geq i\geq 0$,  
we have a strict quasi-isomorphism
$$ t: \Omega^i(\sx_L)\wh{\otimes}^{\R}_LF^{r-1-i}\B^+_{\dr}\stackrel{\sim}{\to}\Omega^i(\sx_L)\wh{\otimes}^{\R}_LF^{r-i}\B^+_{\dr}
$$
and the triangle
$$
 \Omega^r(\sx_L)\wh{\otimes}^{\R}_L\B^+_{\dr}\lomapr{t}\Omega^r(\sx_L)\wh{\otimes}^{\R}_L\B^+_{\dr}\to \Omega^r(\sx_L)\wh{\otimes}^{\R}_LC
$$
is distinguished. But the first claim is clear and the  second claim was just proved in (1). 

      The third claim, \'etale locally, reduces to identifying the graded term in  the distinguished  triangle
$$
F^{r+1}\rg_{\crr}(\sx)^{\bwedge}_{\Q_p}\to  F^r\rg_{\crr}(\sx)^{\bwedge}_{\Q_p}\to \gr^r\rg_{\crr}(\sx/\overline{S})_{\Q_p},
$$
where  $\sx=\sx_{L,C}$, for a semistable affine model $\sx_{\so_L}$ over a finite extension $L$ of $K$. That is, we want to show that
$$
\gr^r\rg_{\crr}(\sx/\overline{S})_{\Q_p}\simeq \bigoplus_{i\leq r}\Omega^i(\sx_L)\wh{\otimes}^{\R}_Lt^{r-i}C[-i].
$$
The above   triangle 
 fits into a commutative diagram:
$$
\xymatrix@R=6mm@C=6mm{
F^{r+1}\rg_{\crr}(\sx)^{\bwedge}_{\Q_p}\ar[r] & F^r\rg_{\crr}(\sx)^{\bwedge}_{\Q_p} \ar[r] & \gr^r\rg_{\crr}(\sx/\overline{S})_{\Q_p}\\
F^{r+1}(\R\Gamma_{\dr}(\sx_L)\wh{\otimes}^{\R}_{L}{\B}^+_{\dr})\ar[r] \ar[u]^{\wr}_{\hat{\iota}_{\rm BK}} &F^r(\R\Gamma_{\dr}(\sx_L)\wh{\otimes}^{\R}_{L}{\B}^+_{\dr})\ar[r] \ar[u]^{\wr}_{\hat{\iota}_{\rm BK}} &
 \gr^r(\R\Gamma_{\dr}(\sx_L)\wh{\otimes}^{\R}_{L}\B^+_{\dr}) \ar[u]^{\wr}_{f_{\sx}}.
}
$$
Using expression (\ref{expression}) we get  
\begin{align*}
\gr^r(\R\Gamma_{\dr}(\sx_L)\wh{\otimes}_{L}^{\R}\B^+_{\dr}) & \xrightarrow[\sim]{g_{\sx}} \R\wlim(\so(\sx_L)\wh{\otimes}^{\R}_Lt^rC\stackrel{0}{\to} \Omega^1(\sx_L)\wh{\otimes}^{\R}_Lt^{r-1}C\stackrel{0}{\to}\cdots\stackrel{0}{\to} \Omega^r(\sx_L)\wh{\otimes}^{\R}_Lt^{0}C)\\
  & \simeq \bigoplus_{i\leq r}\Omega^i(\sx_L)\wh{\otimes}^{\R}_Lt^{r-i}C[-i],
\end{align*}
as wanted. The global map $\beta_X$ is defined by globalizing the local maps $\beta_{\sx}:=g_{\sx}f^{-1}_{\sx}$.
 \end{proof}
    \begin{remark}\label{gen-paris}
   (1)  The above proof shows that we have a  distinguished triangle
    $$
    \sa^{\bwedge}_{\crr}\lomapr{t}\sa^{\bwedge}_{\crr}\lomapr{\vartheta}\sa^{\rm rel}_{\crr}.
    $$
    
    (2)  The  maps $\hat{\iota}_{\rm BK}$ above can be defined in a more general set-up, where $\sx_{\so_L}$ is assumed to be just  log-syntomic over $S_L$. It is again a strict quasi-isomorphism and the proof of this claim is  not much different than in  the log-smooth case:  The fact that the second map 
in the definition of $\iota_{{\rm BK},r} $ in (\ref{the-map}) is a strict quasi-isomorphism  can be seen by unwinding both sides of the cup product map: one finds a K\"unneth morphism for certain de Rham complexes.  It is an integral  quasi-isomorphism because these complexes are "flat enough" which follows from the fact that the maps $\so_{C,n}\to \so_{L,n}$ and $\sx_{\so_L,n}\to\so_{L,n}$, for $n\geq 0$, 
are log-syntomic (see the proof of \cite[Prop. 4.5.4]{Ts} for a similar argument).  The third map in (\ref{the-map}) is a strict quasi-isomorphism (integrally,  a quasi-isomorphism up to a constant dependent on $L$) by an argument analogous to the one given in the proof of \cite[Cor. 2.4]{NN}. 
    \end{remark}
    
  (ii)    {\em Product formula}. Let $X\in {\rm Sm}_K$. The morphisms $\hat\iota_{{\rm BK}}$ from Remark \ref{gen-paris} 
  induce a 
\index{iotabk@\iotabk}morphism\footnote{See the proof of Lemma \ref{sroka1} for details.} in $\sd\sff(C_{\B^+_{\dr}})$
       $$
\iota_{\rm BK}:  \quad \rg_{\dr}(X)\wh{\otimes}^{\R}_K\B^+_{\dr}\to\rg_{\dr}(X_C/\B^+_{\dr}).
 $$
 \begin{lemma}\label{sroka1} The  morphism $\iota_{\rm BK}$
 is a   strict quasi-isomorphism  in $\sd\sff(C_{\B^+_{\dr}})$. 
 \end{lemma}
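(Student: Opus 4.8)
The plan is to obtain the statement by $\eta$-\'etale descent from the local comparison maps $\hat\iota_{\rm BK}$ of Remark~\ref{gen-paris}, exactly as in the proofs of Proposition~\ref{stara1} and Proposition~\ref{product11}; the filtered local input has already been supplied in the proof of Proposition~\ref{projection}. So the first point is to record how $\iota_{\rm BK}$ and its two sides are assembled by descent. By definition $F^r\rg_{\dr}(X_C/\B^+_{\dr})=\rg_{\eet}(X_C,F^r\sa^{\bwedge}_{\crr})$ is the $\eta$-\'etale descent on $\sm^{\rm ss}_C$ of $\sx\mapsto\rg_{\crr}(\sx,\sj^{[r]})^{\bwedge}_{\Q_p}$. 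Since $\B^+_{\dr}$ is fixed, $\wh{\otimes}^{\R}_K\B^+_{\dr}$ commutes with the homotopy colimit over $\eta$-\'etale hypercoverings; combining this with the reduction to a truncated hypercovering in which only finitely many models occur in each degree, all descending to semistable models $\sx=\sx_{L,C}$ over a common finite extension $L/K$ (so that the colimit in play becomes finite and commutes with the completion $\R\wlim_r$), one sees that $\rg_{\dr}(X)\wh{\otimes}^{\R}_K\B^+_{\dr}$ equipped with the filtration $(\ref{expression})$ is the $\eta$-\'etale descent of $\sx\mapsto F^\bullet\big(\rg_{\dr}(\sx_L)\wh{\otimes}^{\R}_L\B^+_{\dr}\big)$ (here $L\subset\ovk\subset\B^+_{\dr}$), and $\iota_{\rm BK}$ is the morphism of sheaves induced by the maps $\hat\iota_{\rm BK}$.

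Since $\eta$-\'etale descent preserves strict quasi-isomorphisms and the filtrations involved are complete (being defined by $\R\wlim_r$), it then suffices to show that for every semistable affine model $\sx=\sx_{L,C}$ and every $r\geq 0$ the local map $\hat\iota_{\rm BK}: \gr^r\big(\rg_{\dr}(\sx_L)\wh{\otimes}^{\R}_L\B^+_{\dr}\big)\to\gr^r\rg_{\crr}(\sx)^{\bwedge}_{\Q_p}$ is a strict quasi-isomorphism (one then gets it on each $F^r$, hence on the two sides, by completeness). For $r=0$ this is part of Remark~\ref{gen-paris}(2) (established there even for log-syntomic models). For general $r$ the commutative diagrams in the proofs of parts (2) and (3) of Proposition~\ref{projection} present $\hat\iota_{\rm BK}$ as a morphism between the two filtration triangles, and on the $r$-th graded piece it is identified there with the strict quasi-isomorphism
$$
\gr^r\big(\rg_{\dr}(\sx_L)\wh{\otimes}^{\R}_L\B^+_{\dr}\big)\;\simeq\;\bigoplus_{i\leq r}\Omega^i(\sx_L)\wh{\otimes}^{\R}_Lt^{r-i}C[-i]\;\stackrel{\sim}{\longrightarrow}\;\gr^r\rg_{\crr}(\sx/\overline{S})_{\Q_p}.
$$
Sheafifying back then gives the Lemma.

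The one genuinely delicate point is the bookkeeping behind the first step: one must legitimately commute the homotopy colimit over $\eta$-\'etale hypercoverings with the two inverse limits $\R\wlim_r$ — the one defining the Hodge-completed crystalline cohomology and the one in $(\ref{expression})$ — and descend the left-hand side from models over varying $\so_L$ down to $X$ over $K$. This is handled verbatim as in the proof of Proposition~\ref{stara1}: pass to $\wt{H}$-cohomology, refine to a truncated hypercovering with finitely many models all defined over a common finite extension, where the relevant colimit is finite; after this reduction the filtered local comparison is exactly what was proved in Remark~\ref{gen-paris} and in the proof of Proposition~\ref{projection}, and nothing further is needed.
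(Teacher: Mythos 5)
Your overall reduction --- localize to an affine semistable model $\sx=\sx_{L,C}$ over a finite extension $L/K$ and compare with the Hodge-completed object via $\hat\iota_{\rm BK}$ --- is the paper's, and your variant of finishing through graded pieces (which are insensitive to completion and are computed in the proof of Proposition~\ref{projection}) would be a legitimate way to organize the last step. The genuine gap is the sentence ``the filtrations involved are complete (being defined by $\R\wlim_r$)''. The $\R\wlim$ in (\ref{expression}) is the totalization of the (bounded) de Rham complex, not a limit over the filtration index, so completeness of the Hodge filtration on the \emph{uncompleted} tensor product is not built into the definition. That completeness, i.e.\ the assertion that
$$
\R\Gamma_{\dr}(\sx_{L})\wh{\otimes}^{\R}_K\B^+_{\dr}\longrightarrow \R\wlim_r\big((\R\Gamma_{\dr}(\sx_{L})\wh{\otimes}^{\R}_K\B^+_{\dr})/F^r\big)
$$
is a filtered strict quasi-isomorphism, is precisely the nontrivial content of the lemma: the \emph{completed} tensor product is already identified with $\rg_{\dr}(\sx_C/\B^+_{\dr})$ filtered-strictly by $\hat\iota_{\rm BK}$ (Proposition~\ref{projection}, Remark~\ref{gen-paris}), so taking completeness for granted makes the argument circular. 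The paper proves this statement degreewise, using that each $\Omega^i(\sx_L)$ is a Banach space so that $\Omega^i(\sx_L)\wh{\otimes}^{\R}_K(-)$ commutes with $\R\wlim_r(F^s\B^+_{\dr}/F^r)\simeq F^s\B^+_{\dr}$; some argument of this kind has to appear in your proof, and it is exactly what your parenthetical skips.

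Two smaller points. The ``genuinely delicate point'' you flag at the end --- commuting the hypercover colimit with the two $\R\wlim$'s --- is not where the difficulty lies: both sides of $\iota_{\rm BK}$ are \emph{defined} by ($\eta$-)\'etale descent of the local filtered presheaves (see the Remark following the statement of the lemma for the left-hand side), so the global statement follows formally once the local map is a filtered strict quasi-isomorphism, and no interchange of colimits with limits is required. Relatedly, the blanket claim that $\wh{\otimes}^{\R}_K\B^+_{\dr}$ commutes with the homotopy colimit over hypercoverings is false in general for completed projective tensor products (cf.\ the warning in Remark~\ref{wrzask1}); it is harmless here only because the global filtered tensor product is defined as that colimit, not computed as one.
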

\begin{remark}\label{sroka12}
The filtration on $\R\Gamma_{\dr}(X)\wh{\otimes}_K^{\R}\B^+_{\dr}$ is defined by the formula
$$
F^r(\R\Gamma_{\dr}(X)\wh{\otimes}_{K}^{\R}\B^+_{\dr}):=\LL\colim(F^r(\R\Gamma_{\dr}\wh{\otimes}^{\R}_{K}\B^+_{\dr})(U_{\cdot})),
$$
 where the homotopy colimit is taken over \'etale affinoid hypercoverings $U_{\cdot}$ of  $X$ and, for an affinoid $U$, 
 $$
 F^r(\R\Gamma_{\dr}(U)\wh{\otimes}_{K}^{\R}\B^+_{\dr}):=\R\wlim(\so(U)\wh{\otimes}^{\R}_KF^r\B^+_{\dr}\to \Omega^1(U)\wh{\otimes}^{\R}_KF^{r-1}\B^+_{\dr}\to\cdots).
 $$
Since  $\R\Gamma_{\dr}(X)$ satisfies filtered \'etale descent\footnote{Use that the varieties are smooth and we have \'etale descent for the structure sheaf.}, it is easy to see that so does $\R\Gamma_{\dr}(X)\wh{\otimes}_K^{\R}\B^+_{\dr}$. 
  \end{remark}
 \begin{proof} We may argue \'etale locally and assume that $X =\sx_{L}$, for a semistable affine model $\sx_{\so_L}$, for  a finite extension $L$ of $K$. Then $X_C=\sx_L\times_KL$. We need to show that the map
 $$
 \iota_{\rm BK}:\quad \R\Gamma_{\dr}(\sx_{L})\wh{\otimes}^{\R}_K\B^+_{\dr}\to (\R\Gamma_{\crr}(\sx_{\so_L}/S_K)_{\Q_p}\wh{\otimes}^{\R}_K\B^+_{\dr})^{\bwedge}
  \xrightarrow[\sim]{\hat{\iota}_{\rm BK}}  \R\Gamma_{\crr}(\sx_{\so_L}\times_{\so_K}{\so_C})_{\Q_p}^{\bwedge}
$$
is a filtered strict quasi-isomorphism. For that, since the base change map 
$$
\R\Gamma_{\crr}(\sx_{\so_L}/S_K)_{\Q_p}^{\bwedge}\to \R\Gamma_{\crr}(\sx_{\so_L}/S_L)_{\Q_p}^{\bwedge}
$$
is a filtered strict quasi-isomorphism, 
it suffices to show that so is 
the canonical map
 \begin{equation}
 \label{completion20}
  \R\Gamma_{\crr}(\sx_{\so_L}/S_L)_{\Q_p}\wh{\otimes}^{\R}_K\B^+_{\dr} {\to} \R\wlim_r(\R\Gamma_{\crr}(\sx_{\so_L}/S_L)_{\Q_p}\wh{\otimes}^{\R}_K\B^+_{\dr})/F^r.
 \end{equation}

   But we can   write (the differentials are over $L$):
 \begin{align*}
   F^j(\R\Gamma_{\crr}(\sx_{\so_L}/S_L)_{\Q_p}\wh{\otimes}^{\R}_K\B^+_{\dr})& =\R\wlim(\so(\sx_L)\wh{\otimes}^{\R}_KF^j\B^+_{\dr}\to \Omega^1(\sx_L)\wh{\otimes}^{\R}_KF^{j-1}\B^+_{\dr}\to \cdots)\\
   F^j(\R\Gamma_{\crr}(\sx_{\so_L}/S_L)_{\Q_p}\wh{\otimes}^{\R}_K\B^+_{\dr})/F^r & = \R\wlim(\so(\sx_L)\wh{\otimes}^{\R}_K(F^j\B^+_{\dr}/F^r)\to \Omega^1(\sx_L)\wh{\otimes}^{\R}_K(F^{j-1}\B^+_{\dr}/F^{r-1})\to \cdots).
 \end{align*}
And now  we can argue degreewise. But then, for $s\geq 0$, we have 
 \begin{align*}
  \R\wlim_r (\Omega^i(\sx_L)\wh{\otimes}_K^{\R} (F^s\B^+_{\dr}/F^r))
   \simeq \Omega^i(\sx_L)\wh{\otimes}_K^{\R} \R\wlim_r(F^s\B^+_{\dr}/F^r)\simeq   \Omega^i(\sx_L)\wh{\otimes}^{\R}_KF^s\B^+_{\dr}.
 \end{align*}
 The second quasi-isomorphism follows from the fact that  we have $F^s\B^+_{\dr}/F^r\simeq C^{r-s}$ as  topological $K$-vector spaces and the maps $F^s\B^+_{\dr}/F^{r+i}\to F^s\B^+_{\dr}/F^r$, $i\geq 0$,  are surjective.
This finishes the proof.
 
 \end{proof}

  (2) Now we pass to  comparisons between $\B^+_{\dr}$-cohomology and crystalline cohomology.  
\begin{lemma}{\rm (Local-global compatibility)}\label{etale-descent20} 
 Let  $\sx\in\sm^{{\rm ss},b}_C$ and let $r\geq 0$.  The canonical  map in $\sd(C_{\B^+_{\dr}})$
  $$
\kappa:\quad F^r \rg_{\crr}(\sx)_{\Q_p}^{\bwedge}{\to}F^r\rg_{\dr}(\sx_C/{\B^+_{\dr}})
  $$
  is a strict quasi-isomorphism.
\end{lemma}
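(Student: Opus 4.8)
The plan is to treat this as a local--global (i.e.\ $\eta$-\'etale descent) statement, following the template of the proof of Proposition~\ref{stara1}, and then to trade the crystalline side for the de Rham side via the Beilinson--Kato quasi-isomorphism. First, recall that $F^r\rg_{\dr}(\sx_C/\B^+_{\dr})=\rg_{\eet}(\sx_C,F^r\sa^{\bwedge}_{\crr})$ is by definition the value at $\sx_C$ of the $\eta$-\'etale sheafification of the presheaf $\sy\mapsto F^r\rg_{\crr}(\sy)^{\bwedge}_{\Q_p}$, and $\kappa$ is the comparison of this presheaf with its sheafification. Since $(\sm^{{\rm ss},b}_C,F_\eta)$ is a Beilinson base of $\sm_{C,\eet}$, that sheafification is computed by $\LL\colim_{\su_{\cdot}}F^r\rg_{\crr}(\su_{\cdot})^{\bwedge}_{\Q_p}$, the colimit running over $\eta$-\'etale hypercoverings $\su_{\cdot}\to\sx$ by objects of $\sm^{{\rm ss},b}_C$ (with $F^r\rg_{\crr}(\su_{\cdot})^{\bwedge}_{\Q_p}$ the associated totalization), and $\kappa$ corresponds to the constant hypercovering. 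So it suffices to show that for every such $\su_{\cdot}$, after passing to a refinement, the natural map $F^r\rg_{\crr}(\sx)^{\bwedge}_{\Q_p}\to F^r\rg_{\crr}(\su_{\cdot})^{\bwedge}_{\Q_p}$ is a strict quasi-isomorphism. I would then run the same reduction as in the proof of Proposition~\ref{stara1}: pass to $\wt{H}$-cohomology, assume finitely many models in each degree, and---since all complexes involved are uniformly bounded below---pass to a truncated hypercovering. This reduces us to the case in which the special fibres and the finitely many transition maps are all defined over a common finite extension $L/K$; we leave the semistable category but stay within log-smooth models of Cartier type over $\so_L$. Writing $\sx=\sx_{\so_L}\times_{\so_L}\so_C$ and $\su_{\cdot}=\su_{\cdot,\so_L}\times_{\so_L}\so_C$, with affinoid generic fibres $\sx_L,\su_{\cdot,L}$ over $L$, the problem becomes: $\alpha\colon F^r\rg_{\crr}(\sx_{\so_L}\times_{\so_L}\so_C)^{\bwedge}_{\Q_p}\to F^r\rg_{\crr}(\su_{\cdot,\so_L}\times_{\so_L}\so_C)^{\bwedge}_{\Q_p}$ is a strict quasi-isomorphism, the target now being a \emph{finite} totalization.

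Next I would invoke the filtered Beilinson--Kato quasi-isomorphism. By Remark~\ref{gen-paris}(2), in its filtered form---which holds for log-smooth, hence log-syntomic, affine models over $\so_L$ by the argument of Lemma~\ref{sroka1}---there is a natural filtered strict quasi-isomorphism $\hat\iota_{\rm BK}\colon F^r(\rg_{\dr}(\sx_L)\wh{\otimes}^{\R}_L\B^+_{\dr})\stackrel{\sim}{\to}F^r\rg_{\crr}(\sx_{\so_L}\times_{\so_L}\so_C)^{\bwedge}_{\Q_p}$, compatible with the \'etale transition maps and therefore inducing an isomorphism of totalizations. Hence $\alpha$ is a strict quasi-isomorphism if and only if $F^r(\rg_{\dr}(\sx_L)\wh{\otimes}^{\R}_L\B^+_{\dr})\to F^r(\rg_{\dr}(\su_{\cdot,L})\wh{\otimes}^{\R}_L\B^+_{\dr})$ is one. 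Using the explicit description~(\ref{expression}) of this filtration---it is $\R\wlim_i$ of $\Omega^i(-)\wh{\otimes}^{\R}_LF^{r-i}\B^+_{\dr}$---together with the commutation of $\R\wlim$ with the (finite) totalization, this reduces, for each $i\geq 0$ separately, to checking that the strict quasi-isomorphism $\Omega^i(\sx_L)\stackrel{\sim}{\to}\holim_{\Delta}\Omega^i(\su_{\cdot,L})$---which holds by \'etale descent for coherent cohomology of rigid analytic varieties over $L$ (a constituent of \'etale descent for $\rg_{\dr}$)---remains a strict quasi-isomorphism after applying $(-)\wh{\otimes}^{\R}_LF^{r-i}\B^+_{\dr}$. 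Since $F^{r-i}\B^+_{\dr}$ is a fixed Fr\'echet space over $L$ and the totalization is finite, this follows from the analogue of Lemma~\ref{trick1}: argue degreewise on the filtration by the Open Mapping Theorem, then pass to the limit, using that a finite totalization is obtained by iterated (co)fibres, which the relevant exact tensor functor respects.

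I expect the only genuinely non-formal input to be precisely this \'etale descent for coherent cohomology of $L$-affinoids, which is classical; everything else is assembly. The step I would watch most carefully is the bookkeeping in the reduction to a common field $L$ (and the passage to a finite totalization), carried out exactly as in Proposition~\ref{stara1}, and the verification that $\hat\iota_{\rm BK}$ is functorial enough to be applied simultaneously to all the terms of the truncated hypercovering---but this is immediate from its construction out of the functorial Hyodo--Kato section and cup products.
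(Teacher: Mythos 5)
Your proposal is correct, and its engine is the same as the paper's: reduce to a basic model $\sx\simeq\sx_{\so_L,\so_C}$ over a finite extension $L/K$ and use the filtered quasi-isomorphism $\hat\iota_{\rm BK}$ of Remark~\ref{gen-paris} to replace the Hodge-completed absolute crystalline cohomology of the model by $F^r(\rg_{\dr}(\sx_L)\wh{\otimes}^{\R}_L\B^+_{\dr})$. The difference is organizational. The paper does not unwind the sheafification at all: it simply places $\kappa$ at the top of a commutative square whose two vertical arrows are the local $\hat\iota_{\rm BK}$ (landing in the presheaf value $F^r\rg_{\crr}(\sx)^{\bwedge}_{\Q_p}$) and the global $\iota_{\rm BK}$ of Lemma~\ref{sroka1} (landing in the sheafified $F^r\rg_{\dr}(\sx_C/\B^+_{\dr})$), with the identity along the bottom; since both verticals are already known to be filtered strict quasi-isomorphisms, $\kappa$ is one too. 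You instead re-derive the descent content that Lemma~\ref{sroka1} (together with the hypercovering definition of the filtration on $\rg_{\dr}(X)\wh{\otimes}^{\R}_K\B^+_{\dr}$) already encapsulates: you expand the sheafification as $\LL\colim$ over $\eta$-\'etale hypercoverings, run the reduction of Proposition~\ref{stara1} to a truncated hypercovering over a common $L$, apply $\hat\iota_{\rm BK}$ termwise, and reduce to \'etale descent for $\Omega^i(-)\wh{\otimes}^{\R}_LF^{r-i}\B^+_{\dr}$, handled by the Open Mapping Theorem argument of Lemma~\ref{trick1} (extended to Fr\'echet targets via $\B^+_{\dr}\simeq\prod_k Ct^k$, as the paper itself does elsewhere). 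This buys self-containedness at the price of duplicating Lemma~\ref{sroka1}; conversely it makes explicit the local-global compatibility of $F^r(\rg_{\dr}(-)\wh{\otimes}^{\R}_L\B^+_{\dr})$ that the paper's two-line proof leaves implicit in its right-hand vertical arrow. The only points to watch in your write-up are minor and you flag them yourself: functoriality of $\hat\iota_{\rm BK}$ along the transition maps of the truncated hypercovering over $\so_L$ (clear from its construction in~(\ref{the-map})), and the fact that the descended family is still an \'etale hypercovering over $L$ (or, alternatively, use the splitting $C\simeq L\oplus W$ as in Proposition~\ref{stara1} to deduce the $L$-statement from the $C$-statement).
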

\begin{proof}  We may argue \'etale locally on $\sx$. Assume thus  that   $\sx\simeq \sx_{\so_L,\so_C}$, for a semistable affine model $\sx_{\so_L}$ over $S_L$, $[L:K]<\infty$. 
From the product quasi-isomorphisms from Lemma \ref{sroka1} and its proof (where we took $L=K$) we
get  the commutative diagram
$$
\xymatrix@R=6mm@C=6mm{
F^r\rg_{\crr}(\sx)_{\Q_p}^{\bwedge}\ar[r]^-{\kappa} &  F^r\rg_{\dr}(\sx_C/\B^+_{\dr})\\
F^r(\rg_{\crr}(\sx_{\so_L})_{\Q_p}\wh{\otimes}^{\R}_{L}\B^+_{\dr})\ar[r]^{\sim} \ar[u]_{\wr}^{\iota_{\rm BK}} & F^r(\rg_{\dr}(\sx_L)\wh{\otimes}^{\R}_{L}\B^+_{\dr})\ar[u]_{\wr}^{\iota_{\rm BK}}
}
$$
and the two vertical strict quasi-isomorphisms. The bottom horizontal 
 strict quasi-isomorphism follows from the local-global property of $F^r(\rg_{\dr}(\sx_L)\wh{\otimes}^{\R}_{L}\B^+_{\dr})$ (see  Remark \ref{sroka12}). 
\end{proof}

      \begin{lemma}\label{kappa-new}The canonical map in $\sd(C_{\B^+_{\dr}})$ 
\begin{align*}
& \kappa\otimes 1: \quad \rg_{\crr}(X)\wh{\otimes}^{\R}_{\B^+_{\crr}}\B^+_{\dr}\stackrel{\sim}{\to} \rg_{\dr}(X/\B^+_{\dr}), \quad r\geq 0,
\end{align*}
is a strict quasi-isomorphism.
\end{lemma}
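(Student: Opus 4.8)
Here is the plan.

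By construction $\rg_{\dr}(X/\B^+_{\dr})=\rg_{\eet}(X,\sa^{\bwedge}_{\crr})$ is obtained by $\eta$-\'etale descent from the Hodge completions $\rg_{\crr}(\sx)^{\bwedge}_{\Q_p}=\R\wlim_r\rg_{\crr}(\sx)_{\Q_p}/F^r$ ($\sx\in\sm^{\rm ss}_C$), and $\kappa\otimes 1$ is the $\B^+_{\dr}$-linear extension of the canonical (Hodge-completion) map $\rg_{\crr}(X)\to\rg_{\dr}(X/\B^+_{\dr})$. So the content of the statement is that the $F^{\bullet}\B^+_{\crr}$-adic completion of $\rg_{\crr}(X)$ agrees with its Hodge completion, and the plan is to prove this $\eta$-\'etale locally, using the product formula $\iota_{\rm BK}$ of Lemma~\ref{sroka1} and Remark~\ref{gen-paris}.

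For the local reduction I would first note that $M\mapsto M\wh{\otimes}^{\LL}_{\B^+_{\crr}}(\B^+_{\crr}/F^i)$ preserves strict quasi-isomorphisms of bounded complexes of flat Banach $\B^+_{\crr}$-modules (the analogue of Lemma~\ref{trick1}), hence commutes with the homotopy colimit along any $\eta$-\'etale hypercovering finite in each degree; then, as in the proofs of Lemma~\ref{etale-descent20} and Proposition~\ref{stara1}, the outer $\R\wlim_i$ also commutes with that finite homotopy colimit because the towers of filtration quotients are Mittag--Leffler. This reduces us to showing, for $\sx=\sx_{\so_L,\so_C}$ with $\sx_{\so_L}$ a semistable affine model over a finite extension $L/K$, that the natural map $\rg_{\crr}(\sx)_{\Q_p}\wh{\otimes}^{\R}_{\B^+_{\crr}}\B^+_{\dr}\to\rg_{\crr}(\sx)^{\bwedge}_{\Q_p}$ is a strict quasi-isomorphism.

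In the local case I would invoke the K\"unneth isomorphism underlying $\iota_{\rm BK}$ (Remark~\ref{gen-paris}): combined with $\rg_{\crr}(\sx_{\so_L}/S_L)_{\Q_p}\simeq\rg_{\dr}(\sx_L)$ (log-smoothness) and $\rg_{\crr}(\overline{S}/S_L)_{\Q_p}\simeq\B^+_{\crr}$, it yields a filtered strict quasi-isomorphism $\rg_{\crr}(\sx)_{\Q_p}\simeq\rg_{\dr}(\sx_L)\wh{\otimes}^{\R}_L\B^+_{\crr}$, the filtration on the right being the convolution of the Hodge filtration of $\sx_L$ with $F^{\bullet}\B^+_{\crr}=\B^+_{\crr}\cap F^{\bullet}\B^+_{\dr}$. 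Since $\theta\colon\B^+_{\crr}\to C$ is surjective one has $\B^+_{\crr}/F^i\B^+_{\crr}\stackrel{\sim}{\to}\B^+_{\dr}/F^i\B^+_{\dr}$, so applying $\R\wlim_i$ of $(-)\wh{\otimes}^{\LL}_{\B^+_{\crr}}\B^+_{\crr}/F^i$ and again the analogue of Lemma~\ref{trick1} gives $\rg_{\crr}(\sx)_{\Q_p}\wh{\otimes}^{\R}_{\B^+_{\crr}}\B^+_{\dr}\simeq\rg_{\dr}(\sx_L)\wh{\otimes}^{\R}_L\B^+_{\dr}$; on the other hand Lemma~\ref{etale-descent20} identifies $\rg_{\crr}(\sx)^{\bwedge}_{\Q_p}=F^0\rg_{\dr}(\sx_C/\B^+_{\dr})$ with the same object through $\iota_{\rm BK}$, and one checks the two identifications are compatible with the map in question. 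That finishes the local case, and with it the lemma.

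The hard part will be the interchange of $\R\wlim_i$ (defining $\wh{\otimes}^{\R}_{\B^+_{\crr}}\B^+_{\dr}$) with the homotopy colimit over the $\eta$-\'etale hypercovering in the local reduction; I would handle it by passing to a hypercovering finite in each degree, so the colimit is finite, and by the Mittag--Leffler property of the filtration towers, mirroring the argument already used for Lemma~\ref{etale-descent20}. A secondary subtlety is that one wants the product decomposition of $\rg_{\crr}(\sx)_{\Q_p}$ \emph{before} Hodge-completing; should only its reduction modulo $F^r$ be available, one would instead compare the towers $\{\rg_{\crr}(\sx)_{\Q_p}/F^r\}_r$ and $\{\rg_{\crr}(\sx)_{\Q_p}\wh{\otimes}^{\LL}_{\B^+_{\crr}}\B^+_{\crr}/F^i\}_i$ directly --- they are interleaved, hence have the same homotopy limit, because the de Rham complex of the affinoid $\sx_L$ has amplitude $\leq\dim\sx$.
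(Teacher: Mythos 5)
Your proposal is correct in substance, but your main route is genuinely different from (and heavier than) the paper's, and the argument you relegate to a fallback at the very end is in fact the paper's entire proof. The paper reduces immediately to an affine $\sx\in\sm^{{\rm ss},b}_C$ (no interchange of $\R\wlim$ with the hypercovering colimit is needed: both sides of the lemma are \emph{defined} as colimits over hypercoverings of the corresponding local objects, so the global map is a colimit of local maps), then chooses a log-smooth lifting $\sy$ of $\sx$ over $\Spf(\A_{\crr})$ and writes both $\rg_{\crr}(\sx)_{\Q_p}\wh{\otimes}^{\LL}_{\B^+_{\crr}}(\B^+_{\crr}/F^r)$ and $\rg_{\crr}(\sx)_{\Q_p}/F^r$ explicitly as the de Rham complex of $\sy$ with coefficients $\B^+_{\crr}/F^r$ in every degree, respectively $\B^+_{\crr}/F^{r-i}$ in degree $i$; the two towers are interleaved because the complex has bounded amplitude, so their $\R\wlim_r$ agree. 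This is exactly your ``secondary subtlety'' paragraph, and it avoids any descent to a model over $L$ and any appeal to $\iota_{\rm BK}$. Your primary route, by contrast, goes through the identification $\rg_{\crr}(\sx)_{\Q_p}\simeq\rg_{\dr}(\sx_L)\wh{\otimes}^{\R}_L\B^+_{\crr}$ \emph{before} Hodge completion; note that Remark~\ref{gen-paris} and Lemma~\ref{sroka1} only furnish this product formula modulo $F^r$ (i.e.\ for the filtration quotients and the completions), so as written that step is not quite covered by the results you cite — one would have to extract the uncompleted statement from the base-change quasi-isomorphism $\epsilon^R_\lambda$ of Section~\ref{lambda} instead. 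Since your fallback is self-contained and is what the paper does, I would promote it to the main argument and drop the Künneth detour.
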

Here, we set
 \begin{align*}
  \R\Gamma_{\crr}(X)\wh{\otimes}^{\R}_{\B^+_{\crr}}\B^+_{\dr} & :=\LL\colim((\R\Gamma_{\crr}\wh{\otimes}^{\R}_{\B^+_{\crr}}\B^+_{\dr})(\su_{\cdot})),  
 \end{align*}
where the homotopy colimit is taken over $\eta$-\'etale quasi-compact hypercoverings $\su_{\cdot}$ from $\sm^{\sem, b}_C$ (that is, hypercoverings $\su_{\cdot}$ such that every $\su_n$, $n\geq0$,   is quasi-compact).
\begin{proof} It suffices to show that, for an affine $\sx\in \sm^{{\rm ss},b}_C$, the canonical map 
$$
\R\wlim_r (\rg_{\crr}(\sx)_{\Q_p}\wh{\otimes}^{\R}_{\B^+_{\crr}}(\B^+_{\crr}/F^r)){\to} \R\wlim_r(\rg_{\crr}(\sx)_{\Q_p}/F^r)
$$ 
is a strict quasi-isomorphism. Take a log-smooth lifting 
$\sy$ of $\sx$ over $\Spf (\A_{\crr})$. We have 
\begin{align}
(\rg_{\crr}(\sx)_{\Q_p}\wh{\otimes}^{\R}_{\B^+_{\crr}}(\B^+_{\crr}/F^r) & \simeq (\so(\sy)_{\Q_p}\wh{\otimes}^{\R}_{\B^+_{\crr}}(\B^+_{\crr}/F^r)\to \Omega^1_{\sy/\A_{\crr},\Q_p}\wh{\otimes}^{\R}_{\B^+_{\crr}}(\B^+_{\crr}/F^{r})\to\cdots),\\
\rg_{\crr}(\sx)_{\Q_p}/F^r & \simeq (\so(\sy)_{\Q_p}\wh{\otimes}^{\R}_{\B^+_{\crr}}(\B^+_{\crr}/F^r)\to \Omega^1_{\sy/\A_{\crr}, \Q_p}\wh{\otimes}^{\R}_{\B^+_{\crr}}(\B^+_{\crr}/F^{r-1})\to\cdots)\notag
\end{align}
The claim in the lemma is now clear.
\end{proof}

   \subsubsection{History} Let  $X\in {\rm Sm}_C$.

 (i) Recall that Bhatt-Morrow-Scholze in \cite[Sec.\,13]{BMS1}  introduced $\B^+_{\dr}$-cohomology of $X$, which they call\footnote{We take here the \'etale version studied in \cite[Sec.\,6.2]{CK} and not the original analytic version. The two versions are quasi-isomorphic by \cite[Sec.\,6.2]{CK}.} {\em crystalline cohomology of $X$ over $\B^+_{\dr}$}. 
We will denote it \index{rgdr@\rgdr}by $\rg^{\rm BMS}_{\dr}(X/\B^+_{\dr})$ and see as a complex in $\sd(C_{\B^+_{\dr}})$. 
As they mention \cite[Rem. 13.2]{BMS1}, morally speaking, it is the infinitesimal cohomology of $X$ over the embedding given by the map $\theta: \B^+_{\dr}\to C$. It is defined though in such a way that  it is easy to compare it with $\A_{\rm inf}$-cohomology. Similarly here, we have defined $\rg_{\dr}(X/\B^+_{\dr})$ in such a way that it is easy to compare it with crystalline cohomology over $\A_{\crr}$.  

 (ii) The infinitesimal site definition of $\rg^{\rm BMS}_{\dr}(X/\B^+_{\dr})$ was carried out by Guo in \cite[Sec.\,7.2]{Guo} (see also \cite{Guo-Li}). We will denote this version of $\B^+_{\dr}$-chomology \index{rgdr@\rgdr}by 
 $\rg^{\rm Guo}_{\dr}(X/\B^+_{\dr})\in \sd\sff(C_{\B^+_{\dr}})$ ($\rg^{\rm Guo}_{\dr}(X/\B^+_{\dr}):=\rg_{\rm inf}(X/\B^+_{\dr})$). 
\index{rginf@\rginf}It comes equipped with a Hodge filtration (which was ignored in \cite{BMS1}). Moreover, Guo constructed a natural quasi-isomorphism (see \cite[Cor. 1.2.9., Th. 1.2.7]{Guo}) 
 \begin{equation}
 \label{guo1}
 \rg^{\rm Guo}_{\dr}(X/\B^+_{\dr})\simeq \rg^{\rm BMS}_{\dr}(X/\B^+_{\dr}).
 \end{equation}
 
 (iii) Our construction of $\B^+_{\dr}$-cohomology is compatible with the above constructions: 
\begin{proposition}\label{BMSG}
Let $X\in {\rm Sm}_C$. 
\begin{enumerate}
\item There is a natural quasi-isomorphism in $\sd(C_{\B^+_{\dr}})$
$$
\rg_{\dr}(X/\B^+_{\dr})\simeq\rg^{\rm BMS}_{\dr}(X/\B^+_{\dr}).
$$
\item
There is a natural  quasi-isomorphism in $\sd\sff(C_{\B^+_{\dr}})$
$$
\rg_{\dr}(X/\B^+_{\dr})\simeq\rg^{\rm Guo}_{\dr}(X/\B^+_{\dr}).
$$
\end{enumerate}
\end{proposition}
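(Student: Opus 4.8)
The plan is to reduce both comparisons to the affine local situation, where all three constructions become explicit de Rham-type complexes over $\B^+_{\dr}$, and then identify them. Since $\rg_{\dr}(X/\B^+_{\dr})$ is defined by $\eta$-\'etale descent from semistable models $\sx\in\sm^{\rm ss}_C$, and both $\rg^{\rm BMS}_{\dr}(X/\B^+_{\dr})$ and $\rg^{\rm Guo}_{\dr}(X/\B^+_{\dr})$ are \'etale sheaves on $\sm_C$ (the BMS/Guo constructions satisfy \'etale descent, being defined site-theoretically), it suffices to produce a functorial quasi-isomorphism on each basic semistable affine model $\sx=\sx_{\so_L,\so_C}$, for $\sx_{\so_L}$ semistable over $S_L$ with $[L:K]<\infty$, and to check compatibility with hypercoverings. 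For such an $\sx$, Lemma \ref{sroka1} (via $\iota_{\rm BK}$) and Lemma \ref{etale-descent20} already give
$$
F^r\rg_{\crr}(\sx)^{\bwedge}_{\Q_p}\simeq F^r\rg_{\dr}(\sx_C/\B^+_{\dr})\simeq F^r(\rg_{\dr}(\sx_L)\wh{\otimes}^{\R}_L\B^+_{\dr}),
$$
so our cohomology is, locally and compatibly with the Hodge filtration, just the Hodge-completed base change of algebraic de Rham cohomology over $L$ to $\B^+_{\dr}$ along $L\hookrightarrow\B^+_{\dr}$.

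For claim (2), I would then invoke Guo's local description: in \cite[Sec.\,7.2]{Guo} the infinitesimal cohomology $\rg_{\rm inf}(\sx_C/\B^+_{\dr})$ of an affine smooth (or semistable) space admitting a model over a discretely valued subfield $L\subset C$ is computed, with its Hodge filtration, by exactly the same Hodge-completed de Rham complex $\R\wlim_r F^\bullet(\rg_{\dr}(\sx_L)\wh{\otimes}^{\R}_L\B^+_{\dr})$ — this is the content of Guo's filtered comparison \cite[Cor. 1.2.9, Th. 1.2.7]{Guo}, since $\theta:\B^+_{\dr}\to C$ restricted to $L$ is the given embedding. Matching the two presentations degreewise (both are built from $\Omega^i(\sx_L)\wh{\otimes}^{\R}_L F^{r-i}\B^+_{\dr}$ with the de Rham differential) gives a filtered quasi-isomorphism on affines; functoriality in $\sx_{\so_L}$ and in $L$ (passing to the colimit over finite extensions, as in the proof of Lemma \ref{etale-descent201}) lets it descend to a filtered quasi-isomorphism of \'etale sheaves on $\sm_C^{{\rm ss},b}$, hence globally after $\eta$-\'etale sheafification, proving (2). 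Claim (1) then follows by forgetting the filtration and composing with Guo's quasi-isomorphism (\ref{guo1}) $\rg^{\rm Guo}_{\dr}(X/\B^+_{\dr})\simeq\rg^{\rm BMS}_{\dr}(X/\B^+_{\dr})$.

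The main obstacle I expect is not the affine-local identification — that is essentially a bookkeeping match of two Koszul/de Rham presentations — but rather ensuring that the local quasi-isomorphisms are \emph{functorial} enough to glue: one must check that the identification $F^r\rg_{\crr}(\sx)^{\bwedge}_{\Q_p}\simeq F^r(\rg_{\dr}(\sx_L)\wh{\otimes}^{\R}_L\B^+_{\dr})$ built via $\iota_{\rm BK}$ is independent (up to coherent homotopy) of the auxiliary model $\sx_{\so_L}/S_L$ and the field $L$, so that it is well-defined on the Beilinson base and compatible with restriction maps in a hypercovering; this is the same kind of independence-of-choices argument that underlies the constructions in Section \ref{lambda}, and in practice one either checks it on the nose using the cup-product/K\"unneth description of $\iota_{{\rm BK},r}$ in (\ref{the-map}) and Remark \ref{gen-paris}(2), or phrases the whole comparison as an equivalence of presheaves of $\infty$-categorical objects before sheafifying. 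A secondary point is that Guo's comparison in \cite{Guo} is stated for rigid spaces over a \emph{discretely valued} base, so one must make sure the reduction to models over finite $L/K$ genuinely puts one in Guo's setting and that his filtered statement is compatible with the transition maps when $L$ grows; this is handled, as in Lemma \ref{etale-descent201}(2) and Proposition \ref{product11}, by taking the homotopy colimit over $L\subset\ovk$ and using that $C\simeq L\oplus W$ as $L$-Banach spaces to control the completions.
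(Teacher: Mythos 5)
Your reduction of (1) to (2) via the quasi-isomorphism (\ref{guo1}) is exactly what the paper does, but for (2) you take a genuinely different route. You propose to identify $F^r\rg_{\dr}(\sx_C/\B^+_{\dr})$ with $F^r(\rg_{\dr}(\sx_L)\wh{\otimes}^{\R}_L\B^+_{\dr})$ via $\iota_{\rm BK}$ for a model over a finite extension $L/K$, and then to match this against Guo's computation of infinitesimal cohomology for varieties admitting such a model. The paper instead never descends to $L$: it constructs a \emph{canonical} comparison map directly at the level of envelopes, sending the PD-envelope presentation of Hodge-completed absolute crystalline cohomology (for $\sx$ embedded in a log-smooth $\sy$ over $\A_{\crr}$) to the infinitesimal-envelope presentation over $\B^+_{\dr}$, using only that $\A_{\crr,\Q_p}/F^i\simeq\B^+_{\dr}/F^i$. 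This map is defined on the nose as a map of presheaves on the Beilinson base, so gluing and independence of choices are automatic; the only thing left to check is that it is a filtered quasi-isomorphism locally, which is done by choosing $\sy$ to be a lifting of $\sx$ and comparing the two explicit de Rham complexes termwise, then recovering the filtered statement from the unfiltered one via $\R\wlim_r$ and the triangles $F^r\to\rg_{\dr}\to\rg_{\dr}/F^r$. What the paper's route buys is precisely the elimination of the coherence problem you flag; what your route would buy, if completed, is that the local identification is more transparent (both sides literally become $\Omega^{\jcdot}(\sx_L)\wh{\otimes}^{\R}_LF^{\jcdot}\B^+_{\dr}$).

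The caveat is that the obstacle you identify — independence, up to coherent homotopy, of the $\iota_{\rm BK}$-identification from the auxiliary pair $(L,\sx_{\so_L})$, and its compatibility with the face maps of a hypercovering — is not a formality, and your proposal only gestures at two possible fixes without carrying either out. Since the comparison must be a map of hypersheaves before sheafification, "checking it on the nose" degreewise is not obviously available (the cup-product description of $\iota_{{\rm BK},r}$ in (\ref{the-map}) depends on $L$), and packaging everything $\infty$-categorically still requires exhibiting the coherences. A secondary imprecision: the results you cite from Guo (\cite[Cor. 1.2.9, Th. 1.2.7]{Guo}) are the Guo--BMS comparison (\ref{guo1}), not the statement that infinitesimal cohomology over $\B^+_{\dr}$ of a variety with an $L$-model is the Hodge-completed base change of its de Rham cohomology; the latter is true but needs a separate reference or proof. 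As written, then, your argument for (2) has a genuine gap at the gluing step, and the paper's direct envelope-to-envelope map is the cleanest way to close it.
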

\begin{proof} 
Claim (1) follows from claim (2) and the quasi-isomorphism (\ref{guo1}). 

    To prove claim (2), recall that $\rg_{\dr}(X/\B^+_{\dr})$ is defined by taking, \'etale locally, the Hodge completed absolute crystalline cohomology and then globalizing.  More specifically, let  $\sx\in \sm^{\rm ss}_C$.  We have
  \begin{align*}
& \rg_{\dr}(\sx_C/\B^+_{\dr})\simeq  \rg_{\crr}(\sx)_{\Q_p}^{\bwedge}:=\R\wlim_r (\rg_{\crr}(\sx)_{\Q_p}/F^r),\\
&  F^r\rg_{\dr}(\sx_C/\B^+_{\dr})\simeq \rg_{\crr}(\sx,\sj^{[r]})_{\Q_p}^{\bwedge}:=\R\wlim_{j \geq r}(\rg_{\crr}(\sx,\sj^{[r]})_{\Q_p}/F^j).
\end{align*}
On the other hand $\rg^{\rm Guo}_{\dr}(X/\B^+_{\dr})$ is defined as the infinitesimal cohomology $\rg_{\rm inf}(X/\B^+_{\dr})$ equipped with its natural Hodge filtration.  It satifies \'etale descent.

  This means that,
 if $\sx$ is affine and (exactly and) closely embedded in an affine  formal log-scheme $\sy$, log-smooth over $\A_{\crr}$,  then in $\sd\sff(C_{\B^+_{\dr}})$
 $$
  \rg_{\dr}(\sx_C/\B^+_{\dr})/F^r\simeq \R\wlim((\sd_{\sx}(\sy)/F^r)_{\Q_p}\to ((\sd_{\sx}(\sy)/F^{r-1})\wh{\otimes}_{\so(\sy)}\Omega^{1}_{\sy/\A_{\crr}})_{\Q_p}\to \cdots),
 $$
 where $ \sd_{\sx}(\sy)$ is the PD-envelope of $\sx$ in $\sy$ and the tensor product is $p$-adic. 
 On the other hand, we have in $\sd\sff(C_{\B^+_{\dr}})$
 $$
   \rg_{\rm inf}(\sx_C/\B^+_{\dr})/F^r\simeq \R\wlim(\sd_{\sx_C}(\sy_C)/F^r\to (\sd_{\sx_C}(\sy_C)/F^{r-1})\wh{\otimes}_{\so(\sy_C)} \Omega^{1}_{\sy_C/\B^+_{\dr}}\to\cdots),
   $$
    where $ \sd_{\sx_C}(\sy_{\B})$ is the inf-envelope of $\sx_C$ in $\sy_\B:=\sy_{\B^+_{\dr}}$. Since $\A_{\crr,\Q_p}/F^i\simeq \B^+_{\dr}/F^i$, we have a natural map in $\sd\sff(C_{\B^+_{\dr}})$
    $$
     \rg_{\dr}(\sx_C/\B^+_{\dr})/F^r\to  \rg_{\rm inf}(\sx_C/\B^+_{\dr})/F^r.
    $$

 This can be globalized to a map in $\sd\sff(C_{\B^+_{\dr}})$
 $$
   \rg_{\dr}(X/\B^+_{\dr})/F^r\to  \rg^{\rm Guo}_{\dr}(X/\B^+_{\dr})/F^r.
 $$
 We claim that it is a  strict quasi-isomorphism. Indeed, it suffices to show this locally so we may assume that we have the data of integral models $\sx, \sy$ as above and, moreover, $\sy$ is a lifting of $\sx$. Then 
\begin{align*}
 &  \rg_{\dr}(\sx_C/\B^+_{\dr})/F^r\simeq \R\wlim(\so(\sy_\B)/F^r\to ((\so(\sy)/F^{r-1})\wh{\otimes}_{\so(\sy)}\Omega^{1}_{\sy/\A_{\crr}})_{\Q_p}\to \cdots),\\
 &  \rg_{\dr}^{\rm Guo}(\sx_C/\B^+_{\dr})/F^r\simeq \R\wlim(\so(\sy_\B)/F^r\to (\so(\sy_\B)/F^{r-1})\wh{\otimes}_{\so(\sy_\B)} \Omega^{1}_{\sy_\B/\B^+_{\dr}}\to\cdots).
\end{align*}
But we have the topological  isomorphisms
\begin{align*}
 ((\so(\sy)/F^{i})\wh{\otimes}_{\so(\sy)}\Omega^{j}_{\sy/\A_{\crr}})_{\Q_p} & \simeq (\so(\sy_\B)/F^{i})\wh{\otimes}_{\so(\sy_\B)}\Omega^{j}_{\sy_{\B,i}/\A_{\crr,\Q_p,i}}\\
 (\so(\sy_\B)/F^{i})\wh{\otimes}_{\so(\sy_\B)} \Omega^{j}_{\sy_\B/\B^+_{\dr}} & \simeq (\so(\sy_\B)/F^{i})\wh{\otimes}_{\so(\sy_\B)} \Omega^{j}_{\sy_{\B,i}/\B^+_{\dr,i}},
\end{align*}
where $(-)_i$ denotes moding out by $F^i$. Hence the  strict quasi-isomorphism
 $$
   \rg_{\dr}(\sx_C/\B^+_{\dr})/F^r\stackrel{\sim}{\to}  \rg^{\rm Guo}_{\dr}(\sx_C/\B^+_{\dr})/F^r,
 $$
as wanted. 

  Having the  strict quasi-isomorphism
  $$
   \rg_{\dr}(X/\B^+_{\dr})/F^r\stackrel{\sim}{\to}  \rg^{\rm Guo}_{\dr}(X/\B^+_{\dr})/F^r,
 $$
  we may take $\R\wlim_r$ of both sides to obtain the strict quasi-isomorphism
 $$
  \rg_{\dr}(X/\B^+_{\dr})\stackrel{\sim}{\to} \rg^{\rm Guo}_{\dr}(X/\B^+_{\dr}).
$$
This is because we have
\begin{align*}
 \rg_{\dr}(X/\B^+_{\dr})\stackrel{\sim}{\to} \R\wlim_r( \rg_{\dr}(X/\B^+_{\dr})/F^r),\quad  \rg^{\rm Guo}_{\dr}(X/\B^+_{\dr})\stackrel{\sim}{\to}  \R\wlim_r(\rg^{\rm Guo}_{\dr}(X/\B^+_{\dr})/F^r)
\end{align*}
as can be easily seen by a computation similar to the one used in the proof of Lemma \ref{sroka1}. 
Finally, to obtain the strict quasi-isomorphism
$$
  F^r\rg_{\dr}(X/\B^+_{\dr})\stackrel{\sim}{\to} F^r\rg^{\rm Guo}_{\dr}(X/\B^+_{\dr}),\quad r\geq 0,
  $$
  we use the  distinguished triangles
  $$
  F^r\rg_{\dr}\to\rg_{\dr}\to\rg_{\dr}/F^r
  $$
  for both cohomologies. 
\end{proof}

 \subsection{Overconvergent $\B^+_{\dr}$-cohomology}  We define overconvergent $\B^+_{\dr}$-cohomology via presentations of dagger structures.
  \subsubsection{Definition of overconvergent $\B^+_{\dr}$-cohomology} Let $X$ be a smooth dagger affinoid over $C$. Let ${\rm pres}(X)=\{X_h\}$ be a presentation of $X$ (see \cite[Sec. 3.2.1]{CN3} for relevant definitions). 
\index{rgdr@\rgdr}Define in $\sd\sff(C_{\B^+_{\dr}})$
    \begin{align*}
      F^r\rg^{\dagger}_{\dr}(X/\B^+_{\dr})  := \LL\colim_hF^r\rg_{\dr}(X_h/\B^+_{\dr}),\quad r\geq 0.
   \end{align*}
  For $r\geq 0$,     the  \'etale sheafification\footnote{See \cite[Def. 2.1]{Vez} for the definition of \'etale topology of dagger varieties.}  of    $ F^r\rg^{\dagger}_{\dr}(X/\B^+_{\dr})$  on ${\rm Sm}^{\dagger}_C$ gives us a \index{acalcris@\acalcris}sheaf $F^r\sa^{\bwedge}_{\crr}$.  The filtered $\B^+_{\dr}$-cohomology  in $\sd\sff(C_{\B^+_{\dr}})$ of a  smooth dagger variety $X$ over $C$
   is defined as $$
  F^r \rg_{\dr}(X/\B^+_{\dr}):=\rg_{\eet}(X,F^r\sa^{\bwedge}_{\crr}),\quad r\geq 0. 
   $$
\begin{remark} If $X$ is a smooth dagger affinoid over $C$ the above two definitions of $\B^+_{\dr}$-cohomology $ \rg^{\dagger}_{\dr}(X/\B^+_{\dr})$ and $ \rg_{\dr}(X/\B^+_{\dr})$ agree. This will be shown in Corollary \ref{local-global-kwak} below by reduction to Hyodo-Kato cohomology via the Hyodo-Kato quasi-isomorphism. 
\end{remark}
\subsubsection{Properties of overconvergent $\B^+_{\dr}$-cohomology} We will now prove properties of overconvergent $\B^+_{\dr}$-cohomology that do not require Hyodo-Kato cohomology.

 We have canonical \index{thetavar@\thetavar}\index{iotabk@\iotabk}maps in, resp., $\sd(C_{\B^+_{\dr}})$ and $\sd\sff(C_K)$
\begin{align*}
\vartheta: \quad & F^r\rg_{\dr}(X/\B^+_{\dr})\to F^r\rg_{\dr}(X),\quad X\in{\rm Sm}^{\dagger}_C, r\geq 0,\\
\iota_{\rm BK}:  \quad & \rg_{\dr}(X)\to\rg_{\dr}(X_C/\B^+_{\dr}),\quad X\in{\rm Sm}^{\dagger}_K,
\end{align*}
induced by their rigid analytic analogs. 
 \begin{proposition}\label{prison}
 \begin{enumerate}
 \item  {\rm (Projection)} Let $X\in {\rm Sm}^{\dagger}_C$. 
\begin{enumerate}
\item The map $\vartheta$ defined above yields  a natural  strict quasi-isomorphism in $\sd\sff(C_C)$
 $$
 \vartheta:\quad \rg_{\dr}(X/\B^+_{\dr})\wh{\otimes}^{\R}_{\B^+_{\dr}}C \stackrel{\sim}{\to}\rg_{\dr}(X).
 $$
  \item More generally, for $r\geq 0$, we have a natural distinguished triangle in $\sd(C_{\B^+_{\dr}})$
 \begin{equation}
 \label{detail11}
 F^{r-1}\rg_{\dr}(X/\B^+_{\dr})\lomapr{t} F^r\rg_{\dr}(X/\B^+_{\dr})\lomapr{\vartheta}F^r\rg_{\dr}(X)
 \end{equation}
 \item  For $r\geq 0$, we have a natural distinguished triangle in $\sd(C_{\B^+_{\dr}})$
 \begin{equation}
 \label{niedziela10}
 F^{r+1}\rg_{\dr}(X/\B^+_{\dr})\to  F^r\rg_{\dr}(X/\B^+_{\dr})\to \bigoplus_{i\leq r}\rg(X,\Omega^i_X)(r-i)[-i]
 \end{equation}
\end{enumerate}
 \item  {\rm (Product  formula)} Let $X\in {\rm Sm}^{\dagger}_K$.  The map $\iota_{\rm BK}$ 
defined above yields a natural  quasi-isomorphism  in $\sd\sff(C_{\B^+_{\dr}})$
       $$
\iota_{\rm BK}:  \quad \rg_{\dr}(X)\wh{\otimes}^{\R}_K\B^+_{\dr}\to\rg_{\dr}(X_C/\B^+_{\dr}).
 $$
 See Remark \ref{samolot1} below for the definition of the tensor product. 
 \item {\rm ($t$-completeness)} The canonical map  in $\sd\sff(C_{\B^+_{\dr}})$
 $$
 \rg_{\dr}(X/\B^+_{\dr})\to \R\wlim_r(\rg_{\dr}(X/\B^+_{\dr})\wh{\otimes}^{\R}_{\B^+_{\dr}}(\B^+_{\dr}/F^r))
 $$
 is a  strict quasi-isomorphism. 
\end{enumerate}
\end{proposition}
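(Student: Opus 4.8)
The plan is to reduce everything to the rigid analytic statements already proved (Proposition~\ref{projection}, Lemma~\ref{sroka1}, Lemma~\ref{etale-descent20}, Lemma~\ref{kappa-new}) by unwinding the definition of the overconvergent cohomology as a homotopy colimit over a presentation, and then to handle the colimit by general nonsense. First I would record the key formal fact: all the functors in sight commute with $\LL\colim$ over a presentation $\{X_h\}$, since $\LL\colim$ is exact (it is a filtered homotopy colimit), commutes with the shifted truncations defining the filtrations, with homotopy fibers, and with the derived tensor products $\wh{\otimes}^{\R}_{\B^+_{\dr}}C$ and $\wh{\otimes}^{\R}_K\B^+_{\dr}$ (these are each built as $\R\wlim$ of algebraic tensor products against the pieces $F^s/F^r$, and the $\R\wlim$ in question is over a uniformly bounded index range degreewise, so it commutes with the filtered colimit); and that $\eta$-\'etale sheafification preserves all of this. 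Granting this, parts (1)(a), (1)(b), (1)(c) and (2) follow by applying $\LL\colim_h$ to the corresponding rigid analytic statements for the $X_h$: for (1)(a) apply $\LL\colim_h$ to the distinguished triangle $\rg_{\crr}(X_h/\B^+_{\dr})\xrightarrow{t}\rg_{\crr}(X_h/\B^+_{\dr})\xrightarrow{\vartheta}\rg_{\dr}(X_h)$ from the proof of Proposition~\ref{projection}(1); for (1)(b) and (1)(c) apply it to the triangles \eqref{detail1} and \eqref{coffee1}, using that $\LL\colim_h\rg(X_h,\Omega^i_{X_h})\simeq\rg(X,\Omega^i_X)$ by the very definition of the overconvergent de Rham complex; and for (2) apply it to the filtered quasi-isomorphism $\iota_{\rm BK}$ of Lemma~\ref{sroka1}, noting that the filtration on $\rg_{\dr}(X)\wh{\otimes}^{\R}_K\B^+_{\dr}$ in the overconvergent setting is \emph{defined} as $\LL\colim$ of the rigid analytic ones. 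Then globalize from dagger affinoids to arbitrary $X\in{\rm Sm}^{\dagger}_C$ (resp.\ ${\rm Sm}^{\dagger}_K$) by $\eta$-\'etale descent, exactly as the sheaves $F^r\sa^{\bwedge}_{\crr}$ were defined.

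For part (3), the $t$-completeness, I would argue that this too reduces to the rigid analytic case and is ultimately a statement about the de Rham complex tensored with $\B^+_{\dr}$. Concretely, working $\eta$-\'etale locally we may assume $X=\sx_C$ for an affine $\sx\in\sm^{{\rm ss},b}_C$ coming from a semistable model $\sx_{\so_L}$ over a finite extension $L/K$; by Lemma~\ref{etale-descent20} and Lemma~\ref{sroka1} we may replace $F^r\rg_{\dr}(\sx_C/\B^+_{\dr})$ by $F^r(\rg_{\dr}(\sx_L)\wh{\otimes}^{\R}_L\B^+_{\dr})$, so the claim becomes that the map \eqref{completion20} is a filtered strict quasi-isomorphism — which is precisely what was proved inside the proof of Lemma~\ref{sroka1} by the degreewise computation $\R\wlim_r(\Omega^i(\sx_L)\wh{\otimes}^{\R}_L(F^s\B^+_{\dr}/F^r))\simeq\Omega^i(\sx_L)\wh{\otimes}^{\R}_LF^s\B^+_{\dr}$. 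Then apply $\LL\colim_h$ over a presentation and $\eta$-\'etale descent as before.

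I expect the main obstacle to be part (3) together with the bookkeeping in part (2): one must check that the two a priori different filtrations on $\rg_{\dr}(X)\wh{\otimes}^{\R}_K\B^+_{\dr}$ — the one defined by the explicit formula with $F^s\B^+_{\dr}$-coefficients and the one obtained as $\LL\colim$ from the rigid analytic filtration — agree, and that $\LL\colim_h$ really does commute with the relevant $\R\wlim_r$. The latter is the delicate point: $\R\wlim$ does not commute with filtered colimits in general, so one needs the observation that in each of the relevant complexes the $r$-th truncation agrees with the $(r+\dim X+1)$-th truncation in a fixed cohomological range, i.e.\ the pro-systems are essentially constant degreewise after a bounded shift, which lets one exchange the two limits. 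Everything else is a formal application of the cited rigid analytic results plus exactness of filtered homotopy colimits and $\eta$-\'etale descent.
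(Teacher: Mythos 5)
Your treatment of (1)(a)--(c) is essentially the paper's argument and is fine: one writes each statement as $\LL\colim_h$ of the corresponding rigid analytic triangle from Proposition~\ref{projection}, the only point to note being that $\wh{\otimes}^{\R}_{\B^+_{\dr}}C$ is by definition the cone of multiplication by $t$ and hence commutes with the colimit. The gap is in (2) (and it propagates to (3)). You assert that the completed tensor products commute with $\LL\colim_h$ over the presentation because they are "$\R\wlim$ over a uniformly bounded index range", and you locate the delicate point in exchanging $\LL\colim_h$ with $\R\wlim_r$. But that is not where the difficulty lies. The filtration on $\rg_{\dr}^{\dagger}(X)\wh{\otimes}^{\R}_K\B^+_{\dr}$ is \emph{not} defined as $\LL\colim$ of the rigid analytic filtrations — it is defined directly from the overconvergent de Rham complex with $F^s\B^+_{\dr}$-coefficients — so the map
$$
\LL\colim_h\big(\rg_{\dr}(X_h)\wh{\otimes}^{\R}_K\B^+_{\dr}\big)\longrightarrow \big(\LL\colim_h\rg_{\dr}(X_h)\big)\wh{\otimes}^{\R}_K\B^+_{\dr}
$$
must be proved to be a (filtered) strict quasi-isomorphism, and this is precisely the kind of statement that fails for general complexes: $\B^+_{\dr}$ is a Fr\'echet space and the completed projective tensor product does not commute with inductive limits (the paper flags exactly this in Remark~\ref{wrzask1}(2)). "General nonsense" does not apply, and essential constancy of the pro-systems in $r$ is irrelevant to this commutation, which is a colimit in $h$.

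What actually makes the argument work is an input you have not invoked: Grosse-Kl\"onne's finiteness theorem, which says that the de Rham cohomology $H^i_{\dr}(X_h^{\rm o})$ of the partially proper interiors is a finite-rank $K$-vector space with its canonical topology, together with the fact that the inductive system $\{H^i_{\dr}(X_h^{\rm o})\}_h$ is essentially constant with value $H^i_{\dr}(X)$. One then computes $\wt{H}^i$ of both sides of the displayed map (getting $H^i_{\dr}(X)\wh{\otimes}_K\B^+_{\dr}$ in each case, the left-hand side because tensoring a finite-rank space with $\B^+_{\dr}$ commutes with everything) and concludes; the filtered statement then follows by induction on $r$ using the triangles of (1)(b). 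Your route for (3) via the local computation inside Lemma~\ref{sroka1} hits the same wall; the cleaner path is to first establish (2), reduce to $X\simeq Y_C$ with $Y$ a dagger affinoid over $K$, and deduce $t$-completeness from the Mittag--Leffler property of the projective tensor products $\rg^{\dagger}_{\dr}(Y)\wh{\otimes}^{\R}_K(\B^+_{\dr}/F^r)$.
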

\begin{remark}\label{samolot1}In  Proposition \ref{prison} (2), 
the filtration on $\R\Gamma_{\dr}(X)\wh{\otimes}_K^{\R}\B^+_{\dr}$ is defined by the formula
$$
F^r(\R\Gamma_{\dr}(X)\wh{\otimes}_{K}^{\R}\B^+_{\dr}):=\LL\colim(F^r(\R\Gamma_{\dr}\wh{\otimes}^{\R}_{K}\B^+_{\dr})(U_{\cdot})),
$$
 where the homotopy colimit is taken over \'etale dagger affinoid hypercoverings $U_{\cdot}$ of  $X$ and, for a smooth dagger  affinoid $U$, 
 $$
 F^r(\R\Gamma_{\dr}(U)\wh{\otimes}_{K}^{\R}\B^+_{\dr}):=\R\wlim(\so(U)\wh{\otimes}^{\R}_KF^r\B^+_{\dr}\to \Omega^1(U)\wh{\otimes}^{\R}_KF^{r-1}\B^+_{\dr}\to\cdots).
 $$
 In particular, if $r=0$, 
 $
 \R\Gamma_{\dr}(U)\wh{\otimes}_{K}^{\R}\B^+_{\dr}
 $ is just the usual projective tensor product.

\end{remark}

\begin{proof}
To prove the first projection formula in (1) it suffices to argue locally for the dagger cohomologies. So we may assume that $X$ is a smooth dagger affinoid with the  presentation $\{X_h\}$. We need to show that
the projection 
$$\vartheta:\quad \rg_{\dr}^{\dagger}(X/\B^+_{\dr})\wh{\otimes}^{\R}_{\B^+_{\dr}}C \stackrel{\sim}{\to}\rg^{\dagger}_{\dr}(X)
$$
is a strict quasi-isomorphism. We can write this projection  more explicitly as the composition
\begin{align*}
\rg_{\dr}^{\dagger}(X/\B^+_{\dr})\wh{\otimes}^{\R}_{\B^+_{\dr}}C & \simeq (\LL\colim_h\rg_{\dr}(X_h/\B^+_{\dr}))\wh{\otimes}^{\R}_{\B^+_{\dr}}C\stackrel{\sim}{\leftarrow}
\LL\colim_h\rg_{\dr}(X_h/\B^+_{\dr})\wh{\otimes}^{\R}_{\B^+_{\dr}}C\\
 & \veryverylomapr{\LL\colim_h\vartheta}\LL\colim_h\rg_{\dr}(X_h)\simeq \rg^{\dagger}_{\dr}(X).
\end{align*}
The second map is a strict quasi-isomorphism because the tensor product is defined as the cone of multiplication by $t$; the third map is a strict quasi-isomorphism by Proposition \ref{projection}.

  To prove the second formula in (1), we argue locally as well. We need to show that, for $r\geq0$, we have a distinguished triangle
 \begin{equation}
 \label{detail13}
 F^{r-1}\rg^{\dagger}_{\dr}(X/\B^+_{\dr})\lomapr{t} F^r\rg^{\dagger}_{\dr}(X/\B^+_{\dr})\lomapr{\vartheta}F^r\rg^{\dagger}_{\dr}(X),
 \end{equation}
where $X$ is a smooth dagger affinoid with the  presentation $\{X_h\}$. But this triangle can be written as:
 $$
  \LL\colim_h F^{r-1}\rg_{\dr}(X_h/\B^+_{\dr})\verylomapr{\LL\colim_h t}  \LL\colim_hF^{r}\rg_{\dr}(X_h/\B^+_{\dr})\verylomapr{\LL\colim_h\vartheta}\LL\colim_h F^r\rg_{\dr}(X_h)
 $$ 
 and then it is clear that it is distinguished by  Proposition \ref{projection}.
 
 To prove the third formula in (1), we again argue locally. We need to show that, for $r\geq 0$, we have a distinguished triangle
 \begin{equation}
 \label{detail131}
 F^{r+1}\rg^{\dagger}_{\dr}(X/\B^+_{\dr})\stackrel{\can}{\to} F^r\rg^{\dagger}_{\dr}(X/\B^+_{\dr})\stackrel{\beta_X}{\to} \bigoplus_{i\leq r}\rg(X,\Omega^i_X)(r-i)[-i],
 \end{equation}
where $X$ is a smooth dagger affinoid with the  presentation $\{X_h\}$. 
But we can define this triangle as:
$$\xymatrix@R=8mm@C=-14mm{
  &\LL\colim_h F^{r+1}\rg_{\dr}(X_h/\B^+_{\dr})\ar[]!<15ex,-2ex>;[dr]!<5ex,1ex>^-{\LL\colim_h \can }\\
\LL\colim_h \bigoplus_{i\leq r}\rg(X_h,\Omega^i_{X_h})(r-i)[-i]\ar[]!<-15ex,-2ex>;[ur]!<-5ex,1ex>^-{[\,1\,]}&&
 \LL\colim_hF^{r}\rg_{\dr}(X_h/\B^+_{\dr})\ar[ll]^-{\LL\colim_h\beta_{X_h}}
} $$ 
 and then it is clear that it is distinguished by Proposition \ref{projection}.

In  the product formula (2), the map $\iota_{\rm BK}$ is defined by globalizing maps $\iota^{\dagger}_{\rm BK}$ for dagger affinoids. To define the latter,
 assume that $X$ is a smooth dagger affinoid with the  presentation $\{X_h\}$ and 
\index{iotabk@\iotabk}set
\begin{align}\label{defnie}
\iota^{\dagger}_{\rm BK}:\quad  \rg^{\dagger}_{\dr}(X)\wh{\otimes}^{\R}_K\B^+_{\dr} & \simeq (\LL\colim_h\rg_{\dr}(X_h))\wh{\otimes}^{\R}_K\B^+_{\dr}\stackrel{\sim}{\leftarrow}
\LL\colim_h\rg_{\dr}(X_h)\wh{\otimes}^{\R}_K\B^+_{\dr}\\
  &  \veryverylomapr{\LL\colim_h \iota_{\rm BK}}\LL\colim_h\rg_{\dr}(X_{h,C}/\B^+_{\dr})
    \simeq 
\rg^{\dagger}_{\dr}(X_C/\B^+_{\dr}).\notag
\end{align}
The third map is a filtered quasi-isomorphism by Lemma \ref{sroka1}. It remains to show that so is the second map,  i.e., that  the map 
\begin{equation}
\label{mapcia1}
\xymatrix@R=5mm{
\LL\colim_h\rg_{\dr}(X_h)\wh{\otimes}^{\R}_K\B^+_{\dr} \ar[r]& \rg^{\dagger}_{\dr}(X)\wh{\otimes}^{\R}_K\B^+_{\dr},
}
\end{equation}
is a  filtered strict quasi-isomorphism. Indeed, look at the cohomology of both sides. On the right hand side, arguing as in \cite[Sec.\,3.2.2]{CDN3},  we get 
$$\wt{H}^i(\rg^{\dagger}_{\dr}(X)\wh{\otimes}^{\R}_K\B^+_{\dr})\simeq H^i_{\dr}(X)\wh{\otimes}_K\B^+_{\dr}.
$$
For the left hand side, we compute 
\begin{align*}
\wt{H}^i(\LL\colim_h\rg_{\dr}(X_h)\wh{\otimes}^{\R}_K\B^+_{\dr}) & \stackrel{\sim}{\to} \wt{H}^i(\LL\colim_h\rg_{\dr}(X^\circ_h)\wh{\otimes}^{\R}_K\B^+_{\dr})\simeq \colim_h(H^i_{\dr}(X^\circ_h)\wh{\otimes}^{\R}_K\B^+_{\dr})\\
& \stackrel{\sim}{\to} {H}^i_{\dr}(X)\wh{\otimes}_K\B^+_{\dr}.
\end{align*}
\begin{remark}\label{version2}
 Here, for a pair of affinoids $X_h\Subset X_{h+1}$ as above, we define, slightly abusively,  
   the (naive) interior $X^{\circ}_{h+1}$  as the connected component of ${\rm Int}(X_{h+1})$ containing $X_h$. See \cite[Appendix]{Vez} for a discussion of (relative) interiors. By \cite[Prop. 2.5.8]{Berk}, this definition is functorial in the pair $X_h\Subset X_{h+1}$. 
   Moreover, $X^{\circ}_{h+1}$ is Stein and its complement in $X_{h+1}$ is open and quasi-compact. 
\end{remark}
The second and the third  isomorphisms above  follow from: 
\begin{enumerate}
\item the fact that the cohomology $H^i_{\dr}(X^\circ_h)$ is a finite rank vector space over $K$ with its canonical topology (by \cite[Th. 3.1]{GKdR});
\item point (1) implies the quasi-isomorphism 
$$
\wt{H}^i(\rg_{\dr}(X^\circ_h)\wh{\otimes}^{\R}_K\B^+_{\dr})\simeq {H}^i_{\dr}(X^\circ_h)\wh{\otimes}_K\B^+_{\dr}
$$
proved as in \cite[Sec.\,3.2.2]{CDN3};
\item the system $\{H^i_{\dr}(X^\circ_h)\}_h$ is essentially constant and isomorphic to $H^i_{\dr}(X)$;
\item  point (3) implies that the system $\{{H}^i_{\dr}(X^\circ_h)\wh{\otimes}_K\B^+_{\dr}\}_h$ is essentially constant and isomorphic to ${H}^i_{\dr}(X)\wh{\otimes}_K\B^+_{\dr}$.
\end{enumerate}
This proves that   the map (\ref{mapcia1}) is a strict quasi-isomorphism. 

We shall need to argue more that it is a filtered strict quasi-isomorphism as well. We argue by induction on $r\geq 0$; the base case of $r=0$ being proved above. For the inductive step ($r-1\Rightarrow r$) consider the following commutative diagram
$$
\xymatrix@R=5mm{
\LL\colim_hF^{r-1}(\rg_{\dr}(X_h)\wh{\otimes}^{\R}_K\B^+_{\dr}) \ar[d]^-{t} \ar[r]^-{\sim}& F^{r-1}(\rg^{\dagger}_{\dr}(X)\wh{\otimes}^{\R}_K\B^+_{\dr})\ar[d]^-{t}\\
\LL\colim_hF^r(\rg_{\dr}(X_h)\wh{\otimes}^{\R}_K\B^+_{\dr}) \ar[r] \ar[d]& F^r(\rg^{\dagger}_{\dr}(X)\wh{\otimes}^{\R}_K\B^+_{\dr})\ar[d]\\
\LL\colim_hF^r(\rg_{\dr}(X_h)\wh{\otimes}^{\R}_KC) \ar[r]^-{\sim}& F^r(\rg^{\dagger}_{\dr}(X)\wh{\otimes}^{\R}_KC)
}
$$
The left and the  right vertical triangles are distinguished by (\ref{detail1}) and (\ref{detail11}), respectively. The bottom map is clearly a strict quasi-isomorphism; the top map is a strict quasi-isomorphism by the inductive assumption. It follows that so is the middle horizontal map, as wanted. 

 We finish the proof of the second claim of our proposition by noting that the map $\R\wlim_h\iota_{\rm BK}$ in (\ref{defnie}) is a strict quasi-isomorphism by Lemma \ref{sroka1}.

 For the third claim of the proposition, it suffices to argue locally for the dagger cohomologies. Hence we can assume that $X\simeq Y_C$ for a smooth dagger affinoid $Y$ defined over $K$. 
 And then the wanted $t$-completeness follows from the second claim of the proposition and the fact that, since our tensor products are  projective and the Mittag-Leffler condition is satisfied,  the canonical map
 $$
\rg^{\dagger}_{\dr}(Y)\wh{\otimes}^{\R}_K\B^+_{\dr}\to  \R\wlim_r(\rg^{\dagger}_{\dr}(Y)\wh{\otimes}^{\R}_K(\B^+_{\dr}/F^r))
 $$
 is a filtered strict quasi-isomorphism.
\end{proof}

\section{Geometric Hyodo-Kato morphisms}\label{hihi1}
This section is devoted to the definition of compatible rigid analytic (for $X\in{\rm Sm}_C$)
and overconvergent (for $X\in{\rm Sm}^\dagger_C$)  Hyodo-Kato cohomologies $\rg_{{\rm HK},\breve{F}}(X)$.
For a general rigid analytic variety, the Hyodo-Kato cohomology is in general quite ugly (not separated
and, locally, infinite dimensional), but for dagger varieties the Hyodo-Kato cohomology
has nice properties (separated and, locally, finite dimensional). On the other hand
(Lemma~\ref{cwir-cwir}),
if $X\in {\rm Sm}_C$ is partially proper, then the rigid analytic
and overconvergent Hyodo-Kato cohomologies give the same result:
if $X^\dagger$ is the associated dagger variety,
the natural map $\rg_{{\rm HK},\breve{F}}(X^\dagger)\to \rg_{{\rm HK},\breve{F}}({X})$ is a
strict quasi-isomorphism (Corollary~\ref{roznosci1}).

We define $\rg_{{\rm HK},\breve{F}}(X)$ for dagger varieties by, locally, going to the limit
over a presentation in the Hyodo-Kato cohomology for rigid analytic varieties, and globalizing.
This definition is much more flexible than Grosse-Kl\"onne's~\cite{GKFr}, and we show 
(Lemma~\ref{herbata2}) that the two definitions give rise to the same cohomology.

The rigid analytic
and overconvergent Hyodo-Kato cohomologies are related (Theorem \ref{rynek2.0} and Theorem~\ref{HK-dagger2})
to the rigid analytic
and overconvergent de Rham and $\B_{\dr}^+$-cohomologies by
the Hyodo-Kato quasi-isomorphisms in, resp., $\sd(C_C)$ and $\sd(C_{\B^+_{\dr}})$: 
$$
\iota_{\hk}:\rg_{\hk,\breve{F}}(X)\wh{\otimes}_{\breve{F}}^{\R}C\stackrel{\sim}{\to}\rg_{\dr}(X),
\quad \iota_{\hk}:\rg_{\hk,\breve{F}}(X)\wh{\otimes}^{\R}_{\breve{F}}\B^+_{\dr}\stackrel{\sim}{\to} \rg_{\dr}(X/\B^+_{\dr})
$$

\subsection{Rigid analytic setting} We start our definitions of Hyodo-Kato morphisms with rigid-analytic varieties. 
 \subsubsection{Completed Hyodo-Kato cohomology}\label{completion1} The completed Hyodo-Kato cohomology $\rg_{\hk}(X^0_1) $ that appeared in the proof of Theorem \ref{HK-crr}  has better topological properties than the classical Hyodo-Kato cohomology $\rg_{\hk}(X_1)$ (being over $p$-complete field $\breve{F}$ instead of  $F^{\nr}$).  Because of this we will often use it. 
  
  Let $X\in {\rm Sm}_C$. Let $\sa^c_{\hk}$ 
\index{acalhk@\acalhk}($c$ stands for "completion") be the $\eta$-\'etale sheafification of the presheaf $\sx\to \rg_{\hk}(\sx^0_1)_{\Q_p}$ on $\sm_C^{{\rm ss},b}$. We 
\index{rghk@\rghk}set in $\sd_{\phi,N}(C_{\breve{F}})$
    $$\rg_{\hk, \breve{F}}(X):=\rg_{\eet}(X,\sa^c_{\hk}).$$ 
It is a dg $\breve{F}$-algebra equipped with a Frobenius, monodromy action, and a continuous action of $\sg_K$, if $X$ is defined over $K$. It is equipped with the topology induced from  the topology of the $\rg_{\hk}(\sx^0_{1})_{\Q_p}$'s.
    
    Unwinding the definitions,  using the base change quasi-isomorphism (\ref{beta}), and globalizing we obtain that the canonical morphism in $\sd_{\phi,N}(C_{\breve{F}})$
     \begin{equation}
     \label{beta22}
     \beta:\quad \rg_{\hk}(X)\wh{\otimes}^{\R}_{F^{\nr}}\breve{F}\to \rg_{\hk,\breve{F}}(X)
     \end{equation}
     is a strict quasi-isomorphism. It implies: 
     \begin{lemma}\label{loc-global1}{\rm ({Local-global compatibility})}
   For  $\sx\in \sm^{{\rm ss}}_C$, the canonical morphism in $\sd_{\phi,N}(C_{\breve{F}})$
    \begin{equation}
    \label{herbata1}
    \rg_{\hk}(\sx^0_{1})_{\Q_p}\to \rg_{\hk,\breve{F}}(\sx_C)
    \end{equation}
    is a strict quasi-isomorphism.
    \end{lemma}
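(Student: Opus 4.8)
The plan is to deduce this from the strict quasi-isomorphism $\beta$ of (\ref{beta22}), by identifying $\rg_{\hk}(\sx^0_1)_{\Q_p}$ with $\rg_{\hk}(\sx_C)\wh{\otimes}^{\R}_{F^{\nr}}\breve{F}$ in a way compatible with the maps to $\rg_{\hk,\breve{F}}(\sx_C)$. Granting such an identification, one composes its inverse with the global strict quasi-isomorphism $\beta\colon\rg_{\hk}(\sx_C)\wh{\otimes}^{\R}_{F^{\nr}}\breve{F}\stackrel{\sim}{\to}\rg_{\hk,\breve{F}}(\sx_C)$ supplied by (\ref{beta22}) and checks that the resulting composite is the canonical morphism (\ref{herbata1}); this gives the lemma.

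To produce the identification I would unwind the definition of the rigid analytic Hyodo-Kato cohomology of $\sx_C$ together with its local-global compatibility (Proposition~\ref{main00}(2)(d)). Choosing a semistable model $\sx_{\so_L}$ of $\sx$ over a finite extension $L/K$ and using the filtered index set $\Sigma$ of Section~\ref{lambda}, one has $\rg_{\hk}(\sx_C)\simeq\LL\colim_{\Sigma}\rg_{\hk}(Z^0_1)_{\Q_p}$, each term being defined over a finite unramified extension $F_{L'}^{\nr}$ of $F$; since $\Sigma$ is filtered and the system is essentially constant, the colimit commutes with $(-)\wh{\otimes}^{\R}_{F^{\nr}}\breve{F}$, and then the rationalized base change quasi-isomorphism (\ref{beta}), $\rg_{\hk}(Z^0_1)_{\Q_p}\wh{\otimes}^{\R}_{F_{L'}^{\nr}}\breve{F}\stackrel{\sim}{\to}\rg_{\hk}(\sx^0_1)_{\Q_p}$, yields the desired $\rg_{\hk}(\sx_C)\wh{\otimes}^{\R}_{F^{\nr}}\breve{F}\simeq\rg_{\hk}(\sx^0_1)_{\Q_p}$. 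Chasing through the constructions should show that this identification is compatible with $\beta$ and with (\ref{herbata1}).

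Alternatively, one can give a self-contained argument along the template of Proposition~\ref{stara1} and Proposition~\ref{product11}. As there, it would suffice to show, for each $\eta$-\'etale hypercovering $\su_{\cdot}$ of $\sx$ by basic semistable models (after passing to a refinement), that the natural map $\rg_{\hk}(\sx^0_1)_{\Q_p}\to\rg_{\hk}(\su^{0}_{\cdot,1})_{\Q_p}$ is a strict quasi-isomorphism. Applying $(-)\wh{\otimes}^{\R}_{\breve{F}}C$, the Hyodo-Kato quasi-isomorphism $\epsilon^{\hk}_{\dr}$ of Corollary~\ref{local1} (together with $\rg_{\crr}(-/\overline{S})_{\Q_p}\simeq\rg_{\dr}((-)_C)$ from (\ref{deszcz1})) would identify the base-changed map with the de Rham descent map $\rg_{\dr}(\sx_C)\to\rg_{\dr}(\su_{\cdot,C})$, a strict quasi-isomorphism by \'etale descent for de Rham cohomology. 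One would then descend back, writing $C\simeq\breve{F}\oplus W$ as topological $\breve{F}$-vector spaces ($W$ a Banach space) and using that a complex $A\in\sd(C_{\breve{F}})$ with $A\wh{\otimes}^{\R}_{\breve{F}}C$ strictly acyclic is itself strictly acyclic, applied to the homotopy fiber of the map above.

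The main obstacle in the first route is essentially one of bookkeeping: verifying the compatibility of the two identifications and the behaviour of the colimit over $\Sigma$, the real content being already contained in $\beta$ (\ref{beta22}), the base change (\ref{beta}), and Proposition~\ref{main00}(2)(d). In the second route the delicate point is the naturality of $\epsilon^{\hk}_{\dr}$ with respect to the $\eta$-\'etale hypercover, which is precisely what the functorial, $\infty$-categorical construction of Section~\ref{lambda} is designed to guarantee.
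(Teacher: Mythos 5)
Your proposal is correct, and both routes would work; the paper's actual proof is a one-line shortcut that avoids the bookkeeping your first route worries about. Namely, the authors simply replace the base field $K$ by $\breve K:=K\breve F$ (which changes nothing in either side of (\ref{herbata1})); over $\breve K$ one has $F^{\nr}=\breve F$, so the completed Hyodo--Kato cohomology coincides on the nose with the uncompleted one, and the statement becomes literally the local-global compatibility of \cite[Prop.\,4.23]{CN3}. Your first route (identify $\rg_{\hk}(\sx^0_1)_{\Q_p}$ with $\rg_{\hk}(\sx_C)\wh{\otimes}^{\R}_{F^{\nr}}\breve F$ via (\ref{beta}) and the colimit over $\Sigma$, then compose with $\beta$ of (\ref{beta22})) is the same reduction carried out without changing the base field, at the cost of the compatibility checks you flag; the base-field change makes those checks vacuous. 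Your second route --- hypercover, tensor with $C$, pass to de Rham via $\epsilon^{\hk}_{\dr}$, use \'etale descent, and descend back through $C\simeq\breve F\oplus W$ --- is exactly the template the paper uses for Propositions~\ref{stara1} and~\ref{product11} and Lemma~\ref{abs-crr}, and it is not circular here since Corollary~\ref{local1} is a purely local statement applied termwise; it buys independence from \cite[Prop.\,4.23]{CN3} but requires the truncation/essential-constancy care spelled out in the proof of Proposition~\ref{stara1}.
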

   \begin{proof}  We can pass from $K$ to $\breve{K}:=K\breve{F}$ (which amounts to passing from $F$ to $\breve{F}$ for the absolutely unramified subfields) without changing the cohomologies in (\ref{herbata1}). And then we can simply use local-global compatibility for $(\breve{F})^{\nr}=\breve{F}$-cohomology (see \cite[Prop. 4.23]{CN3}).
   \end{proof}

     \subsubsection{Geometric rigid analytic Hyodo-Kato quasi-isomorphisms}
     We will now use Theorem \ref{HK-crr} to define, both local and global, geometric Hyodo-Kato quasi-isomorphisms. 
     \label{HK-geom}$\quad$\\

     (i) {\em Local setting.} We will define two types of Hyodo-Kato morphisms: Hyodo-Kato-to-de Rham and Hyodo-Kato-to-$\B^+_{\dr}$.
     
    Let $\sx\in \sm_C^{\rm{ss}, b}$.
The Hyodo-Kato-to-de Rham morphism is defined by the \index{iotahk@\iotahk}composition in $\sd(C_C)$: 
\begin{align}
\label{zaby11}
 \iota_{\hk}:\quad \rg_{\hk}(\sx^0_1)_{\Q_p}\wh{\otimes}^{\R}_{\breve{F}}C  \xrightarrow[\sim]{\epsilon_{\dr}^{\hk}}\rg_{\crr}(\sx_1/\overline{S})_{\Q_p}\simeq \rg_{\dr}(\sx_C).
\end{align}
It is a natural strict quasi-isomorphism.

For the Hyodo-Kato-to-$\B^+_{\dr}$ morphism we have:
\begin{corollary}\label{kawa1}
Let $\sx\in \sm_C^{\rm{ss}, b}$. There exists a natural  strict \index{iotahk@\iotahk}quasi-isomorphism in $\sd(C_{\B^+_{\dr}})$
$$
\iota_{\hk}:\quad \rg_{\hk}(\sx_1^0)_{\Q_p}\wh{\otimes}^{\R}_{\breve{F}}\B^+_{\dr}\stackrel{\sim}{\to} \rg_{\dr}(\sx_C/\B^+_{\dr}). 
$$
Moreover, we have the commutative diagram  in $\sd(C_{\B^+_{\dr}})$
$$
\xymatrix@R=6mm{ 
\rg_{\hk}(\sx_1^0)_{\Q_p}\wh{\otimes}^{\R}_{\breve{F}}\B^+_{\dr}\ar[d]^{1\otimes\theta}\ar[r]^-{\iota_{\hk}}_-{\sim} & \rg_{\dr}(\sx_C/\B^+_{\dr})\ar[d]^{\vartheta}\\
\rg_{\hk}(\sx^0_1)_{\Q_p}\wh{\otimes}^{\R}_{\breve{F}}C \ar[r]^-{\iota_{\hk}}_-{\sim} & \rg_{\dr}(\sx_C).
} $$
\end{corollary}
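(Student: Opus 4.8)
The plan is to deduce Corollary~\ref{kawa1} from Theorem~\ref{HK-crr} by tensoring the twisted isomorphism $\epsilon^{\hk}_{\st}$ down to $\B^+_{\dr}$, in exactly the manner used to produce $\epsilon^{\hk}_{\B^+_{\dr}}$ in Corollary~\ref{local1}, and then identifying the crystalline $\B^+_{\dr}$-cohomology with the $\B^+_{\dr}$-cohomology via Lemma~\ref{kappa-new} (or, locally for $\sx\in\sm^{{\rm ss},b}_C$, via Lemma~\ref{etale-descent20} together with the identification $F^r\rg_{\crr}(\sx)^{\bwedge}_{\Q_p}\simeq F^r\rg_{\dr}(\sx_C/\B^+_{\dr})$). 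First I would recall the strict quasi-isomorphism $\epsilon^{\hk}_{\B^+_{\dr}}:\rg_{\hk}(\sx^0_1)_{\Q_p}\wh{\otimes}^{\R}_{\breve F}\B^+_{\dr}\stackrel{\sim}{\to}\rg_{\crr}(\sx_1)_{\Q_p}\wh{\otimes}^{\R}_{\B^+_{\crr}}\B^+_{\dr}$ of Corollary~\ref{local1}; note that here $X_1=\sx_1$ is the base change of a local semistable model, so the hypotheses of Theorem~\ref{HK-crr} and its corollaries are met. Then I would compose with the canonical strict quasi-isomorphism $\kappa\otimes 1:\rg_{\crr}(\sx_1)_{\Q_p}\wh{\otimes}^{\R}_{\B^+_{\crr}}\B^+_{\dr}\stackrel{\sim}{\to}\rg_{\dr}(\sx_C/\B^+_{\dr})$ obtained by unfolding Lemma~\ref{kappa-new} for the single affine model (or, more directly, by completing the identification $\rg_{\crr}(\sx_1)^{\bwedge}_{\Q_p}\simeq\rg_{\dr}(\sx_C/\B^+_{\dr})$ from the definition in Section~\ref{defcr}, which is a filtered strict quasi-isomorphism by construction). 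The composite $\iota_{\hk}:=(\kappa\otimes 1)\circ\epsilon^{\hk}_{\B^+_{\dr}}$ is the desired natural strict quasi-isomorphism; its naturality in $\sx$ follows from the naturality of both $\epsilon^{\hk}_{\st}$ (established in Section~\ref{pro2} via the independence of the choices $\xi^0$, $\pi$) and of $\kappa$.

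For the commutative square, I would argue as follows. The right-hand vertical map $\vartheta:\rg_{\dr}(\sx_C/\B^+_{\dr})\to\rg_{\dr}(\sx_C)$ is, under the identification $\kappa$, induced by $\theta:\B^+_{\crr}\to C$ applied to $\rg_{\crr}(\sx_1)_{\Q_p}\wh{\otimes}^{\R}_{\B^+_{\crr}}\B^+_{\dr}$, followed by the strict quasi-isomorphism $\rg_{\crr}(\sx_1)_{\Q_p}\wh{\otimes}^{\LL}_{\B^+_{\crr}}C\stackrel{\sim}{\to}\rg_{\crr}(\sx_1/\overline{S})_{\Q_p}\simeq\rg_{\dr}(\sx_C)$ used in the proof of Corollary~\ref{local1}. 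On the other hand, the Hyodo-Kato-to-de Rham map $\iota_{\hk}$ of~(\ref{zaby11}) is $\epsilon^{\hk}_{\dr}$, which by the construction in the proof of Corollary~\ref{local1} is obtained from $\epsilon^{\hk}_{\st}$ by applying the map $\B^+_{\st}\to\B^+_{\crr}$, $\log(\lambda_p)\mapsto 0$, then tensoring over $\B^+_{\crr}$ with $C$. Since $\theta\colon\B^+_{\crr}\to C$ and the splitting $\B^+_{\st}\to\B^+_{\crr}$ are compatible with $\iota\colon\B^+_{\st}\to\B^+_{\dr}$ and $\theta\colon\B^+_{\dr}\to C$ (recall the normalization at $p$ fixed in Section~\ref{period-rings}, where $\iota$ sends $\log([\wt p])\mapsto -\log(p[\wt p]^{-1})\in{\rm Ker}\,\theta$, so $\theta\iota=\theta s^*_{\lambda_p}$ on $\B^+_{\st}$), the square commutes by naturality of these base-change operations together with the functoriality of $\epsilon^{\hk}_{\st}$. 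Concretely, one writes the square as the outer rectangle of a diagram whose top is $\iota_{\hk}$ on $\B^+_{\dr}$, whose left column is $1\otimes\theta$, and whose interior is populated by $\epsilon^{\hk}_{\st}$-squares and $\kappa$-squares, each of which commutes by construction; then chase.

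The main obstacle I expect is the bookkeeping needed to make the commutative square genuinely commute rather than merely commute up to non-canonical homotopy: one must keep track of the twisted tensor products $\wh{\otimes}_{\breve F,\iota}\B^+_{\st}$ versus $\wh{\otimes}^{\R}_{\breve F}\B^+_{\dr}$ and the various $N$-nilpotent truncations appearing in Remark~\ref{identification}, and verify that passing from $\B^+_{\st}$ to $\B^+_{\dr}$ via $\iota=\iota_p$ is compatible on the nose with the splitting $\log(\lambda_p)\mapsto 0$ used to define $\epsilon^{\hk}_{\crr}$ and hence $\epsilon^{\hk}_{\dr}$. The cleanest route is to perform everything at the level of the sheaf-theoretic maps and invoke the functoriality already proved in Section~\ref{pro2}, so that the square reduces to the identity $\theta\circ\iota=\theta\circ s^*_{\lambda_p}\colon\B^+_{\st}\to C$ on period rings, which holds because $\sigma\mapsto\sigma(\log(\lambda_p))-\log(\lambda_p)$ and $\log(\lambda_p)+\log(p[\wt p]^{-1})$ both lie in ${\rm Ker}\,\theta$; the topological point — that all the derived tensor products in sight are computed by honest completed tensor products since the $\rg_{\hk}(\sx^0_1)_{\Q_p}$ are built from Banach spaces and $\B^+_{\crr}$, $\B^+_{\dr}/F^r$ are well-behaved — is handled exactly as in Lemma~\ref{trick1} and Remark~\ref{identification}, so no new analytic input is required.
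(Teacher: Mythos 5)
Your proposal is correct and follows essentially the same route as the paper: the map is defined as the composite of $\epsilon^{\hk}_{\B^+_{\dr}}$ from Corollary~\ref{local1} with the local quasi-isomorphism $\kappa$ underlying Lemma~\ref{kappa-new}, and the commutativity of the square is exactly the compatibility via $\theta$ and $\vartheta$ already recorded in Corollary~\ref{local1}. Your extra unwinding of that compatibility down to the identity $\theta\circ\iota=\theta\circ s^*_{\lambda_p}$ on period rings is a correct elaboration of what the paper leaves implicit, but not a different argument.
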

\begin{proof}
To define $\iota_{\hk}$, we use the natural  strict quasi-isomorphism  in $\sd(C_{\B^+_{\dr}})$
   $$
\epsilon^{\hk}_{\B^+_{\dr}}:\quad \rg_{\hk}(\sx_1^0)_{\Q_p}\wh{\otimes}_{\breve{F}}^{\R}\B^+_{\dr}\stackrel{\sim}{\to} \rg_{\crr}(\sx_1)_{\Q_p}\wh{\otimes}^{\R}_{\B^+_{\crr}}\B^+_{\dr}
$$ from Lemma \ref{local1} and compose it with the strict quasi-isomorphism  in $\sd(C_{\B^+_{\dr}})$
$$
\kappa: \quad \rg_{\crr}(\sx_1)_{\Q_p}\wh{\otimes}^{\R}_{\B^+_{\crr}}\B^+_{\dr}\stackrel{\sim}{\to} \rg_{\crr}(\sx)_{\Q_p}^{\bwedge}
$$ 
from the proof of Lemma \ref{kappa-new}. 

  Commutativity of the diagram follows from Lemma \ref{local1}.
     \end{proof}
     (ii) {\em Global setting.} We can now state the main theorem of this chapter:
     \begin{theorem}{\rm (Geometric Hyodo-Kato isomorphisms)} \label{rynek2.0}
Let $X\in {\rm Sm}_C$.  We have  the natural Hyodo-Kato strict 
\index{iotahk@\iotahk}quasi-isomorphisms in, resp., $\sd(C_C)$ and $\sd(C_{\B^+_{\dr}})$
       \begin{align}
       \label{rynek2}
       \iota_{\hk}:\quad  \rg_{\hk,\breve{F}}(X)\wh{\otimes}_{\breve{F}}^{\R}C\stackrel{\sim}{\to}\rg_{\dr}(X),\quad \iota_{\hk}:\quad \rg_{\hk,\breve{F}}(X)\wh{\otimes}^{\R}_{\breve{F}}\B^+_{\dr}\stackrel{\sim}{\to} \rg_{\dr}(X/\B^+_{\dr})
       \end{align}
       that are compatible via the maps $\theta$ and $\vartheta$.  
       \end{theorem}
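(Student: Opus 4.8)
The plan is to reduce the global statement to the local one by $\eta$-\'etale descent, using that all four cohomologies in play --- $\rg_{\hk,\breve F}$, $\rg_{\dr}$, $\rg_{\dr}(-/\B^+_{\dr})$, and the tensored-up versions $\rg_{\hk,\breve F}(-)\wh\otimes^{\R}_{\breve F}C$ and $\rg_{\hk,\breve F}(-)\wh\otimes^{\R}_{\breve F}\B^+_{\dr}$ --- are by construction $\eta$-\'etale sheafifications (hypersheaves) of presheaves on $\sm^{{\rm ss},b}_C$, together with the local-global compatibility results already proven: Lemma~\ref{loc-global1} for $\rg_{\hk,\breve F}$, Lemma~\ref{etale-descent201} (and \eqref{deszcz1}) for $\rg_{\dr}$, Lemma~\ref{etale-descent20} for $\rg_{\dr}(-/\B^+_{\dr})$, and Proposition~\ref{product11} (in the form given there, with $C$ replaced by an arbitrary Fr\'echet space over $\breve F$ as noted in the Remark following it, applied here to $C$ and to $\B^+_{\dr}$) for the tensor products. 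Concretely, for $X\in{\rm Sm}_C$ and an $\eta$-\'etale hypercovering $\su_{\cdot}$ of $X$ by objects of $\sm^{{\rm ss},b}_C$, both sides of each map in \eqref{rynek2} are computed as $\LL\colim$ over $\su_{\cdot,1}$ of the corresponding presheaf values on the semistable models, so it suffices to produce the maps and check they are strict quasi-isomorphisms on a single $\sx\in\sm^{{\rm ss},b}_C$, functorially in $\sx$.

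First I would record the local maps. For $\sx\in\sm^{{\rm ss},b}_C$, the Hyodo-Kato-to-de Rham quasi-isomorphism $\iota_{\hk}:\rg_{\hk}(\sx^0_1)_{\Q_p}\wh\otimes^{\R}_{\breve F}C\xrightarrow{\sim}\rg_{\dr}(\sx_C)$ is \eqref{zaby11}, obtained from $\epsilon^{\hk}_{\dr}$ of Corollary~\ref{local1} composed with $\rg_{\crr}(\sx_1/\overline S)_{\Q_p}\simeq\rg_{\dr}(\sx_C)$; the Hyodo-Kato-to-$\B^+_{\dr}$ quasi-isomorphism $\iota_{\hk}:\rg_{\hk}(\sx^0_1)_{\Q_p}\wh\otimes^{\R}_{\breve F}\B^+_{\dr}\xrightarrow{\sim}\rg_{\dr}(\sx_C/\B^+_{\dr})$ is Corollary~\ref{kawa1}, obtained from $\epsilon^{\hk}_{\B^+_{\dr}}$ (Corollary~\ref{local1}) composed with $\kappa:\rg_{\crr}(\sx_1)_{\Q_p}\wh\otimes^{\R}_{\B^+_{\crr}}\B^+_{\dr}\xrightarrow{\sim}\rg_{\crr}(\sx)^{\bwedge}_{\Q_p}$ from Lemma~\ref{kappa-new}. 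Both are strict quasi-isomorphisms and, crucially, compatible via $\theta$ and $\vartheta$ (the commutative square in Corollary~\ref{kawa1}). Then I would sheafify: the source presheaves $\sx\mapsto\rg_{\hk}(\sx^0_1)_{\Q_p}\wh\otimes^{\R}_{\breve F}C$ and $\sx\mapsto\rg_{\hk}(\sx^0_1)_{\Q_p}\wh\otimes^{\R}_{\breve F}\B^+_{\dr}$ sheafify (via Proposition~\ref{product11} and its Remark, applied with $B=C$ and $B=\B^+_{\dr}$) to $\sa^c_{\hk}\wh\otimes^{\R}_{\breve F}C$ resp.\ $\sa^c_{\hk}\wh\otimes^{\R}_{\breve F}\B^+_{\dr}$, i.e.\ to $\rg_{\hk,\breve F}(X)\wh\otimes^{\R}_{\breve F}C$ resp.\ $\rg_{\hk,\breve F}(X)\wh\otimes^{\R}_{\breve F}\B^+_{\dr}$ after taking $\rg_{\eet}(X,-)$; the target presheaves sheafify to $\sa^{\rm rel}_{\crr}\simeq\sa_{\dr}$ resp.\ $\sa^{\bwedge}_{\crr}$, giving $\rg_{\dr}(X)$ resp.\ $\rg_{\dr}(X/\B^+_{\dr})$. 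Since the local $\iota_{\hk}$'s are natural in $\sx$, they induce morphisms of the sheafified objects, hence the global maps \eqref{rynek2} after applying $\rg_{\eet}(X,-)$; being strict quasi-isomorphisms of (hyper)sheaves locally, they are strict quasi-isomorphisms globally. The compatibility via $\theta$ and $\vartheta$ is inherited from the local compatibility in Corollary~\ref{kawa1} by functoriality of sheafification.

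The main obstacle, and the point requiring care, is not the descent formalism but checking that the non-standard tensor products $\wh\otimes^{\R}_{\breve F}C$ and $\wh\otimes^{\R}_{\breve F}\B^+_{\dr}$ commute with the $\LL\colim$ over the hypercovering and that their $\eta$-\'etale sheafification really is computed by the presheaf formula $\sx\mapsto\rg_{\hk}(\sx^0_1)_{\Q_p}\wh\otimes^{\R}_{\breve F}(-)$ --- equivalently, that local-global compatibility holds for these tensored cohomologies. This is exactly the content of Proposition~\ref{product11} (for $C$) and its Remark (for a general Fr\'echet space over $\breve F$, in particular $\B^+_{\dr}$, using the $t$-adic/Hodge-filtration presentation $\B^+_{\dr}\simeq\R\wlim_i\B^+_{\dr}/F^i$ and Lemma~\ref{trick1} type arguments to commute $\wh\otimes$ past $\R\wlim$ and $\LL\colim$): one passes via the already-established Hyodo-Kato quasi-isomorphism to de Rham cohomology tensored with $C$ or $\B^+_{\dr}$, invokes \'etale descent for de Rham cohomology there, and returns using the splitting $C\simeq\breve F\oplus W$ (and its filtered analog). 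So the bulk of the work has already been done in Section~3 and in Corollary~\ref{local1}/Corollary~\ref{kawa1}; assembling \eqref{rynek2} is then a formal consequence, which is why I would present Theorem~\ref{rynek2.0} as following directly from those ingredients by $\eta$-\'etale descent.
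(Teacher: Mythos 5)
Your proposal is correct and follows essentially the same route as the paper: the paper's proof is literally ``globalize the local strict quasi-isomorphisms from Corollary~\ref{local1} and Corollary~\ref{kawa1}'', and you carry out exactly that globalization, correctly identifying the supporting inputs (the local maps~(\ref{zaby11}) and Corollary~\ref{kawa1}, local-global compatibility of the non-standard tensor products via Proposition~\ref{product11} and its Remark, and inheritance of the $\theta$/$\vartheta$-compatibility from the local commutative square).
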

       \begin{proof}
       Globalize the  local strict quasi-isomorphisms  from  Corollary \ref{local1} and Corollary \ref{kawa1}.
       \end{proof}
       
         (iii) {\em Complements.} In a similar fashion,   the  local strict quasi-isomorphism 
$ \epsilon^{\rm HK}_{\st}$  from  Theorem  \ref{HK-crr}  induces the natural  strict 
\index{epshk@\epshk}quasi-isomorphism in $\sd_{\phi,N}(C_{\B^+_{\st}})$
       \begin{equation}
       \label{epsilon1}
      \epsilon^{\rm HK}_{\st}:\quad  \rg_{\hk,\breve{F}}(X)\wh{\otimes}_{\breve{F}}\B^+_{\st}\stackrel{\sim}{\to}\rg_{\crr}(X)\wh{\otimes}_{\B^+_{\crr}}\B^+_{\st},
  \end{equation}
where we set in $\sd_{\phi,N}(C_{\B^+_{\st}})$
 \begin{align}
 \label{HK-referee}
     \R\Gamma_{\hk,\breve{F}}(X)\wh{\otimes}_{\breve{F}}\B^+_{\st} & :=\LL\colim((\R\Gamma_{\hk,\breve{F}}\wh{\otimes}_{\breve{F},\iota}\B^+_{\st})(\su_{\cdot,1})),  \\
  \R\Gamma_{\hk,\breve{F}}(X)\wh{\otimes}_{\B^+_{\crr}}\B^+_{\st} & :=\LL\colim((\R\Gamma_{\hk,\breve{F}}\wh{\otimes}_{\B^+_{\crr},\iota}\B^+_{\st})(\su_{\cdot,1})),  \notag
 \end{align}
with the homotopy colimit is taken over $\eta$-\'etale quasi-compact hypercoverings $\su_{\cdot}$ from $\sm^{\sem, b}_C$.  
Applying the map $\B^+_{\st}\to \B^+_{\crr}$ given by sending $\log (\lambda_p)\mapsto 0$ to the morphism (\ref{epsilon1}) we obtain
the  strict 
\index{epshk@\epshk}quasi-isomorphism in  $\sd_{\phi}(C_{\B^+_{\crr}})$
       \begin{equation}
       \label{epsilon12}
      \epsilon^{\rm HK}_{\crr}:\quad  \rg_{\hk,\breve{F}}(X)\wh{\otimes}_{\breve{F}}\B^+_{\crr}\stackrel{\sim}{\to}\rg_{\crr}(X).
  \end{equation}
    \subsection{The overconvergent setting} We are now ready to define the overconvergent geometric Hyodo-Kato morphism. We do it locally by using, via presentations, the rigid-analytic geometric Hyodo-Kato morphism constructed in the previous section and then we glue. The advantage of this approach is that, by construction,  the overconvergent and the rigid analytic geometric Hyodo-Kato morphisms are compatible. This is in contrast to \cite{CDN3}, \cite{CN3}, where a lot of effort was devoted to proving compatibility between the overconvergent construction due to Grosse-Kl\"onne, and the rigid-analytic construction due to Hyodo-Kato\footnote{Recently, Ertl-Yamada in \cite{EY} have introduced a particularly simple definition of overconvergent Hyodo-Kato cohomology for weak-formal semistable schemes and equally simple definition of the relevant Hyodo-Kato map. Their construction is compatible with the crystalline Hyodo-Kato analog when the scheme is proper. It is likely that their construction can be extended to the set-up needed in this paper.}.

  \subsubsection{Overconvergent Hyodo-Kato cohomology via presentations of dagger structures}\label{referee1} In this section we introduce a definition of overconvergent Hyodo-Kato cohomology using presentations of dagger structures (see \cite[Appendix]{Vez}, \cite[Sec.\,6.3]{CN3}). We show that the so defined Hyodo-Kato cohomology, a priori different from the one defined by Grosse-Kl\"onne, is, in fact,  strictly quasi-isomorphic to it.

\vskip.2cm
    (i) {\em Local definition}. Let $X$ be a dagger affinoid over $L=K,C$. Let ${\rm pres}(X):=\{X_h\}_{h\in\N}$ be a presentation of dagger structures. 
\index{rghk@\rghk}Define in  $\sd_{\phi,N}(C_{\breve{F}})$:
  $$
  \rg^{\dagger}_{\hk}(X):=\LL\colim_h\rg_{\hk}(X_h)
  $$
  and equip it with the induced Frobenius and monodromy. 
  We have a natural \index{alphahk@\alphahk}map 
  \begin{equation}\label{map11}
  \alpha^{\dagger}_{\hk}:  \rg^{\dagger}_{\hk}(X) \to \rg^{\gk}_{\hk}(X) 
  \end{equation}
  defined as the composition
  \begin{align}\label{map12}
  \rg^{\dagger}_{\hk}(X)  & =\LL\colim_h\rg_{\hk}(X_h)\stackrel{\sim}{\to} \LL\colim_h\rg_{\hk}(X^{\circ}_h)\\
  & \stackrel{\sim}{\leftarrow}\LL\colim_h\rg^{\gk}_{\hk}(X^{{\circ},\dagger}_h)\to
   \rg^{\gk}_{\hk}(X). \notag
  \end{align}
 The third map  is a strict quasi-isomorphism by Corollary \ref{ogrod1}: this is   because the 
\index{Xn@\Xn}interior $X^{\circ}_h$ is Stein. Note that the   proof of the cited corollary relies on a nontrivial  comparison result between the rigid analytic and Grosse-Kl\"onne's overconvergent Hyodo-Kato morphisms. 
 The map $  \alpha^{\dagger}_{\hk}$ is functorial  (see Remark \ref{version2}).
  
\vskip.2cm
  (ii) {\em Globalization}. For a general smooth dagger variety $X$ over $L$, using the natural equivalence of analytic topoi 
  \begin{equation}
  \label{equiv1}
  {\rm Sh}({\rm SmAff}^{\dagger}_{L,\eet})\stackrel{\sim}{\to} {\rm Sh}({\rm Sm}^{\dagger}_{L,\eet})\
  \end{equation}
  we define the sheaf $\sa_{\hk}^{\dagger}$ on $X_{\eet}$ as the sheaf associated to the presheaf defined by $U\mapsto \rg_{\hk}^{\dagger}(U), U\in  {\rm SmAff}^{\dagger}_{L}$, $U\to X$  an \'etale map. We define  in $\sd_{\phi,N}(C_{F_L})$
  $$
  \rg_{\hk}(X):=\rg_{\eet}(X,\sa^{\dagger}_{\hk}).
  $$
If $ L =K$,  it is a dg $F$-algebra.    If $L=C$,   it is a dg ${F}^{\nr}$-algebra equipped with a Frobenius, monodromy action, and a continuous action of $\sg_K$ if   $X$ is defined over $K$. Its  topology is induced  from the topology of the $ \rg^{\dagger}_{\hk}(X)$'s.


  Globalizing the map $\alpha_{\hk}^{\dagger}$ from (\ref{map11}) we obtain a natural 
\index{alphahk@\alphahk}map  in $\sd_{\phi,N}(C_{F_L})$
  $$
  \alpha_{\hk}: \rg_{\hk}(X)\to \rg^{\gk}_{\hk}(X).
  $$
  \begin{lemma}\label{herbata2}Let $L=K,C$. 
  \begin{enumerate} 
  \item The above map $\alpha_{\hk}$ is a strict quasi-isomorphism.
  \item  {\rm (Local-global compatibility)} If $X$ is  a smooth dagger affinoid the natural map  in $\sd_{\phi,N}(C_{F_L})$
  $$\rg^{\dagger}_{\hk}(X)\to \rg_{\hk}(X)$$ is a strict quasi-isomorphism. 
  \end{enumerate}
  \end{lemma}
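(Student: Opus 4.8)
The plan is to prove the two claims together by first establishing local-global compatibility (claim (2)) for a smooth dagger affinoid, and then deducing the global statement (claim (1)) by $\eta$-\'etale descent. For claim (2), let $X$ be a smooth dagger affinoid over $L$ with presentation $\{X_h\}$. The natural map $\rg^{\dagger}_{\hk}(X)\to \rg_{\hk}(X)=\rg_{\eet}(X,\sa^{\dagger}_{\hk})$ is, essentially by the definition of $\sa^\dagger_{\hk}$ as the $\eta$-\'etale sheafification of $U\mapsto \rg^\dagger_{\hk}(U)$, the comparison between the value of a presheaf and that of its sheafification on a representable object; so it suffices to show that the presheaf $U\mapsto \rg^\dagger_{\hk}(U)=\LL\colim_h\rg_{\hk}(U_h)$ already satisfies $\eta$-\'etale hyperdescent on smooth dagger affinoids, at least after passing to $\wt H$-cohomology. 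Here I would use the key structural input that, for a smooth dagger affinoid $X^0_h$ over a finite extension of $K$, the de Rham cohomology $H^i_{\dr}(X^0_h)$ is finite-dimensional with its canonical (Hausdorff) topology (Grosse-Kl\"onne, \cite[Th. 3.1]{GKdR}, invoked already in the proof of Proposition~\ref{prison}), and the system $\{H^i_{\dr}(X^0_h)\}_h$ is essentially constant and isomorphic to $H^i_{\dr}(X)$. Via the rigid-analytic Hyodo-Kato quasi-isomorphism $\iota_{\hk}\colon \rg_{\hk}(X_h)\wh{\otimes}^{\R}_{\breve F}C\stackrel{\sim}{\to}\rg_{\dr}(X_h)$ (more precisely, working over $L$ before extending scalars, using $\rg^{\gk}_{\hk}(-)$) one transports the finiteness and essential constancy from de Rham cohomology to Hyodo-Kato cohomology, so that $\colim_h \wt H^i(\rg_{\hk}(X_h))$ is computed by an essentially constant system, and the sheafification changes nothing.

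For claim (1), I would argue that it is enough to prove $\alpha^{\dagger}_{\hk}\colon \rg^{\dagger}_{\hk}(X)\to \rg^{\gk}_{\hk}(X)$ is a strict quasi-isomorphism for smooth dagger affinoids $X$, since both $\rg_{\hk}$ and $\rg^{\gk}_{\hk}$ are defined (or satisfy) $\eta$-\'etale descent, and then globalize by taking an admissible dagger-affinoid hypercovering. But $\alpha^\dagger_{\hk}$ is the composition in~(\ref{map12}): the first map is a strict quasi-isomorphism because $\rg_{\hk}(X_h)$ depends only on the reduction $X_{h,1}$, hence is unchanged if we replace $X_h$ by its interior $X^{\rm o}_h$ (they have the same special fiber, up to the relevant sense; this is the standard observation that Hyodo-Kato cohomology is insensitive to shrinking the boundary, so $\rg_{\hk}(X_h)\stackrel{\sim}{\to}\rg_{\hk}(X^{\rm o}_h)$); the second map, $\LL\colim_h\rg^{\gk}_{\hk}(X^{{\rm o},\dagger}_h)\to \LL\colim_h \rg_{\hk}(X^{\rm o}_h)$, is a strict quasi-isomorphism by point 3b of Proposition~\ref{main00} applied degreewise (each $X^{\rm o}_h$ is partially proper, and for $L=C$ one also needs that it comes from a dagger variety over a finite extension of $K$, which holds by Elkik's theorem as in the proof of Corollary~\ref{ogrod1}); and the third map is a strict quasi-isomorphism by Corollary~\ref{ogrod1} itself (again using that $X^{\rm o}_h$ is partially proper and defined, up to the covering, over a finite extension of $K$). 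Composing, $\alpha^\dagger_{\hk}$ is a strict quasi-isomorphism on dagger affinoids, and then $\alpha_{\hk}$ is one globally by $\eta$-\'etale descent.

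The main obstacle I anticipate is the bookkeeping around the interiors $X^{\rm o}_h$: one must be careful that $\rg_{\hk}(X_h)\to\rg_{\hk}(X^{\rm o}_h)$ is genuinely an equivalence (the rigid-analytic Hyodo-Kato cohomology is built from special fibers of semistable formal models, and shrinking $X_h$ to its interior does not change the admissible locus of such models in a way that affects the colimit), and that the essential-constancy statement for $\{H^i_{\dr}(X^0_h)\}_h$—which is exactly the finite-dimensionality input from \cite{GKdR}—propagates correctly through all the tensor products and colimits, in both the $L=K$ and $L=C$ cases, so that no non-Hausdorff pathology is introduced when sheafifying. Everything else is a formal consequence of the already-established rigid-analytic Hyodo-Kato theory (Theorem~\ref{HK-crr}, Theorem~\ref{rynek2.0}) together with Proposition~\ref{main00} and Corollary~\ref{ogrod1}.
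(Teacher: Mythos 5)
Your overall strategy is close to the paper's (reduce to a dagger affinoid by \'etale descent and analyze the composition (\ref{map12}) term by term), but two of your justifications are wrong, and one of them concerns precisely the step that carries the content of the lemma. First, the minor one: the map $\LL\colim_h\rg_{\hk}(X_h)\to \LL\colim_h\rg_{\hk}(X^{\rm o}_h)$ is \emph{not} an equivalence because ``$X_h$ and $X_h^{\rm o}$ have the same special fiber'' --- they do not, and Hyodo--Kato cohomology is certainly sensitive to passing from an affinoid to its interior. The map is a strict quasi-isomorphism because the two directed systems are interleaved ($X_{h+1}\Subset X_h$ forces $X_{h+1}\subset X_h^{\rm o}\subset X_h$), so the colimits are computed over mutually cofinal systems. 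Second, and more seriously: you attribute the last map $\LL\colim_h\rg^{\gk}_{\hk}(X^{{\rm o},\dagger}_h)\to \rg^{\gk}_{\hk}(X)$ to Corollary~\ref{ogrod1}. That corollary compares the Grosse-Kl\"onne overconvergent Hyodo--Kato cohomology of a single partially proper dagger variety with the rigid analytic Hyodo--Kato cohomology of its \emph{completion}; it is the tool for the middle map $\LL\colim_h\rg^{\gk}_{\hk}(X^{{\rm o},\dagger}_h)\to\LL\colim_h\rg_{\hk}(X^{\rm o}_h)$, not for the last one, which lives entirely inside the Grosse-Kl\"onne theory and asserts that the colimit along the presentation recovers $\rg^{\gk}_{\hk}(X)$. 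That last map is the actual crux of the lemma; the paper handles it by citing the proof of Proposition 6.17 of \cite{CN3}, which uses the Grosse-Kl\"onne Hyodo--Kato quasi-isomorphism to transport the question to overconvergent de Rham cohomology, where $\LL\colim_h\rg_{\dr}(X^{{\rm o},\dagger}_h)\simeq\rg_{\dr}(X)$ follows from finiteness and essential constancy of $\{H^i_{\dr}(X_h^{{\rm o},\dagger})\}_h$.

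The irony is that the correct mechanism is already present in your write-up of claim (2) --- finiteness of de Rham cohomology of dagger affinoids from \cite{GKdR} plus essential constancy along the presentation, transported through a Hyodo--Kato quasi-isomorphism --- so the fix is to apply exactly that argument (with the Grosse-Kl\"onne quasi-isomorphism of Proposition~\ref{main00}(1c), which is the one available at this stage; the overconvergent quasi-isomorphism of Theorem~\ref{HK-dagger2} is proved \emph{after} this lemma and depends on it, so invoking it here would be circular) to the last map, rather than citing Corollary~\ref{ogrod1}. One further caution on your claim-(2) argument: be explicit that the finiteness input concerns the \emph{dagger} affinoids $X_h^{{\rm o},\dagger}$ and not the rigid affinoids $X_h$, whose de Rham and Hyodo--Kato cohomologies are in general infinite dimensional and non-Hausdorff; as stated your sentence about transporting finiteness ``from de Rham cohomology to Hyodo--Kato cohomology'' of the $X_h$ blurs this distinction. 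Finally, note that the paper deduces claim (2) from claim (1) via the commuting triangle through $\rg^{\gk}_{\hk}(X)$, which reduces both claims to the single key map above; your order (claim (2) first, via hyperdescent of the presheaf) is workable but makes you prove the descent statement twice.
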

  \begin{proof}
  For the first claim, by \'etale descent, 
  we may assume that $X$ comes from   a smooth dagger affinoid.    Looking at the composition (\ref{map12})  defining the map $\alpha^{\dagger}_{\hk}$ we see that it suffices to show that the natural map
  \begin{equation}
  \label{dagger1}
  \LL\colim_h\rg_{\hk}^{\gk}(X^{{\circ},\dagger}_h)\to \rg^{\gk}_{\hk}(X)
  \end{equation}
  is a strict quasi-isomorphism. But this was shown in the proof of Proposition 6.17 in \cite{CN3}.  We note that that proof uses the Hyodo-Kato quasi-isomorphism of Grosse-Kl\"{o}nne to pass to the de Rham cohomology where the analog of (\ref{dagger1})  is obvious. 
  
    For the second claim, consider the commutative local-global diagram  in $\sd_{\phi,N}(C_{F_L})$
    $$
    \xymatrix{
    \rg^{\dagger}_{\hk}(X)\ar[r]\ar[d]_{\alpha^{\dagger}_{\hk}} &  \rg_{\hk}(X)\ar[dl]_{\sim}^{\alpha_{\hk}}\\
    \rg^{\rm GK}_{\hk}(X) 
    }
    $$
    The slanted  arrow is a strict quasi-isomorphism by the first claim of the lemma. It suffices to show that the left vertical arrow is a strict quasi-isomorphism as well. For that, it suffices to show that the map 
    $$
     \LL\colim_h\rg_{\hk}^{\gk}(X^{{\circ},\dagger}_h)\to \rg^{\gk}_{\hk}(X)
     $$ appearing in the definition (\ref{map12}) of the map $\alpha^{\dagger}_{\hk}$ is a strict quasi-isomorphism but this was just shown above. 
\end{proof}

  (iii) {\em Completed overconvergent Hyodo-Kato cohomology.}  
   We can define the {\em completed overconvergent Hyodo-Kato cohomology} by a similar procedure to the one used above. It will have better topological properties than its classical version. Let $X$ be a smooth dagger affinoid over $C$. 
   Let ${\rm pres}(X)=\{X_h\}_{h\in\N}$. Define in  $\sd_{\phi,N}(C_{\breve{F}})$
  $$
  \rg^{\dagger}_{\hk,\breve{F}}(X):=\LL\colim_h\rg_{\hk,\breve{F}}(X_h). 
  $$
For a general smooth dagger variety over $C$, we can globalize the above definition and obtain the 
\index{acalhk@\acalhk}sheaf $\sa^{c}_{\hk}$  for the $\eta$-\'etale topology on $ \sm^{\dagger,{\rm ss}}_C$ and 
   \index{rghk@\rghk}cohomology
    $$\rg_{\hk, \breve{F}}(X):=\rg_{\eet}(X,\sa^{c}_{\hk})\in   \sd_{\phi,N}(C_{\breve{F}}).
    $$ It is a dg $\breve{F}$-algebra equipped with a Frobenius, monodromy action, and a continuous action of $\sg_K$, if $X$ is defined over $K$. It is equipped with  the topology induced from the topology of the $ \rg^{\dagger}_{\hk,\breve{F}}(X)$'s.
     
    We have  the local-global compatibility  by  Lemma \ref{herbata2} 
(replace, without loss of information, $F$ by~$\breve{F}$). 
      
\vskip.2cm
      (iv) {\em Completed overconvergent Hyodo-Kato cohomology ala Grosse-Kl\"onne.} But we can also define the {completed overconvergent Hyodo-Kato cohomology} as in the rigid analytic case, by modifying the definition of the overconvergent Hyodo-Kato cohomology of Grosse-Kl\"onne. That is, we can 
\index{rghk@\rghk}set $\rg_{\hk,\breve{F}}^{\rm GK}(\sx_1):=\rg_{\hk}(\sx_0)$, 
for  $\sx\in \sm^{\dagger,{\rm ss}}_C$, where $\sx_0:=\sx_{\overline{k}}$,  and globalize.  We will denote by $\rg_{\hk,\breve{F}}^{\rm GK}(X), X\in {\rm Sm}^{\dagger}_C$, the so obtained cohomology in  $\sd_{\phi,N}(C_{\breve{F}})$. 

   We easily check that we have strict quasi-isomorphisms in  $\sd_{\phi,N}(C_{\breve{F}})$: 
\begin{align}\label{niedziela1}
\rg_{\hk,\breve{F}}^{\rm GK}(\sx_1) & \stackrel{\sim}{\leftarrow} \rg_{\hk}^{\rm GK}(\sx_1)\wh{\otimes}^{\R}_{F^{\nr}}\breve{F}, \quad \sx\in \sm^{\dagger,{\rm ss}},\\
\rg_{\hk,\breve{F}}^{\rm GK}(X) & \stackrel{\sim}{\leftarrow} \rg_{\hk}^{\rm GK}(X)\wh{\otimes}^{\R}_{F^{\nr}}\breve{F},\quad X\in {\rm Sm}_C^{\dagger}.\notag
\end{align}
We also have    local-global compatibility: pass from $F$ to $\breve{F}$ as in the proof of Lemma \ref{loc-global1}. This reduces the problem to the local-global compatibility for the usual Hyodo-Kato cohomology
of Grosse-Kl\"onne and this we know is true.

   The two definitions of completed overconvergent Hyodo-Kato cohomology give the same objects:
 \begin{lemma}\label{cwir-cwir}
 Let $X\in {\rm Sm}_C^{\dagger}$. There exists a natural strict 
\index{alphahk@\alphahk}quasi-isomorphism in  $\sd_{\phi,N}(C_{\breve{F}})$
 $$
 \alpha_{\hk,\breve{F}}:\quad \rg_{\hk,\breve{F}}(X) \stackrel{\sim}{\to} \rg_{\hk,\breve{F}}^{\rm GK}(X). 
 $$
 \end{lemma}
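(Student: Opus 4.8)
The plan is to reduce everything to the local case of a semistable dagger model and to the already-established rigid analytic statement, Theorem~\ref{rynek2.0}, exploiting the fact that both $\rg_{\hk,\breve{F}}(X)$ and $\rg_{\hk,\breve{F}}^{\rm GK}(X)$ are defined by $\eta$-\'etale descent and, locally, by a homotopy colimit over a presentation $\{X_h\}$. By \'etale descent it is enough to produce $\alpha_{\hk,\breve{F}}$, compatibly with Frobenius and monodromy, on a smooth dagger affinoid $X$ with presentation $\{X_h\}$, and to check there that it is a strict quasi-isomorphism; the sheafification then glues the local maps into a global one (here one uses the equivalence of topoi~(\ref{equiv1}) and the local-global compatibilities already recorded for both sides, i.e. Lemma~\ref{herbata2} for $\rg_{\hk,\breve{F}}$ and the analogous statement for $\rg_{\hk,\breve{F}}^{\rm GK}$ noted just above).

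First I would spell out the two local objects. By definition (and Lemma~\ref{herbata2}, applied with $F$ replaced by $\breve{F}$), $\rg_{\hk,\breve{F}}(X)\simeq \rg^{\dagger}_{\hk,\breve{F}}(X)=\LL\colim_h \rg_{\hk,\breve{F}}(X_h)$, a homotopy colimit of the rigid analytic completed Hyodo-Kato cohomologies of the members of the presentation. On the other side, using~(\ref{niedziela1}) together with the local-global compatibility of $\rg_{\hk}^{\rm GK}$ recalled above, one has $\rg_{\hk,\breve{F}}^{\rm GK}(X)\simeq \rg_{\hk}^{\rm GK}(X)\wh{\otimes}^{\R}_{F^{\nr}}\breve{F}$, and the latter, by Proposition~\ref{main00}(3b) together with Corollary~\ref{ogrod1} (again with $F\rightsquigarrow\breve{F}$), is strictly quasi-isomorphic to $\LL\colim_h \rg_{\hk}^{\rm GK}(X^{{\rm o},\dagger}_h)\wh{\otimes}^{\R}_{F^{\nr}}\breve{F}$. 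So the desired map is exactly the map $\alpha_{\hk}$ of the uncompleted theory (Lemma~\ref{herbata2}), tensored with $\breve{F}$ over $F^{\nr}$ and run through the identifications~(\ref{niedziela1}): concretely, $\alpha_{\hk,\breve{F}}:=\alpha_{\hk}\wh{\otimes}^{\R}_{F^{\nr}}\breve{F}$, globalized. Compatibility with Frobenius and monodromy is inherited from $\alpha_{\hk}$, since all the identifications in~(\ref{niedziela1}) and in the base-change quasi-isomorphism~(\ref{beta}) are Frobenius- and monodromy-equivariant by construction.

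It then remains to see that this map is a strict quasi-isomorphism, and here the key point is that $\alpha_{\hk}$ itself is one by Lemma~\ref{herbata2}(1). The only thing to verify is that applying $(-)\wh{\otimes}^{\R}_{F^{\nr}}\breve{F}$ preserves strict quasi-isomorphisms in the relevant category, which follows from the splitting $\breve{F}\simeq F^{\nr}\oplus W$ for a Fr\'echet (indeed Banach) space $W$ over $F^{\nr}$ — exactly the elementary device used repeatedly in the proofs of Proposition~\ref{stara1} and Proposition~\ref{product11} — so that $A\wh{\otimes}^{\R}_{F^{\nr}}\breve{F}$ is strictly acyclic precisely when $A$ is. Passing back through the sheafifications and using that both $\sa^c_{\hk}$'s compute their cohomologies by $\eta$-\'etale descent over the Beilinson base $\sm^{\dagger,{\rm ss}}_C$ completes the argument.

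I expect the main obstacle to be purely bookkeeping rather than conceptual: one must make sure that the presentation-level map $\LL\colim_h\rg_{\hk,\breve{F}}(X_h)\to \LL\colim_h\rg_{\hk}^{\rm GK}(X^{{\rm o},\dagger}_h)\wh{\otimes}^{\R}_{F^{\nr}}\breve{F}$ is genuinely the $\breve{F}$-base change of the map defining $\alpha^{\dagger}_{\hk}$ in~(\ref{map12}) — in particular that the intermediate step through the interiors $X^{\rm o}_h$ and the Grosse-Kl\"onne comparison (Corollary~\ref{ogrod1}, whose proof already invokes a nontrivial rigid analytic vs.\ overconvergent comparison) is compatible with completion — and that the globalization via~(\ref{equiv1}) respects all of this. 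Once the diagram at the level of presentations is seen to commute up to coherent homotopy, the rest is the $\breve{F}\simeq F^{\nr}\oplus W$ trick and descent.
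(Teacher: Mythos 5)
Your proposal is correct and follows essentially the same route as the paper, whose entire proof is ``pass from $F$ to $\breve{F}$ and use Lemma~\ref{herbata2}'': you are simply unpacking that reduction, identifying $\alpha_{\hk,\breve{F}}$ with the $\breve{F}$-base change of $\alpha_{\hk}$ via~(\ref{beta22}) and~(\ref{niedziela1}) and invoking the splitting $\breve{F}\simeq F^{\nr}\oplus W$. The compatibility of this base-changed map with the directly-defined completed map is exactly what Lemma~\ref{passage}(2) records, so no further argument is needed.
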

 \begin{proof}
 Pass from $F$ to $\breve{F}$ and use Lemma \ref{herbata2}. 
 \end{proof}
 (v) {\em Tensor products.}  The following lemma will allow us to pass between tensor products involving the two definitions of overconvergent Hyodo-Kato cohomology. 
  \begin{lemma}\label{passage}
  Let $W$ be a Banach space\footnote{In applications, $W$ will be most often a period rings.} over $\breve{F}$. 
  \begin{enumerate}
  \item {\rm (Local-global compatibility)} Let $X$ be a smooth dagger affinoid over $C$. The canonical map  in  $\sd(C_{\breve{F}})$
  $$
 \rg^{\dagger}_{\hk}(X)\wh{\otimes}^{\R}_{F^{\nr}}W\to \rg_{\hk}(X)\wh{\otimes}^{\R}_{F^{\nr}}W
$$
is a strict quasi-isomorphism.
  \item   Let $X\in {\rm Sm}^{\dagger}_C$. There exists a following commutative diagram in  $\sd(C_{\breve{F}})$
 $$
 \xymatrix@R=6mm{
  \rg_{\hk,\breve{F}}(X)\wh{\otimes}^{\R}_{\breve{F}}W\ar[r]^{\alpha_{\hk,\breve{F}}(W)}_{\sim} & \rg_{\hk,\breve{F}}^{\rm GK}(X)\wh{\otimes}^{\R}_{\breve{F}}W \\
    \rg_{\hk}(X)\wh{\otimes}^{\R}_{F^{\nr}}W\ar[u]^{\wr} \ar[r]^{\alpha_{\hk}(W)}_{\sim}& \rg_{\hk}^{\rm GK}(X)\wh{\otimes}^{\R}_{F^{\nr}}W\ar[u]^{\wr}.
 }
 $$
  \end{enumerate}
  \end{lemma}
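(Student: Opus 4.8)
The plan is to reduce both statements, via the Hyodo--Kato quasi-isomorphism, to overconvergent de Rham cohomology of dagger affinoids, where finiteness makes $-\wh{\otimes}^{\R}W$ harmless, and to handle the passage between the base fields $F^{\nr},\breve{F},K,C$ by the splitting $C\simeq\breve{F}\oplus W_0$ of $C$ as a $\breve{F}$-Banach space (the ``$C\simeq F\oplus W$'' trick used throughout). Concretely, I would first isolate the following observation. Let $Y$ be a smooth dagger affinoid over $C$ and let $\phi\colon A\to B$ be a strict quasi-isomorphism of complexes over $\breve{F}$ (or $F^{\nr}$) such that $A\wh{\otimes}^{\R}_{\breve{F}}C$ and $B\wh{\otimes}^{\R}_{\breve{F}}C$ are both identified, compatibly with $\phi$, with $\rg^{\dagger}_{\dr}(Y)$. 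Then $\phi\wh{\otimes}^{\R}_{\breve{F}}W$ is again a strict quasi-isomorphism: choosing a topological splitting $C\simeq\breve{F}\oplus W_0$ of $\breve{F}$-Banach spaces, $A\wh{\otimes}^{\R}_{\breve{F}}W$ is a direct summand of $A\wh{\otimes}^{\R}_{\breve{F}}(C\wh{\otimes}_{\breve{F}}W)\simeq\rg^{\dagger}_{\dr}(Y)\wh{\otimes}^{\R}_{\breve{F}}W$, and likewise for $B$, so $\phi\wh{\otimes}^{\R}_{\breve{F}}W$ is a direct summand of the (up to these identifications) identity map of $\rg^{\dagger}_{\dr}(Y)\wh{\otimes}^{\R}_{\breve{F}}W$; by Grosse-Kl\"onne's finiteness theorem \cite[Th. 3.1]{GKdR}, $H^i_{\dr}(Y)$ is finite-dimensional over $K$ with its canonical topology, so $\wt{H}^i(\rg^{\dagger}_{\dr}(Y)\wh{\otimes}^{\R}_{\breve{F}}W)\simeq H^i_{\dr}(Y)\otimes_KW$ is classical (argue as in \cite[Sec.\,3.2.2]{CDN3}, cf.\ the proof of Proposition~\ref{prison}), and one concludes since direct summands of strict quasi-isomorphisms are strict quasi-isomorphisms.

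For part (1), I would apply this observation to $\phi=$ the strict quasi-isomorphism $\rg^{\dagger}_{\hk}(X)\to\rg_{\hk}(X)$ of Lemma~\ref{herbata2}(2), after writing $-\wh{\otimes}^{\R}_{F^{\nr}}W=(-\wh{\otimes}^{\R}_{F^{\nr}}\breve{F})\wh{\otimes}^{\R}_{\breve{F}}W$. Its hypothesis holds: $\rg^{\dagger}_{\hk}(X)\wh{\otimes}^{\R}_{F^{\nr}}C\simeq\LL\colim_h(\rg_{\hk}(X_h)\wh{\otimes}^{\R}_{F^{\nr}}C)\simeq\LL\colim_h\rg_{\dr}(X_h)=\rg^{\dagger}_{\dr}(X)$ by Theorem~\ref{rynek2.0} and (\ref{beta22}), and $\rg_{\hk}(X)\wh{\otimes}^{\R}_{F^{\nr}}C\simeq\rg_{\dr}(X)$ by local-global compatibility; the two identifications are $\phi$-compatible by functoriality. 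This gives (1).

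For part (2), I would start from the commutative square of strict quasi-isomorphisms over $F^{\nr}$ and $\breve{F}$: the bottom arrow $\alpha_{\hk}\colon\rg_{\hk}(X)\xrightarrow{\sim}\rg^{\gk}_{\hk}(X)$ (Lemma~\ref{herbata2}(1)), the top arrow $\alpha_{\hk,\breve{F}}\colon\rg_{\hk,\breve{F}}(X)\xrightarrow{\sim}\rg^{\gk}_{\hk,\breve{F}}(X)$ (Lemma~\ref{cwir-cwir}), and the vertical $\breve{F}$-completion maps from (\ref{beta22}) (its dagger analog) and (\ref{niedziela1}); this square commutes because $\alpha_{\hk,\breve{F}}$ is by construction the $\breve{F}$-linearization of $\alpha_{\hk}$. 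Applying $-\wh{\otimes}^{\R}_{F^{\nr}}W$, resp.\ $-\wh{\otimes}^{\R}_{\breve{F}}W$, yields the diagram of the lemma, which commutes by functoriality; each of its four labelled arrows is a strict quasi-isomorphism by the observation of the first paragraph, whose hypothesis holds for all four corners — $\rg^{\gk}_{\hk}$ and $\rg^{\gk}_{\hk,\breve{F}}$ satisfying it via Grosse-Kl\"onne's Hyodo--Kato quasi-isomorphism to $\rg_{\dr}$ (Proposition~\ref{main00}), exactly as $\rg_{\hk}$ and $\rg_{\hk,\breve{F}}$ do. A general $X\in{\rm Sm}^{\dagger}_C$ is reduced to the case of a dagger affinoid by $\eta$-\'etale descent, all functors and maps in sight being defined by descent from smooth dagger affinoids.

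The main obstacle is the functional-analytic one: since $C$ is not spherically complete, $-\wh{\otimes}_{\breve{F}}W$ does not preserve strictness of complexes (or Hausdorffness of cohomology) in general, so the argument genuinely has to route through the finite-dimensional de Rham cohomology of dagger affinoids to force $-\wh{\otimes}^{\R}_{\breve{F}}W$ to commute with $\wt{H}^i$; once that is in place, everything else is a routine tracking of which of the four flavours of overconvergent Hyodo--Kato cohomology (new vs.\ Grosse-Kl\"onne, over $F^{\nr}$ vs.\ over $\breve{F}$) one is working with and a check of the evident compatibilities.
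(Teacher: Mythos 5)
Your overall strategy is the paper's: route everything through the overconvergent de Rham cohomology of dagger affinoids (finite-dimensional by Grosse-Kl\"onne) via the Hyodo--Kato quasi-isomorphisms, and use the splitting $C\simeq\breve{F}\oplus W_0$ of $\breve{F}$-Banach spaces to come back. The packaging differs. For (1) the paper argues in two steps: first the case $W=\breve F$, by a purely local--global diagram comparing $\rg^{\dagger}_{\hk}(X)\wh{\otimes}^{\R}_{F^{\nr}}\breve F$ with the completed cohomologies (no de Rham input needed); then, for general $W$, it isolates the single genuinely analytic point --- the commutation of $\wh{\otimes}^{\R}_{\breve F}W$ with $\LL\colim_h$, i.e.\ the map (\ref{rzut1}) --- and proves it by applying the rigid-analytic Hyodo--Kato isomorphism to each $X_h$ separately and quoting \cite[2.1.2]{CDN3} for de Rham. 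For (2) the paper does not tensor the existing square with $W$: it redefines $\alpha_{\hk}(W)$ from scratch as the zig-zag (\ref{map120}) through the interiors $X_h^{\rm o}$, using Corollary~\ref{ogrod1} (stated precisely for Fr\'echet $W$) to invert the wrong-way arrow after tensoring; your plan of tensoring the square leg by leg amounts to the same thing once you apply your observation to each leg of the zig-zag.

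The one place where your write-up hides the real work is the identification $A\wh{\otimes}^{\R}_{\breve F}(C\wh{\otimes}_{\breve F}W)\simeq\rg^{\dagger}_{\dr}(Y)\wh{\otimes}^{\R}_{\breve F}W$ inside your observation. As the Warning in Remark~\ref{wrzask1} emphasizes, the projective tensor product does not commute with the inductive limits defining $A=\rg^{\dagger}_{\hk}(X)\wh{\otimes}^{\R}_{F^{\nr}}\breve F$ and $\rg^{\dagger}_{\dr}(Y)$, so the hypothesis ``$A\wh{\otimes}^{\R}_{\breve F}C$ is identified with $\rg^{\dagger}_{\dr}(Y)$'' does not formally survive a further $\wh{\otimes}^{\R}_{\breve F}W$: that survival is a statement of exactly the type you are trying to prove, and as written the observation is circular at this point. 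It can be repaired: unwind the tensor product termwise in $h$, where the rigid-analytic Hyodo--Kato quasi-isomorphism for $X_h$ may be tensored with the Banach space $C\wh{\otimes}_{\breve F}W$ harmlessly (cf.\ Lemma~\ref{trick1}), and then invoke \cite[2.1.2]{CDN3} to move the colimit past $\wh{\otimes}W$ on the de Rham side --- but this is precisely the paper's treatment of (\ref{rzut1}). With that step made explicit, your direct-summand argument closes, and the remaining points (commutativity of the square, \'etale descent to general $X$) are fine.
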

  \begin{remark}\label{wrzask1}
  (1) The tensor product $ \rg^{\dagger}_{\hk}(X)\wh{\otimes}^{\R}_{F^{\nr}}W$ is 
\index{tensor@\tensor}defined in  $\sd(C_{\breve{F}})$ as
  $$
  \rg^{\dagger}_{\hk}(X)\wh{\otimes}^{\R}_{F^{\nr}}W:=\LL\colim_h(\rg_{\hk}(X_h)\wh{\otimes}^{\R}_{F^{\nr}}W),
  $$
  where $\{X_h\}$ is the presentation of $X$. 
  
   (2) {\bf Warning}:  One has to be careful with  tensor products  as in (1) (because we chose projective tensor products hence we lost the  commutation with general inductive limits). For example, when $F^{\nr}=\breve{F}$, the tensor product 
    $
  \rg^{\dagger}_{\hk}(X)\wh{\otimes}^{\R}_{\breve{F}}W$ is already defined. Luckily, in this case, the two definitions give the same tensor product. To see this, note that we have
   $\rg^{\dagger}_{\hk}(X)\wh{\otimes}^{\R}_{\breve{F}}W=(\LL\colim_h\rg_{\hk}(X_h))\wh{\otimes}^{\R}_{\breve{F}}W$.  Hence  the canonical map
   $$
  \LL\colim_h\rg_{\hk}(X_h)\wh{\otimes}^{\R}_{F^{\nr}}W \to \rg^{\dagger}_{\hk}(X)\wh{\otimes}^{\R}_{\breve{F}}W
   $$
   induces a map $\rg^{\dagger}_{\hk}(X)\wh{\otimes}^{\R}_{F^{\nr}}W \to \rg^{\dagger}_{\hk}(X)\wh{\otimes}^{\R}_{\breve{F}}W. $ In the proof of Lemma \ref{passage} below we will show that this is a strict quasi-isomorphism. 
  
  (3) For any smooth dagger variety $X$, the tensor product $\rg_{\hk}(X)\wh{\otimes}^{\R}_{F^{\nr}}W$ is defined by globalizing the tensor product from (1). 
  \end{remark}
  \begin{proof} For (1), we start with the case $W=\breve{F}$. Consider the commutative diagram
  \begin{equation}
  \label{diagram-kwak}
  \xymatrix@R=6mm{
   \rg^{\dagger}_{\hk}(X)\wh{\otimes}^{\R}_{F^{\nr}}\breve{F}\ar[r] \ar[d]^{\wr}&  \rg_{\hk}(X)\wh{\otimes}^{\R}_{F^{\nr}}\breve{F}\ar[d]^{\wr}\\
   \rg^{\dagger}_{\hk,\breve{F}}(X)\ar[r]^{\sim} &  \rg_{\hk,\breve{F}}(X).
}
  \end{equation}
  The bottom map is a strict quasi-isomorphism by Lemma \ref{herbata2} (replace $F$ by $\breve{F}$). The left vertical map is a strict quasi-isomorphism by definition and (\ref{beta22}); the right vertical map is the globalization of the left vertical map hence a strict quasi-isomorphism as well. It follows that the top map is also a strict quasi-isomorphism, as wanted. 
  
   Now, for a general $W$, we take the top map in the diagram (\ref{diagram-kwak}) and tensor it with $W$ over $\breve{F}$ to obtain the strict quasi-isomorphism in the top of the commutative diagram
   $$
   \xymatrix@R=6mm{
     (\rg^{\dagger}_{\hk}(X)\wh{\otimes}^{\R}_{F^{\nr}}\breve{F})\wh{\otimes}^{\R}_{\breve{F}}W\ar[r]^{\sim}  & (\rg_{\hk}(X)\wh{\otimes}^{\R}_{F^{\nr}}\breve{F})\wh{\otimes}^{\R}_{\breve{F}}W\\
          \rg^{\dagger}_{\hk}(X)\wh{\otimes}^{\R}_{F^{\nr}}W\ar[r] \ar[u]& \rg_{\hk}(X)\wh{\otimes}^{\R}_{F^{\nr}}W\ar[u]
  }
   $$
 It remains to show that the left vertical map in the diagram is a strict quasi-isomorphism because then so is the right vertical map (being the globalization of the left vertical map) and then the bottom map as well, as wanted. 
   \begin{remark}
   The tensor product in the top row is the usual projective tensor product. Hence the vertical maps are not identities and the statement that they are strict quasi-isomorphisms is not trivial even for $\breve{F}$. 
   \end{remark}
 It is clear that the left vertical map  is a strict quasi-isomorphism if we drop the dagger and replace $X$ with $X_h$ for the presentation $\{X_h\}$ of $X$. 
   It suffices thus to show that    the map
   \begin{equation}
   \label{rzut1}
 \LL\colim_h(\rg_{\hk}(X_h)\wh{\otimes}^{\R}_{F^{\nr}}\breve{F}\wh{\otimes}^{\R}_{\breve{F}}W) \to    (\LL\colim_h\rg_{\hk}(X_h)\wh{\otimes}^{\R}_{F^{\nr}}\breve{F})\wh{\otimes}^{\R}_{\breve{F}}W
   \end{equation}
   is a strict quasi-isomorphism.   Applying the Hyodo-Kato morphism we pass to the canonical map
   $$
      \LL\colim_h\rg_{\dr}(X_h)\wh{\otimes}^{\R}_{\breve{F}}W\to  (\LL\colim_h\rg_{\dr}(X_h))\wh{\otimes}^{\R}_{\breve{F}}W,
   $$
 which is a strict quasi-isomorphism by \cite[2.1.2]{CDN3}. Now we go back to the map (\ref{rzut1}) by a projection $C\to \breve{F}$.
 
   We pass now to the second claim of the lemma. Assume first that $X$ is a smooth dagger affinoid. Then we define the map 
  $$
  \alpha_{\hk}(W):\quad  \rg^{\dagger}_{\hk}(X)\wh{\otimes}^{\R}_{F^{\nr}}W\to \rg_{\hk}^{\rm GK}(X)\wh{\otimes}^{\R}_{F^{\nr}}W
   $$
   as the composition
    \begin{align}\label{map120}
  \rg^{\dagger}_{\hk}(X)\wh{\otimes}^{\R}_{F^{\nr}}W & =\LL\colim_h\rg_{\hk}(X_h)\wh{\otimes}^{\R}_{F^{\nr}}W\stackrel{\sim}{\to} \LL\colim_h\rg_{\hk}(X^{\circ}_h)\wh{\otimes}^{\R}_{F^{\nr}}W\\
  & \stackrel{\sim}{\leftarrow}\LL\colim_h\rg^{\gk}_{\hk}(X^{{\circ},\dagger}_h)\wh{\otimes}^{\R}_{F^{\nr}}W\to
   \rg^{\gk}_{\hk}(X)\wh{\otimes}^{\R}_{F^{\nr}}W. \notag
  \end{align}
  The third map  is a strict quasi-isomorphism by Corollary \ref{ogrod1}: this is   because the interior $X^{\rm o}_h$ is partially proper.  
  
  For a general $X$, we obtain the map $\alpha_{\hk}(W)$ by globalizing the above definition. Changing $F$ into $\breve{F}$ in the definition of $\alpha_{\hk}(W)$, we get the map $\alpha_{\hk,\breve{F}}(W)$ compatible with the map $\alpha_{\hk}(W)$. This gives us the commutative diagram we wanted. Moreover, it is clear from the definitions that the right vertical map in the diagram is a strict quasi-isomorphism. The top map is a strict quasi-isomorphism by Lemma \ref{cwir-cwir}. 
The left vertical map is a strict quasi-isomorphism because we can check it locally where  claim (1) reduces us to the dagger cohomology of an affinoid and there this is clear from the definitions. It follows then that the bottom map is a strict quasi-isomorphism as well, as wanted. 
    \end{proof}

  (vi) {\em Properties of overconvergent Hyodo-Kato cohomology.} Let $X$ be a smooth dagger variety over $C$.  Recall that (see \cite[Prop. 4.38]{CN3})  the Hyodo-Kato cohomology $\wt{H}^*_{\hk}(X)$ is classical. 
If $X$ is quasi-compact it  is a finite dimensional $F^{\nr}$-vector space with its natural topology. For a general $X$, 
it is a limit in $C_F$ of finite dimensional $F^{\nr}$-vector spaces. 
The endomorphism $\phi$ on $H^*_{\hk}(X)$ is  a  homeomorphism.

  We will need the following computation later on:
  \begin{proposition}\label{tluczenie}Let $X$ be a smooth dagger variety over $C$. Let $W$ be a Banach  space with an $\breve{F}$-module structure. 
\begin{enumerate}
\item If $X$ is quasi-compact then the cohomology of the complex $\R\Gamma_{\hk}(X)\wh{\otimes}_{F^{\nr}}^{\R}W$ is classical and we have an $\breve{F}$-linear topological isomorphism
$$
\wt{H}^i(\rg_{\hk}(X)\wh{\otimes}^{\R}_{F^{\nr}}W)\simeq H^i_{\hk}(X)\wh{\otimes}_{F^{\nr}}W,\quad i\geq 0.
$$
\item Take an  increasing admissible covering  $\{U_n\}_{n\in\N}$ of $X$ by quasi-compact dagger varieties $U_n$. Then we have a natural strict quasi-isomorphism in $\sd(C_{\breve{F}})$
$$
\rg_{\hk}(X)\wh{\otimes}^{\R}_{F^{\nr}}W\stackrel{\sim}{\to} \R\wlim_n(\rg_{\hk}(U_{n,C})\wh{\otimes}^{\R}_{F^{\nr}}W).
$$
The cohomology of $\rg_{\hk}(X)\wh{\otimes}^{\R}_{F^{\nr}}W$ is classical and we have, for $i\geq 0$, an $\breve{F}$-linear topological isomorphism
$$
\wt{H}^i(\rg_{\hk}(X)\wh{\otimes}^{\R}_{F^{\nr}}W)\simeq H^i_{\hk}(X)\wh{\otimes}^{\R}_{F^{\nr}}W:=\wlim_n(H^i_{\hk}(U_n){\otimes}_{F^{\nr}}W).
$$
In particular, it is a Fr\'echet space\footnote{We note that $H^i_{\hk}(U_n)$ is a finite rank vector space over $F^{\nr}$ equipped with the canonical topology.}.
\end{enumerate}
\end{proposition}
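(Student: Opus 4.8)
The plan is to reduce (1) to a Künneth statement with one finite-dimensional factor, and (2) to (1) together with the Milnor exact sequence attached to an increasing admissible covering.

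For (1), assume $X$ is quasi-compact. First I would pass to $\breve{F}$-coefficients: by the base change strict quasi-isomorphism (\ref{beta22}) and the compatibility of tensor products in Lemma~\ref{passage} and Remark~\ref{wrzask1}, the complex $\rg_{\hk}(X)\wh{\otimes}^{\R}_{F^{\nr}}W$ is strictly quasi-isomorphic to $(\rg_{\hk}(X)\wh{\otimes}^{\R}_{F^{\nr}}\breve{F})\wh{\otimes}^{\R}_{\breve{F}}W\simeq \rg_{\hk,\breve{F}}(X)\wh{\otimes}^{\R}_{\breve{F}}W$, using that $W$ is a $\breve{F}$-module. Since $X$ is quasi-compact, $\wt{H}^i_{\hk}(X)$ is classical and finite-dimensional over $F^{\nr}$ (see \cite[Prop. 4.38]{CN3}, recalled above), so each $\wt{H}^i(\rg_{\hk,\breve{F}}(X))\simeq H^i_{\hk}(X)\wh{\otimes}_{F^{\nr}}\breve{F}$ is classical and finite-dimensional over $\breve{F}$. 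Now I would invoke the Künneth-type argument of \cite[Sec.\,3.2.2]{CDN3} — exactly the one used in the proof of Proposition~\ref{prison} for $\rg_{\dr}(X^0_h)\wh{\otimes}^{\R}_K\B^+_{\dr}$ — with $(\breve{F},W)$ replacing $(K,\B^+_{\dr})$: a bounded complex of $\sd(C_{\breve{F}})$ whose cohomology objects are all classical and finite-dimensional splits as the direct sum of these objects, and $(-)\wh{\otimes}^{\R}_{\breve{F}}W$ is exact on finite-dimensional vector spaces and sends them to Banach spaces, so $\wt{H}^i(\rg_{\hk,\breve{F}}(X)\wh{\otimes}^{\R}_{\breve{F}}W)\simeq \wt{H}^i(\rg_{\hk,\breve{F}}(X))\otimes_{\breve{F}}W\simeq H^i_{\hk}(X)\wh{\otimes}_{F^{\nr}}W$, which is a finite direct sum of copies of $W$ — in particular a Banach space, hence classical. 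This gives (1).

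For (2), recall from Remark~\ref{wrzask1} that $\rg_{\hk}(X)\wh{\otimes}^{\R}_{F^{\nr}}W=\rg_{\eet}(X,\sg)$, where $\sg$ is the $\eta$-\'etale sheafification of the presheaf $U\mapsto \rg^{\dagger}_{\hk}(U)\wh{\otimes}^{\R}_{F^{\nr}}W$ on smooth dagger affinoids. For an increasing admissible covering $\{U_n\}_{n\in\N}$ of $X$ by quasi-compact opens, the usual Milnor-sequence argument for such exhaustions (used throughout \cite{CDN3}, \cite{CN3}) gives a natural strict quasi-isomorphism $\rg_{\eet}(X,\sg)\stackrel{\sim}{\to}\R\wlim_n\rg_{\eet}(U_n,\sg)$, and $\rg_{\eet}(U_n,\sg)\simeq \rg_{\hk}(U_n)\wh{\otimes}^{\R}_{F^{\nr}}W$ by local-global compatibility (Lemma~\ref{passage}(1), Lemma~\ref{herbata2}); this is the asserted quasi-isomorphism $\rg_{\hk}(X)\wh{\otimes}^{\R}_{F^{\nr}}W\stackrel{\sim}{\to}\R\wlim_n(\rg_{\hk}(U_{n,C})\wh{\otimes}^{\R}_{F^{\nr}}W)$. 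To compute cohomology, I would use (1): $\wt{H}^j(\rg_{\hk}(U_n)\wh{\otimes}^{\R}_{F^{\nr}}W)\simeq H^j_{\hk}(U_n)\wh{\otimes}_{F^{\nr}}W$, a Banach space. Since $H^j_{\hk}(U_n)$ is finite-dimensional over $F^{\nr}$, the images $\im\big(H^j_{\hk}(U_m)\to H^j_{\hk}(U_n)\big)$ form a decreasing sequence of finite-dimensional — hence closed — subspaces, so they stabilize, and the same holds after applying the flat functor $(-)\wh{\otimes}_{F^{\nr}}W$; thus the towers $\{H^j_{\hk}(U_n)\wh{\otimes}_{F^{\nr}}W\}_n$ are (topologically) Mittag-Leffler, $\R^1\wlim$ vanishes on them, the $\R\wlim_n$ spectral sequence for the tower of complexes degenerates, and $\wt{H}^j(\rg_{\hk}(X)\wh{\otimes}^{\R}_{F^{\nr}}W)\simeq \wlim_n\big(H^j_{\hk}(U_n)\wh{\otimes}_{F^{\nr}}W\big)$ — a countable limit of Banach spaces, hence Fr\'echet, in particular separated, so classical. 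Independence of the covering is routine, and this identifies the right-hand side with $H^j_{\hk}(X)\wh{\otimes}^{\R}_{F^{\nr}}W$ in the sense of the statement.

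The main obstacle will be the careful bookkeeping of the several tensor products: one has to keep the colimit over a presentation entering the definition of $\rg_{\hk}(-)\wh{\otimes}^{\R}_{F^{\nr}}W$ — which, by the warning in Remark~\ref{wrzask1}(2), does not commute with arbitrary (co)limits — strictly separated from the projective limit over the increasing covering, for which the Milnor sequence does apply; and to verify that the passage between $F^{\nr}$- and $\breve{F}$-coefficients, as well as the degeneration of the $\R\wlim$ spectral sequence, go through for an arbitrary Banach $\breve{F}$-module $W$, which is precisely where the finite-dimensionality of each $H^j_{\hk}(U_n)$ is essential.
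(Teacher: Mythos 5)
Your part (2) is essentially the paper's argument: the paper simply invokes the proof of \cite[3.26]{CDN3} together with the Mittag--Leffler property of the towers $\{H^i_{\hk}(U_n)\otimes_{F^{\nr}}W\}_n$, which is exactly your Milnor-sequence reasoning. For part (1) you diverge. The paper replaces $\rg_{\hk}$ by the Grosse-Kl\"onne version via Lemma~\ref{passage}, takes an \'etale hypercovering $\su_{\cdot}$ of $X$ by quasi-compact semistable models, invokes \cite[Ex. 3.16]{CDN3} for each $\su_{j,C}$, and then runs the descent spectral sequence with terms $H^i_{\hk}(\su_{j,C})\otimes_{F^{\nr}}W$; because these terms are Banach, the differentials are $W$-linear, and the $F^{\nr}$-terms have finite rank, the abutment is identified with $H^{i+j}_{\hk}(X)\otimes_{F^{\nr}}W$. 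You instead propose to split the global complex $\rg_{\hk,\breve{F}}(X)$ as the direct sum of its classical finite-dimensional cohomology objects and then tensor with $W$.

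The gap is in that last step. For a quasi-compact $X$ which is not a dagger affinoid, $\rg_{\hk}(X)\wh{\otimes}^{\R}_{F^{\nr}}W$ is \emph{not} defined as the derived projective tensor product of a global representative of $\rg_{\hk}(X)$ with $W$: by Remark~\ref{wrzask1}(3) it is the globalization (\'etale descent) of the locally defined products $U\mapsto\rg^{\dagger}_{\hk}(U)\wh{\otimes}^{\R}_{F^{\nr}}W$. Your splitting argument computes the cohomology of the naive tensor product of the global complex with $W$, which is a priori a different object; identifying the two is precisely the content of the descent step, and that identification is what the paper's spectral sequence supplies. Your argument is complete for dagger affinoids --- there the tensor product is $\LL\colim_h(\rg_{\hk}(X_h)\wh{\otimes}^{\R}_{F^{\nr}}W)$, Lemma~\ref{passage} lets you pull $W$ out of the colimit, and $\rg^{\dagger}_{\hk,\breve{F}}(X)$ does split since its cohomology is classical of finite rank (note that you cannot split the individual rigid analytic pieces $\rg_{\hk}(X_h)$, whose cohomology is in general infinite-dimensional and non-Hausdorff) --- but to pass from affinoids to a general quasi-compact $X$ you still need a \v{C}ech or hypercovering spectral sequence with $W$-linear differentials and finite-rank $F^{\nr}$-entries, at which point you have reconstructed the paper's proof. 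Please add this descent step explicitly, or restrict the splitting argument to the affinoid case and glue.
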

\begin{proof} By Lemma \ref{passage}, we may replace $\rg_{\hk}(-)$ with  Grosse-Kl\"onne's version $\rg_{\hk}^{\rm GK}(-)$. Let $X$ be quasi-compact. Consider an \'etale hypercovering $\su_{\cdot}$ of $X$ built from quasi-compact models from $\sm^{\dagger,\sem,b}_C$. By \cite[Ex. 3.16]{CDN3}, claim  (1) is true for every $\su_{i,C}$. Hence we have the spectral sequence
$$
E_2^{i,j}= H^{{\rm GK},i}_{\hk}(\su_{j,C}){\otimes}_{F^{\nr}}W\Rightarrow \wt{H}^{i+j}(\R\Gamma^{\rm GK}_{\hk}(X)\wh{\otimes}_{F^{\nr}}^{\R}W).
$$
The terms of the spectral sequence are Banach spaces and the differentials in the spectral sequence are $W$-linear. Since the Hyodo-Kato cohomology groups
$H^{{\rm GK},i}_{\hk}(\su_{j,C})$ are of finite rank, claim (1) follows. 

 Having (1), claim (2) follows just as in the proof of \cite[3.26]{CDN3} (note that the system $\{H^i_{\hk}(U_n){\otimes}_{F^{\nr}}W\}_{n\in\N}$ satisfies the Mittag-Leffler condition).
 \end{proof}

  \subsubsection{Overconvergent geometric Hyodo-Kato morphism via presentations of dagger structures} In this section we introduce a definition of  overconvergent geometric Hyodo-Kato morphism using presentations of dagger structures.
  
   (i) {\em Local definition}. Let $X$ be a dagger affinoid over $C$. Let ${\rm pres}(X)=\{X_h\}$. 
      
   \index{iotahk@\iotahk}Define natural   Hyodo-Kato morphisms in $\sd(C_{\breve{F}})$
   \begin{align}
   \label{map13}
  \iota^{\dagger}_{\hk}:\quad   \rg^{\dagger}_{\hk,\breve{F}}(X)\to \rg^{\dagger}_{\dr}(X),\quad 
   \iota^{\dagger}_{\hk}:\quad   \rg^{\dagger}_{\hk,\breve{F}}(X)\to \rg^{\dagger}_{\dr}(X/\B^+_{\dr})
  \end{align}
  as the compositions
  \begin{align*}
   &  \rg^{\dagger}_{\hk,\breve{F}}(X)=\LL\colim_h\rg_{\hk,\breve{F}}(X_h)\veryverylomapr{\LL\colim_h(\iota_{\hk})}\LL\colim_h\rg_{\dr}(X_h)=\rg^{\dagger}_{\dr}(X),\\
  & \rg^{\dagger}_{\hk,\breve{F}}(X)=\LL\colim_h\rg_{\hk,\breve{F}}(X_h)\veryverylomapr{\LL\colim_h(\iota_{\hk})}\LL\colim_h\rg_{\dr}(X_h/\B^+_{\dr})=\rg^{\dagger}_{\dr}(X/\B^+_{\dr}).
  \end{align*}
  They are compatible via the map $\theta:\B^+_{\dr}\to C$.
   \begin{proposition} \label{ogrod2}The linearizations of the Hyodo-Kato morphisms in (\ref{map13}) yield  compatible natural  strict Hyodo-Kato quasi-isomorphisms in, resp., $\sd(C_{\breve{F}})$ and $\sd(C_{\B^+_{\dr}})$
    \begin{align*}
      \iota^{\dagger}_{\hk}:\quad   \rg^{\dagger}_{\hk,\breve{F}}(X)\wh{\otimes}^{\R}_{\breve{F}}C\stackrel{\sim}{\to} \rg^{\dagger}_{\dr}(X),\quad 
   \iota^{\dagger}_{\hk}:\quad   \rg^{\dagger}_{\hk,\breve{F}}(X)\wh{\otimes}^{\R}_{\breve{F}}\B^+_{\dr}\stackrel{\sim}{\to} \rg^{\dagger}_{\dr}(X/\B^+_{\dr}).
\end{align*}
   \end{proposition}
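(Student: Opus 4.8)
The strategy is to reduce the statement to the rigid analytic Hyodo-Kato quasi-isomorphisms already established in Theorem~\ref{rynek2.0} (for $X\in{\rm Sm}_C$), applied to each $X_h$ in the presentation $\{X_h\}$ of the dagger affinoid $X$, and then commute the relevant operations (linearization, $\LL\colim_h$, and tensoring with $C$ or $\B^+_{\dr}$).

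First I would treat the de Rham case. By definition, $\iota^{\dagger}_{\hk}$ is obtained from $\LL\colim_h(\iota_{\hk}\colon \rg_{\hk,\breve{F}}(X_h)\to\rg_{\dr}(X_h))$, and after linearization we must show
$$
\rg^{\dagger}_{\hk,\breve{F}}(X)\wh{\otimes}^{\R}_{\breve{F}}C
=\big(\LL\colim_h\rg_{\hk,\breve{F}}(X_h)\big)\wh{\otimes}^{\R}_{\breve{F}}C
\stackrel{\sim}{\to}\LL\colim_h\big(\rg_{\hk,\breve{F}}(X_h)\wh{\otimes}^{\R}_{\breve{F}}C\big)
\veryverylomapr{\LL\colim_h\iota_{\hk}}\LL\colim_h\rg_{\dr}(X_h)=\rg^{\dagger}_{\dr}(X).
$$
The first arrow is a strict quasi-isomorphism because the tensor product $(-)\wh{\otimes}^{\R}_{\breve{F}}C$ is computed degreewise using $C\simeq\breve{F}\oplus W$ for a Banach space $W$ over $\breve{F}$ (cf.\ the argument at the end of the proof of Proposition~\ref{stara1} and~\cite[2.1.2]{CDN3}), and filtered homotopy colimits commute with such a computation. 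The second arrow is a strict quasi-isomorphism because each $\iota_{\hk}\colon \rg_{\hk,\breve{F}}(X_h)\wh{\otimes}^{\R}_{\breve{F}}C\to\rg_{\dr}(X_h)$ is one by Theorem~\ref{rynek2.0}, and $\LL\colim_h$ of strict quasi-isomorphisms is a strict quasi-isomorphism. This gives the first quasi-isomorphism.

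For the $\B^+_{\dr}$-case I would run the same argument, replacing $C$ by $\B^+_{\dr}$ throughout: each $\iota_{\hk}\colon\rg_{\hk,\breve{F}}(X_h)\wh{\otimes}^{\R}_{\breve{F}}\B^+_{\dr}\stackrel{\sim}{\to}\rg_{\dr}(X_h/\B^+_{\dr})$ is a strict quasi-isomorphism by Theorem~\ref{rynek2.0}, and $\rg^{\dagger}_{\dr}(X/\B^+_{\dr}):=\LL\colim_h\rg_{\dr}(X_h/\B^+_{\dr})$ by definition. The delicate point, and the main obstacle, is the commutation $\big(\LL\colim_h\rg_{\hk,\breve{F}}(X_h)\big)\wh{\otimes}^{\R}_{\breve{F}}\B^+_{\dr}\stackrel{\sim}{\to}\LL\colim_h\big(\rg_{\hk,\breve{F}}(X_h)\wh{\otimes}^{\R}_{\breve{F}}\B^+_{\dr}\big)$: unlike for $C$, the ring $\B^+_{\dr}$ is only a Fr\'echet space (an $\R\wlim$ of Banach spaces $\B^+_{\dr}/F^r$), and the Warning in Remark~\ref{wrzask1}(2) shows that projective tensor products do not commute with arbitrary filtered colimits. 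Here the argument of Lemma~\ref{passage}(1) is the template: one uses the Hyodo-Kato quasi-isomorphism to transport the question to $\LL\colim_h\big(\rg_{\dr}(X_h)\wh{\otimes}^{\R}_{\breve{F}}\B^+_{\dr}\big)$ versus $\big(\LL\colim_h\rg_{\dr}(X_h)\big)\wh{\otimes}^{\R}_{\breve{F}}\B^+_{\dr}$, which one handles filtration-level by filtration-level (working with $\B^+_{\dr}/F^r$, which is Banach, then taking $\R\wlim_r$ using that the system satisfies Mittag-Leffler, as in the proofs of Lemma~\ref{sroka1} and Proposition~\ref{prison}(3)), exploiting that $H^i_{\dr}(X^0_h)$ is finite rank over $K$ with its canonical topology so that the pro-system $\{H^i_{\dr}(X^0_h)\wh{\otimes}_K\B^+_{\dr}/F^r\}$ is essentially constant in $h$.

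Finally, the compatibility of the two Hyodo-Kato morphisms via $\theta\colon\B^+_{\dr}\to C$ follows because it holds at each level $X_h$ by the commutative diagram in Corollary~\ref{kawa1} (globalized in Theorem~\ref{rynek2.0}), and applying $\LL\colim_h$ to a commutative diagram yields a commutative diagram. Naturality in $X$ follows from the naturality of all constructions involved (the presentations form a filtered system functorial up to the usual contractible ambiguity, and $\eta$-\'etale sheafification is functorial), so the local maps glue; I would record this globalization in the paragraph following the proposition, exactly as was done for $\iota_{\hk}$ in the rigid analytic case.
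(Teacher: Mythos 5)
Your overall strategy --- reduce to the rigid analytic Hyodo--Kato quasi-isomorphisms of Theorem~\ref{rynek2.0} applied to each $X_h$, and then deal with the interchange of the linearization with $\LL\colim_h$ --- is the strategy of the paper, and your treatment of the $\B^+_{\dr}$-case (transport to de Rham, argue filtration level by filtration level using finite-dimensionality of $H^i_{\dr}(X_h^0)$, essential constancy, and Mittag--Leffler) is essentially what the paper does by referring back to the proof of Proposition~\ref{prison}.

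The gap is in your justification of the interchange for the $C$-case. The splitting $C\simeq\breve{F}\oplus W$ does \emph{not} give commutation of $(-)\wh{\otimes}^{\R}_{\breve{F}}C$ with filtered homotopy colimits: $W$ is an infinite-dimensional Banach space and the completed projective tensor product with it does not commute with inductive limits --- this is precisely the Warning in Remark~\ref{wrzask1}, and the argument at the end of the proof of Proposition~\ref{stara1} that you invoke proves a different statement (descent of strict acyclicity along $-\wh{\otimes}^{\R}_L C$), not the commutation. The paper sidesteps the interchange entirely by a rank-counting argument: it observes that the three terms $\LL\colim_h(\rg_{\hk,\breve{F}}(X_h)\wh{\otimes}^{\R}_{\breve{F}}C)$, $(\LL\colim_h\rg_{\hk,\breve{F}}(X_h))\wh{\otimes}^{\R}_{\breve{F}}C$ and $\LL\colim_h\rg_{\dr}(X_h)$ all have classical cohomology of finite rank over $C$ (for the middle one this uses Lemma~\ref{passage} and Proposition~\ref{tluczenie}), that the first maps isomorphically onto the third via $\LL\colim_h(\iota_{\hk})$, and that the ranks of the middle term agree with the de Rham ranks via the comparison with Grosse-Kl\"onne's Hyodo--Kato cohomology and his Hyodo--Kato theorem; a surjection between finite-dimensional spaces of equal dimension is then an isomorphism, which forces the slanted map $\iota^{\dagger}_{\hk}$ to be one. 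If you insist on proving the interchange directly, the correct route is the one used for the map (\ref{rzut1}) in the proof of Lemma~\ref{passage}: apply the levelwise Hyodo--Kato isomorphisms to move the question to the de Rham complexes, where \cite[2.1.2]{CDN3} applies, and come back via a projection $C\to\breve{F}$. Either way, some input beyond the splitting of $C$ is required.
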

   \begin{proof}
 For the first map, we need to show that  the map 
 $$
 (\LL\colim_h\rg_{\hk,\breve{F}}(X_h))\wh{\otimes}_{\breve{F}}^{\R}C\veryverylomapr{\LL\colim_h(\iota_{\hk})}\LL\colim_h\rg_{\dr}(X_h)
 $$
 is a  strict quasi-isomorphism. But this map fits into a commutative diagram
 $$
 \xymatrix{
   \LL\colim_h\rg_{\hk,\breve{F}}(X_h)\wh{\otimes}_{\breve{F}}^{\R}C \ar[r] \ar[d]^{\LL\colim_h(\iota_{\hk})}_{\wr} & (\LL\colim_h\rg_{\hk,\breve{F}}(X_h))\wh{\otimes}_{\breve{F}}^{\R}C\ar[dl]^{\LL\colim_h(\iota_{\hk})}\\
  \LL\colim_h\rg_{\dr}(X_h)
 }
 $$
 The bottom term is just the overconvergent de Rham cohomology and its cohomology is classical and  a finite rank vector space over $C$ with its canonical topology. Via the vertical strict quasi-isomorphism the same is true of the upper left term. The upper right term is strictly quasi-isomorphic to $\rg_{\hk,\breve{F}}(X)\wh{\otimes}^{\R}_{\breve{F}}C$, which, by Lemma \ref{passage} and Proposition \ref{tluczenie}, also has classical cohomology that is  finite rank over $C$. Hence, looking at the above diagram one cohomology degree at a time, we obtain a commutative diagram of finite rank vector spaces over $C$. These ranks are, in fact, equal: this is clear for the bottom and the upper left term; for the upper right term consider the maps: 
 $$
 \rg_{\hk,\breve{F}}(X)\wh{\otimes}^{\R}_{\breve{F}}C\simeq \rg^{\rm GK}_{\hk}(X)\wh{\otimes}^{\R}_{{F}^{\nr}}C\verylomapr{\iota^{\rm GK}_{\hk}}\rg_{\dr}(X).
 $$
The first map is a strict quasi-isomorphism by Lemma \ref{passage}. The second map is the Grosse-Kl\"onne Hyodo-Kato morphism and it is a strict quasi-isomorphism by  \cite[5.15]{CN3}. Hence the rank in question is the same as that of the corresponding de Rham cohomology, as wanted.

  For the second map in our proposition, we argue in a similar fashion. We need to show that  the map 
 $$
 (\LL\colim_h\rg_{\hk,\breve{F}}(X_h))\wh{\otimes}_{\breve{F}}^{\R}\B^+_{\dr}\veryverylomapr{\LL\colim_h(\iota_{\hk})}\LL\colim_h\rg_{\dr}(X_h/\B^+_{\dr})
 $$
 is a  strict quasi-isomorphism. But this map fits into a commutative diagram
 $$
 \xymatrix{
   \LL\colim_h\rg_{\hk,\breve{F}}(X_h)\wh{\otimes}_{\breve{F}}^{\R}\B^+_{\dr} \ar[r] \ar[d]^{\LL\colim_h(\iota_{\hk})}_{\wr} & (\LL\colim_h\rg_{\hk,\breve{F}}(X_h))\wh{\otimes}_{\breve{F}}^{\R}\B^+_{\dr}\ar[dl]^{\LL\colim_h(\iota_{\hk})}\\
  \LL\colim_h\rg_{\dr}(X_h/\B^+_{\dr})
 }
 $$
The vertical map is a strict quasi-isomorphism by (\ref{rynek2}). The horizontal map can be shown to be a strict quasi-isomorphism by an argument analogous to the one used in the proof of Proposition \ref{prison}. 
 It follows that so is the slanted map, as wanted. 
  \end{proof}
    (ii) {\em Globalization}. For a general smooth dagger variety $X$ over $C$, 
     globalizing the maps  $\iota^{\dagger}_{\hk}$  from (\ref{map13}), we obtain  compatible 
natural \index{iotahk@\iotahk}maps  in $\sd(C_{\breve{F}})$
  $$
  \iota_{\hk}: \rg_{\hk,\breve{F}}(X)\to \rg_{\dr}(X),\quad   \iota_{\hk}: \rg_{\hk,\breve{F}}(X)\to \rg_{\dr}(X/\B^+_{\dr}).
  $$
\begin{theorem}{\rm (Overconvergent Hyodo-Kato isomorphisms)}
\label{HK-dagger2}
   The linearizations of the above Hyodo-Kato morphisms yields compatible natural strict quasi-isomorphisms in, resp., $\sd(C_C)$ and $\sd(C_{\B^+_{\dr}})$
  \begin{align*}
  &  \iota_{\hk}: \rg_{\hk,\breve{F}}(X)\wh{\otimes}^{\R}_{\breve{F}}C\stackrel{\sim}{\to }\rg_{\dr}(X),\\
   &  \iota_{\hk}: \rg_{\hk,\breve{F}}(X)\wh{\otimes}^{\R}_{\breve{F}}\B^+_{\dr}\stackrel{\sim}{\to} \rg_{\dr}(X/\B^+_{\dr}).
 \end{align*}
  \end{theorem}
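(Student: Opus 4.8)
The plan is to deduce Theorem~\ref{HK-dagger2} from its affinoid version, Proposition~\ref{ogrod2}, by $\eta$-\'etale descent, exactly as Theorem~\ref{rynek2.0} was obtained by globalizing Corollary~\ref{kawa1} and Corollary~\ref{local1} in the rigid analytic setting.

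First I would recall that every object appearing in the statement is, by construction, of the form $\rg_{\eet}(X,-)$ applied to the $\eta$-\'etale hypersheafification of a presheaf on smooth dagger affinoids over $C$: namely $U\mapsto \rg^{\dagger}_{\dr}(U)$ and $U\mapsto \rg^{\dagger}_{\dr}(U/\B^+_{\dr})$ on the de Rham and $\B^+_{\dr}$-side (Section~\ref{hihi1}), $U\mapsto \rg^{\dagger}_{\hk,\breve{F}}(U)$ for $\sa^c_{\hk}$, and --- by the convention of Remark~\ref{wrzask1}(3) --- $U\mapsto \rg^{\dagger}_{\hk,\breve{F}}(U)\wh{\otimes}^{\R}_{\breve{F}}C$ and $U\mapsto \rg^{\dagger}_{\hk,\breve{F}}(U)\wh{\otimes}^{\R}_{\breve{F}}\B^+_{\dr}$ for the two tensor products on the left. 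I would also note that the global maps $\iota_{\hk}$ in the theorem are, by their definition in Section~\ref{hihi1}, the image under $\rg_{\eet}(X,-)$ of the hypersheafifications of the local morphisms $\iota^{\dagger}_{\hk}$ of~(\ref{map13}); since those are built by $\LL\colim$ over a presentation out of the functorial rigid analytic maps of Corollary~\ref{kawa1} and Corollary~\ref{local1}, they are functorial in $U$ (up to the usual contractible ambiguity) and therefore descend to morphisms of the corresponding hypersheaves.

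Then I would invoke Proposition~\ref{ogrod2}: for every smooth dagger affinoid $U$ over $C$ the maps $\iota^{\dagger}_{\hk}\colon \rg^{\dagger}_{\hk,\breve{F}}(U)\wh{\otimes}^{\R}_{\breve{F}}C\to \rg^{\dagger}_{\dr}(U)$ and $\iota^{\dagger}_{\hk}\colon \rg^{\dagger}_{\hk,\breve{F}}(U)\wh{\otimes}^{\R}_{\breve{F}}\B^+_{\dr}\to \rg^{\dagger}_{\dr}(U/\B^+_{\dr})$ are strict quasi-isomorphisms. Hence the induced morphisms of $\eta$-\'etale hypersheaves are equivalences, and applying $\rg_{\eet}(X,-)$ --- which preserves strict quasi-isomorphisms, the topology on each side being the one induced by $\eta$-\'etale descent from the local topologies --- gives the two strict quasi-isomorphisms of the theorem. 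Their compatibility via $\theta\colon\B^+_{\dr}\to C$ and $\vartheta$ is inherited from the corresponding local compatibility recorded in Proposition~\ref{ogrod2} (which itself comes from Corollary~\ref{kawa1}), by functoriality of hypersheafification and of $\rg_{\eet}(X,-)$.

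The main thing to be careful about --- and where I expect the only real work to lie --- is the bookkeeping of the tensor products when $X$ is not quasi-compact: one must check that forming $\rg_{\eet}(X,-)$ of the hypersheafified affinoid-level tensor products is genuinely compatible with the identifications used at the affinoid level, so that no $\R\wlim$/Mittag-Leffler obstruction is introduced in the globalization. For this I would appeal to Proposition~\ref{tluczenie} (which controls $\rg_{\hk,\breve{F}}(X)\wh{\otimes}^{\R}_{\breve{F}}W$ along an increasing quasi-compact admissible cover) together with Proposition~\ref{prison}(3) (the $t$-completeness of $\rg_{\dr}(X/\B^+_{\dr})$); with these in hand the passage from affinoids to general dagger varieties is purely formal.
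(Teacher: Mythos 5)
Your proposal is correct and follows essentially the same route as the paper: the paper's proof likewise reduces by $\eta$-\'etale descent to the dagger affinoid case, where Proposition~\ref{ogrod2} applies, noting only that the tensor products commute with the relevant (hyper)covering limits. Your extra care about the non-quasi-compact case via Proposition~\ref{tluczenie} is exactly the point the paper compresses into that one remark.
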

  \begin{proof} Looking at $\eta$-\'etale hypercoverings and using that our tensor products commute with products,  we may assume $X$ to be a dagger  affinoid and then the result is known by Proposition \ref{ogrod2}.
\end{proof}

   (iii) {\em Application.} As an immediate application of the overconvergent Hyodo-Kato quasi-isomorphisms we get the local-global compatibility for $\B^+_{\dr}$-cohomology: .
\begin{corollary}\label{local-global-kwak} 
{\rm ({Local-global compatibility})} Let $X$ be a smooth dagger affinoid over $C$.   The canonical morphism in $\sd\sff(C_{\B^+_{\dr}})$
\begin{align}
\label{mumu1}
  \rg^{\dagger}_{\dr}(X/\B^+_{\dr})\to \rg_{\dr}(X/\B^+_{\dr})
\end{align}
is a    strict quasi-isomorphism. 
\end{corollary}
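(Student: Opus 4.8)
The plan is to deduce this from the already–constructed Hyodo-Kato quasi-isomorphisms together with the local-global compatibility for completed Hyodo-Kato cohomology, in the same spirit as the rigid analytic Lemma~\ref{etale-descent20} (where the product formula $\iota_{\rm BK}$ played the role that the Hyodo-Kato morphism will play here, $\iota_{\rm BK}$ being unavailable over $C$).

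First I would treat the unfiltered statement, i.e. that $\rg^{\dagger}_{\dr}(X/\B^+_{\dr})\to \rg_{\dr}(X/\B^+_{\dr})$ is a strict quasi-isomorphism. For this, consider the square
$$
\xymatrix@R=6mm{
\rg^{\dagger}_{\hk,\breve{F}}(X)\wh{\otimes}^{\R}_{\breve{F}}\B^+_{\dr} \ar[r]^-{\iota^{\dagger}_{\hk}}_-{\sim} \ar[d] & \rg^{\dagger}_{\dr}(X/\B^+_{\dr}) \ar[d]\\
\rg_{\hk,\breve{F}}(X)\wh{\otimes}^{\R}_{\breve{F}}\B^+_{\dr} \ar[r]^-{\iota_{\hk}}_-{\sim} & \rg_{\dr}(X/\B^+_{\dr})
}
$$
which commutes by construction of the Hyodo-Kato morphisms (both are induced from $\LL\colim_h\iota_{\hk}$ on a presentation $\{X_h\}$ of $X$, compatibly with globalization). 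The top arrow is a strict quasi-isomorphism by Proposition~\ref{ogrod2}, the bottom one by Theorem~\ref{HK-dagger2}. The left arrow is a strict quasi-isomorphism because $\rg^{\dagger}_{\hk,\breve{F}}(X)\to\rg_{\hk,\breve{F}}(X)$ is one by Lemma~\ref{herbata2} (after replacing $F$ by $\breve{F}$), and this survives $\wh{\otimes}^{\R}_{\breve{F}}\B^+_{\dr}$: writing $\B^+_{\dr}=\R\wlim_r\B^+_{\dr}/F^r$ and applying Lemma~\ref{passage} to each Banach quotient $\B^+_{\dr}/F^r$, then taking $\R\wlim_r$. Hence the right arrow, which is the map of the corollary, is a strict quasi-isomorphism.

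Next I would upgrade this to the filtered statement, namely that $F^r\rg^{\dagger}_{\dr}(X/\B^+_{\dr})\to F^r\rg_{\dr}(X/\B^+_{\dr})$ is a strict quasi-isomorphism for all $r\geq 0$, by induction on $r$ with base case $r=0$ just settled. For the step $r\Rightarrow r+1$, compare the two distinguished triangles supplied by Proposition~\ref{prison}(1)(c),
$$
F^{r+1}\rg^{\dagger}_{\dr}(X/\B^+_{\dr})\to F^r\rg^{\dagger}_{\dr}(X/\B^+_{\dr})\to \textstyle\bigoplus_{i\leq r}\rg(X,\Omega^i_X)(r-i)[-i],
$$
$$
F^{r+1}\rg_{\dr}(X/\B^+_{\dr})\to F^r\rg_{\dr}(X/\B^+_{\dr})\to \textstyle\bigoplus_{i\leq r}\rg(X,\Omega^i_X)(r-i)[-i];
$$
both are induced from the rigid analytic triangles of Proposition~\ref{projection}(3), so the comparison map is the identity on the right-hand terms (for a dagger affinoid, $\LL\colim_h\rg(X_h,\Omega^i_{X_h})\simeq\rg(X,\Omega^i_X)$). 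The inductive hypothesis gives a strict quasi-isomorphism on the middle terms, hence on the fibers $F^{r+1}$, which completes the induction and the proof. I expect no real obstacle: the substance lies entirely in Theorems~\ref{rynek2.0} and~\ref{HK-dagger2}, Proposition~\ref{ogrod2} and Lemma~\ref{herbata2}; the only delicate point is the bookkeeping of the $F$-filtration and of the tensor products $\wh{\otimes}^{\R}_{\breve{F}}\B^+_{\dr}$ (read as $\R\wlim_r$ of Banach tensor products), which is handled by Lemma~\ref{passage} and the triangle induction above.
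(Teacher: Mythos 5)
Your proof is correct and follows essentially the same route as the paper: the unfiltered case via the commutative square comparing the two Hyodo-Kato quasi-isomorphisms (Proposition~\ref{ogrod2}, Theorem~\ref{HK-dagger2}) together with local-global compatibility for completed overconvergent Hyodo-Kato cohomology, then an induction on the filtration using the distinguished triangles of Proposition~\ref{prison}. The only (cosmetic) difference is that the paper's induction descends via the triangle $F^{r-1}\to F^r\to F^r\rg_{\dr}(X)$ of Proposition~\ref{prison}(1)(b), invoking local-global compatibility for filtered de Rham cohomology, whereas you ascend via the graded-pieces triangle of Proposition~\ref{prison}(1)(c); both are valid.
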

\begin{proof}
Consider the following commutative diagram
$$
\xymatrix@R=6mm{\rg^{\dagger}_{\hk,\breve{F}}(X)\wh{\otimes}^{\R}_{\breve{F}}\B^+_{\dr}\ar[d]^{\iota^{\dagger}_{\hk}}_{\wr} \ar[r]^{\sim} & \rg_{\hk,\breve{F}}(X)\wh{\otimes}^{\R}_{\breve{F}}\B^+_{\dr}\ar[d]^{\iota_{\hk}}_{\wr}  \\
  \rg^{\dagger}_{\dr}(X/\B^+_{\dr})\ar[r] &  \rg_{\dr}(X/\B^+_{\dr})
}
$$
The vertical arrows are strict quasi-isomorphisms by Proposition \ref{ogrod2} and Theorem \ref{HK-dagger2}. The top arrow is a strict quasi-isomorphism by the local-global compatibility for completed overconvergent Hyodo-Kato cohomology. 
 It follows that so is the bottom horizontal arrow, proving that the map (\ref{mumu1}) is a strict quasi-isomorphism.
 
  To show that this map is a filtered strict quasi-isomorphism, we will argue by induction on $r\geq 0$. The inductive step uses 
  the following commutative diagram
  $$
  \xymatrix@=5mm{
   F^{r-1}\rg^{\dagger}_{\dr}(X/\B^+_{\dr})\ar[r]^{t} \ar[d]^{\wr}& F^r\rg^{\dagger}_{\dr}(X/\B^+_{\dr})\ar[d]\ar[r]^{\vartheta} & F^r\rg^{\dagger}_{\dr}(X)\ar[d]^{\wr},\\
 F^{r-1}\rg_{\dr}(X/\B^+_{\dr})\ar[r]^{t} &  F^r\rg_{\dr}(X/\B^+_{\dr})\ar[r]^{\vartheta} & F^r\rg_{\dr}(X),
  }
  $$
  in which the rows are distinguished triangles by Proposition \ref{prison} and its proof. The first and the third vertical maps are strict quasi-isomorphism by the inductive hypothesis and by the local-global property for filtered de Rham cohomology (see \cite[Sec. 5.1]{CN3}), respectively. It follows that the middle vertical map is a strict quasi-isomorphism as well, as wanted.
\end{proof}
\subsubsection{Comparison with the rigid analytic constructions}Let $X$ be a smooth dagger variety over $L=K,C$. Let $\wh{X}$ be its completion. 
 \begin{lemma}
 \begin{enumerate}
 \item There is a natural morphism in $\sd_{\phi,N}(C_{F^{\rm nr}})$
 \begin{equation}           
 \label{wrzesien1}
 \rg_{\hk}(X)\to\rg_{\hk}(\wh{X}). 
 \end{equation}
 \item Let $L=C$. There are compatible natural morphisms in, resp., $\sd_{\phi,N}(C_{\breve{F}})$ and $\sd\sff(C_{\B^+_{\dr}})$
 \begin{align*}
 \rg_{\hk,\breve{F}}(X)\to\rg_{\hk,\breve{F}}(\wh{X}),\quad 
   \rg_{\dr}(X/\B^+_{\dr})\to\rg_{\dr}(\wh{X}/\B^+_{\dr}). 
 \end{align*}
 They are  compatible with the map (\ref{wrzesien1}).
 \item The morphism in (2) are compatible with the Hyodo-Kato morphisms, i.e., we have the commutative diagrams in $\sd(C_{\breve{F}})$
 $$
 \xymatrix@R=6mm{
   \rg_{\hk,\breve{F}}(X)\ar[r]\ar[d]^{\iota_{\hk}} &  \rg_{\hk,\breve{F}}(\wh{X})\ar[d]^{\iota_{\hk}}\\
   \rg_{\dr}(X)\ar[r] & \rg_{\dr}(\wh{X}),
 }\quad 
 \xymatrix@R=6mm{
  \rg_{\hk,\breve{F}}(X)\ar[r]\ar[d]^{\iota_{\hk}} &  \rg_{\hk,\breve{F}}(\wh{X})\ar[d]^{\iota_{\hk}}\\
   \rg_{\dr}(X/\B^+_{\dr})\ar[r] & \rg_{\dr}(\wh{X}/\B^+_{\dr})
 }
 $$
 \end{enumerate}
 \end{lemma}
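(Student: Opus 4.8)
The plan is to construct all the morphisms from the corresponding rigid analytic morphisms defined earlier, by passing to the limit over a presentation of the dagger structure, exactly as in the definition of the overconvergent cohomologies and their Hyodo-Kato maps. Since every object in the statement is defined by $\eta$-\'etale descent from affinoid data, and the completion functor $X\mapsto\wh{X}$ is compatible with admissible coverings, it suffices to treat the case of a smooth dagger affinoid $X$ with a presentation $\{X_h\}_{h\in\N}$, so that $\wh{X}=X_1$ (or, more precisely, $\wh{X}$ is the interior of the first term, in the conventions of Section~\ref{hihi1}); then one globalizes by sheafification.

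For claim (1), recall that $\rg_{\hk}(X)=\rg^{\dagger}_{\hk}(X):=\LL\colim_h\rg_{\hk}(X_h)$ by Lemma~\ref{herbata2}(2), and that for each $h$ there is a canonical map $X_h\to\wh{X}$ of rigid analytic varieties. Applying the (contravariant) functoriality of the rigid analytic Hyodo-Kato cohomology $\rg_{\hk}(-)$ to these maps and passing to the homotopy colimit over $h$ gives a natural map
$$
\rg_{\hk}(X)=\LL\colim_h\rg_{\hk}(X_h)\to\LL\colim_h\rg_{\hk}(\wh{X})\simeq\rg_{\hk}(\wh{X}),
$$
where the last identification uses that the diagram is essentially constant. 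For claim (2) one repeats this construction with $\rg_{\hk,\breve{F}}(-)$ in place of $\rg_{\hk}(-)$ and with $\rg_{\dr}(-/\B^+_{\dr})$ in place of $\rg_{\dr}(-)$; compatibility of the two maps with each other and with the map of (1) follows from the functoriality of the base-change maps $\rg_{\hk}(-)\wh{\otimes}^{\R}_{F^{\nr}}\breve{F}\to\rg_{\hk,\breve{F}}(-)$ (the strict quasi-isomorphism~(\ref{beta22})) and $\kappa\colon\rg_{\crr}(-)\to\rg_{\dr}(-/\B^+_{\dr})$, all of which are natural in the variety.

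For claim (3), the two commutative squares are obtained by taking the homotopy colimit over $h$ of the corresponding squares for the rigid analytic varieties $X_h\to\wh{X}$. The point is that the rigid analytic Hyodo-Kato morphisms $\iota_{\hk}\colon\rg_{\hk,\breve{F}}(-)\to\rg_{\dr}(-)$ and $\iota_{\hk}\colon\rg_{\hk,\breve{F}}(-)\to\rg_{\dr}(-/\B^+_{\dr})$ are, by Theorem~\ref{rynek2.0}, defined via $\eta$-\'etale descent from the local maps~(\ref{zaby11}) and Corollary~\ref{kawa1}, hence are natural transformations of functors on ${\rm Sm}_C$; applying them to the map $X_h\to\wh{X}$ gives, for each $h$, a commutative square, and the overconvergent $\iota_{\hk}$ on $X$ is by definition $\LL\colim_h$ of the corresponding local maps (see~(\ref{map13}) and Proposition~\ref{ogrod2}). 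Compatibility with the map $\theta\colon\B^+_{\dr}\to C$ is inherited from the rigid analytic case.

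The main obstacle is purely bookkeeping: one must check that the homotopy colimit $\LL\colim_h$ over a presentation commutes, up to natural strict quasi-isomorphism, with the various operations involved (the tensor products $\wh{\otimes}^{\R}_{\breve{F}}C$, $\wh{\otimes}^{\R}_{\breve{F}}\B^+_{\dr}$, and the formation of the Hyodo-Kato and de Rham sheaves), and that the resulting maps are independent of the chosen presentation and glue under $\eta$-\'etale descent. All of these compatibilities have already been established in the course of defining the overconvergent cohomologies and their Hyodo-Kato morphisms (Lemma~\ref{passage}, Proposition~\ref{ogrod2}, Theorem~\ref{HK-dagger2}), so no genuinely new difficulty arises; the statement is essentially a repackaging of the functoriality built into those constructions, combined with the rigid analytic version of the same statement, which is part of Proposition~\ref{main00}(3b).
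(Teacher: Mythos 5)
Your proposal is correct and follows essentially the same route as the paper: define the maps on a smooth dagger affinoid by taking $\LL\colim_h$ over the presentation of the maps induced by the canonical morphisms between $\wh{X}$ and the $X_h$, then globalize by $\eta$-\'etale descent, with the compatibilities in (2) and (3) inherited from the naturality of the rigid analytic constructions (base change $\beta$, $\kappa$, and the rigid analytic $\iota_{\hk}$). One small slip: the canonical geometric maps go $\wh{X}\to X_h$ (the completion sits inside each term of the presentation), not $X_h\to\wh{X}$; since you invoke contravariance and write the cohomological maps $\rg_{\hk}(X_h)\to\rg_{\hk}(\wh{X})$ in the correct direction, this does not affect the argument.
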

 \begin{proof}
 Let $X$ be a smooth dagger affinoid over $L$ with the presentation $\{X_h\}$.  Using the compatible maps $\wh{X}\to X_h$, we define the map 
 \begin{align*}
 \rg^{\dagger}_{\hk}(X)=\LL\colim_h\rg_{\hk}(X_h)\to \LL\colim_h\rg_{\hk}(\wh{X})=\rg_{\hk}(\wh{X}).
 \end{align*}
 It globalizes to give the map in (\ref{wrzesien1}). 
 
  We proceed in a similar way for the other two cohomologies. The stated compatibilities follow  easily from the definitions.
 \end{proof}
 
  The  Hyodo-Kato quasi-isomorphisms imply the following:  
\begin{corollary}\label{roznosci1}
     Let $X\in {\rm Sm}^{\dagger}_C$. 
   If $X$ is partially proper, then the canonical  morphisms  in, resp., $\sd_{\phi,N}(C_{\breve{F}})$ and $\sd\sff(C_{\B^+_{\dr}})$
    \begin{equation}
    \label{cieplo1}
  \rg_{\hk,\breve{F}}(X)\to \rg_{\hk,\breve{F}}(\wh{X}),\quad  \rg_{\dr}(X/\B^+_{\dr})\to \rg_{\dr}(\wh{X}/\B^+_{\dr})
    \end{equation}
are    strict quasi-isomorphisms. 
     \end{corollary}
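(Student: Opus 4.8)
The plan is to reduce the statement to the corresponding comparisons for Hyodo-Kato and de Rham cohomologies, which are already known, by transporting everything through the Hyodo-Kato quasi-isomorphisms established in Theorem~\ref{HK-dagger2} and Corollary~\ref{roznosci1}'s prerequisites. First I would recall that, since $X$ is partially proper, Corollary~\ref{ogrod1} gives that the completed overconvergent Hyodo-Kato cohomology satisfies $\rg_{\hk,\breve{F}}(X)\wh{\otimes}^{\R}_{\breve F}W\to\rg_{\hk,\breve F}(\wh X)\wh{\otimes}^{\R}_{\breve F}W$ is a strict quasi-isomorphism for any Fr\'echet space $W$ over $\breve F$ (applying it with $W=\breve F$ gives the first map in~(\ref{cieplo1}); one also needs the $\rg^{\rm GK}$-vs-presentation comparison of Lemma~\ref{cwir-cwir} and Lemma~\ref{herbata2} to identify the source and target with the definitions used here). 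Strictly speaking, Corollary~\ref{ogrod1} is stated for $\rg^{\gk}_{\hk}$, so I would first invoke Lemma~\ref{cwir-cwir} (resp.\ its non-completed analog) to pass between $\rg_{\hk,\breve F}$ and $\rg^{\rm GK}_{\hk,\breve F}$, then conclude that $\rg_{\hk,\breve F}(X)\to\rg_{\hk,\breve F}(\wh X)$ is a strict quasi-isomorphism.

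Next, for the $\B^+_{\dr}$-cohomology, I would set up the commutative square
$$
\xymatrix@R=6mm{
\rg_{\hk,\breve{F}}(X)\wh{\otimes}^{\R}_{\breve F}\B^+_{\dr}\ar[r]\ar[d]^{\iota_{\hk}}_{\wr} & \rg_{\hk,\breve F}(\wh X)\wh{\otimes}^{\R}_{\breve F}\B^+_{\dr}\ar[d]^{\iota_{\hk}}_{\wr}\\
\rg_{\dr}(X/\B^+_{\dr})\ar[r] & \rg_{\dr}(\wh X/\B^+_{\dr})
}
$$
whose commutativity is part of the lemma preceding this corollary, and whose vertical arrows are strict quasi-isomorphisms by Theorem~\ref{HK-dagger2} (for the dagger variety $X$) and Theorem~\ref{rynek2.0} (for the rigid analytic variety $\wh X$). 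It therefore suffices to show the top horizontal arrow is a strict quasi-isomorphism. By Lemma~\ref{passage} and Remark~\ref{wrzask1} one may rewrite both tensor products in terms of $\rg^{\rm GK}_{\hk}(-)\wh{\otimes}^{\R}_{F^{\nr}}\B^+_{\dr}$ (with $\B^+_{\dr}$ viewed as a Fr\'echet space over $\breve F$, i.e.\ $\R\wlim_r\B^+_{\dr}/F^r$ of Banach pieces), and then the needed statement is exactly (the Fr\'echet version of) Corollary~\ref{ogrod1}: the natural map $\rg^{\gk}_{\hk}(X)\wh{\otimes}^{\R}_{F^{\nr}}\B^+_{\dr}\to\rg_{\hk}(\wh X)\wh{\otimes}^{\R}_{F^{\nr}}\B^+_{\dr}$ is a strict quasi-isomorphism when $X$ is partially proper. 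This gives the second map in~(\ref{cieplo1}) as a strict quasi-isomorphism; the filtered refinement then follows by the now-standard induction on $r$, using the distinguished triangles~(\ref{detail1}) and~(\ref{detail11}) (from Proposition~\ref{projection} and Proposition~\ref{prison}) together with the local-global and partially-proper compatibilities for de Rham cohomology (Proposition~\ref{main00}(3b)) in the outer terms.

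Concretely, the inductive step compares the two rows
$$
\xymatrix@R=6mm@C=5mm{
F^{r-1}\rg_{\dr}(X/\B^+_{\dr})\ar[r]^{t}\ar[d] & F^r\rg_{\dr}(X/\B^+_{\dr})\ar[r]^{\vartheta}\ar[d] & F^r\rg_{\dr}(X)\ar[d]\\
F^{r-1}\rg_{\dr}(\wh X/\B^+_{\dr})\ar[r]^{t} & F^r\rg_{\dr}(\wh X/\B^+_{\dr})\ar[r]^{\vartheta} & F^r\rg_{\dr}(\wh X)
}
$$
where the left vertical map is a strict quasi-isomorphism by the inductive hypothesis, the right one by the partially-proper de Rham comparison, and the rows are distinguished by Proposition~\ref{prison}; hence the middle map is a strict quasi-isomorphism, with the base case $r=0$ being the unfiltered statement just proved.

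I expect the main obstacle to be purely bookkeeping: making sure that the various definitions of overconvergent (completed) Hyodo-Kato cohomology and the non-standard tensor products with $\B^+_{\dr}$ are correctly matched up so that Corollary~\ref{ogrod1} (which is phrased for Grosse-Kl\"onne's $\rg^{\gk}_{\hk}$ and an arbitrary Fr\'echet coefficient space) can actually be applied, and that the comparison morphisms in~(\ref{cieplo1}) are the ones obtained by globalizing the presentation-level maps. None of this is deep given the apparatus already built (Lemma~\ref{passage}, Lemma~\ref{cwir-cwir}, Theorem~\ref{HK-dagger2}, Proposition~\ref{ogrod2}), but it requires care to state the Fr\'echet version of Corollary~\ref{ogrod1} correctly and to check the compatibility of the Hyodo-Kato squares used above.
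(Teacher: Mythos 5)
Your proof is correct and, for the $\B^+_{\dr}$-part and the filtration, coincides with the paper's argument: the paper uses exactly the same square with the Hyodo--Kato quasi-isomorphisms of Theorem~\ref{HK-dagger2} and Theorem~\ref{rynek2.0} as vertical arrows, and exactly the same induction on $r$ via the distinguished triangles of Proposition~\ref{prison} and Proposition~\ref{projection}. The only genuine divergence is in how the first map of~(\ref{cieplo1}) is established. You route through Corollary~\ref{ogrod1} (the Grosse-Kl\"onne comparison with Fr\'echet coefficients) together with Lemma~\ref{cwir-cwir} and Lemma~\ref{herbata2} to identify $\rg_{\hk,\breve F}$ with $\rg^{\rm GK}_{\hk,\breve F}$, which forces you to check that the presentation-level canonical map of~(\ref{cieplo1}) matches the map appearing in Corollary~\ref{ogrod1} --- the bookkeeping you flag. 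The paper instead tensors with $C$, compares directly with the de Rham square $\rg_{\dr}(X)\stackrel{\sim}{\to}\rg_{\dr}(\wh X)$ (known for partially proper $X$ by Proposition~\ref{main00}), and then splits off $\breve F$ from $C$ as a direct summand to descend from $C$-coefficients to $\breve F$-coefficients; this avoids invoking Corollary~\ref{ogrod1} and the GK-identification entirely, and is what lets the paper then deduce the top arrow of your $\B^+_{\dr}$-square simply by tensoring the just-proved $\breve F$-statement with $\B^+_{\dr}$, rather than re-applying the Fr\'echet version of Corollary~\ref{ogrod1} with $W=\B^+_{\dr}$ as you do. Both routes are legitimate given the apparatus already in place; yours trades the ``split off $\breve F$'' trick for a compatibility check between two definitions of overconvergent Hyodo--Kato cohomology.
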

     \begin{proof}     
For the first map, consider the commutative diagram
   $$
   \xymatrix@R=6mm{  
   \rg_{\hk,\breve{F}}(X)\wh{\otimes}^{\R}_{\breve{F}}C\ar[r]\ar[d]^{\iota_{\hk}}_{\wr} &  \rg_{\hk,\breve{F}}(\wh{X})\wh{\otimes}^{\R}_{\breve{F}}C\ar[d]^{\iota_{\hk}}_{\wr}\\
   \rg_{\dr}(X)\ar[r]^{\sim} & \rg_{\dr}(\wh{X})
   }
   $$
 It implies that the top arrow is a strict quasi-isomorphism. Splitting off $\breve{F}$ from $C$ we obtain
 the claim of the corollary.  
 
 For the second map,  in the unfiltered case, consider the commutative diagram 
$$ \xymatrix@R=6mm{
  \rg_{\hk,\breve{F}}(X)\wh{\otimes}^{\R}_{\breve{F}}\B^+_{\dr}\ar[r]^{\sim} \ar[d]^{\iota_{\hk}}_{\wr} &  \rg_{\hk,\breve{F}}(\wh{X})\wh{\otimes}^{\R}_{\breve{F}}\B^+_{\dr}\ar[d]^{\iota_{\hk}}_{\wr}\\
   \rg_{\dr}(X/\B^+_{\dr})\ar[r] & \rg_{\dr}(\wh{X}/\B^+_{\dr})
 }
 $$
The top arrow is a strict quasi-isomorphism by what was just proved.  It implies that the bottom arrow is a strict quasi-isomorphism, as wanted.

  To treat filtrations, we proceed by  induction on the filtration level $r$ (the base case of $r=0$ just proved). The inductive step ($r-1 \Rightarrow r$) uses the commutative diagram 
  $$
  \xymatrix@R=5mm{
   F^{r-1}\rg_{\dr}(X/\B^+_{\dr})\ar[r]^{t} \ar[d]^{\wr}& F^r\rg_{\dr}(X/\B^+_{\dr})\ar[d]\ar[r]^{\vartheta} & F^r\rg_{\dr}(X)\ar[d]^{\wr},\\
 F^{r-1}\rg_{\dr}(\wh{X}/\B^+_{\dr})\ar[r]^{t} &  F^r\rg_{\dr}(\wh{X}/\B^+_{\dr})\ar[r]^{\vartheta} & F^r\rg_{\dr}(\wh{X}),
  }
  $$
  in which the rows are distinguished triangles by  Proposition \ref{prison} and Proposition \ref{projection}. The first vertical map is a  strict quasi-isomorphism by the inductive hypothesis. 
It follows that the middle vertical map is a strict quasi-isomorphism as well, as wanted.
 \end{proof}

\section{Overconvergent geometric syntomic cohomology} In this section we will define overconvergent geometric syntomic cohomology and  prove a comparison theorem for smooth dagger affinoids and Stein varieties over $C$.   
   \subsection{Local-global compatibility for rigid analytic geometric syntomic cohomology} 
   Recall that in \cite[Sec. 4.1]{CN3} the syntomic cohomology 
\index{rgsyn@\rgsyn}$\rg_{\synt}(X,\Q_p(r))\in\sd(C_{\Q_p})$ of a rigid analytic variety $X$ is defined by  $\eta$-\'etale descent  from the crystalline syntomic cohomology of Fontaine-Messing.
The latter is defined as   the fiber  ($\sx$ is a semistable formal scheme over $\so_C$ equipped with its canonical 
\index{rgsyn@\rgsyn}log-structure)
  $$\rg_{\synt}(\sx,\Q_p(r)):=[F^r\rg_{\crr}(\sx)\lomapr{\phi-p^r}\rg_{\crr}(\sx)], 
  $$ where the (logarithmic) crystalline cohomology is absolute (i.e., over $\Z_p$).
 By definition, it fits into the distinguished triangle in $\sd(C_{\Q_p})$
  \begin{equation}
  \label{triangle222}
  \rg_{\synt}(X,\Q_p(1))\to [\rg_{\crr}(X)]^{\phi=p^r}\to \rg_{\crr}(X)/F^r
  \end{equation}
 
   We were not able to prove the local-global compatibility for this syntomic cohomology in \cite{CN3}: the usual technique is to pass from the second term of (\ref{triangle222}) to Hyodo-Kato cohomology and from the third term -- to filtered de Rham cohomology; then one passes, via the Hyodo-Kato quasi-isomorphism,  from Hyodo-Kato cohomology to de Rham cohomology and we do have local-global compatibility for filtered de Rham cohomology. The problem was: we did not have then the Hyodo-Kato morphism. But we have it now
thanks to Theorem~\ref{rynek2.0}, so in this section we
 will prove the local-global compatibility for rigid analytic geometric syntomic cohomology that we will need. 
 
   We start with stating such a compatibility for absolute crystalline cohomology.
\begin{lemma}{\rm (Crystalline local-global compatibility)} \label{abs-crr}Let $\sx\in \sm^{{\rm ss},b}_C$. 
The canonical map in $\sd_{\phi}(C_{\B^+_{\crr}})$
$$\rg_{\crr}(\sx)_{\Q_p}\to \rg_{\crr}(\sx_C)$$
is a strict  quasi-isomorphism.
\end{lemma}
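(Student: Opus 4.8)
The plan is to prove Lemma~\ref{abs-crr} by reducing the absolute crystalline comparison to the already-established comparisons for Hyodo-Kato cohomology and for $\B^+_{\dr}$-cohomology (equivalently filtered de Rham cohomology), using the fact that $\rg_{\crr}$ is controlled by these two via the $\B^+_{\st}$-linear quasi-isomorphism $\epsilon^{\rm HK}_{\crr}$ of~(\ref{epsilon12}) and the Hodge filtration. First I would recall that by definition $\rg_{\crr}(\sx_C)=\rg_{\eet}(\sx_C,\sa_{\crr})$, so the claim is that the presheaf $\sy\mapsto \rg_{\crr}(\sy)_{\Q_p}$ on $\sm^{{\rm ss}}_C$ already satisfies $\eta$-\'etale descent when restricted to basic models, i.e.\ the natural map to its sheafification is a strict quasi-isomorphism on $\sm^{{\rm ss},b}_C$. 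As in the template proof of Proposition~\ref{stara1}, it suffices to show that for any $\eta$-\'etale hypercovering $\su_{\cdot}$ of $\sx$ by basic semistable models, the map
$$
\rg_{\crr}(\sx)_{\Q_p}\to \rg_{\crr}(\su_{\cdot})_{\Q_p}
$$
is a strict quasi-isomorphism.

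The key step is to factor this map through the two pieces we control. On the one hand, the filtered completion $\rg_{\crr}(-)^{\bwedge}_{\Q_p}$ has local-global compatibility by Lemma~\ref{etale-descent20} (combined with Lemma~\ref{sroka1}), so $\rg_{\crr}(\sx)^{\bwedge}_{\Q_p}\to \rg_{\crr}(\su_{\cdot})^{\bwedge}_{\Q_p}$ is a strict quasi-isomorphism, and likewise for each $F^r$. On the other hand, tensoring $\epsilon^{\rm HK}_{\crr}$ of~(\ref{epsilon12}) shows $\rg_{\crr}(-)_{\Q_p}\simeq \rg_{\hk,\breve{F}}(-)\wh{\otimes}_{\breve F}\B^+_{\crr}$ compatibly with $\eta$-\'etale localization, and $\rg_{\hk,\breve F}$ has local-global compatibility by Lemma~\ref{loc-global1}; since $\wh\otimes_{\breve F}\B^+_{\crr}$ is exact in the relevant sense (flat Banach coefficients, as in Lemma~\ref{trick1} and Proposition~\ref{product11}(2) of the Remark), one gets that $\rg_{\crr}(\sx)_{\Q_p}\to \rg_{\crr}(\su_{\cdot})_{\Q_p}$ is a strict quasi-isomorphism after $\wh\otimes^{\R}_{\B^+_{\crr}}C$ — i.e.\ $\rg_{\crr}(\sx)_{\Q_p}\wh\otimes^{\R}_{\B^+_{\crr}}C\to \rg_{\crr}(\su_{\cdot})_{\Q_p}\wh\otimes^{\R}_{\B^+_{\crr}}C$ is a strict quasi-isomorphism, this being $\rg_{\dr}(\sx_C)\to\rg_{\dr}(\su_{\cdot,C})$, which is known by \'etale descent for de Rham cohomology.

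The final step is a $t$-adic d\'evissage: let $A:=\cofiber(\rg_{\crr}(\sx)_{\Q_p}\to\rg_{\crr}(\su_{\cdot})_{\Q_p})$, a complex of $\B^+_{\crr}$-modules. The argument above shows $A\wh\otimes^{\R}_{\B^+_{\crr}}C$ is strictly acyclic, and the filtered statement (Lemma~\ref{etale-descent20} for all $r$) together with the triangle $F^{r+1}\sa^{\bwedge}_{\crr}\to F^r\sa^{\bwedge}_{\crr}\to\bigoplus_{i\le r}\rg(-,\Omega^i)(r-i)[-i]$ of Remark~\ref{gen-paris}(1)/Proposition~\ref{projection}(3) shows that the completed cofiber $A^{\bwedge}$ is strictly acyclic as well. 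Since $\rg_{\crr}(\sx)_{\Q_p}$ and $\rg_{\crr}(\su_{\cdot})_{\Q_p}$ are built (via $\epsilon^{\rm HK}_{\crr}$) from Banach spaces that are $t$-torsion-free and $t$-adically separated enough, the map $A\to A^{\bwedge}=\R\wlim_r A/t^r$ is a strict quasi-isomorphism; hence $A$ is strictly acyclic, as wanted. The main obstacle I expect is the bookkeeping in this last d\'evissage — precisely checking that multiplication by $t$ (equivalently by $F^r$) is well enough behaved on these locally-convex complexes that acyclicity of $A\wh\otimes^{\R}_{\B^+_{\crr}}C$ plus filtered acyclicity upgrades to acyclicity of $A$ itself; one handles this exactly as in the proof of Proposition~\ref{stara1} (writing $C\simeq\breve F\oplus W$ will not directly apply here since the base is $\B^+_{\crr}$, so instead one uses that $\B^+_{\crr}/t^r$ are built from $C$ by successive extensions, reducing to the de Rham statement degree by degree as in Lemma~\ref{kappa-new}).
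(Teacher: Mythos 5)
You have all the right ingredients in your middle paragraph, but you stop short of the conclusion they already give and then try to recover it by a d\'evissage that does not work. The paper's proof is precisely your middle step, assembled correctly: the quasi-isomorphism $\epsilon^{\hk}_{\crr}$ identifies $\rg_{\crr}(\sx)_{\Q_p}$ with $\rg_{\hk,\breve{F}}(\sx^0_1)_{\Q_p}\wh{\otimes}^{\R}_{\breve{F}}\B^+_{\crr}$ locally and $\rg_{\crr}(\sx_C)$ with $\rg_{\hk,\breve{F}}(\sx_C)\wh{\otimes}^{\R}_{\breve{F}}\B^+_{\crr}$ globally, these fit into a commutative square over the map $\beta\otimes\id$, and since $\B^+_{\crr}$ is a Banach space over $\breve{F}$ the Hyodo--Kato local-global compatibility of Lemma~\ref{loc-global1} persists after $\wh{\otimes}^{\R}_{\breve{F}}\B^+_{\crr}$ (Remark (2) following Proposition~\ref{product11}). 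That makes the bottom map of the square a strict quasi-isomorphism, hence the top map too, and you are done. There is no need to pass to the weaker statement ``after $\wh{\otimes}^{\R}_{\B^+_{\crr}}C$'' and then climb back up.

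The climb back up, as you set it up, has a genuine gap. Absolute crystalline cohomology is \emph{not} complete for the Hodge filtration --- that is the whole point of the distinction between $\rg_{\crr}(X)$ and $\rg_{\dr}(X/\B^+_{\dr})=\rg_{\crr}(X)^{\bwedge}$, reflecting the fact that $\B^+_{\crr}$ is not $F$-adically (or $t$-adically) complete, its completion being $\B^+_{\dr}$. So the assertion that the cofiber $A$ satisfies $A\stackrel{\sim}{\to}A^{\bwedge}$ fails at the level of the coefficient ring itself, and knowing both $A\wh{\otimes}^{\R}_{\B^+_{\crr}}C=0$ and $A^{\bwedge}=0$ does not force $A=0$ (the second condition is in any case a formal consequence of the first by derived Nakayama for complete objects, so it carries no extra information; objects such as $\B^+_{\crr}[1/t]$ satisfy both vanishings without being zero). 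The obstacle you flag at the end is therefore not a bookkeeping issue but a genuine failure of the strategy; the fix is simply to use the uncompleted identification $\epsilon^{\hk}_{\crr}$ directly, as above.
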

\begin{proof}
We have the commutative diagram in $\sd_{\phi}(C_{\B^+_{\crr}})$
$$
\xymatrix@R=6mm{
\rg_{\crr}(\sx)_{\Q_p}\ar[r] &  \rg_{\crr}(\sx_C)\\
\rg_{\hk,\breve{F}}(\sx^0_1)_{\Q_p}\wh{\otimes}^{\R}_{\breve{F}}\B^+_{\crr}\ar[u]^{\wr}_{\epsilon_{\crr}^{\hk}} \ar[r]^{\beta\otimes\id}  & \rg_{\hk,\breve{F}}(\sx_{C})\wh{\otimes}^{\R}_{\breve{F}}\B^+_{\crr} \ar[u]^{\wr}_{\epsilon^{\hk}_{\crr}}
}
$$
The bottom map is a strict quasi-isomorphism by Lemma \ref{loc-global1}. Hence so is the top map, as wanted.
\end{proof}

\begin{proposition}{\rm (Syntomic local-global compatibility)} \label{synt-locglob}
Let $\sx\in \sm^{{\rm ss},b}_C$. Let $r\geq 0$. 
The canonical map in $\sd(C_{\Q_p})$
$$\rg_{\synt}(\sx,\Z_p(r))_{\Q_p}\to \rg_{\synt}(\sx_C,\Q_p(r))$$
is a strict  quasi-isomorphism. Here $\rg_{\synt}(\sx,\Z_p(r))$ is the syntomic cohomology of Fontaine-Messing \cite{FM} {\rm (see also \cite{BEp})}. 
\end{proposition}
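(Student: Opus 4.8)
The strategy is the standard dévissage for syntomic cohomology, now available thanks to the Hyodo-Kato machinery developed in this paper. Recall that syntomic cohomology is by definition the homotopy fiber of $\phi-p^r$ on filtered crystalline cohomology, so that, after inverting $p$, it sits in the distinguished triangle
$$
\rg_{\synt}(\sx,\Z_p(r))_{\Q_p}\to [\rg_{\crr}(\sx)_{\Q_p}]^{\phi=p^r}\to \rg_{\crr}(\sx)_{\Q_p}/F^r
$$
(coming from~(\ref{triangle222}) evaluated on $\sx$), and likewise for $\sx_C$ with $\rg_{\crr}(X)$ in place of $\rg_{\crr}(\sx)_{\Q_p}$. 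Hence it suffices to prove that the canonical map induces strict quasi-isomorphisms on the second and third terms of these triangles, compatibly, and then pass to homotopy fibers.

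\textbf{The third term.} For the filtered quotient $\rg_{\crr}(\sx)_{\Q_p}/F^r$, I would use that $\rg_{\crr}(\sx)^{\bwedge}_{\Q_p}/F^r\simeq \rg_{\crr}(\sx)_{\Q_p}/F^r$ (the Hodge filtration is separated and the relevant quotients are unchanged by completion, since $F^r/F^j$ is the same before and after), together with Lemma~\ref{etale-descent20} which gives $\kappa: F^r\rg_{\crr}(\sx)^{\bwedge}_{\Q_p}\stackrel{\sim}{\to} F^r\rg_{\dr}(\sx_C/\B^+_{\dr})$ locally, hence $\rg_{\crr}(\sx)^{\bwedge}_{\Q_p}/F^r\simeq \rg_{\dr}(\sx_C/\B^+_{\dr})/F^r=\rg_{\crr}(\sx_C)/F^r$ (the last equality by the definition of $\rg_{\dr}(X/\B^+_{\dr})$ as $\eta$-étale sheafification of the completed absolute crystalline cohomology). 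Thus the map on the third terms is a strict quasi-isomorphism.

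\textbf{The second term.} For the Frobenius eigenspace $[\rg_{\crr}(\sx)_{\Q_p}]^{\phi=p^r}$, I would invoke Lemma~\ref{abs-crr}: the canonical map $\rg_{\crr}(\sx)_{\Q_p}\to\rg_{\crr}(\sx_C)$ is a strict quasi-isomorphism compatible with Frobenius (as is visible from the commutative diagram in its proof, where both vertical maps $\epsilon^{\hk}_{\crr}$ are Frobenius-compatible by the construction in~(\ref{epsilon12}) and Theorem~\ref{HK-crr}, and $\beta$ is Frobenius-compatible by~(\ref{beta})). Since the derived eigenspace $[-]^{\phi=p^r}$ is the homotopy fiber of $\phi-p^r$, a Frobenius-compatible strict quasi-isomorphism induces a strict quasi-isomorphism on eigenspaces. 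Combining these two points with the commutativity of the maps (which holds because all identifications are induced by the same canonical maps $\rg_{\crr}(\sx)_{\Q_p}\to\rg_{\crr}(\sx_C)$ and $\rg_{\crr}(\sx)^{\bwedge}_{\Q_p}\to\rg_{\crr}(\sx_C)$, hence are compatible with $\phi$, $F^r$ and $\vartheta$ by functoriality), passing to homotopy fibers yields the strict quasi-isomorphism
$$
\rg_{\synt}(\sx,\Z_p(r))_{\Q_p}\stackrel{\sim}{\to}\rg_{\synt}(\sx_C,\Q_p(r)),
$$
as wanted.

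\textbf{Main obstacle.} The only real work is the Frobenius-compatibility in Lemma~\ref{abs-crr}, i.e.\ checking that the isomorphism $\rg_{\crr}(\sx)_{\Q_p}\simeq\rg_{\crr}(\sx_C)$ genuinely respects $\phi$ at the level of the derived $\infty$-category; but this is exactly what Theorem~\ref{HK-crr} and the construction of $\epsilon^{\hk}_{\crr}$ in~(\ref{epsilon12}) provide, since that map is Frobenius-compatible by construction and $\beta$ is too. A secondary subtlety is keeping the third-term identification strictly compatible with the eigenspace identification — i.e.\ that the composite through $\rg_{\dr}(X/\B^+_{\dr})$ agrees with the direct map $\rg_{\crr}(\sx)_{\Q_p}/F^r\to\rg_{\crr}(\sx_C)/F^r$ — but this again follows from functoriality of the canonical maps, so no genuinely new argument is needed beyond assembling Lemmas~\ref{abs-crr} and~\ref{etale-descent20}.
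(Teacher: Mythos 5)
Your proof is correct, and it is a genuine (if mild) variant of the paper's argument. The paper does not compare the two crystalline fiber sequences directly: it first constructs a strict quasi-isomorphism $\iota_2$ identifying $\rg_{\synt}(\sx_C,\Q_p(r))$ with the ``Bloch--Kato'' presentation $\big[[\rg_{\hk}\wh{\otimes}_{F^{\nr}}\B^+_{\st}]^{N=0,\phi=p^r}\to\rg_{\dr}(\sx_C/\B^+_{\dr})/F^r\big]$ (via the diagram built from $\epsilon^{\hk}_{\st}$ and $\kappa$), does the same locally, and then invokes local-global compatibility for Hyodo--Kato cohomology and for filtered $\B^+_{\dr}$-cohomology (Lemma~\ref{etale-descent20}). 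You instead keep the crystalline presentation on both sides and handle the $\phi$-eigenspace leg by citing Lemma~\ref{abs-crr} directly; this is legitimate, since that lemma is established just before the proposition, and the Frobenius-equivariance you worry about is in fact automatic (the canonical map is the sheafification/unit map, which commutes with $\phi$ by construction, so there is no need to trace it through the Hyodo--Kato diagram). Your treatment of the $/F^r$ leg — passing through the Hodge completion, noting that completion does not change the quotient by $F^r$, and applying Lemma~\ref{etale-descent20} — is also sound. What your shortcut loses is the byproduct of the paper's detour: the quasi-isomorphism $\iota_2$ is exactly what yields Corollary~\ref{presentation} (the Bloch--Kato presentation of rigid analytic syntomic cohomology), which is needed later for the comparison with the overconvergent theory, so the paper would have had to construct it anyway. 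Of course, both routes ultimately rest on the same input, namely the Hyodo--Kato quasi-isomorphisms of Section~\ref{hihi1}, since Lemma~\ref{abs-crr} is itself proved by reduction to Hyodo--Kato local-global compatibility.
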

\begin{proof} Set $X:=\sx_C$. 
First, we define a natural strict quasi-isomorphism in $\sd(C_{\Q_p})$:
  \begin{align*}
   \iota_2: \quad & 
    \big[ [\rg_{\hk}({X})\wh{\otimes}_{F^{\nr}}{\B}^+_{\st}]^{N=0,\phi=p^r}\lomapr{\iota_{\hk}\otimes\iota} \rg_{\dr}({X}/\B^+_{\dr})/F^r\big]\\
    & \to  [[\R\Gamma_{\crr}({X})]^{\phi=p^r}\lomapr{\can} \R\Gamma_{\crr}({X})/F^r]= \rg_{\synt}({X},\Q_p(r)),
  \end{align*}
  where we set
  $$[\rg_{\hk}({X})\wh{\otimes}_{F^{\nr}}{\B}^+_{\st}]^{N=0,\phi=p^r}:= \left[
\begin{aligned}\xymatrix@=40pt{
 \rg_{\hk}({X})\wh{\otimes}_{F^{\nr}}{\B}^+_{\st}\ar[d]^{N}\ar[r]^-{1-\phi_r} &
\rg_{\hk}({X})\wh{\otimes}_{F^{\nr}}{\B}^+_{\st}\ar[d]^{N}\\
 \rg_{\hk}({X})\wh{\otimes}_{F^{\nr}}{\B}^+_{\st}\ar[r]^-{1-\phi_{r-1}} & \rg_{\hk}({X})\wh{\otimes}_{F^{\nr}}{\B}^+_{\st}
 }\end{aligned}\right]
$$
    For that, it suffices to define the maps $\iota_{\rm BK}^1$ and $\iota_{\rm BK}^2$  in the following diagram, with the first map being Frobenius equivariant,  and to show that this diagram commutes in $\sd(C_{\Q_p})$:
    \begin{equation}
    \label{film1}
\xymatrix{
[\R\Gamma_{\hk}({X})\wh{\otimes}_{F^{\nr}}{\B}^+_{\st}]^{N=0}\ar[d]^{\epsilon^{\hk}_{\st}}_{\wr} \ar@/_80pt/[dd]^{\wr}_{\iota_{\rm BK}^1}
\ar[r]^-{\iota_{\hk}\otimes \iota}   &   
   \rg_{\crr}({X}/\B^+_{\dr})/F^r
 \ar@/^80pt/[dd]^{\iota_{\rm BK}^2}_{\wr}\\
     [\R\Gamma_{\crr}({X})\wh{\otimes}_{\B^+_{\crr}}{\B}^+_{\st}]^{N=0}\ar[ru]^{\kappa} \\
         \R\Gamma_{\crr}({X})\ar[u]^{\wr}_{\beta}\ar[r]^{\can}  &        \R\Gamma_{\crr}({X})/F^r\ar[uu]^{\kappa}_{\wr}.
}
\end{equation}  
 Here the map $\epsilon^{\hk}_{\st}$ is the one from \eqref{epsilon1}. It is Frobenius equivariant.  We set:  $\iota_{\rm BK}^1:=\beta^{-1}\epsilon^{\hk}_{\st}$ and $\iota_{\rm BK}^2:=\kappa^{-1}$. Since the map $\beta$ is Frobenius equivariant so is the map $\iota_{\rm BK}^1$.  By definition,   all the pieces of  the diagram commute and the maps  $\iota_{\rm BK}^1, \iota_{\rm BK}^2$ are  strict quasi-isomorphisms. 
  
    The morphism $\iota_2$ has a compatible local version. Now, the wanted local-global compatibility, via the strict quasi-isomorphisms $\iota_2$,  follows from local-global compatibility for Hyodo-Kato cohomology  and filtered $\B^+_{\dr}$-cohomology, proved in Proposition \ref{main00} and Lemma \ref{etale-descent20}, respectively.
\end{proof}

   The proof of Proposition  \ref{synt-locglob} actually shows the following:
   \begin{corollary}
   \label{presentation}
   Let $X\in{\rm Sm}_C$ and $r\geq 0$. There exist a natural strict quasi-isomorphism in $\sd(C_{\Q_p})$
   \begin{equation}
   \label{pres11}
   \rg_{\synt}(X,\Q_p(r))\simeq \big[[\rg_{\hk}(X)\wh{\otimes}_{F^{\nr}}\B^+_{\st}]^{N=0,\phi=p^r}\lomapr{\iota_{\hk}\otimes\iota}\rg_{\dr}(X/\B^+_{\dr})/F^r\big].
   \end{equation}
   \end{corollary}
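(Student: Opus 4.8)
The plan is to extract Corollary~\ref{presentation} directly from the proof of Proposition~\ref{synt-locglob} by observing that the construction of $\iota_2$ there never used the hypothesis that $X$ comes from a semistable model $\sx$: it only used the distinguished triangles defining $\rg_{\synt}(X,\Q_p(r))$ (namely~(\ref{triangle222})), the Hyodo-Kato quasi-isomorphism $\iota_{\hk}$ of Theorem~\ref{rynek2.0}, the identification $\kappa:\rg_{\crr}(X)^{\bwedge}\stackrel{\sim}{\to}\rg_{\dr}(X/\B^+_{\dr})$ from Section~\ref{defcr}, and the quasi-isomorphism $\epsilon^{\rm HK}_{\st}$ of~(\ref{epsilon1}) together with its variant $\epsilon^{\rm HK}_{\crr}$ of~(\ref{epsilon12}). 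All of these are available for an arbitrary $X\in{\rm Sm}_C$. So the proof is essentially a matter of reassembling diagram~(\ref{film1}) globally.

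First I would recall that, by definition, $\rg_{\synt}(X,\Q_p(r))=[[\rg_{\crr}(X)]^{\phi=p^r}\lomapr{\can}\rg_{\crr}(X)/F^r]$, and that by Lemma~\ref{kappa-new} (its proof) and the compatibility $\A_{\crr,\Q_p}/F^i\simeq\B^+_{\dr}/F^i$ we have a filtered strict quasi-isomorphism $\kappa:\rg_{\crr}(X)/F^r\stackrel{\sim}{\to}\rg_{\dr}(X/\B^+_{\dr})/F^r$. Next I would invoke $\epsilon^{\rm HK}_{\st}:\rg_{\hk,\breve{F}}(X)\wh{\otimes}_{\breve{F}}\B^+_{\st}\stackrel{\sim}{\to}\rg_{\crr}(X)\wh{\otimes}_{\B^+_{\crr}}\B^+_{\st}$ from~(\ref{epsilon1}), which is compatible with Frobenius and monodromy, hence induces a strict quasi-isomorphism on the $N=0$, $\phi=p^r$ eigenspaces; composing with the section $\rg_{\crr}(X)\stackrel{\sim}{\to}[\rg_{\crr}(X)\wh{\otimes}_{\B^+_{\crr}}\B^+_{\st}]^{N=0}$ (which holds because $N$ acts trivially on $\rg_{\crr}(X)$ and Frobenius is as required — this is exactly the left column of~(\ref{film1})) gives the identification $[\rg_{\crr}(X)]^{\phi=p^r}\simeq[\rg_{\hk}(X)\wh{\otimes}_{F^{\nr}}\B^+_{\st}]^{N=0,\phi=p^r}$. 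Then I would check that the square relating these identifications to the maps $\can$ on one side and $\iota_{\hk}\otimes\iota$ on the other commutes up to coherent homotopy — this is the content of diagram~(\ref{film1}), read globally via $\eta$-\'etale descent, since every arrow in it is either a structural map of the eigenspace/fiber constructions or one of $\kappa$, $\epsilon^{\hk}_{\st}$, $\iota_{\hk}$, whose compatibilities were established in Theorem~\ref{HK-crr}, Corollary~\ref{kawa1}, and Theorem~\ref{rynek2.0}. Passing to homotopy fibers of the two horizontal maps then yields~(\ref{pres11}).

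The main obstacle I anticipate is not the existence of the individual quasi-isomorphisms but the coherence: making diagram~(\ref{film1}) commute in the $\infty$-categorical sense (not just up to an unspecified homotopy) so that the induced map on homotopy fibers is well-defined and natural in $X$. This is handled as in the proof of Proposition~\ref{synt-locglob}: the maps $\iota^1_{\rm BK}$ and $\iota^2_{\rm BK}$ are \emph{defined} so as to fill the left and right triangles, and the outer pentagon commutes because its remaining edges are the defining maps of $\rg_{\synt}$ and the (sheafified) map $\kappa$, which are compatible by construction at the level of $\eta$-\'etale sheaves $F^r\sa_{\crr}$, $F^r\sa^{\bwedge}_{\crr}$, $\sa^c_{\hk}$. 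Since all constructions are performed as $\eta$-\'etale descent of functorial constructions on $\sm^{{\rm ss}}_C$ (where~(\ref{film1}) was checked in the proof of Proposition~\ref{synt-locglob}), the global statement follows by descent, and naturality in $X$ is automatic. I would close by remarking that the same argument, run with $\rg_{\dr}(X/\B^+_{\dr})$ replaced by $\rg_{\dr}(X/\B^+_{\dr})/F^r$ throughout, is literally what~(\ref{film1}) produces, so no additional work is needed beyond citing the already-proven commutativity.
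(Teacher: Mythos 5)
Your proposal is correct and is essentially the paper's own argument: the paper proves Corollary~\ref{presentation} precisely by observing that the construction of the strict quasi-isomorphism $\iota_2$ in the proof of Proposition~\ref{synt-locglob} (diagram~(\ref{film1}), built from $\kappa$, $\epsilon^{\hk}_{\st}$, and the maps $\iota^1_{\rm BK}$, $\iota^2_{\rm BK}$ defined to fill the two triangles) is carried out for the global sheafified cohomologies of $X$ and never uses a semistable model. The only nit is the citation: the identification $\rg_{\crr}(X)/F^r\simeq\rg_{\dr}(X/\B^+_{\dr})/F^r$ comes directly from the definition of $\rg_{\dr}(X/\B^+_{\dr})$ as the Hodge-completion in Section~\ref{defcr} rather than from Lemma~\ref{kappa-new}, but this does not affect the argument.
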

We like to call the expression on the right {\em the Bloch-Kato syntomic cohomology} because it resembles the definition of Bloch-Kato Selmer groups in \cite{BK}.

 \subsection{Twisted Hyodo-Kato cohomology}\label{HK11}
Let $X$ be a smooth dagger variety over $C$.  In this section we will study the twisted Hyodo-Kato 
\index{hkr@\hkr}cohomology  in $\sd(C_{\Q_p})$
\begin{equation}
  \label{lwiatko1}
  {\rm HK}(X,r):=[\R\Gamma_{\hk}(X)\wh{\otimes}_{F^{\nr}}^{\R}\wh{\B}^+_{\st}]^{N=0,\phi=p^r},\quad r\geq 0,
  \end{equation}
  where  $\R\Gamma_{\hk}(X)$ is the geometric Hyodo-Kato cohomology defined in \cite[Sec.\,4.3.1]{CN3} and
 we set in $\sd_{\phi,N}(C_{\wh{\B}^+_{\st}})$
 \begin{align*}
  \R\Gamma_{\hk}(X)\wh{\otimes}^{\R}_{{F}^{\nr}}\wh{\B}^+_{\st}:=\LL\colim((\R\Gamma_{\hk}\wh{\otimes}^{\R}_{F^{\nr}}\wh{\B}^+_{\st})(U_{\cdot})), 
 \end{align*}
 where the homotopy colimit is taken over \'etale affinoid hypercoverings  $U_{\cdot}$ from ${\rm Sm}^{\dagger}_C$.  
  We wrote $[\R\Gamma_{\hk}(X)\wh{\otimes}_{F^{\nr}}^{\R}\wh{\B}^+_{\st}]^{N=0,\phi=p^r}$ for the homotopy limit of the commutative diagram  in $\sd(C_{\Q_p})$
  $$
\xymatrix@R=6mm{
\R\Gamma_{\hk}(X)\wh{\otimes}_{F^{\nr}}^{\R}\wh{\B}^+_{\st} \ar[r]^{\phi-p^r} \ar[d]^N& \R\Gamma_{\hk}(X)\wh{\otimes}_{F^{\nr}}^{\R}\wh{\B}^+_{\st}\ar[d]^N\\
\R\Gamma_{\hk}(X)\wh{\otimes}_{F^{\nr}}^{\R}\wh{\B}^+_{\st}  \ar[r]^{p\phi-p^r} & \R\Gamma_{\hk}(X)\wh{\otimes}_{F^{\nr}}^{\R}\wh{\B}^+_{\st}.
}
$$

  The following proposition generalizes the computations from \cite[Sec.\,3.2.2]{CDN3} done in the case when $X$ has a semistable integral model over a finite extension of $K$. 
\begin{proposition}\label{ciemno0}Let $X$ be a smooth dagger variety over $C$. Let $r\geq 0$.
\begin{enumerate}
\item If $X$ is quasi-compact then the cohomology of the complex $\R\Gamma_{\hk}(X)\wh{\otimes}_{F^{\nr}}^{\R}\wh{\B}^+_{\st}$ is classical and we have an isomorphism of $(\phi,N)$-modules over $F^{\rm nr}$ 
$$
\wt{H}^i(\rg_{\hk}(X)\wh{\otimes}^{\R}_{F^{\nr}}\wh{\B}^+_{\st})\simeq H^i_{\hk}(X)\wh{\otimes}_{F^{\nr}}\wh{\B}^+_{\st},\quad i\geq 0.
$$
\item  If $X$  is quasi-compact there is a natural isomorphism
$$
\wt{H}^i({\rm HK}(X,r))\simeq (H^i_{\hk}(X)\wh{\otimes}_{F^{\nr}}\wh{\B}^+_{\st})^{N=0,\phi=p^r},\quad i\geq 0,
$$
of Banach spaces. In particular, $\wt{H}^i({\rm HK}(X,r))$ is classical. 
\item Take an  increasing admissible covering  $\{U_n\}_{n\in\N}$ of $X$ by quasi-compact dagger varieties $U_n$. Then we have a natural strict quasi-isomorphism in 
$\sd(C_{\wh{\B}^+_{\st}})$
$$
\rg_{\hk}(X)\wh{\otimes}^{\R}_{F^{\nr}}\wh{\B}^+_{\st}\stackrel{\sim}{\to} \R\wlim_n(\rg_{\hk}(U_{n,C})\wh{\otimes}^{\R}_{F^{\nr}}\wh{\B}^+_{\st}).
$$
The cohomology of $\rg_{\hk}(X)\wh{\otimes}^{\R}_{F^{\nr}}\wh{\B}^+_{\st}$ is classical and we have, for $i\geq 0$, an isomorphism of $(\phi,N)$-modules over $\wh{\B}^+_{\st}$ 
$$
\wt{H}^i(\rg_{\hk}(X)\wh{\otimes}^{\R}_{F^{\nr}}\wh{\B}^+_{\st})\simeq H^i_{\hk}(X)\wh{\otimes}^{\R}_{F^{\nr}}\wh{\B}^+_{\st}:=\wlim_n(H^i_{\hk}(U_n){\otimes}_{F^{\nr}}\wh{\B}^+_{\st}).
$$
In particular, it is a Fr\'echet space\footnote{We note that $H^i_{\hk}(U_n)$ is a finite rank vector space over $F^{\nr}$ equipped with the canonical topology.}.
\item The cohomology $\wt{H}^i([\rg_{\hk}(X)\wh{\otimes}^{\R}_{F^{\nr}}\wh{\B}^+_{\st}]^{N=0,\phi=p^r})$, $i\geq 0$,  is classical and we have natural isomorphisms in $\sd(C_{\Q_p})$
$$
{H}^i([\rg_{\hk}(X)\wh{\otimes}^{\R}_{F^{\nr}}\wh{\B}^+_{\st}]^{N=0,\phi=p^r})\simeq (H^i_{\hk}(X)\wh{\otimes}^{\R}_{F^{\nr}}\wh{\B}^+_{\st})^{N=0,\phi=p^r},\quad i\geq 0.
$$
In particular, the space ${H}^i([\rg_{\hk}(X)\wh{\otimes}^{\R}_{F^{\nr}}\wh{\B}^+_{\st}]^{N=0,\phi=p^r})$ is Fr\'echet. Moreover,
$$
{H}^i([\rg_{\hk}(X)\wh{\otimes}^{\R}_{F^{\nr}}\wh{\B}^+_{\st}]^{N=0})\simeq  (H^i_{\hk}(X)\wh{\otimes}^{\R}_{F^{\nr}}\wh{\B}^+_{\st})^{N=0}\simeq H^i_{\hk}(X)\wh{\otimes}^{\R}_{F^{\nr}}{\B}^+_{\crr},
$$
where the last isomorphism is not, in general, Galois equivariant (in the case $X$ comes from $X_K$ over $K$).
\end{enumerate}
\end{proposition}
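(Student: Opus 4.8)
\textbf{Proof plan for Proposition \ref{ciemno0}.} The strategy is a bootstrap from the quasi-compact case to the general case, running in parallel with the argument of Proposition \ref{tluczenie}, and then extracting the eigenspace computations by a careful homological analysis of the two-term ``$N=0,\phi=p^r$'' homotopy limit. The key input throughout is that $\wh{\B}^+_{\st}$ is a Banach space over $\breve{F}$ (noted in Section \ref{period-rings}), so the derived tensor products in sight are computed by honest projective tensor products with Banach spaces, and that $H^i_{\hk}(U_n)$ is a finite rank $F^{\nr}$-vector space with its canonical topology (Proposition \ref{main00}, \cite[Prop. 4.38]{CN3}), with $\phi$ a homeomorphism on it.

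First I would prove (1) exactly as in the proof of Proposition \ref{tluczenie}(1): pick an $\eta$-\'etale hypercovering $\su_{\jcdot}$ of the quasi-compact $X$ by quasi-compact models from $\sm^{\dagger,\sem,b}_C$, invoke \cite[Ex. 3.16]{CDN3} (or the semistable-model computation of \cite[Sec.\,3.2.2]{CDN3}) for each $\su_{j,C}$, and run the descent spectral sequence $E_2^{i,j}=H^i_{\hk}(\su_{j,C})\otimes_{F^{\nr}}\wh{\B}^+_{\st}\Rightarrow \wt H^{i+j}(\rg_{\hk}(X)\wh\otimes^{\R}_{F^{\nr}}\wh{\B}^+_{\st})$; since the $E_2$-terms are Banach and the $H^i_{\hk}(\su_{j,C})$ are finite rank, the spectral sequence degenerates in a controlled way and classicality follows. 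For (3), having (1), I would repeat verbatim the argument of \cite[3.26]{CDN3} / Proposition \ref{tluczenie}(2): the system $\{H^i_{\hk}(U_n)\otimes_{F^{\nr}}\wh{\B}^+_{\st}\}_n$ satisfies Mittag-Leffler (the transition maps are surjective as maps of finite-rank $F^{\nr}$-spaces base changed to a fixed Banach space), so $\R\wlim_n$ commutes with cohomology and yields the stated Fr\'echet description.

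Next, (2) and the eigenspace part of (4). The point is that on a \emph{classical} cohomology group $M:=H^i_{\hk}(X)\wh\otimes_{F^{\nr}}\wh{\B}^+_{\st}$ (quasi-compact case) the operators $N$ and $\phi-p^r$ are continuous, so I would show that the homotopy limit ${\rm HK}(X,r)$ has classical cohomology by computing $\wt H^i$ as the honest kernel $\ker(N,\phi-p^r)$ on the $M$'s together with a vanishing of the relevant $\wt H^1$-type obstruction. The cleanest way: use the Breuil-type exact sequence (\ref{Breuil}), i.e. $0\to \A_{\crr}\to\wh{\A}_{l,\st}\xrightarrow{N_l}\wh{\A}_{l,\st}\to 0$ rationally, to identify $[\rg_{\hk}(X)\wh\otimes^{\R}_{F^{\nr}}\wh{\B}^+_{\st}]^{N=0}$ with $\rg_{\hk}(X)\wh\otimes^{\R}_{F^{\nr}}\B^+_{\crr}$ (the last displayed isomorphism of (4), Galois-equivariance failing exactly because of the splitting choice, as in the proof of Corollary \ref{local1}); then classicality of $\wt H^i$ and the identification with $(H^i_{\hk}(X)\wh\otimes_{F^{\nr}}\B^+_{\crr})^{\cdots}$ reduces to the analogous statement for a single Banach/Fr\'echet coefficient module, which follows from (1),(3) plus the fact that $\phi$ acts invertibly on $H^i_{\hk}(X)$ (so $\phi-p^r$ has closed image on the finite-rank pieces, hence on the $\R\wlim$). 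The eigenspace under $\phi=p^r$ is then cut out by a continuous operator on a Fr\'echet space with closed image, so it is Fr\'echet and classical. For (2) (quasi-compact case) the same computation with $\wh{\B}^+_{\st}$ in place of $\B^+_{\crr}$ gives Banach-ness.

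The main obstacle I anticipate is the interchange of $(-)^{N=0,\phi=p^r}$ with $\LL\colim_h$ / $\R\wlim_n$ and with cohomology --- i.e. showing that the homotopy limit defining ${\rm HK}(X,r)$ does not introduce non-classical (non-Hausdorff) $H^1$-contributions. This is precisely where one needs: (a) $\phi$ invertible on Hyodo-Kato cohomology so that $\phi-p^r$ has closed image on each finite-rank graded piece; (b) the Mittag-Leffler property of the relevant inverse systems so $\R\wlim_n$ is exact on them; and (c) the Banach-ness of $\wh{\B}^+_{\st}$ and $\B^+_{\crr}$ so the projective tensor products are well-behaved and the Open Mapping Theorem (as in Lemma \ref{trick1}) applies. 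Assembling (a)--(c) into a clean statement that $\ker$ and $\coker$ of a two-variable ``$N,\phi-p^r$'' system stay classical is the technical heart; everything else is bookkeeping parallel to \cite[Sec.\,3.2.2]{CDN3} and Proposition \ref{tluczenie}.
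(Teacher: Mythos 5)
Your plan is correct and follows essentially the same route as the paper: the paper disposes of (1) and (3) by observing that $\wh{\B}^+_{\st}$ is a Banach space over $\breve{F}$ so that Proposition \ref{tluczenie} applies verbatim, and then deduces (2) from (1) as in \cite[Lemma 3.20]{CDN3} and (4) from (3) as in \cite[Lemma 3.28]{CDN3} --- which is exactly the eigenspace analysis (surjectivity of $N$ via the Breuil sequence, nilpotence of $N$ on Hyodo-Kato cohomology, strictness of $\phi-p^r$ from the finite-rank $\phi$-module structure, and Mittag--Leffler for the $\R\wlim_n$) that you reconstruct in your second and third paragraphs.
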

\begin{proof}Since $\wh{\B}^+_{\st}$ is a Banach space over $\breve{F}$, claims (1) and (3) are a special case of Proposition \ref{tluczenie}. 
 Claim (2)  follows from (1) just as in the proof of \cite[Lemma 3.20]{CDN3}. Finally, claim (4) follows from (3) is proved as in \cite[Lemma 3.28]{CDN3}.
 \end{proof}
 \subsubsection{A variant of the twisted Hyodo-Kato cohomology} 
There is a variant of the twisted Hyodo-Kato 
\index{hkr@\hkr}cohomology  in $\sd(C_{\Q_p})$
$$
 \wt{\rm HK}(X,r):=[\R\Gamma_{\hk}(X)\wh{\otimes}_{F^{\nr}}^{\R}{\B}^+_{\st}]^{N=0,\phi=p^r},\quad r\geq 0,
$$
  that we will often use. 
 Here  we set in $\sd_{\phi,N}(C_{\B^+_{\st}})$
 \begin{align*}
  \R\Gamma_{\hk}(X)\wh{\otimes}^{\R}_{{F}^{\nr}}{\B}^+_{\st}:=\LL\colim((\R\Gamma_{\hk}\wh{\otimes}_{F^{\nr},\iota}{\B}^+_{\st})(U_{\cdot})), 
 \end{align*}
 where the homotopy colimit is taken over \'etale affinoid hypercoverings  $U_{\cdot}$ from ${\rm Sm}^{\dagger}_C$. We have  in $\sd_{\phi,N}(C_{\B^+_{\st}})$
  $$
 \rg_{\hk}(X)\wh{\otimes}_{F^{\nr},\iota}\B^+_{\st}\simeq \LL\colim_h(\rg_{\hk}(X_h)\wh{\otimes}^{\R}_{F^{\nr},\iota}\B^+_{\st}),
  $$
  where $\{X_h\}$ is the presentation of $X$. It is easy to check that this tensor product satisfies local-global compatibility.
  \begin{lemma} \label{cicho1}Let $X\in {\rm Sm}^{\dagger}_C$. The canonical morphism in $\sd_{\phi,N}(C_{F^{\rm nr}})$
  $$
   \wt{\rm HK}(X,r)\to  {\rm HK}(X,r),\quad r\geq 0.
  $$
  is a strict quasi-isomorphism. 
  \end{lemma}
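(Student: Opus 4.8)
The plan is to reduce the statement to the comparison between the two period rings $\B^+_{\st}$ (which is $\B^+_{\crr}[\log([\wt p])]$) and $\wh\B^+_{\st}$ (the Banach completion $\A_{\crr}\langle t_p[\wt p]^{-1}-1\rangle^{\bwedge}[\tfrac1p]$), linked by the $\B^+_{\crr}$-linear map $\kappa$ sending $\log([\wt p])\mapsto -\log(t_p[\wt p]^{-1})$. Since both $\rg_{\hk}(X)\wh\otimes^{\R}_{F^{\nr},\iota}\B^+_{\st}$ and $\rg_{\hk}(X)\wh\otimes^{\R}_{F^{\nr}}\wh\B^+_{\st}$ are built locally, via presentations and $\eta$-\'etale hypercoverings, out of $\rg_{\hk}(X_h)\wh\otimes^{\R}_{F^{\nr},\iota}\B^+_{\st}$ and $\rg_{\hk}(X_h)\wh\otimes^{\R}_{F^{\nr}}\wh\B^+_{\st}$, and both tensor products satisfy local-global compatibility, it suffices to produce the morphism and check it is a strict quasi-isomorphism on a smooth dagger affinoid $X$ with presentation $\{X_h\}$, hence (by commuting with $\LL\colim_h$) on each $X_h$, hence on a basic semistable model $\sx\in\sm^{{\rm ss},b}_C$ where $\rg_{\hk}(X_h)\simeq\rg_{\hk}(\sx^0_1)_{\Q_p}$.

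First I would recall from Remark~\ref{identification} that for $M=\rg_{\hk}(\sx^0_1)_{\Q_p}$ there are strict quasi-isomorphisms
$$
M\wh\otimes_{\breve F,\iota}\B^+_{\st}\ \xleftarrow{\ \sim\ }\ (M\wh\otimes_{\breve F,\iota}\B^+_{\st})^{N-{\rm nilp}}\ \xrightarrow{\ \sim\ }\ (M\wh\otimes^{\R}_{\breve F}\wh\B^+_{l,\st})^{N-{\rm nilp}},
$$
the point being that $N$ may be taken globally nilpotent on $M$ (Mokrane). The map $\kappa_l$ identifies $\B^+_{\st}$ with $(\wh\A_{l,\st})^{N_l-{\rm nilp}}_{\Q_p}$ and intertwines $N$ with $r\,t_a\partial_{t_a}$; the inclusion $(\wh\A_{l,\st})^{N_l-{\rm nilp}}\hookrightarrow\wh\A_{l,\st}$ then induces the comparison map. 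Thus $\wt{\rm HK}(X,r)\to{\rm HK}(X,r)$ is, after applying $[-]^{N=0,\phi=p^r}$ and passing through these identifications, induced by the inclusion of the $N_l$-nilpotent part into all of $\wh\B^+_{l,\st}$. The key input is then the exact sequence~(\ref{Breuil}), $0\to\A_{\crr,n}\to\wh\A_{l,\st,n}\xrightarrow{N_l}\wh\A_{l,\st,n}\to0$, which rationally gives $\wh\B^+_{l,\st}{}^{N_l=0}=\B^+_{\crr}$ and, more to the point, shows that $[\wh\B^+_{l,\st}\xrightarrow{N_l}\wh\B^+_{l,\st}]\simeq\B^+_{\crr}\simeq[(\wh\B^+_{l,\st})^{N_l-{\rm nilp}}\xrightarrow{N_l}(\wh\B^+_{l,\st})^{N_l-{\rm nilp}}]$; hence the inclusion of $N_l$-nilpotent parts becomes a quasi-isomorphism after taking derived $N=0$-invariants. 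Concretely: both $[\rg_{\hk}(X)\wh\otimes^{\R}_{F^{\nr}}(-)]^{N=0}$ compute $\rg_{\hk}(X)\wh\otimes^{\R}_{F^{\nr}}\B^+_{\crr}$ (compare Proposition~\ref{ciemno0}(4) and the analogous statement with $\B^+_{\st}$ in place of $\wh\B^+_{\st}$, which follows from Theorem~\ref{HK-crr} / Corollary~\ref{local1}), compatibly, and then imposing $\phi=p^r$ on both sides preserves the quasi-isomorphism since it is a further homotopy fiber of the same operator on quasi-isomorphic complexes.

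For the topological (strictness) part I would argue as in Proposition~\ref{ciemno0}: when $X$ is quasi-compact, $\wt H^i(\rg_{\hk}(X)\wh\otimes^{\R}_{F^{\nr}}\B^+_{\st})\simeq H^i_{\hk}(X)\wh\otimes_{F^{\nr}}\B^+_{\st}$ is classical by Proposition~\ref{tluczenie} (applied with the Banach space $\B^{\leq r}_{\st}$ and passing to the colimit, using that $H^i_{\hk}(X)$ is finite rank so the colimit stabilizes degreewise), and similarly for $\wh\B^+_{\st}$; the comparison map on cohomology is then $H^i_{\hk}(X)\wh\otimes(\B^+_{\st}\to\wh\B^+_{\st})$ followed by taking $N=0,\phi=p^r$, and one checks it is an isomorphism of Banach spaces using the explicit description of both period rings and~(\ref{Breuil}). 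The general case follows by writing $X$ as an increasing union of quasi-compact pieces and applying $\R\wlim$, exactly as in Proposition~\ref{ciemno0}(3)--(4), the Mittag-Leffler condition being satisfied for the same reason.

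\textbf{Main obstacle.} I expect the delicate point to be not the algebra but the bookkeeping of tensor products: $\B^+_{\st}$-tensor products are defined via $\B^{\leq r}_{\st}$ and an inductive colimit (the one place in the paper using inductive tensor products, cf.~(\ref{def10})), whereas $\wh\B^+_{\st}$-tensor products are honest projective tensor products with a Banach space; one must check the $N$-nilpotent identifications of Remark~\ref{identification} are genuinely compatible with $[-]^{N=0,\phi=p^r}$ at the level of complexes (not just cohomology), and that taking $\LL\colim_h$ over a presentation commutes with all of $N=0$, $\phi=p^r$, and the passage to $N_l$-nilpotent parts. This is the kind of routine-but-treacherous verification that the paper handles by reducing everything to the affinoid/model level and invoking local-global compatibility, so I would structure the proof to do exactly that and isolate the period-ring computation~(\ref{Breuil}) as the only nontrivial ingredient.
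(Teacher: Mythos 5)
Your proposal is correct and follows essentially the same route as the paper: reduce to the derived $N=0$ part, localize to a dagger affinoid and its presentation via Lemma~\ref{passage} (local-global compatibility of the tensor products), and then settle the rigid analytic case for each $X_h$ by the $N$-nilpotent comparison of Remark~\ref{identification} (i.e., the argument for~(\ref{Paris-hot}), resting on global nilpotency of $N$ on Hyodo–Kato cohomology and the Banach structure of $\wh{\B}^+_{l,\st}$). Your extra detail on the Breuil sequence~(\ref{Breuil}) and on strictness via Proposition~\ref{ciemno0} only makes explicit what the paper compresses into the citation of that remark.
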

  \begin{proof} It suffices to show that the canonical morphism
  $$
  [\R\Gamma_{\hk}(X)\wh{\otimes}_{F^{\nr}}^{\R}{\B}^+_{\st}]^{N=0}\to [\R\Gamma_{\hk}(X)\wh{\otimes}_{F^{\nr}}^{\R}\wh{\B}^+_{\st}]^{N=0}
  $$
  is a strict quasi-isomorphism. For that, from the definitions of both sides, we can assume that $X$ is a dagger affinoid. Then this map can be rewritten as 
  $$
    [\R\Gamma_{\hk}(X)\wh{\otimes}_{F^{\nr},\iota}{\B}^+_{\st}]^{N=0}\to [\R\Gamma_{\hk}(X)\wh{\otimes}_{F^{\nr}}\wh{\B}^+_{\st}]^{N=0},
$$
which, by Lemma \ref{passage}, can be written as 
$$
\LL\colim_h([\R\Gamma_{\hk}(X_h)\wh{\otimes}_{F^{\nr},\iota}{\B}^+_{\st}]^{N=0})\to \LL\colim_h([\R\Gamma_{\hk}(X_h)\wh{\otimes}_{F^{\nr}}\wt{\B}^+_{\st}]^{N=0}),
$$
for the presentation $\{X_h\}$ of $X$. But this map  is a strict quasi-isomorphism because so is the canonical map
$$
[\R\Gamma_{\hk}(X_h)\wh{\otimes}_{F^{\nr},\iota}{\B}^+_{\st}]^{N=0}\to [\R\Gamma_{\hk}(X_h)\wh{\otimes}_{F^{\nr}}\wt{\B}^+_{\st}]^{N=0},
$$
by the same argument as the one used to show   (\ref{Paris-hot}).
  \end{proof}
\subsection{$\B^+_{\dr}$-cohomology}   Let $X$ be a smooth dagger variety over $C$.  In this section we will study the filtered $\B^+_{\dr}$-cohomology $\rg_{\dr}(X/\B^+_{\dr})$ and its 
\index{drr@\drr}quotients 
$${\rm DR}(X,r):=\rg_{\dr}(X/\B^+_{\dr})/F^r,\quad r\geq 0.
$$
We note that, immediately from the distinguished triangle (\ref{detail11}), we obtain
\begin{lemma}\label{prison-break11}Let $X$ be a smooth dagger variety over $C$. 
Let $r\geq 0$. We have a distinguished triangle in $\sd\sff(C_{\B^+_{\dr}})$
$$
{\rm DR}(X,r-1)\lomapr{t} {\rm DR}(X,r)\lomapr{\vartheta} \rg_{\dr}(X)/F^r
$$
\end{lemma}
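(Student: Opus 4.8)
The plan is to deduce the triangle directly from Proposition~\ref{prison} by a $3\times 3$ (octahedral) argument, without any further geometric input.

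First I would assemble the morphism of distinguished triangles given by multiplication by $t$, relating the two defining triangles of $\mathrm{DR}(X,r-1)$ and $\mathrm{DR}(X,r)$:
$$
\xymatrix@R=6mm{
F^{r-1}\rg_{\dr}(X/\B^+_{\dr})\ar[r]\ar[d]^-{t} & \rg_{\dr}(X/\B^+_{\dr})\ar[r]\ar[d]^-{t} & \mathrm{DR}(X,r-1)\ar[d]^-{t}\\
F^{r}\rg_{\dr}(X/\B^+_{\dr})\ar[r] & \rg_{\dr}(X/\B^+_{\dr})\ar[r] & \mathrm{DR}(X,r)
}
$$
Here the left square commutes because $t$ is compatible with the inclusions $F^{r-1}\subset F^r$, and the right vertical arrow is the map induced on cofibers of the horizontal maps, which is exactly the morphism $t\colon \mathrm{DR}(X,r-1)\to\mathrm{DR}(X,r)$ appearing in the statement.

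Next I would pass to the cofibers of the three vertical maps. Since cofiber sequences are stable under finite colimits in the ambient derived $\infty$-category (the $3\times 3$ lemma, i.e.\ the octahedral axiom applied twice), one obtains a distinguished triangle
$$
\Cone\bigl(t\colon F^{r-1}\rg_{\dr}(X/\B^+_{\dr})\to F^{r}\rg_{\dr}(X/\B^+_{\dr})\bigr)\to \Cone\bigl(t\colon \rg_{\dr}(X/\B^+_{\dr})\to\rg_{\dr}(X/\B^+_{\dr})\bigr)\to \Cone\bigl(t\colon \mathrm{DR}(X,r-1)\to\mathrm{DR}(X,r)\bigr).
$$
By the triangle~(\ref{detail11}) the first term is $F^r\rg_{\dr}(X)$, and by Proposition~\ref{prison}(1)(a) (where the tensor product $\wh{\otimes}^{\R}_{\B^+_{\dr}}C$ is defined as the cone of $t$) the second term is $\rg_{\dr}(X)$; moreover these identifications are compatible, both cone maps being induced by $\vartheta$. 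Hence the third cofiber is identified with $\Cone\bigl(F^r\rg_{\dr}(X)\hookrightarrow\rg_{\dr}(X)\bigr)=\rg_{\dr}(X)/F^r$, and chasing the identifications shows that the resulting map $\mathrm{DR}(X,r)\to\rg_{\dr}(X)/F^r$ is the one induced by $\vartheta$, which is what the lemma asserts.

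There is essentially no real obstacle: the whole argument is a formal diagram chase. The one point requiring a little care is the bookkeeping identifying the connecting map of the new triangle with $\vartheta$ modulo $F^r$, and this follows from the naturality of the $3\times 3$ construction together with the compatibility of the maps $\vartheta$ at filtration level $r$ and at level $\infty$ already recorded in Proposition~\ref{prison}. In particular, since everything is carried out at the level of distinguished triangles in the derived $\infty$-category, no issue of strictness arises.
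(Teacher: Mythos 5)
Your proof is correct and is exactly the argument the paper has in mind: the paper states the lemma as following "immediately from the distinguished triangle (\ref{detail11})", and your $3\times 3$/octahedral deduction from the two triangles of Proposition~\ref{prison}(1)(a)--(b) is precisely that immediate deduction, spelled out. The one point you flag — identifying the induced map on cofibers with $\vartheta$ via the compatibility of $\vartheta$ at filtration level $r$ and on the full complex — is indeed the only bookkeeping needed, and it is supplied by Proposition~\ref{prison}.
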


    By Theorem \ref{HK-dagger2},  we have the strict quasi-isomorphism in $\sd\sff(C_{\B^+_{\dr}})$
 \begin{equation}
 \label{comp1}
 \iota_{\hk}:\quad \rg_{\hk,\breve{F}}(X)\wh{\otimes}^{\R}_{\breve{F}}\B^+_{\dr}\stackrel{\sim}{\to} \rg_{\dr}(X/\B^+_{\dr}).
 \end{equation}
It yields the following computation: 
\begin{proposition}\label{tluczenie1}
Let $X$ be a smooth dagger variety over $C$. 
\begin{enumerate}
\item If $X$ is quasi-compact then the cohomology of the complex $\R\Gamma_{\dr}(X/\B^+_{\dr})$ is classical and we have $$
\wt{H}^i(\R\Gamma_{\dr}(X/\B^+_{\dr}))\simeq H^i_{\hk,\breve{F}}(X)\wh{\otimes}_{\breve{F}}\B^+_{\dr},\quad i\geq 0.
$$
\item Take an  increasing admissible covering  $\{U_n\}_{n\in\N}$ of $X$ by quasi-compact dagger varieties $U_n$. Then we have a natural strict quasi-isomorphism  in $\sd\sff(C_{\B^+_{\dr}})$
$$
\R\Gamma_{\dr}(X/\B^+_{\dr})\stackrel{\sim}{\to} \R\wlim_n\R\Gamma_{\dr}(U_n/\B^+_{\dr}).
$$
The cohomology of $\R\Gamma_{\dr}(X/\B^+_{\dr})$ is classical and we have, for $i\geq 0$, 
$$
\wt{H}^i(\R\Gamma_{\dr}(X/\B^+_{\dr}))\simeq H^i_{\hk,\breve{F}}(X)\wh{\otimes}^{\R}_{\breve{F}}\B^+_{\dr}\simeq \wlim_n(H^i_{\hk,\breve{F}}(U_n){\otimes}_{\breve{F}}\B^+_{\dr}).
$$
In particular, it is a Fr\'echet space\footnote{Recall that $H^i_{\hk,\breve{F}}(U_n)$ is a finite rank vector space over $\breve{F}$ equipped with the canonical topology.}.
\end{enumerate}
\end{proposition}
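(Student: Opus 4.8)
The statement (Proposition~\ref{tluczenie1}) is the $\B^+_{\dr}$-analog of Proposition~\ref{tluczenie}, and the strategy is to transport everything along the overconvergent Hyodo-Kato quasi-isomorphism~(\ref{comp1}) and then invoke Proposition~\ref{tluczenie}. First I would observe that $\B^+_{\dr}$ is not a Banach space over $\breve F$, so Proposition~\ref{tluczenie} does not apply verbatim; but $\B^+_{\dr}=\R\wlim_r(\B^+_{\dr}/F^r)$ and each quotient $\B^+_{\dr}/F^r$ is a Banach space over $\breve F$ (in fact a finite successive extension of copies of $C$). So the plan is: (a) handle the $t$-adic quotients using Proposition~\ref{tluczenie} applied with $W=\B^+_{\dr}/F^r$, then (b) pass to the limit over $r$ using the $t$-completeness statement of Proposition~\ref{prison}(3), and finally (c) use the Hyodo-Kato comparison~(\ref{comp1}) to rewrite the answer in terms of $H^i_{\hk,\breve F}(X)$.

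\textbf{Step (1): the quasi-compact case.} Let $X$ be quasi-compact. By~(\ref{comp1}) it is enough to prove the classicality statement and compute $\wt H^i$ for $\rg_{\hk,\breve F}(X)\wh\otimes^{\R}_{\breve F}\B^+_{\dr}$. Here I would use the strict quasi-isomorphism $\rg_{\hk,\breve F}(X)\wh\otimes^{\R}_{\breve F}\B^+_{\dr}\stackrel{\sim}{\to}\R\wlim_r(\rg_{\hk,\breve F}(X)\wh\otimes^{\R}_{\breve F}(\B^+_{\dr}/F^r))$ (the same Mittag-Leffler/projective-tensor-product argument as in the proof of Proposition~\ref{prison}(3) and Lemma~\ref{sroka1}, using that the transition maps $\B^+_{\dr}/F^{r+1}\to\B^+_{\dr}/F^r$ are surjective so the system is ML). For each fixed $r$, Proposition~\ref{tluczenie}(1) (applied with the Banach $\breve F$-space $W=\B^+_{\dr}/F^r$, after replacing $\rg_{\hk}(-)\wh\otimes_{F^{\nr}}$ by $\rg_{\hk,\breve F}(-)\wh\otimes_{\breve F}$ via~(\ref{beta22})) gives $\wt H^i(\rg_{\hk,\breve F}(X)\wh\otimes^{\R}_{\breve F}(\B^+_{\dr}/F^r))\simeq H^i_{\hk,\breve F}(X)\wh\otimes_{\breve F}(\B^+_{\dr}/F^r)$, which is classical, and these are Banach spaces with $W$-linear transition maps. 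Since $H^i_{\hk,\breve F}(X)$ is a finite rank $\breve F$-vector space, the system $\{H^i_{\hk,\breve F}(X)\wh\otimes_{\breve F}(\B^+_{\dr}/F^r)\}_r$ satisfies Mittag-Leffler (surjective transition maps), so $\R\wlim_r$ computes the honest $\wlim$ and no $\wlim^1$ appears; the resulting $\wt H^i$ is classical (the cohomology of a $\R\wlim$ of uniformly bounded ML systems of classical complexes of Banach spaces is classical, as in the arguments of~\cite[Sec.\,3.2.2]{CDN3}), and equals $\wlim_r(H^i_{\hk,\breve F}(X)\wh\otimes_{\breve F}(\B^+_{\dr}/F^r))\simeq H^i_{\hk,\breve F}(X)\wh\otimes_{\breve F}\B^+_{\dr}$. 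This proves claim~(1).

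\textbf{Step (2): the general (Stein-type) case.} For a general smooth dagger $X$ with an increasing admissible covering $\{U_n\}$ by quasi-compact dagger varieties, I would first establish the strict quasi-isomorphism $\rg_{\dr}(X/\B^+_{\dr})\stackrel{\sim}{\to}\R\wlim_n\rg_{\dr}(U_n/\B^+_{\dr})$. By~(\ref{comp1}) and its compatibility with restriction maps (from the preceding Lemma), this reduces to $\rg_{\hk,\breve F}(X)\wh\otimes^{\R}_{\breve F}\B^+_{\dr}\stackrel{\sim}{\to}\R\wlim_n(\rg_{\hk,\breve F}(U_{n})\wh\otimes^{\R}_{\breve F}\B^+_{\dr})$. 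Modulo $F^r$ this is Proposition~\ref{tluczenie}(2) (with $W=\B^+_{\dr}/F^r$), and then one takes $\R\wlim_r$ of both sides, interchanging the two $\R\wlim$'s; the $t$-completeness from Step (1) (on $X$ and on each $U_n$) identifies the limits with the $\B^+_{\dr}$-cohomologies. For the cohomology computation: by Step (1) applied to each $U_n$, $\wt H^i(\rg_{\dr}(U_n/\B^+_{\dr}))\simeq H^i_{\hk,\breve F}(U_n)\wh\otimes_{\breve F}\B^+_{\dr}$, which is classical, and the system $\{H^i_{\hk,\breve F}(U_n)\wh\otimes_{\breve F}\B^+_{\dr}\}_n$ satisfies Mittag-Leffler because $\{H^i_{\hk,\breve F}(U_n)\}_n$ does (this is recorded in the discussion before Proposition~\ref{tluczenie}, and $\wh\otimes_{\breve F}\B^+_{\dr}$ preserves ML surjectivity). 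Hence the $\R\wlim$-spectral sequence degenerates to $\wt H^i(\R\Gamma_{\dr}(X/\B^+_{\dr}))\simeq\wlim_n(H^i_{\hk,\breve F}(U_n)\wh\otimes_{\breve F}\B^+_{\dr})=:H^i_{\hk,\breve F}(X)\wh\otimes^{\R}_{\breve F}\B^+_{\dr}$, which is classical and Fréchet (a countable inverse limit of finite-dimensional-over-$\breve F$, hence Banach over $\B^+_{\dr}$, spaces with dense-image transition maps).

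\textbf{Main obstacle.} The delicate point is the double limit interchange and the control of $\wlim^1$-terms: one must be careful that $\R\wlim_r$ and $\R\wlim_n$ commute and that neither produces a non-classical (non-Hausdorff) contribution. This is handled exactly as in~\cite[Sec.\,3.2.2, 3.26, 3.28]{CDN3} and in the proof of Proposition~\ref{tluczenie}: the key inputs are that all the systems in sight are Mittag-Leffler (because $H^i_{\hk,\breve F}(U_n)$ is finite rank over $\breve F$ and the $\B^+_{\dr}/F^r$ are finite extensions of $C$), that tensoring a ML system with a Banach space over $\breve F$ stays ML, and that a uniformly bounded $\R\wlim$ of a ML system of complexes of Banach spaces with classical cohomology again has classical cohomology. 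Once this bookkeeping is in place, claims~(1) and~(2) follow formally from Proposition~\ref{tluczenie} and Theorem~\ref{HK-dagger2}.
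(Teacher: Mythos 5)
Your proof is correct, and it follows the paper's overall strategy (transport along the Hyodo--Kato quasi-isomorphism~(\ref{comp1}) and reduce to Proposition~\ref{tluczenie}); but the key technical device you use to deal with the fact that $\B^+_{\dr}$ is only Fr\'echet, not Banach, is genuinely different from the paper's. The paper uses the topological splitting $\B^+_{\dr}\simeq\prod_{k\geq 0}Ct^k$ in $C_{\breve F}$, so that $\rg_{\hk,\breve F}(X)\wh\otimes^{\R}_{\breve F}\B^+_{\dr}\simeq\prod_{k\geq 0}(\rg_{\hk,\breve F}(X)\wh\otimes^{\R}_{\breve F}Ct^k)$; since cohomology commutes with countable products and each factor is covered by Proposition~\ref{tluczenie} with the Banach space $W=C$ (after passing to the Grosse-Kl\"onne version via Lemma~\ref{passage}), both claims follow in one stroke, with no $\wlim^1$ or Mittag--Leffler analysis needed. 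You instead exhaust $\B^+_{\dr}$ by its filtration quotients $\B^+_{\dr}/F^r$, apply Proposition~\ref{tluczenie} with $W=\B^+_{\dr}/F^r$, and then pass to $\R\wlim_r$ (and, in part~(2), interchange with $\R\wlim_n$), which forces you to verify $t$-completeness of the Hyodo--Kato side and track Mittag--Leffler conditions throughout. Your bookkeeping is sound -- the systems are indeed ML because $H^i_{\hk,\breve F}(U_n)$ has finite rank and the transition maps $\B^+_{\dr}/F^{r+1}\to\B^+_{\dr}/F^r$ are surjective -- so the argument goes through; but note that the product decomposition buys you all of this for free (a product is in particular an ML inverse limit with split surjective transitions), which is why the paper's proof is three lines. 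If you keep your route, you should state explicitly that the $t$-completeness $\rg_{\hk,\breve F}(X)\wh\otimes^{\R}_{\breve F}\B^+_{\dr}\simeq\R\wlim_r(\rg_{\hk,\breve F}(X)\wh\otimes^{\R}_{\breve F}(\B^+_{\dr}/F^r))$ is being proved on the Hyodo--Kato side (Proposition~\ref{prison}(3) as stated concerns the de Rham side), e.g.\ by the degreewise projective-tensor-product argument of Lemma~\ref{sroka1} -- or simply observe that it too follows from $\B^+_{\dr}\simeq\prod_{k}Ct^k$.
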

\begin{proof}
Using the Hyodo-Kato morphism (\ref{comp1}), we may pass to the computation of the cohomology of the complex $\rg_{\hk,\breve{F}}(X)\wh{\otimes}^{\R}_{\breve{F}}\B^+_{\dr}$. 
Since $\B^+_{\dr}\simeq\prod_{k\geq 0}Ct^k$ in $C_{\breve{F}}$, we have
$$\rg_{\hk,\breve{F}}(X)\wh{\otimes}^{\R}_{\breve{F}}\B^+_{\dr}\simeq \prod_{k\geq 0}(\rg_{\hk,\breve{F}}(X)\wh{\otimes}^{\R}_{\breve{F}}Ct^k)$$ and we can use Lemma \ref{passage} to pass to  $\rg^{\rm GK}_{\hk,\breve{F}}(X)\wh{\otimes}^{\R}_{\breve{F}}\B^+_{\dr}$. Then the proof of Proposition \ref{tluczenie} goes through.
\end{proof}
\subsubsection{Varieties over  $K$}  \label{sroka11} Before studying filtrations on $\B^+_{\dr}$-cohomology we will look more carefully  at the example of varieties defined over $K$. 

  Recall that (see \cite[Sec.\,5.1]{CN3}), for a smooth dagger variety $X$ over $L$, $L=K,C$,   the de Rham  cohomology $\wt{H}^i_{\dr}(X)$ is classical. 
If $X$ is quasi-compact it  is a finite dimensional $L$-vector space with its natural topology. For a general $X$, 
it is a limit in $C_{\Q_p}$ of finite dimensional $L$-vector spaces (hence a Fr\'echet space).

  Let $X\in {\rm Sm}^{\dagger}_K$. By Proposition \ref{prison}, we have the  strict quasi-isomorphisms  in $\sd\sff(C_{\B^+_{\dr}})$
\begin{align*}
\iota_{\rm BK}:\quad \R\Gamma_{\dr}(X)\wh{\otimes}_{K}^{\R}{\B}^+_{\dr} & \stackrel{\sim}{\to} \rg_{\dr}(X_C/\B^+_{\dr}),\\
  {\rm DR}(X_C,r) & \simeq (\R\Gamma_{\dr}(X)\wh{\otimes}_{K}^{\R}\B^+_{\dr})/F^r.
\end{align*}
 (i) {\em Example: Stein varieties over $K$.}
  Assume that $X$  is Stein. We easily see  that in $\sd(C_K)$
    \begin{align}
    \label{derham11}
     F^r(\R\Gamma_{\dr}(X)  \wh{\otimes}^{\R}_{K}\B^+_{\dr}) &  \simeq F^r(\Omega\kr({X})\wh{\otimes}_{K}\B^+_{\dr})
\\\notag & =(\so({X})\wh{\otimes}_{K} F^r\B^+_{\dr}\to\Omega^1({X})\wh{\otimes}_{K} F^{r-1}\B^+_{\dr}\to \cdots)\\\notag
      {\rm DR}(X_C,r)    = (\R\Gamma_{\dr}(X) & \wh{\otimes}^{\R}_{K}  \B^+_{\dr})/F^r  \simeq (\Omega\kr({X})\wh{\otimes}_{K}\B^+_{\dr})/F^r\\  
   = (\so({X}) & \wh{\otimes}_K (\B^+_{\dr}/F^r)\to
      \Omega^1({X})\wh{\otimes}_K(\B^+_{\dr}/F^{r-1})\to\cdots 
        \to  \Omega^{r-1}({X})\wh{\otimes}_K(\B^+_{\dr}/F^{1})).\notag
  \end{align}
   In low degrees we have
\begin{align*}
{\rm DR}(X_C,0) &=0,\quad 
      {\rm DR}(X_C,1)\simeq \so({X})\wh{\otimes}_K C,\\
      {\rm DR}(X_C,2) & \simeq (\so({X})\wh{\otimes}_K(\B^+_{\dr}/F^2)\to
      \Omega^1({X})\wh{\otimes}_K C).
      \end{align*}
 Recall that, because $X$ is Stein,  the de Rham complex is built from Fr\'echet spaces and it has strict differentials.   Arguing just as in \cite[Ex. 3.30]{CDN3} it follows that: 
 \begin{enumerate}
 \item the complexes  $   F^r(\R\Gamma_{\dr}(X)  \wh{\otimes}^{\R}_{K}\B^+_{\dr}) $ and  ${\rm DR}(X_C,r)$ are  built from Fr\'echet spaces;
 \item  their  differentials are  strict;
 \item  and 
 the cohomologies  $   \wt{H}^iF^r(\R\Gamma_{\dr}(X)  \wh{\otimes}^{\R}_{K}\B^+_{\dr}) $ and $ \wt{H}^i{\rm DR}(X_C,r)$ are classical and Fr\'echet. 
\end{enumerate}
 (ii) {\em Example: Affinoids over $K$.}  Assume now that  $X$ is an affinoid.   Then the computation is a bit more complicated because the spaces $\Omega^i(X)$ and $\B^+_{\dr}$ (an $LB$-space and a Fr\'echet space, respectively) do not work together well with tensor products. However, if we use the fact that $\B^+_{\dr}\simeq\prod_{k\geq 0}Ct^k$ in  $\sd(C_K)$, we get the strict quasi-isomorphisms 
 $$
 \Omega^i({X})\wh{\otimes}_{K}\B^+_{\dr}\stackrel{\sim}{\to} \Omega^i({X})\wh{\otimes}^{\R}_{K}\B^+_{\dr},
 $$
 which implies the strict quasi-isomorphisms 
 from (\ref{derham11}). 
 
       Then, arguing just as in \cite[Ex. 3.30]{CDN3}, 
one shows that  the cohomology  $ \wt{H}^i{\rm DR}(X_C,r)$ is classical and that  it is an ${LB}$-space. Also, we easily see that the differentials in the complex 
 $   F^r(\R\Gamma_{\dr}(X)\wh{\otimes}^{\R}_{K}\B^+_{\dr}) $  are  strict; hence 
 the cohomology  $   \wt{H}^iF^r(\R\Gamma_{\dr}(X)  \wh{\otimes}^{\R}_{K}\B^+_{\dr}) $ is  classical.

  (iii) {\em General varieties over $K$.} The following computation can be done in the same way as the computation in Proposition \ref{tluczenie1}. 
\begin{proposition}Let $X$ be a smooth dagger variety over $K$.
\begin{enumerate}
\item If $X$ is quasi-compact then the cohomology of the complex $\R\Gamma_{\dr}(X)\wh{\otimes}_{K}^{\R}{\B}^+_{\dr}$ is classical and we have 
\begin{equation}
\label{new1}
\wt{H}^i(\rg_{\dr}(X)\wh{\otimes}^{\R}_{K}{\B}^+_{\dr})\simeq H^i_{\dr}(X)\wh{\otimes}_{K}{\B}^+_{\dr},\quad i\geq 0.
\end{equation}
\item Take an  increasing admissible covering  $\{U_n\}_{n\in\N}$ of $X$ by quasi-compact dagger varieties $U_n$. Then we have a natural strict quasi-isomorphism  in $\sd\sff(C_{\B^+_{\dr}})$
$$
\rg_{\dr}(X)\wh{\otimes}^{\R}_{K}{\B}^+_{\dr}\stackrel{\sim}{\to} \R\wlim_n(\rg_{\dr}(U_{n})\wh{\otimes}^{\R}_{K}{\B}^+_{\dr}).
$$
The cohomology of $\rg_{\dr}(X)\wh{\otimes}^{\R}_{K}{\B}^+_{\dr}$ is classical and we have, for $i\geq 0$, 
$$
\wt{H}^i(\rg_{\dr}(X)\wh{\otimes}^{\R}_{K}{\B}^+_{\dr})\simeq H^i_{\dr}(X)\wh{\otimes}^{\R}_{K}{\B}^+_{\dr}\simeq \wlim_n(H^i_{\dr}(U_n){\otimes}_{K}{\B}^+_{\dr}).
$$
In particular, it is a Fr\'echet space\footnote{We note that $H^i_{\dr}(U_n)$ is a finite rank vector space over $K$ equipped with the canonical topology.}.
\end{enumerate}
\end{proposition}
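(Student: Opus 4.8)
The statement is the exact analog, over $K$ rather than $C$, of Proposition~\ref{tluczenie1}, with Hyodo-Kato cohomology tensored with $\B^+_{\dr}$ replaced by de Rham cohomology tensored with $\B^+_{\dr}$. The strategy is therefore to mimic the proof of Proposition~\ref{tluczenie1}, only now we do not even need a Hyodo-Kato isomorphism: de Rham cohomology is already the natural object. First I would reduce claim~(1) to the case of dagger affinoids (and, more generally, to the Stein and affinoid examples worked out in Section~\ref{sroka11}): take an $\eta$-\'etale affinoid hypercovering $\su_\cdot$ of the quasi-compact $X$ built from quasi-compact models, note that our tensor product $\rg_{\dr}(-)\wh{\otimes}^{\R}_K\B^+_{\dr}$ commutes with the relevant products and hence gives a spectral sequence
$$
E_2^{i,j}=H^i_{\dr}(\su_{j})\wh{\otimes}_K\B^+_{\dr}\Rightarrow \wt{H}^{i+j}(\rg_{\dr}(X)\wh{\otimes}^{\R}_K\B^+_{\dr}),
$$
whose terms are Fr\'echet spaces (products of copies of $H^i_{\dr}(\su_j)\wh{\otimes}_KC$, using $\B^+_{\dr}\simeq\prod_{k\geq 0}Ct^k$ in $\sd(C_K)$) with $\B^+_{\dr}$-linear differentials. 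Since the $H^i_{\dr}(\su_j)$ are finite rank over $K$ with their canonical topology (by Grosse-Kl\"onne \cite[Th.~3.1]{GKdR}, exactly as in the rigid analytic \cite[Th.~5.1]{CN3} cited in Section~\ref{sroka11}), the $E_2$-differentials have closed image and the spectral sequence degenerates enough to give classicality and the isomorphism~(\ref{new1}). This is the content of the affinoid example in Section~\ref{sroka11}(ii); globalizing over the hypercovering by a quasi-compact model is the passage from $\su_\cdot$ to $X$.

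For claim~(2) the argument is the standard Fr\'echet-limit one, following the template of \cite[Prop.~3.26]{CDN3} (and of Proposition~\ref{tluczenie}(2) and Proposition~\ref{tluczenie1}(2)). Given the increasing admissible covering $\{U_n\}$, the canonical map $\rg_{\dr}(X)\wh{\otimes}^{\R}_K\B^+_{\dr}\to \R\wlim_n(\rg_{\dr}(U_n)\wh{\otimes}^{\R}_K\B^+_{\dr})$ is a strict quasi-isomorphism because $\rg_{\dr}(X)\stackrel{\sim}{\to}\R\wlim_n\rg_{\dr}(U_n)$ (a \v Cech argument, de Rham cohomology being a sheaf for the admissible topology) and the tensor product $(-)\wh{\otimes}^{\R}_K\B^+_{\dr}$ commutes with $\R\wlim$ here since $\B^+_{\dr}\simeq\prod_kCt^k$ in $\sd(C_K)$ and the systems involved satisfy Mittag-Leffler. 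Then I would invoke claim~(1) for each quasi-compact $U_n$: the cohomology is classical with $\wt{H}^i(\rg_{\dr}(U_n)\wh{\otimes}^{\R}_K\B^+_{\dr})\simeq H^i_{\dr}(U_n)\wh{\otimes}_K\B^+_{\dr}$. The system $\{H^i_{\dr}(U_n)\}_n$ is a system of finite rank $K$-vector spaces, so $\{H^i_{\dr}(U_n)\wh{\otimes}_K\B^+_{\dr}\}_n$ satisfies Mittag-Leffler; the $\R\wlim$ therefore computes to the honest limit $\wlim_n(H^i_{\dr}(U_n)\otimes_K\B^+_{\dr})=H^i_{\dr}(X)\wh{\otimes}^{\R}_K\B^+_{\dr}$, and the resulting cohomology is classical (a countable inverse limit of the classical, Fr\'echet objects from~(1), each a product of finite-dimensional spaces) and Fr\'echet. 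This also matches $H^i_{\dr}(X)=\wlim_nH^i_{\dr}(U_n)$ since that system is Mittag-Leffler as well.

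The one point that requires a little care — and which I expect to be the main obstacle — is the interplay between the $LB$-space $\Omega^i(X)$ on a quasi-compact (affinoid or merely dagger quasi-compact) $X$ and the Fr\'echet space $\B^+_{\dr}$: completed tensor products of $LB$-spaces with Fr\'echet spaces are delicate, and the strictness of the differentials in $\rg_{\dr}(X)\wh{\otimes}^{\R}_K\B^+_{\dr}$ is not automatic. The remedy, already indicated in Section~\ref{sroka11}(ii), is to use $\B^+_{\dr}\simeq\prod_{k\geq 0}Ct^k$ in $\sd(C_K)$ so that $\Omega^i(X)\wh{\otimes}_K\B^+_{\dr}\stackrel{\sim}{\to}\Omega^i(X)\wh{\otimes}^{\R}_K\B^+_{\dr}$ reduces everything degreewise to products of $\Omega^i(X)\wh{\otimes}_KC$, where strictness and classicality follow from the corresponding (known) facts for the de Rham complex of $X$ over $C$; for the general quasi-compact $X$ one first runs this over an affinoid hypercovering by a quasi-compact model as in~(1) above. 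Once this bookkeeping is in place, the proof is, as claimed in the statement preceding it, word-for-word the argument of Proposition~\ref{tluczenie1}.
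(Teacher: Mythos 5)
Your proposal is correct and follows essentially the same route as the paper, which proves this proposition simply by pointing to the argument of Proposition~\ref{tluczenie1} (itself resting on Proposition~\ref{tluczenie}): an \'etale hypercovering by quasi-compact models, the resulting spectral sequence with finite-rank de Rham terms, the decomposition $\B^+_{\dr}\simeq\prod_{k\geq 0}Ct^k$ in $\sd(C_K)$ to handle the $LB$/Fr\'echet tensor-product issue, and a Mittag-Leffler limit argument for part~(2). The only (harmless) difference is that you work directly with de Rham cohomology rather than routing through the Hyodo-Kato quasi-isomorphism, which is exactly the simplification the base change from $C$ to $K$ affords.
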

\subsubsection{Stein varieties and affinoid   over $C$} If $X$ is a smooth dagger affinoid over $C$ then it is  defined over a finite field extension of $K$ and its de Rham type cohomologies have properties  listed in  Section \ref{sroka11}. 

  In the case of Stein varieties we need to argue a bit more. 
\begin{proposition} \label{ciemno1}Let $X\in {\rm Sm}^{\dagger}_C$ be Stein and $r\geq 0$.
Then
\begin{enumerate}
\item concerning the complex ${\rm DR}(X,r)$, we have: 
\begin{enumerate}
\item 
 the cohomology  $ \wt{H}^i{\rm DR}(X,r)$ is classical and Fr\'echet. 
 \item we have a strictly exact sequence
 $$ 
0\to   \Omega^{i}(X)/\im d \to {H}^{i}{\rm DR}(X,r)\to H^{i}_{\dr}(X/\B^+_{\dr})/t^{r-i-1}\to 0
 $$
\end{enumerate}
\item the cohomology $\wt{H}^iF^r\rg_{\dr}(X/\B^+_{\dr})$ is classical and Fr\'echet.
\end{enumerate}
\end{proposition}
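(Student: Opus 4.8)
The plan is to reduce everything to statements already established for quasi-compact $X$ (in particular for dagger affinoids, which are defined over a finite extension of $K$) via an increasing admissible Stein covering, and then to control the resulting $\R\wlim$ using Fr\'echet-space arguments. First I would fix an increasing admissible covering $\{U_n\}_{n\in\N}$ of $X$ by dagger affinoids, with $U_n\Subset U_{n+1}$; since $X$ is Stein such a covering exists and each $U_n$ is a dagger affinoid over $C$, hence defined over a finite extension of $K$ by Elkik's theorem (as in the proof of Corollary~\ref{ogrod1}). For each $U_n$ the de Rham type cohomologies have the properties recalled in Section~\ref{sroka11}: $\wt H^i{\rm DR}(U_n,r)$ is a classical $LB$-space, and $\wt H^iF^r\rg_{\dr}(U_n/\B^+_{\dr})$ is classical. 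The key structural input is the short exact sequence in (1)(b): for an affinoid $U$ defined over $K'$, using the explicit description of $\rg_{\dr}(U)\wh\otimes^{\R}_{K'}\B^+_{\dr}$ and its quotient ${\rm DR}(U,r)$ from~(\ref{derham11}) together with $\iota_{\rm BK}$ from Proposition~\ref{prison}(2), one reads off the filtration-by-$t$ on ${\rm DR}(U,r)$; the distinguished triangle of Lemma~\ref{prison-break11} applied repeatedly, combined with $H^i_{\dr}(U)$ being finite-dimensional and the de Rham differential being strict (Section~\ref{sroka11}), yields the asserted strictly exact sequence with $\Omega^i(U)/\im d$ appearing as the ``bottom'' of the $t$-adic filtration and $H^i_{\dr}(U/\B^+_{\dr})/t^{r-i-1}$ as the quotient. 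This is essentially the computation done in \cite[Ex.\ 3.30]{CDN3}, now run over $\B^+_{\dr}$ rather than $\B^+_{\st}$.

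Next I would pass to the limit. For $X$ Stein, Proposition~\ref{tluczenie1}(2) (for the $\B^+_{\dr}$-cohomology) and its filtered refinement give strict quasi-isomorphisms $\rg_{\dr}(X/\B^+_{\dr})\stackrel\sim\to\R\wlim_n\rg_{\dr}(U_n/\B^+_{\dr})$ and, via Lemma~\ref{prison-break11} and the distinguished triangle (\ref{detail11}), an analogous statement for $F^r\rg_{\dr}(X/\B^+_{\dr})$ and for ${\rm DR}(X,r)=\rg_{\dr}(X/\B^+_{\dr})/F^r$. The system $\{\wt H^i{\rm DR}(U_n,r)\}_n$ satisfies the Mittag-Leffler condition: at each step one has the short exact sequence from (1)(b), and both $\{\Omega^i(U_n)/\im d\}_n$ and $\{H^i_{\dr}(U_n/\B^+_{\dr})/t^{r-i-1}\}_n$ are inverse systems of Fr\'echet spaces with dense/surjective transition maps (the first because $X$ is Stein so $\Omega^i(X)=\wlim_n\Omega^i(U_n)$ with the de Rham complex strict, the second because $\{H^i_{\dr}(U_n)\}_n$ is essentially constant, equal to $H^i_{\dr}(X)$, by the finiteness in Section~\ref{sroka11}). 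Hence $\R^1\wlim$ vanishes, the cohomology of $\R\wlim_n$ computes the honest $\wlim_n$, it is classical, and as a countable inverse limit of Fr\'echet spaces it is Fr\'echet. Taking $\wlim_n$ of the short exact sequences (using that they are sequences of Fr\'echet spaces with strict maps, and ML on the left terms) gives the strictly exact sequence in (1)(b) for $X$ itself, which also proves (1)(a). For (2), the same argument applied to the inverse system $\{\wt H^iF^r\rg_{\dr}(U_n/\B^+_{\dr})\}_n$ — which is classical by Section~\ref{sroka11} and whose transition maps are again well-behaved by the triangle (\ref{detail11}) relating $F^r$, $F^{r-1}$, and $F^r\rg_{\dr}(U_n)$, plus ML for $\{H^i_{\dr}(U_n)\}_n$ — shows $\wt H^iF^r\rg_{\dr}(X/\B^+_{\dr})$ is classical and Fr\'echet.

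The main obstacle I anticipate is \emph{strictness bookkeeping}: the tensor products $\Omega^i(U)\wh\otimes^{\R}_{K'}\B^+_{\dr}$ mix $LB$-spaces (the $\Omega^i$) with the Fr\'echet space $\B^+_{\dr}$, and one must check that the differentials in ${\rm DR}(X,r)$ and in $F^r\rg_{\dr}(X/\B^+_{\dr})$ stay strict after taking $\R\wlim_n$, so that ``classical'' is preserved and the short exact sequence in (1)(b) is genuinely strictly exact rather than merely exact. The device is the decomposition $\B^+_{\dr}\simeq\prod_{k\geq 0}Ct^k$ in $\sd(C_{K'})$ (used already in Section~\ref{sroka11} and the proof of Proposition~\ref{tluczenie1}), which replaces the awkward mixed tensor product by a countable product of copies of $\Omega^i(U)\wh\otimes^{\R}_{K'}C$ with controlled differentials; combined with the fact that for Stein $X$ the de Rham complex $\Omega^\bullet(X)$ itself has strict differentials and Fr\'echet terms, this should let the argument of \cite[Ex.\ 3.30]{CDN3} go through verbatim in each bounded range of filtration, and then one concludes by the finitely many filtration steps separating $F^r$ from $F^0$. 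Everything else is a routine transcription of the affinoid/Stein dichotomy of Section~\ref{sroka11} through the Hyodo-Kato and $\iota_{\rm BK}$ comparisons.
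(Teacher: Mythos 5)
Your proposal follows essentially the same route as the paper for part (1): cover the Stein space by dagger affinoids $U_n$, invoke the affinoid short exact sequence of \cite[Ex.\ 3.30]{CDN3}, and pass to $\wlim_n$ using vanishing of $\R^1\lim$ (for the left term via the Stein property of the covering, for the right term via the Hyodo--Kato isomorphism $H^i_{\dr}(U_n/\B^+_{\dr})\simeq H^i_{\hk,\breve F}(U_n)\wh\otimes_{\breve F}\B^+_{\dr}$ and finite-rank-ness of $H^i_{\hk,\breve F}(U_n)$). For part (2) you run the limit over the covering on the filtered pieces, whereas the paper works globally: it first gets classicality of $\wt H^i_{\dr}(X/\B^+_{\dr})$ from the global Hyodo--Kato isomorphism on $X$ (Theorem \ref{HK-dagger2} plus Proposition \ref{tluczenie1}), handles $i>r$ directly, and for $i\le r$ does an induction on $r$ via the long exact sequence of the triangle (\ref{detail11}) \emph{on $X$ itself}, reusing the extension argument of part (1) at the step $i=r$. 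Your variant should work but forces you to verify Mittag--Leffler for the systems $\{\wt H^iF^r\rg_{\dr}(U_n/\B^+_{\dr})\}_n$, which the paper's global induction sidesteps.

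One point needs repair. You assert that the terms $\wt H^i{\rm DR}(U_n,r)$ and $\Omega^i(U_n)/\im d$ form inverse systems of Fr\'echet spaces and conclude that the limit is Fr\'echet as a countable inverse limit of Fr\'echet spaces. For a dagger affinoid these spaces are $LB$-spaces, not Fr\'echet (this is exactly the affinoid/Stein dichotomy of Section \ref{sroka11}), and a countable inverse limit of $LB$-spaces is not automatically Fr\'echet. The $\R^1\lim$ vanishing is still fine (it rests on density of transition maps and essential constancy, not on Fr\'echet-ness), but the Fr\'echet conclusion has to be obtained differently: the paper identifies $\wt H^i{\rm DR}(X,r)$ as an extension of the two \emph{global} Fr\'echet spaces $\Omega^i(X)/\im d$ and $H^i_{\dr}(X/\B^+_{\dr})/t^{r-i-1}$, which gives Hausdorffness, and then uses that $\wt H^i{\rm DR}(X,r)$ is by construction a quotient of terms of the complex ${\rm DR}(X,r)$, which for Stein $X$ is built from Fr\'echet spaces with strict differentials; a Hausdorff quotient of a Fr\'echet space by a (necessarily closed) subspace is Fr\'echet. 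With that substitution your argument matches the paper's.
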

\begin{proof}
Concerning claim (1), cover $X$ with a Stein covering by affinoids $\{U_n\}$, $n\in\N$. Since every affinoid $U_n$ is defined over a finite extension of $K$, we have the strict exact sequences from \cite[Ex. 3.30]{CDN3}
$$
0\to   \Omega^{i}(U_n)/\im d \to {H}^{i}{\rm DR}(U_n,r)\to H^{i}_{\dr}(U_n/\B^+_{\dr})/t^{r-i-1}\to 0
$$
All the terms are classical and Hausdorff. 
We claim that, taking their $\lim_n$, we obtain 
\begin{align}
\label{dzisiaj1}
 & 0\to   \lim_n(\Omega^{i}(U_n)/\im d) \to \lim_n{H}^{i}{\rm DR}(U_n,r)\to \lim_n(H^{i}_{\dr}(U_n/\B^+_{\dr})/t^{r-i-1})\to 0\\
& \R^1\lim_n{H}^{i}{\rm DR}(U_n,r)\simeq \R^1\lim_nH^{i}_{\dr}(U_n/\B^+_{\dr})/t^{r-i-1}=0\notag
\end{align}
Indeed, the sequence is strictly exact   since $ \R^1\lim_n \Omega^{i}(U_n)=0$. For the same reason we have the isomorphism between $\R^1\lim$'s. Since we have Hyodo-Kato isomorphisms 
$H^{i}_{\dr}(U_n/\B^+_{\dr})\simeq H^i_{\hk,\breve{F}}(U_n)\wh{\otimes}_{\breve{F}}\B^+_{\dr}$ and the Hyodo-Kato cohomology $H^i_{\hk,\breve{F}}(U_n)$ is of finite rank, these $\R^1\lim_n$ vanish. From (\ref{dzisiaj1}) we obtain the strictly exact sequence
$$
  0\to   \Omega^{i}(X)/\im d \to \wt{H}^{i}{\rm DR}(X,r)\to H^{i}_{\dr}(X/\B^+_{\dr})/t^{r-i-1}\to 0
$$
  Hence, $\wt{H}^{i}{\rm DR}(X,r) $ is classical (as an extension of two classical objects). It is also an extension of two Fr\'echet spaces; which implies that it is, in particular,  Hausdorff. It is also a quotient of two Fr\'echet spaces by construction, 
which implies that it is a Fr\'echet space itself, as wanted.

    For claim (2), since we have the Hyodo-Kato strict quasi-isomorphism (from Theorem \ref{HK-dagger2})
   $$
   \iota_{\rm HK}: \rg_{\hk,\breve{F}}(X)\wh{\otimes}^{\R}_{\breve{F}}\B^+_{\dr}\stackrel{\sim}{\to}\rg_{\dr}(X/\B^+_{\dr})
   $$
   and the cohomology $$\wt{H}^i(\rg_{\hk,\breve{F}}(X)\wh{\otimes}^{\R}_{\breve{F}}\B^+_{\dr})\simeq H^i_{\hk,\breve{F}}(X)\wh{\otimes}_{\breve{F}}\B^+_{\dr}$$
   is classical, we get that the cohomology $\wt{H}^i_{\dr}(X/\B^+_{\dr})$ is also classical and Fr\'echet. 
   For $i>r$, we have an isomorphism $\wt{H}^i(F^r\rg_{\dr}(X/\B^+_{\dr}))\stackrel{\sim}{\to}\wt{H}^i_{\dr}(X/\B^+_{\dr})$ (take an exhaustive affinoid  covering and use the fact that affinoids are defined over a finite extension of $K$); hence this cohomology is also classical and Fr\'echet. 
   
    For $i\leq r$,  we argue by induction on $r$, the base case of $r=0$ being shown above. For the inductive step ($r-1 \Rightarrow r$), take 
     the distinguished triangle (\ref{detail11}) and consider the induced  long exact sequence
  $$
0 \to  {H}^i(F^{r-1}\rg_{\dr}(X/\B^+_{\dr}))\lomapr{t}  \wt{H}^i(F^{r}\rg_{\dr}(X/\B^+_{\dr})) \lomapr{\vartheta} {H}^i(F^{r}\rg_{\dr}(X)) \lomapr{\partial}{H}^{i+1}(F^{r-1}\rg_{\dr}(X/\B^+_{\dr}))
  $$
The injection on the left follows from the fact that $ {H}^{i-1}(F^{r}\rg_{\dr}(X))=0$; the terms involving $F^{r-1}$ filtration  are classical by the inductive hypothesis.
 
   $\bullet$  If $i<r$, then this yields an isomorphism
  $$
   {H}^i(F^{r-1}\rg_{\dr}(X/\B^+_{\dr}))\xrightarrow[\sim]{t}  \wt{H}^i(F^{r}\rg_{\dr}(X/\B^+_{\dr})),
  $$
  showing that $\wt{H}^i(F^{r}\rg_{\dr}(X/\B^+_{\dr}))$ is classical and Fr\'echet. 
  
    $\bullet$ For $i=r$, we get a short exact sequence
$$
0 \to  {H}^i(F^{r-1}\rg_{\dr}(X/\B^+_{\dr}))\lomapr{t}  \wt{H}^i(F^{r}\rg_{\dr}(X/\B^+_{\dr})) \lomapr{\vartheta} \ker{\partial}\to 0
$$
Hence, $ \wt{H}^i(F^{r}\rg_{\dr}(X/\B^+_{\dr})) $ is classical and a Fr\'echet space by the argument we have used in the case of $\wt{H}^{i}{\rm DR}(X,r) $ in the proof of claim (1). 
\end{proof}
\subsection{Overconvergent geometric syntomic cohomology} We are now ready to define  overconvergent geometric syntomic cohomology and  prove a comparison theorem for smooth dagger affinoids and Stein varieties. 

 Let $X$ be a smooth dagger  variety over  $C$.
 Take $r\geq 0$. 
We define the {\em geometric syntomic cohomology} of $X$ as the following mapping fiber 
\index{rgsyn@\rgsyn}(taken in $\sd(C_{\Q_p})$)
\begin{align}
\label{part1}
\R\Gamma_{\synt}(X,\Q_p(r))& :=[[\R\Gamma_{\hk}(X)\wh{\otimes}_{F^{\nr}}^{\R}\wh{\B}^+_{\st}]^{N=0,\phi=p^r}
\verylomapr{\iota_{\hk}\otimes\iota}\rg_{\crr}(X/\B^+_{\dr})/F^r]\\
  & =[{\rm HK}(X,r)\verylomapr{\iota_{\hk}\otimes\iota}{\rm DR}(X,r)].\notag
\end{align}
This is a generalization  of the geometric syntomic cohomology introduced  in \cite[Sec.\,3.2.2]{CDN3} in the case $X$ comes from a semistable model over $\so_K$. We will define below in Section \ref{here1} overconvergent geometric syntomic cohomology via presentations of dagger structures from rigid-analytic geometric syntomic cohomology and show in Proposition \ref{zamek3} that the two definitions give strictly quasi-isomorphic cohomologies.

   The following proposition generalizes \cite[Prop. 3.36]{CDN3}. 
\begin{remark}
We will often use an equivalent definition of overconvergent geometric syntomic cohomology:
\begin{align*}
\R\Gamma_{\synt}(X,\Q_p(r)) :=[[\R\Gamma_{\hk}(X)\wh{\otimes}_{F^{\nr}}^{\R}{\B}^+_{\st}]^{N=0,\phi=p^r}
\verylomapr{\iota_{\hk}\otimes\iota}\rg_{\crr}(X/\B^+_{\dr})/F^r].
\end{align*}
See Lemma \ref{cicho1} for why the two definitions give the same object (up to a canonical strict quasi-isomorphism). 
\end{remark}
\begin{proposition} 
\label{fd11}Let $X$ be a smooth dagger  affinoid or   a smooth dagger Stein variety over $C$.
Let  $r\geq 0$. There is a natural  map of strictly exact sequences
  $$
\xymatrix@C=.6cm@R=.5cm{
 0\ar[r] & \Omega^{r-1}(X)/\kker d\ar[r]^-{\partial}\ar@{=}[d] & H^r_{\synt}(X,\Q_p(r))\ar[d]^{\beta} \ar[r] & (H^r_{\hk}(X)\wh{\otimes}^{\R}_{F^{\nr}}\wh{\B}^+_{\st})^{N=0,\phi=p^r}
\ar[r]\ar[d]^{\iota_{\hk}\otimes\theta} & 0\\
 0\ar[r] & \Omega^{r-1}(X)/\kker d \ar[r]^-d & \Omega^r(X)^{d=0} \ar[r] & H^r_{\dr}(X)\ar[r] & 0
}
$$
Moreover, $\kker(\iota_{\hk}\otimes\theta)\simeq (H^r_{\hk}(X)\wh{\otimes}^{\R}_{F^{\nr}}\wh{\B}^+_{\st})^{N=0,\phi=p^{r-1}}
$, $H^r_{\synt}(X,\Q_p(r))$ is $LB$ or Fr\'echet, respectively, and the maps $\beta$, $\iota_{\hk}\otimes\theta$ are strict and have closed images.
  \end{proposition}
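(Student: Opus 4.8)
\textbf{Proof plan for Proposition~\ref{fd11}.} The strategy is to compute both the Hyodo-Kato and the de Rham sides of the syntomic mapping fiber explicitly, using the presentations already established. First I would unwind the definition~(\ref{part1}): $\rg_{\synt}(X,\Q_p(r))$ is the homotopy fiber of $\iota_{\hk}\otimes\iota:{\rm HK}(X,r)\to{\rm DR}(X,r)$, so we get a long exact sequence relating $H^*_{\synt}$, $\wt H^*({\rm HK}(X,r))$ and $\wt H^*({\rm DR}(X,r))$. By Proposition~\ref{ciemno0}(2),(4), in both the affinoid and Stein cases (the affinoid case follows since such $X$ is defined over a finite extension of $K$, so the computations of Section~\ref{sroka11} and \cite[Ex. 3.30]{CDN3} apply, and the Stein case is Proposition~\ref{ciemno0}(4)), we have $\wt H^i({\rm HK}(X,r))\simeq (H^i_{\hk}(X)\wh\otimes^{\R}_{F^{\nr}}\wh{\B}^+_{\st})^{N=0,\phi=p^r}$, classical and $LB$ or Fr\'echet respectively. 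For the de Rham side, Proposition~\ref{ciemno1}(1) (Stein case) and the affinoid analog in Section~\ref{sroka11} give the strictly exact sequence
$$
0\to \Omega^i(X)/\im d\to \wt H^i{\rm DR}(X,r)\to H^i_{\dr}(X/\B^+_{\dr})/t^{r-i-1}\to 0.
$$

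Next I would assemble the diagram in degree $r$. Feeding these computations into the long exact sequence of the mapping fiber, the key point is that $\iota_{\hk}\otimes\iota$ identifies (via the Hyodo-Kato isomorphism of Theorem~\ref{HK-dagger2} and its compatibility with $\vartheta$) with the natural map ${\rm HK}(X,r)\to{\rm DR}(X,r)$, and that modulo $F^r$ the target $H^r_{\dr}(X/\B^+_{\dr})/t^{r-r-1}=H^r_{\dr}(X/\B^+_{\dr})/t^{-1}$ is understood to be $H^r_{\dr}(X)$ — more precisely the map on $H^r$ lands, after the quotient by $\Omega^{r-1}(X)/\im d$, in $H^r_{\dr}(X)$, with composite $\iota_{\hk}\otimes\theta$. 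One extracts the top row
$$
0\to \Omega^{r-1}(X)/\im d\xrightarrow{\partial} H^r_{\synt}(X,\Q_p(r))\to (H^r_{\hk}(X)\wh\otimes^{\R}_{F^{\nr}}\wh{\B}^+_{\st})^{N=0,\phi=p^r}\to 0
$$
by checking that the connecting map $\partial$ is injective (because $\wt H^{r-1}$ of the relevant complexes is exhausted by $\Omega^{r-1}$-terms) and that the cokernel map is surjective onto the kernel of $\iota_{\hk}\otimes\iota$ on $\wt H^r$; the bottom row is the standard de Rham presentation $0\to\Omega^{r-1}(X)/\im d\xrightarrow{d}\Omega^r(X)^{d=0}\to H^r_{\dr}(X)\to 0$. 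Commutativity of the square is then formal from the compatibility of $\iota_{\hk}$ with $\vartheta$ and $\theta$ (Theorem~\ref{HK-dagger2}, Corollary~\ref{kawa1}).

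The remaining assertions are then bookkeeping. For $\kker(\iota_{\hk}\otimes\theta)$: writing $\wh{\B}^+_{\st}$-coefficients and using that $\theta:\wh{\B}^+_{\st}\to C$ has kernel generated (up to the relevant filtration piece) by $t$, one identifies the kernel on the $\phi=p^r$ eigenspace with the $\phi=p^{r-1}$ eigenspace via multiplication by $t$ (which shifts Frobenius weight by $1$ and lands in $F^1$), giving $(H^r_{\hk}(X)\wh\otimes^{\R}_{F^{\nr}}\wh{\B}^+_{\st})^{N=0,\phi=p^{r-1}}$; this is exactly the computation done in \cite[Prop. 3.36]{CDN3} and I would cite it. That $H^r_{\synt}(X,\Q_p(r))$ is $LB$ (affinoid case) or Fr\'echet (Stein case) follows because it is an extension of two such spaces — $\Omega^{r-1}(X)/\im d$ and the eigenspace — both of which are $LB$ or Fr\'echet by the properties recalled in Section~\ref{sroka11}, and the class of $LB$ (resp. Fr\'echet) spaces is stable under extensions in $LH(C_{\Q_p})$. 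Strictness of $\beta$ and $\iota_{\hk}\otimes\theta$ and closedness of their images: $\iota_{\hk}\otimes\theta$ is strict with closed image because its source and target are computed by strict complexes (Proposition~\ref{ciemno1}, \cite[Ex. 3.30]{CDN3}) and its kernel and cokernel are classical; strictness of $\beta$ then follows from strictness of $d$ in the bottom row together with the snake lemma applied to the diagram, all maps being between classical Hausdorff objects.

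\textbf{Main obstacle.} The delicate step is the Stein case of the coefficient computations: showing that $\wt H^r({\rm DR}(X,r))$ and $\wt H^r(\rg_{\synt}(X,\Q_p(r)))$ are classical and Fr\'echet, and that the short exact sequences survive $\R\wlim_n$ over a Stein exhaustion $\{U_n\}$ without $\R^1\wlim$ obstructions. This rests on the finite-rankness of $H^i_{\hk,\breve F}(U_n)$ (so that the relevant $\R^1\wlim_n$ vanish, as in Proposition~\ref{ciemno1}), and on the strictness of all differentials — which in turn uses that each $U_n$ is defined over a finite extension of $K$ so that the affinoid computations of Section~\ref{sroka11} and \cite[Ex. 3.30]{CDN3} apply term by term. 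Once these are in hand the passage to the limit and the diagram chase are routine.
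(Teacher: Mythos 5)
Your overall route is the same as the paper's: take the long exact cohomology sequence of the defining homotopy fiber, feed in the computations of $\wt{H}^i({\rm HK}(X,r))$ and $\wt{H}^i({\rm DR}(X,r))$ from Propositions \ref{ciemno0} and \ref{ciemno1} (and Section \ref{sroka11} for affinoids), and reduce to the argument of \cite[Prop.~3.36]{CDN3}. But there is one genuine gap, and it is exactly the point the paper singles out. Exactness of the top row at $\Omega^{r-1}(X)/\kker d$ requires identifying the kernel of the connecting map $\partial:\wt{H}^{r-1}({\rm DR}(X,r))\simeq\Omega^{r-1}(X)/\im d\to H^r_{\synt}(X,\Q_p(r))$ with $H^{r-1}_{\dr}(X)=\kker d/\im d$; by the long exact sequence this kernel is the image of $(H^{r-1}_{\hk}(X)\wh{\otimes}^{\R}_{F^{\nr}}\wh{\B}^+_{\st})^{N=0,\phi=p^r}$, so one must prove that this eigenspace surjects onto $H^{r-1}_{\dr}(X)$ under $\iota_{\hk}\otimes\theta$. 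Your justification --- ``$\partial$ is injective because $\wt{H}^{r-1}$ of the relevant complexes is exhausted by $\Omega^{r-1}$-terms'' --- is not correct as stated ($\partial$ is not injective; its kernel is precisely $H^{r-1}_{\dr}(X)$) and it hides this surjectivity.

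That surjectivity is a fundamental-exact-sequence argument and holds only if the Frobenius slopes on $H^{r-1}_{\hk}(X)$ are $\leq r-1$: a slope-$r$ summand would contribute $(\B^+_{\crr})^{\phi=1}=\Q_p$, which does not surject onto $C$. In \cite{CDN3} this slope bound comes from the weight spectral sequence of a semistable model over $\so_K$, which your $X$ --- an arbitrary smooth dagger affinoid or Stein variety over $C$ --- need not possess; so even a wholesale citation of \cite[Prop.~3.36]{CDN3} does not close the gap. The paper supplies the missing input by proving that for any quasi-compact smooth dagger variety $Y$ over $C$ the slopes of $\phi$ on $H^i_{\hk}(Y)$ are $\leq i$, by descent along \'etale hypercoverings built from basic semistable models, quasi-compact in every degree. (A secondary omission: you never construct $\beta$; the paper obtains it as $\vartheta\circ\tilde{\beta}$, where $\tilde{\beta}$ is the map of homotopy fibers into $F^r\rg_{\dr}(X/\B^+_{\dr})$ induced by comparing the syntomic fiber sequence with the filtration triangle for $\rg_{\dr}(X/\B^+_{\dr})$, and the induced map $\Omega^{r-1}(X)\to\Omega^r(X)$ must be checked to be $d$.) The rest of your plan --- the coefficient computations, the kernel of $\iota_{\hk}\otimes\theta$ via multiplication by $t$, and the topological assertions --- matches the paper.
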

\begin{proof} 
The diagram in the proposition arises  from  the commutative  diagram:
$$
\xymatrix{
\R\Gamma_{\synt}(X,\Q_p(1))\ar[r]\ar@/_60pt/[dd]^-{\beta}\ar@{.>}[d]^-{\tilde{\beta}} & [\R\Gamma_{\hk}(X)\wh{\otimes}^{\R}_{F^{\nr}}\wh{\B}^+_{\st}]^{\phi=p,N=0}
\ar[d]^{\iota_{\hk}\otimes\iota}\ar[r]^-{\iota_{\hk}\otimes\iota} & \R\Gamma_{\dr}(X/\B^+_{\dr})/F^r\ar@{=}[d]\\
F^r(\R\Gamma_{\dr}(X/\B^+_{\dr})\ar[r]\ar[d]^-{\vartheta}&  \R\Gamma_{\dr}(X/\B^+_{\dr})\ar[d]^{\vartheta}\ar[r] & \R\Gamma_{\dr}(X/\B^+_{\dr})/F^r\ar[d]^{\vartheta}\\
\Omega^{\geq r}({X}) \ar[r]& \Omega\kr(X)\ar[r] & \Omega^{\leq r-1}({X})
}
$$
The map $\tilde{\beta}$ is the map on mapping fibers induced by the commutative right square.  We  set $\beta:=\vartheta\tilde{\beta}$. 
The map $\Omega^{r-1}(X)\to \Omega^r(X)$ induced from the bottom row of the above diagram is easily checked to be equal to $d$.

 Applying cohomology to the above diagram we obtain a commutative diagram
 $$
 \xymatrix@R=.6cm@C=.5cm{
  (H^{r-1}_{\hk}(X)\wh{\otimes}^{\R}_{F^{\nr}}\wh{\B}^+_{\st})^{\phi=p,N=0}\ar[r] \ar[d]^-{\iota_{\hk}\otimes\iota}& 
\Omega^{r-1}({X})/\im d\ar@{=}[d] \ar[r]^-{\partial}  & \wt{H}^r_{\synt}(X,\Q_p(1))\ar[r]\ar[d]^-{\beta}& (H^r_{\hk}(X)\wh{\otimes}^{\R}_{F^{\nr}}\wh{\B}^+_{\st})^{\phi=p,N=0}
\ar[d]^{\iota_{\hk}\otimes\iota} \\
0\to H^{r-1}_{\dr}(X)\ar[r] & \Omega^{r-1}({X})/\im d\ar[r]^-d & \Omega^{r}({X})^{d=0} \ar[r] & H^r_{\dr}(X)
}
$$
We have used here Proposition \ref{ciemno0} and Proposition \ref{ciemno1}. 
 We can now use the proof of Proposition 3.36 in \cite{CDN3} as soon as we know that, for a quasi-compact smooth dagger variety $Y$ over $C$, the slopes of Frobenius on $H^i_{\hk}(Y)$ are $\leq i$. But this is true when $Y=\sy_{K,C}$ for a semistable model over $\so_K$ (by the weight spectral sequence) and it follows for a general  $Y$ by taking \'etale hypercoverings built from semistable basic models, quasi-compact in every degree. 
\end{proof}

\section{Two comparison morphisms} 
In this section  we define two comparison morphisms: from geometric syntomic cohomology of a smooth dagger variety to geometric syntomic cohomology of its completion and between geometric syntomic cohomology of a smooth dagger variety and its pro-\'etale cohomology.
 We also prove that the first morphism is a quasi-isomorphism 
for partially proper varieties (Theorem~\ref{zamek})
and the second morphism is a quasi-isomorphism in a stable range (Theorem~\ref{period15}). 

\subsection{From overconvergent to rigid analytic geometric syntomic cohomology}
We start with  a morphism from geometric syntomic cohomology of a smooth dagger variety to geometric syntomic cohomology of its completion. 

  \subsubsection{Construction of the comparison morphism.} \label{constr1} 
    Let $X$ be a smooth dagger variety over $C$. We will  construct a natural map in $\sd(C_{\Q_p})$
    \begin{equation}
    \label{map-comp}
    \iota: \quad  \rg_{\synt}(X,\Q_p(r))\to \rg_{\synt}(\wh{X},\Q_p(r))
   \end{equation}
    from the syntomic cohomology of $X$ to the syntomic cohomology of its completion $\wh{X}$.
    
     (i) {\em The map $\iota_1$.}  
    First, we note that we have a canonical natural morphism  in $\sd(C_{\Q_p})$
    \begin{align*}
  \iota_1:  \rg_{\synt}(X,\Q_p(r)) & =  \big[[\rg_{\hk}(X)\wh{\otimes}^{\R}_{F^{\nr}}\wh{\B}^+_{\st}]^{N=0,\phi=p^r}\verylomapr{\iota_{\hk}\otimes\iota}\rg_{\dr}(X/\B^+_{\dr})/F^r\big]\\
   & \rightarrow\big[[\rg_{\hk}(\wh{X})\wh{\otimes}^{\R}_{F^{\nr}}\wh{\B}^+_{\st}]^{N=0,\phi=p^r}\verylomapr{\iota_{\hk}\otimes\iota}\rg_{\dr}(\wh{X}/\B^+_{\dr})/F^r\big]\\
   & \xleftarrow{\sim} \big[[\rg_{\hk}(\wh{X})\wh{\otimes}_{F^{\nr}}{\B}^+_{\st}]^{N=0,\phi=p^r}\lomapr{\iota_{\hk}\otimes\iota} \rg_{\dr}(\wh{X}/\B^+_{\dr})/F^r\big].
    \end{align*}
Indeed, for that it suffices to show that the canonical map
$$
[\rg_{\hk}(\wh{X})\wh{\otimes}_{F^{\nr}}{\B}^+_{\st}]^{N=0}\to [\rg_{\hk}(\wh{X})\wh{\otimes}^{\R}_{F^{\nr}}\wh{\B}^+_{\st}]^{N=0}
$$
is a strict quasi-isomorphism. We may assume that $X$ has a semistable weak formal model $\sx$defined over $\so_{K^{\prime}}$. Then the above map is equal to the map
$$
[\rg_{\hk}(\sx^0_1)\wh{\otimes}_{K^{\prime},\iota}{\B}^+_{\st}]^{N=0}\to [\rg_{\hk}(\sx^1_0)\wh{\otimes}^{\R}_{K^{\prime}}\wh{\B}^+_{\st}]^{N=0}.
$$
But this is a special case of the strict quasi-isomorphism in (\ref{Paris-hot}).

  (ii) {\em The map $\iota_2$.}  
  Next, we use the strict quasi-isomorphism  in $\sd(C_{\Q_p})$
  \begin{align*}
   \iota_2: \quad & 
    \big[ [\rg_{\hk}(\wh{X})_{F}\wh{\otimes}_{F^{\nr}}{\B}^+_{\st}]^{N=0,\phi=p^r}\lomapr{\iota_{\hk}\otimes\iota} \rg_{\dr}(\wh{X}/\B^+_{\dr})/F^r\big]\\
    & \to  [[\R\Gamma_{\crr}(\wh{X})]^{\phi=p^r}\lomapr{\can} \R\Gamma_{\crr}(\wh{X})/F^r]= \rg_{\synt}(\wh{X},\Q_p(r))
  \end{align*}
  from the proof of Proposition \ref{synt-locglob}. 
   
     (iii)    Finally, we define the map  in $\sd(C_{\Q_p})$
   $$\iota: \rg_{\synt}({X},\Q_p(r))\to\rg_{\synt}(\wh{X},\Q_p(r))
   $$ as  $\iota:=\iota_2\iota_1$. 
 \subsubsection{A comparison theorem}We are now ready to prove our  comparison theorem: 
  \begin{theorem} 
  \label{zamek}
  Let $X$ be a partially proper smooth dagger variety over $C$.
    The map  in $\sd(C_{\Q_p})$
    $$
    \iota: \quad \rg_{\synt}(X,\Q_p(r))\to \rg_{\synt}(\wh{X},\Q_p(r))
    $$
    is a strict quasi-isomorphism. 
    \end{theorem}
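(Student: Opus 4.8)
\textbf{Proof plan for Theorem \ref{zamek}.}
The strategy is to unwind the map $\iota=\iota_2\iota_1$ into its ``Bloch--Kato'' building blocks and reduce everything to the three comparison statements that are already established: the Hyodo--Kato comparison (Corollary~\ref{roznosci1}) for $\rg_{\hk,\breve{F}}$, the $\B^+_{\dr}$-comparison (again Corollary~\ref{roznosci1}) for $\rg_{\dr}(-/\B^+_{\dr})$, and the local-global compatibility/presentation formula (Corollary~\ref{presentation}, Proposition~\ref{synt-locglob}) which expresses rigid analytic syntomic cohomology as a Bloch--Kato mapping fiber. First I would note that, by Corollary~\ref{presentation} applied to $\wh X$, the target $\rg_{\synt}(\wh X,\Q_p(r))$ is strictly quasi-isomorphic to $\big[[\rg_{\hk}(\wh X)\wh{\otimes}_{F^{\nr}}\B^+_{\st}]^{N=0,\phi=p^r}\lomapr{\iota_{\hk}\otimes\iota}\rg_{\dr}(\wh X/\B^+_{\dr})/F^r\big]$; this is precisely the middle object through which $\iota_2$ factors, so that $\iota$ is identified with the canonical map of mapping fibers induced by the three vertical arrows $\rg_{\hk}(X)\to\rg_{\hk}(\wh X)$ (or rather its completed version $\rg_{\hk,\breve F}(X)\to\rg_{\hk,\breve F}(\wh X)$, after tensoring up to $\breve F$ as in the definition of the twisted Hyodo--Kato cohomology and using Lemma~\ref{cicho1}), $\rg_{\dr}(X/\B^+_{\dr})\to\rg_{\dr}(\wh X/\B^+_{\dr})$, and the identity on the quotient terms.

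The heart of the argument is then the following: since $X$ is partially proper, Corollary~\ref{roznosci1} gives that both $\rg_{\hk,\breve F}(X)\to\rg_{\hk,\breve F}(\wh X)$ and $\rg_{\dr}(X/\B^+_{\dr})\to\rg_{\dr}(\wh X/\B^+_{\dr})$ are (filtered) strict quasi-isomorphisms. I would then need to check that these comparisons survive the operations $\wh{\otimes}^{\R}_{F^{\nr}}\wh{\B}^+_{\st}$, the passage to the derived eigenspace $[-]^{N=0,\phi=p^r}$, and the mapping fiber with $\rg_{\dr}(-/\B^+_{\dr})/F^r$. For the tensor product: $\wh{\B}^+_{\st}$ is a Banach space over $\breve F$, so by Lemma~\ref{trick1} (in its $\breve F$-linear incarnation) tensoring a strict quasi-isomorphism of Banach complexes with $\wh{\B}^+_{\st}$ preserves strict quasi-isomorphisms; globalizing over an $\eta$-\'etale hypercovering (as in the definition of $\rg_{\hk}(X)\wh{\otimes}^{\R}_{F^{\nr}}\wh{\B}^+_{\st}$ and using that the tensor products commute with the relevant homotopy (co)limits, cf.\ the proof of Theorem~\ref{HK-dagger2}) gives that $\rg_{\hk}(X)\wh{\otimes}^{\R}_{F^{\nr}}\wh{\B}^+_{\st}\to\rg_{\hk}(\wh X)\wh{\otimes}^{\R}_{F^{\nr}}\wh{\B}^+_{\st}$ is a strict quasi-isomorphism. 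Taking the derived $[-]^{N=0}$ and then $[-]^{\phi=p^r}$ (each a finite homotopy limit, hence preserving strict quasi-isomorphisms) yields ${\rm HK}(X,r)\to{\rm HK}(\wh X,r)$ a strict quasi-isomorphism, and similarly ${\rm DR}(X,r)={\rm DR}(X/\B^+_{\dr})/F^r\to{\rm DR}(\wh X,r)$ is a strict quasi-isomorphism (it is the cofiber of the filtration, and $F^r$-comparison is part of Corollary~\ref{roznosci1}). Finally, $\rg_{\synt}(-,\Q_p(r))$ is the mapping fiber $[{\rm HK}(-,r)\to{\rm DR}(-,r)]$, and the mapping fiber of a square two of whose parallel arrows are strict quasi-isomorphisms is itself a strict quasi-isomorphism; this gives the theorem.

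\textbf{Main obstacle.} The step I expect to require the most care is the commutativity of the diagram identifying $\iota=\iota_2\iota_1$ with the natural map of Bloch--Kato mapping fibers: one must check that the various avatars of the comparison maps ($\iota_{\rm BK}^1$, $\iota_{\rm BK}^2$, $\kappa$, $\epsilon^{\rm HK}_{\st}$, the Hyodo--Kato map $\iota_{\hk}$, and the map $\rg_{\hk,\breve F}(X)\to\rg_{\hk,\breve F}(\wh X)$ from the lemma preceding Corollary~\ref{roznosci1}) all fit into one compatible diagram, i.e.\ that the comparison between the Bloch--Kato description of syntomic cohomology and the Fontaine--Messing one is itself natural in $X$ and compatible with the map $X\to\wh X$. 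This is essentially bookkeeping, but it is where the $\infty$-categorical functoriality built up in the earlier sections is actually used; the key input is that $\iota_2$ was constructed (in the proof of Proposition~\ref{synt-locglob}) out of the maps $\iota_{\rm BK}$, which are functorial, and that the left/right triangles in diagram~(\ref{film1}) commute by construction, so the whole comparison is a natural transformation of functors on ${\rm Sm}^\dagger_C$. A secondary technical point is the interchange of $\LL\colim_h$ (over a presentation) with the Banach tensor products and with $[-]^{N=0,\phi=p^r}$; this is handled exactly as in Lemma~\ref{cicho1} and Proposition~\ref{tluczenie}, using that everything in sight is built from Banach spaces with $N$ globally nilpotent after restricting to a quasi-compact piece.
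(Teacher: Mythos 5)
Your proposal is correct and follows essentially the same route as the paper: decompose $\iota=\iota_2\iota_1$, note $\iota_2$ is a strict quasi-isomorphism by construction (Proposition~\ref{synt-locglob}), and reduce $\iota_1$ to the two comparison maps on the Hyodo--Kato and $\B^+_{\dr}$ terms, which are handled by Corollary~\ref{roznosci1} together with the tensor-product bookkeeping (the paper routes the Hyodo--Kato side through Lemma~\ref{passage} to pass to the completed $\breve F$-version, rather than through Lemma~\ref{trick1} directly, but this is the same idea). Your worry about identifying $\iota$ with a map of Bloch--Kato mapping fibers is moot, since $\iota_1$ is \emph{defined} as that canonical map of fibers, so no commutativity check is needed beyond what the construction already provides.
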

    \begin{proof}We have $\iota=\iota_2\iota_1$ by definition and as we have seen  the map $\iota_2$ is a strict quasi-isomorphism. Hence it remains to show that so is the map $\iota_1$.  For that,  it suffices to show that the following canonical  maps 
    \begin{equation}
    \label{kwaku2}
  \rg_{\hk}(X)\wh{\otimes}^{\R}_{F^{\nr}}\wh{\B}^+_{\st}\to \rg_{\hk}(\wh{X})\wh{\otimes}^{\R}_{F^{\nr}}\wh{\B}^+_{\st},\quad \rg_{\dr}(X/\B^+_{\dr})/F^r\to \rg_{\dr}(\wh{X}/\B^+_{\dr})/F^r
    \end{equation}
    are  strict quasi-isomorphisms.  For the second map this follows from Corollary \ref{roznosci1}. 
    For the first map, by Lemma \ref{passage}, it suffices to show that the canonical map 
$$
 \rg_{\hk,\breve{F}}(X)\wh{\otimes}^{\R}_{\breve{F}}\wh{\B}^+_{\st}\to \rg_{\hk,\breve{F}}(\wh{X})\wh{\otimes}^{\R}_{\breve{F}}\wh{\B}^+_{\st}
$$
is a strict quasi-isomorphism. 
But this holds because, by Corollary \ref{roznosci1},  the canonical  map $ \rg_{\hk,\breve{F}}(X)\to \rg_{\hk,\breve{F}}(\wh{X})$ is a strict quasi-isomorphism. 
           \end{proof}      
            \subsection{From overconvergent syntomic cohomology to pro-\'etale cohomology} We will construct  now  a comparison morphism between geometric syntomic cohomology of a smooth dagger variety and its pro-\'etale cohomology. We will prove that it is a strict quasi-isomorphism in a stable range. 
 \subsubsection{Overconvergent geometric syntomic cohomology via presentations of dagger structures} \label{here1}We start with showing  that the  overconvergent geometric syntomic cohomology defined as  in \cite[Sec.\,6.3]{CN3} using presentations of dagger structures, a priori different from the overconvergent geometric syntomic cohomology  defined as in \cite[Sec.\,5.4]{CN3}, is strictly quasi-isomorphic to it. This was shown in \cite[Prop. 6.17]{CN3} in the arithmetic case, where the key ingredient of the proof is the comparison theorem between arithmetic overconvergent and rigid analytic syntomic cohomology of partially proper dagger spaces. We had to wait for the geometric version of the later  comparison theorem (our Theorem \ref{zamek}) to state the geometric analog of 
   \cite[Prop. 6.17]{CN3}.
   
      (i) {\em Local definition.} Let $X$ be a dagger  affinoid over $C$. Let ${\rm pres}(X)=\{X_h\}$. Recall that we have defined the syntomic cohomology
      $$\rg_{\synt}^{\dagger}(X,\Q_p(r)):=\LL\colim_h \rg_{\synt}(X_{h},\Q_p(r)),\quad  r\in\N. $$
   We have a natural \index{iotahk@\iotahk}map in $\sd(C_{\Q_p})$
    \begin{equation}
    \label{def1}
     \iota^{\dagger}_{\synt}\colon \rg_{\synt}^{\dagger}(X,\Q_p(r)) \to \rg_{\synt}(X,\Q_p(r))
    \end{equation}
defined as the composition
\begin{align}
\label{composition1}
 \rg_{\synt}^{\dagger}(X,\Q_p(r)) & =\LL\colim_h \rg_{\synt}(X_{h},\Q_p(r))\stackrel{\sim}{\to}\LL\colim_h \rg_{\synt}(X^{\circ}_{h},\Q_p(r))\\
  & \stackrel{\sim}{\leftarrow}
 \LL\colim_h \rg_{\synt}(X^{{\circ},\dagger}_{h},\Q_p(r))\to \rg_{\synt}(X,\Q_p(r)).\notag
\end{align}
The third quasi-isomorphism holds by Theorem \ref{zamek} because $X^{\circ}_h$ is partially proper.
       
     (ii) {\em Globalization.}  For a general smooth dagger variety $X$ over $C$, 
using the  natural equivalence of analytic topoi
$$
{\rm Sh}({\rm SmAff}^{\dagger}_{C,\eet})\stackrel{\sim}{\to} {\rm Sh}({\rm Sm}^{\dagger}_{C,\eet}),
$$
we define the sheaf $\sa_{\synt}^{\dagger}(r)$, $r\in\N$,  on $X_{\eet}$ as the sheaf associated to the presheaf defined by:  $U\mapsto \rg_{\synt}^{\dagger}(U,\Q_p(r))$, $U\in {\rm SmAff}^{\dagger}_{C}$, $U\to X$ an \'etale map.  We define\footnote{We will show below (see Proposition \ref{zamek3}) that this definition of $\rg_{\synt}^{\dagger}(X,\Q_p(r))$, for a smooth dagger affinoid $X$,  gives an object naturally strictly quasi-isomorphic to the one defined above.} in $\sd(C_{\Q_p})$
$$
\rg_{\synt}^{\dagger}(X,\Q_p(r)):=\rg_{\eet}(X, \sa_{\synt}^{\dagger}(r)),\quad r\in\N.
$$
Globalizing  the  map $\iota^{\dagger}_{\synt}$  from (\ref{def1})
     we obtain a natural map 
    $$
    \iota^{\dagger}_{\synt}:  \rg_{\synt}^{\dagger}(X,\Q_p(r)) \to \rg_{\synt}(X,\Q_p(r)).
    $$
    
     (iii) {\em A comparison quasi-isomorphism.}

    \begin{proposition}    \label{zamek3}
    The  above map $\iota^{\dagger}_{\synt}$ is a strict quasi-isomorphism.
    \end{proposition}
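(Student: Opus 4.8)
The plan is to reduce the statement to the local situation of a smooth dagger affinoid, where by construction $\rg_{\synt}^{\dagger}(X,\Q_p(r))$ (the sheafified version) is computed by $\eta$-\'etale descent from the presheaf $U\mapsto \LL\colim_h\rg_{\synt}(U_h,\Q_p(r))$, and compare it with the ``naive'' presentation-level definition. The key point is that the map $\iota^{\dagger}_{\synt}$ is built by globalizing the composition \eqref{composition1}, whose only non-formal ingredient is the third arrow
$$
\LL\colim_h\rg_{\synt}(X^{{\rm o},\dagger}_h,\Q_p(r))\to \rg_{\synt}(X,\Q_p(r)),
$$
which is a strict quasi-isomorphism by Theorem~\ref{zamek} applied to the partially proper interiors $X^{\rm o}_h$ (whose completions are the rigid analytic $X^{\rm o}_h$ appearing in the colimit). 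So, just as in the proofs of Lemma~\ref{herbata2}(2) and Proposition~\ref{ogrod2}, the strategy is: first handle the affinoid case directly, then invoke $\eta$-\'etale descent for the sheafified definition.

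First I would take $X$ a smooth dagger affinoid with presentation $\{X_h\}$. One has the commutative ``local-global'' triangle
$$
\xymatrix@R=6mm{
\LL\colim_h\rg_{\synt}(X_h,\Q_p(r))\ar[r]\ar[d] & \rg_{\eet}(X,\sa^{\dagger}_{\synt}(r))\ar[dl]\\
\rg_{\synt}(X,\Q_p(r))
}
$$
in which the slanted arrow is the globalized $\iota^{\dagger}_{\synt}$ and the vertical arrow is the one built from \eqref{composition1}. The vertical arrow is a strict quasi-isomorphism: in \eqref{composition1} the first map is a strict quasi-isomorphism because $X_h$ and its interior $X^{\rm o}_h$ have the same rigid analytic syntomic cohomology (the interior is obtained by a cofinal shrinking of the presentation, so the homotopy colimits agree), the second map is a strict quasi-isomorphism by Theorem~\ref{zamek} (local-global compatibility for rigid analytic syntomic cohomology, here applied to the partially proper $X^{\rm o}_h$ and its completion), and the third by Theorem~\ref{zamek} again since $X^{\rm o}_h$ is partially proper. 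Hence, to conclude in the affinoid case, it suffices to show the horizontal arrow is a strict quasi-isomorphism, i.e.\ that $\eta$-\'etale sheafification does not change $\LL\colim_h\rg_{\synt}(X_h,\Q_p(r))$ on affinoids; this follows because each $\rg_{\synt}(X_h,-)$ already satisfies $\eta$-\'etale descent and homotopy colimits over the presentation commute with the finite homotopy limits defining syntomic cohomology and with $\eta$-\'etale hypercovers (one checks this degreewise on a fixed hypercover, as in the proof of Lemma~\ref{kappa-new}).

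For a general smooth dagger variety $X$, I would then use $\eta$-\'etale descent: both $\rg_{\synt}^{\dagger}(X,\Q_p(r))$ (defined via the sheaf $\sa^{\dagger}_{\synt}(r)$) and $\rg_{\synt}(X,\Q_p(r))$ (defined via \eqref{part1}, which is itself $\eta$-\'etale-sheaf-theoretic through its Hyodo-Kato and $\B^+_{\dr}$ constituents) are $\rg_{\eet}(X,-)$ of $\eta$-\'etale sheaves, and the map $\iota^{\dagger}_{\synt}$ is induced by a morphism of sheaves which, by the affinoid case just treated, is an equivalence of sheaves. Evaluating on $X$ gives the result. The main obstacle I anticipate is the bookkeeping in the affinoid step --- specifically, verifying cleanly that the homotopy colimit over the presentation commutes with the homotopy limit in the definition \eqref{part1} and with $\eta$-\'etale descent, so that the ``naive'' and ``sheafified'' presentation-level definitions literally agree; this is where one must be careful with the projective-versus-inductive tensor product issues flagged in Remark~\ref{wrzask1}, but it is handled exactly as in the proofs of Lemma~\ref{herbata2} and Proposition~\ref{ogrod2}.
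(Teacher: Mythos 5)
There is a genuine gap, and it sits exactly where the real work of the proposition lies. You dispose of the final arrow of the composition (\ref{composition1}),
$$
\LL\colim_h\rg_{\synt}(X^{{\rm o},\dagger}_h,\Q_p(r))\to \rg_{\synt}(X,\Q_p(r)),
$$
by invoking Theorem~\ref{zamek} ``applied to the partially proper interiors $X^{\rm o}_h$''. But Theorem~\ref{zamek} compares the \emph{overconvergent} syntomic cohomology of a partially proper dagger variety with the \emph{rigid analytic} syntomic cohomology of its completion; that is precisely what justifies the backwards arrow $\LL\colim_h\rg_{\synt}(X^{{\rm o},\dagger}_h,\Q_p(r))\stackrel{\sim}{\to}\LL\colim_h\rg_{\synt}(X^{\rm o}_h,\Q_p(r))$ already marked as a quasi-isomorphism in (\ref{composition1}). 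The final arrow is of a different nature: both its source and its target are overconvergent syntomic cohomologies (the target is $\rg_{\synt}(X,\Q_p(r))$ for the dagger affinoid $X$ itself, defined by the homotopy fiber (\ref{part1}), and $X$ is not partially proper), so Theorem~\ref{zamek} simply does not apply. You have in effect conflated this arrow with the one preceding it and thereby skipped the substantive content of the proof.

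What is actually needed is to unwind the definition (\ref{part1}) on both sides and reduce to showing the two strict (filtered) quasi-isomorphisms
$$
\LL\colim_h\bigl(\rg_{\hk}(X^{{\rm o},\dagger}_h)\wh{\otimes}_{F^{\nr}}\wh{\B}^+_{\st}\bigr)\stackrel{\sim}{\to}\rg_{\hk}(X)\wh{\otimes}_{F^{\nr}}\wh{\B}^+_{\st},
\qquad
\LL\colim_h\rg_{\dr}(X^{{\rm o},\dagger}_h/\B^+_{\dr})\stackrel{\sim}{\to}\rg_{\dr}(X/\B^+_{\dr}).
$$
The paper establishes these by assuming $X$ is defined over a finite extension $L$ of $K$ and factoring each map through the presentation-level tensor products $\rg^{\dagger}_{\hk}(X)\wh{\otimes}_{F^{\nr}}\wh{\B}^+_{\st}$ and $\rg^{\dagger}_{\dr}(X/\B^+_{\dr})$: the first factor is a strict quasi-isomorphism by the very definition of the dagger tensor product in the Hyodo-Kato case, and by the computation (\ref{defnie}) (together with the filtered statement of Proposition~\ref{prison}) in the de Rham case; the remaining factors are the local-global compatibilities of Lemma~\ref{passage} and Corollary~\ref{local-global-kwak}. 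Your closing paragraph gestures at the right class of difficulties (commutation of the colimit over the presentation with the homotopy limits and the projective tensor products), but you locate them in the sheafification step rather than in this arrow, and you give no argument for them; citing Theorem~\ref{zamek} a second time does not substitute for it.
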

    \begin{proof}By \'etale descent, we may assume that $X$ is a smooth dagger affinoid. 
Looking at the composition (\ref{composition1}) defining the map $\iota^{\dagger}_{\synt}$  we see that it suffices to show that the natural map
    $$
    \LL\colim_h \rg_{\synt}(X^{{\rm o},\dagger}_h,\Q_p(r))\to \rg_{\synt}(X,\Q_p(r))
    $$
    is a strict quasi-isomorphism. Or, from the definitions of both sides, that
  we have strict  quasi-isomorphisms in, resp., $\sd_{\phi,N}(C_{\wh{\B}^+_{\st}})$ and $\sd\sff(C_{\B^+_{\dr}})$
     \begin{align*}
      \rg_{\hk}(X)\wh{\otimes}_{F^{\nr}}\wh{\B}^+_{\st} & \stackrel{\sim}{\leftarrow}\LL\colim_h (\rg_{\hk}(X^{{\rm o},\dagger}_h)\wh{\otimes}_{F^{\nr}}\wh{\B}^+_{\st}),\\
       \rg_{\dr}(X/\B^+_{\dr}) & \stackrel{\sim}{\leftarrow}\LL\colim_h \rg_{\dr}(X^{{\rm o},\dagger}_h/\B^+_{\dr}).
     \end{align*}
We may assume that $X$ is defined over a finite field extension $L$ of $K$, i.e., there exists $X_L$ such that $X\simeq X_{L,C}$
 Then    the above  maps  factor as
    \begin{align*}
       &  \LL\colim_h (\rg_{\hk}(X^{{\rm o},\dagger}_h)\wh{\otimes}_{F^{\nr}}\wh{\B}^+_{\st})\stackrel{\sim}{\to}(\LL\colim_h (\rg_{\hk}(X_{h}))\wh{\otimes}_{F^{\nr}}\wh{\B}^+_{\st})\stackrel{\sim}{\to} 
       \rg^{\dagger}_{\hk}(X)\wh{\otimes}_{F^{\nr}}\wh{\B}^+_{\st}\stackrel{\sim}{\to}\rg_{\hk}(X)\wh{\otimes}_{F^{\nr}}\wh{\B}^+_{\st},\\
 & \LL\colim_h (\rg_{\dr}(X^{{\rm o},\dagger}_{L,h})\wh{\otimes}^{\R}_L\B^+_{\dr})\stackrel{\sim}{\to}(\LL\colim_h \rg_{\dr}(X_{L,h}))\wh{\otimes}^{\R}_L\B^+_{\dr}\stackrel{\sim}{\to} \rg^{\dagger}_{\dr}(X/\B^+_{\dr})\stackrel{\sim}{\to}\rg_{\dr}(X/\B^+_{\dr}).
    \end{align*}
       In the Hyodo-Kato case, the first map is a strict quasi-isomorphism by definition of the dagger tensor product. In the de Rham case, the first map is a strict filtered quasi-isomorphism by the computation (\ref{defnie}). 

\end{proof}
\subsubsection{The geometric overconvergent period map and a comparison result}   \label{period-dagger-section}       We are now ready to define and study the overconvergent period map.    
     Let $X\in {\rm Sm}^{\dagger}_C$, $r\geq 0$.  Define the period 
\index{alphar@\alphar}map in $\sd(C_{\Q_p})$
   \begin{equation}
   \label{period-dagger}
   \alpha_r: \rg_{\synt}(X,\Q_p(r))\to \rg_{\proeet}(X,\Q_p(r))
   \end{equation}
   as the composition
   $$
    \rg_{\synt}(X,\Q_p(r))\stackrel{\sim}{\leftarrow}  \rg_{\synt}^{\dagger}(X,\Q_p(r))\lomapr{\alpha_r^{\dagger}} \rg_{\proeet}(X,\Q_p(r)),
   $$
   where the first map is the map $\iota^{\dagger}_{\synt}$ from Proposition   \ref{zamek3} and the second map is defined by globalizing the following map defined for  a dagger affinoid $X$ with  the presentation $\{X_h\}$:
   $$
    \rg_{\synt}^{\dagger}(X,\Q_p(r))=\LL\colim_h\rg_{\synt}(X_h,\Q_p(r))\lomapr{\alpha_r}\LL\colim_h\rg_{\proeet}(X_h,\Q_p(r))\simeq 
     \rg_{\proeet}(X,\Q_p(r)).
   $$
   Here $\alpha_r$ is the rigid analytic period map (see Proposition \ref{main00}).
   
   We have the following compatibility with the rigid analytic period map: 
   \begin{proposition}\label{lwiatko11}
{\rm ({Dagger-rigid analytic compatibility})} Let  $X\in {\rm Sm}^{\dagger}_C$ and $r\geq 0$. 
   \begin{enumerate}
   \item The following diagram
   $$
\xymatrix@R=6mm{
\rg_{\synt}(X,\Q_p(r))\ar[r]^{\alpha_r}\ar[d]^{\iota} & \rg_{\proeet}(X,\Q_p(r))\ar[d]^{\iota_{\proeet}}\\
\rg_{\synt}(\wh{X},\Q_p(r))\ar[r]^{\wh{\alpha}_r} & \rg_{\proeet}(\wh{X},\Q_p(r))
}
$$
commutes. 
\item If $X$ is partially proper then the maps $\iota$ and $\iota_{\proeet}$ in the above diagram are strict quasi-isomorphisms. 
\end{enumerate}
\end{proposition}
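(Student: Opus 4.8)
The statement to prove is Proposition~\ref{lwiatko11}: that the period maps for a dagger variety $X$ and its completion $\wh{X}$ fit into a commutative square with the comparison maps $\iota$ (on syntomic cohomology) and $\iota_{\proeet}$ (on pro-\'etale cohomology), and that $\iota$, $\iota_{\proeet}$ are strict quasi-isomorphisms when $X$ is partially proper. The plan is to reduce both assertions to statements already established or easy to establish locally on a presentation $\{X_h\}$ of a dagger affinoid, using $\eta$-\'etale descent to globalize. First I would recall that by Proposition~\ref{zamek3} the map $\iota$ factors as $\rg_{\synt}(X,\Q_p(r))\xleftarrow{\sim}\rg^{\dagger}_{\synt}(X,\Q_p(r))\to\rg_{\synt}(\wh{X},\Q_p(r))$ (the second map being the analog of $\iota^{\dagger}_{\synt}$ but with target the rigid analytic syntomic cohomology of $\wh{X}$, i.e.\ essentially the map $\iota$ of Section~\ref{constr1}), and that $\alpha_r$ was \emph{defined} in (\ref{period-dagger}) as $\alpha_r^{\dagger}\circ(\iota^{\dagger}_{\synt})^{-1}$. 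So commutativity of the square reduces to the commutativity of the square obtained by replacing the top-left corner $\rg_{\synt}(X,\Q_p(r))$ with $\rg^{\dagger}_{\synt}(X,\Q_p(r))$, the top map with $\alpha^{\dagger}_r$, the left map with the composite $\rg^{\dagger}_{\synt}(X,\Q_p(r))\to\rg^{\dagger}_{\synt}(\wh X,\Q_p(r))=\rg_{\synt}(\wh X,\Q_p(r))$ (the completion being a rigid analytic variety, so its overconvergent and rigid analytic syntomic cohomologies agree), and the bottom map and right map unchanged.

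\textbf{Key steps.} (1) Reduce to $X$ a dagger affinoid with presentation $\{X_h\}$, by $\eta$-\'etale (hyper)descent, since all five cohomologies in the square are $\eta$-\'etale sheaves and all four maps are built by sheafification from the affinoid case; a hypercover argument (as in the proof of Proposition~\ref{stara1}) lets one pass to affinoids in every degree. (2) For such $X$, unwind the definitions: $\rg^{\dagger}_{\synt}(X,\Q_p(r))=\LL\colim_h\rg_{\synt}(X_h,\Q_p(r))$, $\rg_{\proeet}(X,\Q_p(r))=\LL\colim_h\rg_{\proeet}(X_h,\Q_p(r))$, $\rg_{\synt}(\wh X,\Q_p(r))=\rg_{\synt}(\wh X,\Q_p(r))$, $\rg_{\proeet}(\wh X,\Q_p(r))=\rg_{\proeet}(\wh X,\Q_p(r))$, and $\alpha^{\dagger}_r=\LL\colim_h\alpha_r$. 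The whole square is then obtained by applying $\LL\colim_h$ to the square (for each fixed $h$, coming from the maps $\wh X\to X_h$)
$$
\xymatrix@R=6mm{
\rg_{\synt}(X_h,\Q_p(r))\ar[r]^{\alpha_r}\ar[d] & \rg_{\proeet}(X_h,\Q_p(r))\ar[d]\\
\rg_{\synt}(\wh{X},\Q_p(r))\ar[r]^{\alpha_r} & \rg_{\proeet}(\wh{X},\Q_p(r))
}
$$
which commutes by functoriality of the rigid analytic period map $\alpha_r$ in the rigid analytic variety (Proposition~\ref{main00}, and the compatibility $\iota_{\proeet}\circ\alpha_r=\alpha_r\circ\iota^{\rm GK}$ recorded there, together with $\eta$-\'etale functoriality of the constructions in Section~\ref{constr1}). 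One has to check that the identifications $\iota_1,\iota_2$ used to define $\iota$ (Section~\ref{constr1}) match, under $\LL\colim_h$, the corresponding identifications in the definition of $\alpha_r$; this is bookkeeping with the strict quasi-isomorphisms of Remark~\ref{identification} (the $(-)^{N-{\rm nilp}}$ comparisons) and of Proposition~\ref{synt-locglob} (the map $\iota_2$), all of which are natural in the input variety. (3) For claim (2): that $\iota$ is a strict quasi-isomorphism for $X$ partially proper is exactly Theorem~\ref{zamek} (taking care that the map of Theorem~\ref{zamek} is the $\iota=\iota_2\iota_1$ of Section~\ref{constr1}, which agrees with the $\iota$ here via Proposition~\ref{zamek3}); that $\iota_{\proeet}$ is a strict quasi-isomorphism for $X$ partially proper is part~3(a) of Proposition~\ref{main00} (i.e.\ \cite[Prop. 3.17]{CN3}).

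\textbf{Main obstacle.} The genuinely laborious point is step (2)'s compatibility check: verifying that the chain of strict quasi-isomorphisms $\iota_1$ (going through $\wh\B^+_{\st}$ versus $\B^+_{\st}$ and the $N$-nilpotent truncations) and $\iota_2$ (the local-global identification of the Bloch–Kato presentation of syntomic cohomology with Fontaine–Messing syntomic cohomology, from the proof of Proposition~\ref{synt-locglob}) are compatible, under $\LL\colim_h$ and under the maps $\wh X\to X_h$, with the analogous identifications implicit in the rigid analytic period map $\alpha_r$ as it enters the definition (\ref{period-dagger}). Everything is "natural" but the naturality has to be traced through Remark~\ref{identification}, Lemma~\ref{cicho1}, Proposition~\ref{synt-locglob}, and the definition of $\alpha_r$ via $(\phi,\Gamma)$-modules referenced from \cite{CN1}; since each of these constructions was set up functorially in the rigid analytic input, the compatibility holds, but writing it out cleanly is where the real work lies. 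The descent reduction in step (1) and the invocations in step (3) are, by contrast, routine given the cited results.
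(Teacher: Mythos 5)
Your proposal is correct and follows essentially the same route as the paper: the paper's proof also reduces part (1) to the dagger affinoid case by étale descent and then declares commutativity "straightforward from the definitions" (your step (2) simply spells out that bookkeeping), and part (2) is cited verbatim from Theorem~\ref{zamek} and Proposition~\ref{main00}, point 3a. Your identification of the $\iota_1,\iota_2$ compatibility check as the real content behind "straightforward from the definitions" is an accurate reading of where the work is hidden.
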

Here, the period map ${\alpha}_r$ is the one defined above.  
\index{alphar@\alphar}We put hat above its  rigid analytic analog  to distinguish it from the dagger period map. 
   \begin{proof}
    For the first claim, it suffices to show that this diagram naturally commutes \'etale locally. So we may assume that $X$ is a smooth dagger affinoid. Then checking commutativity is straightforward from the definitions. 
    
       For the second claim, note that the map $\iota$ is a strict quasi-isomorphism by Theorem \ref{zamek} and  the map $\iota_{\proeet}$ is a strict quasi-isomorphism by Proposition \ref{main00}, point 3a.   
\end{proof}

  The following comparison result follows almost immediately from its rigid analytic analog (see Proposition \ref{main00}, point 2c):
\begin{theorem}
   \label{period15}For $X\in {\rm Sm}^{\dagger}_C$ and $r\geq 0$, 
    the period map  in $\sd(C_{\Q_p})$
    \begin{equation}
    \label{period15.1}
    \alpha_r: \R\Gamma_{\synt}(X,\Q_p(r))\to \R\Gamma_{\proeet}(X,\Q_p(r))
    \end{equation}
    is a  strict quasi-isomorphism after truncation $\tau_{\leq r}$.
\end{theorem}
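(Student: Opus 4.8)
The plan is to reduce the assertion to the rigid analytic period map of Proposition~\ref{main00} (point 2c) by means of the presentation of the dagger structure. Recall from~(\ref{period-dagger}) that $\alpha_r$ is the composition
$$
\rg_{\synt}(X,\Q_p(r))\stackrel{\sim}{\leftarrow}\rg^{\dagger}_{\synt}(X,\Q_p(r))\lomapr{\alpha^{\dagger}_r}\rg_{\proeet}(X,\Q_p(r)),
$$
in which the first map is the strict quasi-isomorphism $\iota^{\dagger}_{\synt}$ of Proposition~\ref{zamek3}. Hence it suffices to show that $\tau_{\leq r}\alpha^{\dagger}_r$ is a strict quasi-isomorphism.

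First I would reduce to the case of a smooth dagger affinoid. Both $\rg^{\dagger}_{\synt}(-,\Q_p(r))$ and $\rg_{\proeet}(-,\Q_p(r))$ are computed as $\rg_{\eet}$ of sheaves on $X_{\eet}$ --- the sheaf $\sa^{\dagger}_{\synt}(r)$ on the one hand, the $\Q_p(r)$ pro-\'etale sheaf on the other --- and $\alpha^{\dagger}_r$ is induced by a morphism between them. Since $\tau_{\leq r}$ is compatible with $\eta$-\'etale (hence \'etale) descent through the descent spectral sequence, exactly as in the arithmetic situation of~\cite{CN3}, a map that is a strict quasi-isomorphism after $\tau_{\leq r}$ on every dagger affinoid \'etale over $X$ has this property on $X$ as well. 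So assume that $X$ is a smooth dagger affinoid with presentation $\{X_h\}$.

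For such $X$, by the definition of $\alpha^{\dagger}_r$ recalled before Proposition~\ref{lwiatko11} and the definition of the pro-\'etale cohomology of a dagger affinoid, $\alpha^{\dagger}_r$ is the map
$$
\LL\colim_h\rg_{\synt}(X_h,\Q_p(r))\verylomapr{\LL\colim_h\alpha_r}\LL\colim_h\rg_{\proeet}(X_h,\Q_p(r))\simeq\rg_{\proeet}(X,\Q_p(r)),
$$
where $\alpha_r\colon\rg_{\synt}(X_h,\Q_p(r))\to\rg_{\proeet}(X_h,\Q_p(r))$ is the rigid analytic period map, which is a strict quasi-isomorphism after $\tau_{\leq r}$ by Proposition~\ref{main00}, point 2c. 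Filtered homotopy colimits in $\sd(C_{\Q_p})$ are $t$-exact, hence commute with $\tau_{\leq r}$, and they preserve strict quasi-isomorphisms; therefore $\tau_{\leq r}\alpha^{\dagger}_r\simeq\LL\colim_h(\tau_{\leq r}\alpha_r)$ is a strict quasi-isomorphism, which finishes the proof.

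The only point requiring a little care --- but genuinely routine at this stage --- is the commutation of $\tau_{\leq r}$ with the homotopy colimit over the presentation and with the $\eta$-\'etale descent spectral sequence in the quasi-abelian setting of locally convex $\Q_p$-vector spaces. All the substantial content has already been put in place: the comparison $\iota^{\dagger}_{\synt}$ of Proposition~\ref{zamek3}, which itself rests on Theorem~\ref{zamek}, and the rigid analytic period isomorphism of Proposition~\ref{main00} (point 2c), which rests on Tsuji's computation of $p$-adic nearby cycles via syntomic complexes. So there is no real obstacle remaining.
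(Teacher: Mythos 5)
Your proposal is correct and follows exactly the route the paper takes: the paper's proof consists of the single remark that the theorem "follows immediately from its rigid analytic analog (Proposition~\ref{main00}, point 2c)", the implicit content being precisely your unwinding of $\alpha_r$ as $\alpha_r^{\dagger}\circ(\iota^{\dagger}_{\synt})^{-1}$, the reduction to dagger affinoids, and the commutation of $\tau_{\leq r}$ with the homotopy colimit over the presentation. Your write-up simply makes explicit the steps the paper leaves to the reader.
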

\begin{proof}We may localize and assume that $X$ is a dagger affinoid. 
Proposition \ref{main00} yields immediately the strict quasi-isomorphism after truncation $\tau_{\leq r-1}$ (since $L^i\colim$ vanishes for $i >1$). It remains to show that the map $\alpha_r$ induces an isomorphism on cohomology  in degree $r$. For that, consider the following commutative diagram
$$
\xymatrix{
\wt{H}^r_{\synt}(X,\Q_p(r))\ar[r]^{\alpha_r} \ar[d]^{\wr}_t& \wt{H}^r_{\proeet}(X,\Q_p(r))\ar[d]^{\wr}_{\zeta}\\
\wt{H}^r_{\synt}(X,\Q_p(r+1))\ar[r]^{\alpha_{r+1}}_{\sim} & \wt{H}^r_{\proeet}(X,\Q_p(r+1)).
}
$$
The right vertical arrow is a multiplication by $p$-adic root of unity. The bottom arrow is an isomorphism by the above argument. The left vertical arrow is an isomorphism by the diagram in Proposition \ref{fd11} and a chase of the diagram in \cite[Rem. 4.5]{CDN3} (we note here that we do not need comparison with pro-\'etale cohomology for this chase).
It follows that the top horizontal map is an isomorphism, as wanted. 
\end{proof}
The above theorem it implies the following result (which will be the starting point of our study of  $C_{\st}$-conjecture for smooth analytic varieties in \cite{CN5}):
\begin{corollary}
For $X\in {\rm Sm}^{\dagger}_C$,  $r\geq 0$, and $i\leq r$, we have the long exact sequence
$$
  \cdots\to \wt{H}^{i-1}(\rg_{\dr}(X/\B^+_{\dr})/F^r) \to \wt{H}^i_{\proeet}(X,\Q_p(r))\to (H^i_{\hk}(X)\wh{\otimes}_{F^{\nr}}\B^+_{\st})^{N=0,\phi=p^r}\lomapr{\iota_{\hk}\otimes\iota} \wt{H}^i(\rg_{\dr}(X/\B^+_{\dr})/F^r)
$$
\end{corollary}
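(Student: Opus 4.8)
The plan is to deduce the long exact sequence directly from the distinguished triangle defining overconvergent geometric syntomic cohomology, combined with the period quasi-isomorphism of Theorem~\ref{period15}. Recall that, by definition~(\ref{part1}), we have the distinguished triangle
$$
\rg_{\synt}(X,\Q_p(r))\lomapr{}{\rm HK}(X,r)\lomapr{\iota_{\hk}\otimes\iota}{\rm DR}(X,r)
$$
in $\sd(C_{\Q_p})$, where ${\rm HK}(X,r)=[\rg_{\hk}(X)\wh{\otimes}^{\R}_{F^{\nr}}\wh{\B}^+_{\st}]^{N=0,\phi=p^r}$ and ${\rm DR}(X,r)=\rg_{\dr}(X/\B^+_{\dr})/F^r$. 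Taking the associated long exact sequence of $\wt{H}^*$-cohomology in the quasi-abelian category ${LH}(C_{\Q_p})$ yields
$$
\cdots\to\wt{H}^{i-1}({\rm DR}(X,r))\to\wt{H}^i(\rg_{\synt}(X,\Q_p(r)))\to\wt{H}^i({\rm HK}(X,r))\lomapr{\iota_{\hk}\otimes\iota}\wt{H}^i({\rm DR}(X,r))\to\cdots
$$

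First I would replace $\wt{H}^i(\rg_{\synt}(X,\Q_p(r)))$ with $\wt{H}^i_{\proeet}(X,\Q_p(r))$ in the range $i\leq r$: this is exactly the content of Theorem~\ref{period15}, which asserts that $\alpha_r$ is a strict quasi-isomorphism after truncation $\tau_{\leq r}$; since the long exact sequence only involves degrees $i$ and $i-1$, and the corollary is stated with this understanding, we get the displayed sequence with $\wt{H}^i_{\proeet}$ in the middle-left spot. Next I would identify the term $\wt{H}^i({\rm HK}(X,r))$ with $(H^i_{\hk}(X)\wh{\otimes}_{F^{\nr}}\B^+_{\st})^{N=0,\phi=p^r}$: by Lemma~\ref{cicho1} the canonical map $\wt{\rm HK}(X,r)\to{\rm HK}(X,r)$ is a strict quasi-isomorphism, so one may work with the $\B^+_{\st}$-version, and then Proposition~\ref{ciemno0}(2),(4) gives that $\wt{H}^i$ of this complex is classical and naturally isomorphic to $(H^i_{\hk}(X)\wh{\otimes}_{F^{\nr}}\wh{\B}^+_{\st})^{N=0,\phi=p^r}$, which in turn (again by Lemma~\ref{cicho1} at the level of cohomology) matches the $\B^+_{\st}$-version appearing in the statement.

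The term $\wt{H}^{i-1}({\rm DR}(X,r))$ and $\wt{H}^i({\rm DR}(X,r))$ require no further massaging: ${\rm DR}(X,r)$ is by definition $\rg_{\dr}(X/\B^+_{\dr})/F^r$, which is precisely the notation used in the corollary. Finally, the connecting map in the middle is induced by the map $\iota_{\hk}\otimes\iota$ in the defining triangle, so it is literally the map labeled $\iota_{\hk}\otimes\iota$ in the statement. I expect the main (modest) obstacle to be bookkeeping: making sure that the passage from $\wt{\rm HK}$ to ${\rm HK}$, the identification of cohomology via Proposition~\ref{ciemno0}, and the application of Theorem~\ref{period15} are all compatible, i.e.\ that the maps in the long exact sequence are the stated ones and not merely abstractly isomorphic. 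Since all the cited results come with explicit natural morphisms and the triangle~(\ref{part1}) is fixed, this compatibility is automatic, so the proof is essentially a one-line invocation: apply $\wt H^*$ to~(\ref{part1}), then substitute using Theorem~\ref{period15}, Lemma~\ref{cicho1}, and Proposition~\ref{ciemno0}.
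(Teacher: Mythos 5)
Your proposal is correct and follows essentially the same route as the paper: the paper's own proof is a one-line invocation of Theorem~\ref{period15} together with the identification of the $N=0$ eigenspaces for $\B^+_{\st}$ versus $\wh{\B}^+_{\st}$, which is exactly what you supply (via Lemma~\ref{cicho1} and Proposition~\ref{ciemno0}) after taking the long exact sequence of the defining triangle~(\ref{part1}). Your version is, if anything, slightly more explicit about the bookkeeping than the paper's.
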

Here we set 
$$
H^i_{\hk}(X)\wh{\otimes}_{F^{\nr}}\B^+_{\st}:=\lim_n(H^i_{\hk}(U_n)\wh{\otimes}_{F^{\nr}}\B^+_{\st}),
$$
for an increasing covering $\{U_n\}_n$ of $X$ by quasi-compact open  (note that the groups $H^i_{\hk}(U_n)$ are of finite rank). 
\begin{proof}
Use Theorem \ref{period15} and the obvious fact that the canonical map $[H^i_{\hk}(U_n)\wh{\otimes}_{F^{\nr}}\B^+_{\st}]^{N=0}\to [H^i_{\hk}(U_n)\wh{\otimes}_{F^{\nr}}\widehat{\B}^+_{\st}]^{N=0}$ is an isomorphism.
\end{proof}

\section{Geometrization of period morphisms}\label{Geometrization} 
The purpose of this section is to geometrize syntomic cohomology (and the related Hyodo-Kato and de Rham cohomologies), pro-\'etale cohomology, and the associated period morphisms both in the rigid analytic and the overconvergent set-ups. By "geometrization" we mean putting  a topological VS-structure. The key computation is the one showing that the rigid analytic version of Fontaine-Messing period morphism is a shadow of a VS-morphism. Both sides of the period morphism, crystalline syntomic cohomology and pro-\'etale cohomology,  have natural VS structures. However it it not immediately clear that the period map navigates well between these two VS-structures. To show that, in fact, it does so we use the presentation of the period map via $(\phi,\Gamma)$-modules introduced  in \cite{CN1}, \cite{SG}. 
\subsection{Geometrization}
In this section we explain how to geometrize the cohomologies and the period morphism (th.\,\ref{lifted-period}).
In the next sections we prove Theorem~\ref{lifted-period}, first in the lifted case, then in the general case.

\subsubsection{Vector Spaces}
A VS, resp.~a VS$^+$,  is a functor from perfectoid affinoids  $(\Lambda,\Lambda^+)$ over $(C,\so_C)$
(denoted by $\Lambda=(\Lambda,\Lambda^+)$ in what follows) to 
 $\Q_p$-modules, resp.~$\Z_p$-modules. If ${\mathbb W}$ is a VS$^+$, then $\Q_p\otimes_{\Z_p}{\mathbb W}$
is a VS. VS's form an abelian category. 
 Trivial examples of VS's are:

$\bullet$ 
finite dimensional $\Q_p$-vector spaces $V$,
with associated functor $\Lambda\mapsto V$ for all $\Lambda$,

$\bullet$ ${\mathbb V}^d$, for $d\in\N$, with ${\mathbb V}^d(\Lambda)=\Lambda^d$, for
all $\Lambda$.

\noindent
More interesting examples are provided by Fontaine's rings~\cite{bures,CB}:

$\bullet$ $\Bcris^+$, $\Bst^+$, $\Bdr^+$, $\Bcris$, $\Bst$, $\Bdr$ are naturally VS's (and even Rings).

$\bullet$ If $m\geq 1$, then ${\mathbb B}_m:=\Bdr^+/t^m\Bdr^+$ is a VS (and also a Ring).

$\bullet$ Let $h\geq 1$ and $d\in\Z$. Then ${\mathbb U}_{h,d}=(\Bcris^+)^{\varphi^h=p^d}$ if $d\geq 0$,
and ${\mathbb U}_{h,d}={\mathbb B}_d /\Q_{p^h}$ if $d<0$, are VS's.

\noindent Exemples of VS$^+$'s include $\Ainf$, $\Acris$, or $\bbA^{[u,v]}$ if $0<u\leq v$;
the last example being the functor sending $\Lambda$ to the $p$-adic completion of $\Ainf(\Lambda)[\frac{p}{[\alpha]},\frac{[\beta]}{p}]$, where $\alpha,\beta\in \so_C^\flat$ with $v(\alpha)=\frac{1}{v}$
and $v(\beta)=\frac{1}{u}$.
 By \cite[Prop. 3.2]{SG}, 
 we have  $\Acris(\Lambda)\subset \bbA^{[u,v]}({\Lambda})$,  for $u\geq \frac{1}{p-1}$.
If $v\geq 1$, we have $\bbA^{[u,v]}(\Lambda)\subset\Bdr^+(\Lambda)$ and this inclusion
induces a filtration on $\bbA^{[u,v]}(\Lambda)$.

$\bullet$    The semistable period rings  can be also lifted to VS's. We set 
 \begin{align*}
&  \wh{{\mathbb B}}^+_{\st}:=\wh{{\mathbb B}}^+_{p,\st}
:=(\Acris<t_p[\wt{p}]^{-1}-1>)^{\bwedge}[\tfrac{1}{p}],
\quad {\mathbb B}^+_{\st}:={\mathbb B}^+_{p,\st}:={\mathbb B}^+_{\crr}[\log([\wt{p}])],\\
& \kappa: {\mathbb B}^+_{\st}\to\wh{{\mathbb B}}^+_{\st}, \,\log([\wt{p}])\mapsto -\log(t_p[\wt{p}]^{-1}),\quad  \iota: {\mathbb B}^+_{\st}\to{\mathbb B}^+_{\dr},\, \log([\wt{p}])\mapsto -\log(p[\wt{p}]^{-1}),\\
&  \iota: \wh{{\mathbb B}}^+_{\st}\to{\mathbb B}^+_{\dr},\, t_p[\wt{p}]^{-1}\mapsto p[\wt{p}]^{-1}.
\end{align*}

If ${\mathbb W}$ is one of the above Rings, we denote by ${\bf W}$ the ring ${\mathbb W}(C)$: for 
example $\A^{[u,v]}=\bbA^{[u,v]}(C)$ (for the other Rings $\Ainf$, $\Acris$, $\Bcris^+$, $\Bdr^+$, etc., 
one gets back the rings already defined).

\begin{remark}
The above definition gives presheaves on ${\rm Perf}_C$. Passing to the associated sheaves
gives a natural viewpoint on VS's and VS$^+$'s; this was put to use by Le Bras in his thesis~\cite{lebras}.
\end{remark}

\subsubsection{Pro-\'etale cohomology}
 Let $X$ be a smooth rigid analytic variety over $C$. 
If $\Lambda$ is a perfectoid $C$-Banach algebra, let $X_\Lambda$ be the scalar extension $X\times_C\Lambda$.
The  functor 
$\Lambda\mapsto H^i_{\proeet}(X_{ \Lambda},\Q_p)$ defines a VS.
That is, there exists a VS
${\mathbb H}^i_{{\proeet}}(X,\Q_p)$ such that 
${\mathbb H}^i_{{\proeet}}(X,\Q_p)(\Lambda)=H^i_{\proeet}(X_{\Lambda},\Q_p)$,
for all  perfectoid $C$-Banach algebras.
In particular, $H^i_{\proeet}(X,\Q_p)$ is the space of  $C$-points
of  ${\mathbb H}^i_{{\proeet}}(X,\Q_p)$;  we have  put in this way a geometric structure on  
 $H^i_{\proeet}(X,\Q_p)$.
 
  We will use a bit more general\footnote{That is, presheaves on ${\rm Perf}_C$ with values in different categories than that of $\Q_p$-modules.} VS's:
  \begin{enumerate}
  \item  the cohomology complex: the functor $${\mathbb R}_{\proeet}(X_{\Lambda},\Q_p): \quad \Lambda\mapsto \rg_{\proeet}(X_{\Lambda},\Q_p)$$ defines a VS with values in $\sd(C_{\Q_p})$;
  \item  its cohomology groups $\wt{{\mathbb H}}_{\proeet}(X_{\Lambda},\Q_p)$ form a VS with values in $LH(C_{\Q_p})$;
  \item  its algebraic cohomology groups ${{\mathbb H}}_{\proeet}(X_{\Lambda},\Q_p)$ form the VS described above. We have a natural map $\wt{{\mathbb H}}_{\proeet}(X_{\Lambda},\Q_p)\to {{\mathbb H}}_{\proeet}(X_{\Lambda},\Q_p)$.
  \end{enumerate}
  \subsubsection{Crystalline syntomic cohomology}
To geometrize (filtered) absolute crystalline cohomology, we define the functor 
\begin{equation}
\label{bla1}
\mathbb{F}^r{\mathbb R}_{\crr}(X):\quad \Lambda\mapsto F^r\rg_{\crr}(X)
\wh{\otimes}^{R}_{\B^+_{\crr}}\Bcris^+(\Lambda),\quad r\geq 0,
\end{equation}
  that lifts the absolute crystalline cohomology $\rg_{\crr}(X)$ from Section \ref{absolute-crr}.  The tensor product used in (\ref{bla1}) needs to be defined. We do it in the following way. We set 
$$F^r\rg_{\crr}(X)\wh{\otimes}^{R}_{\B^+_{\crr}}\Bcris^+(\Lambda):=\rg_{\eet}(X,F^r\sa_{\crr,\Lambda}),
$$
 where $F^r\sa_{\crr,\Lambda}$ is the $\eta$-\'etale sheafification\footnote{We do not discuss local-global compatibilities. As far as we can tell this does not cause problems.} 
on $\sm^{\rm ss}_C$ of the presheaf 
$\sx\mapsto (F^r\rg_{\crr}(\sx/\A_{\crr})\wh{\otimes}^{\LL}_{\A_{\crr}}\Acris(\Lambda))_{\Q_p}$. 
 
 We proceed similarly for rigid $\B^+_{\dr}$-cohomology 
(from Section \ref{defcr}): we define the functor 
$${\mathbb F}^r{\mathbb R}_{\dr}(X/\Bdr^+):\quad 
 \Lambda\mapsto F^r\rg_{\dr}(X/\B^+_{\dr})\wh{\otimes}^{\R}_{\B^+_{\dr}}\Bdr^+(\Lambda),\quad  r\geq 0,
 $$
  that lifts the filtered $\B^+_{\dr}$-cohomology $F^r\rg_{\dr}(X/\B^+_{\dr})$. Here 
 $$F^r\rg_{\dr}(X/\B^+_{\dr})\wh{\otimes}^{\R}_{\B^+_{\dr}}\Bdr^+(\Lambda)
:=\rg_{\eet}(X,F^r\sa^{\bwedge}_{{\crr},\Lambda}),
 $$ where $F^r\sa^{\bwedge}_{{\crr},\Lambda}$ is the $\eta$-\'etale sheafification on $\sm^{\rm ss}_C$ of the presheaf 
 $$\sx\mapsto \R\wlim_{i\geq r}((\rg_{\crr}(\sx,\sj^{[r]})/F^i)
\wh{\otimes}^{\LL}_{(\A_{\crr}/F^i)}(\Acris(\Lambda)/F^i))_{\Q_p}.
 $$

   Finally, we lift crystalline syntomic cohomology by setting 
   $$
  \mathbb{R}_{\synt}(X, \Q_p(r)):\quad  \Lambda\mapsto 
[{\mathbb F}^r{\mathbb R}_{\crr}(X)(\Lambda)_{\Q_p}\lomapr{\phi-p^r}{\mathbb R}_{\dr}(X/\Bdr^+)(\Lambda)/F^r].
   $$
   \subsubsection{Rigid analytic varieties, period morphism} 
We move now to the geometrization of rigid analytic period morphisms. 
We will prove the following theorem.
   \begin{theorem}\label{lifted-period}
   For $X\in {\rm Sm}_C$ and $r\geq 0$, the functorial period map in $\sd(C_{\Q_p})$
   $$
   \alpha_r:\quad \rg_{\synt}(X,\Q_p(r))\to \rg_{\proeet}(X,\Q_p(r))
   $$
   lifts to a functorial map of VS's (with values  in $\sd(C_{\Q_p})$):
   $$
     {\bbalpha}_r:\quad {\mathbb R}_{\synt}(X,\Q_p(r))\to {\mathbb R}_{\proeet}(X,\Q_p(r)),
   $$
   which is a strict quasi-isomorphism after truncation $\tau_{\leq r}$.
   \end{theorem}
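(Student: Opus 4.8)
The plan is to reduce Theorem~\ref{lifted-period} to the already-established unlifted statement (Theorem~\ref{period15}, or rather its rigid analytic analogue contained in Proposition~\ref{main00}) by showing that both sides and the period map are the $C$-points of objects and a morphism living over the whole category ${\rm Perf}_C$. The key observation is that all the ingredients of $\alpha_r$ -- absolute crystalline cohomology, $\B^+_{\dr}$-cohomology, the period rings, and the Fontaine--Messing period morphism -- have already been promoted to VS-valued functors in the preceding subsection, so what remains is genuinely a formal manipulation plus one substantive input about the period map itself.

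First I would assemble the local (on a semistable model $\sx$ over $\so_C$) Fontaine--Messing period morphism $\alpha_r\colon\rg_{\synt}(\sx,\Q_p(r))\to\rg_{\eet}(\sx_C,\Q_p(r))$ and observe, following \cite{CN1}, \cite{SG}, that it can be rewritten through $(\phi,\Gamma)$-modules so that it extends naturally to a morphism of presheaves $\Lambda\mapsto\big(\rg_{\synt}(\sx,\Q_p(r))\text{-type object}\big)\to\rg_{\eet}(\sx_\Lambda,\Q_p(r))$: on the syntomic side one base-changes the period rings $\A_{\crr},\B^+_{\dr}$ to $\Acris(\Lambda),\Bdr^+(\Lambda)$ exactly as in the definitions of $\mathbb{F}^r\mathbb{R}_{\crr}(X)$ and $\mathbb{R}_{\dr}(X/\Bdr^+)$ above; on the \'etale side the functoriality of pro-\'etale cohomology in the perfectoid $\Lambda$ supplies the target VS ${\mathbb R}_{\proeet}(X,\Q_p(r))$. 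This gives local VS-morphisms $\mathbb{A}_r$; then one sheafifies for the $\eta$-\'etale topology (using Proposition~2.2 of \cite{CN3} / the Beilinson-base descent recalled in Section~\ref{sketchy}) to globalize to $X\in{\rm Sm}_C$, and one checks that evaluating at $\Lambda=C$ recovers the map $\alpha_r$ of Theorem~\ref{basic} and Theorem~\ref{period15}. The compatibility with $\tau_{\le r}$ on the level of presheaves is automatic since truncation is functorial.

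Next I would prove that $\mathbb{A}_r$ is a strict quasi-isomorphism after $\tau_{\le r}$. This is a statement about VS's with values in $\sd(C_{\Q_p})$, so it suffices to prove it after evaluation at every perfectoid $C$-Banach algebra $\Lambda$. For each such $\Lambda$, the cone of $\tau_{\le r}\mathbb{A}_r(\Lambda)$ is computed by the very same mechanism as over $C$: the Fontaine--Messing period map over a perfectoid base is again a strict quasi-isomorphism in the stable range, by Tsuji's computation of $p$-adic nearby cycles via syntomic complexes \cite{Ts} (which is insensitive to the perfectoid ground ring), combined with the $\eta$-\'etale descent already used to build $\rg_{\synt}$ and $\rg_{\proeet}$. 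Concretely one runs the argument of Theorem~\ref{period15} verbatim with $C$ replaced by $\Lambda$, using that $\Acris(\Lambda)$, $\Bdr^+(\Lambda)$ behave like $\A_{\crr}$, $\B^+_{\dr}$ for the purposes of the comparison (the relevant flatness and exactness facts are recorded in Section~\ref{period-rings} and in \cite[Prop. 3.2]{SG}).

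The main obstacle I expect is not the sheafification or the truncation bookkeeping but the genuine point that the period morphism ``navigates well'' between the two VS-structures: a priori the syntomic side is geometrized by base-changing \emph{period rings}, while the pro-\'etale side is geometrized by base-changing the \emph{space} $X$, and it is not formal that the Fontaine--Messing map intertwines these two operations compatibly in $\Lambda$. This is exactly where the $(\phi,\Gamma)$-module interpretation of $\alpha_r$ from \cite{CN1}, \cite{SG} is essential: it expresses $\alpha_r$ through Koszul-type complexes built out of the period rings on which the functoriality in $\Lambda$ is transparent, and it matches it with the Galois-cohomological description of pro-\'etale cohomology, which is likewise functorial in $\Lambda$. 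So the bulk of the work is to recall that reinterpretation carefully and check that each of its building blocks is a VS-morphism; once that is done, the remaining steps (globalization by $\eta$-\'etale descent, reduction of the quasi-isomorphism claim to evaluations, and invoking \cite{Ts} over a perfectoid base) are routine adaptations of arguments already present in the paper.
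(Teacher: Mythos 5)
Your construction of the lift ${\mathbb A}_r$ is essentially the paper's: both rely on rewriting the Fontaine--Messing map through $(\phi,\Gamma)$-module-style Koszul complexes as in \cite{CN1}, \cite{SG} so that the two geometrizations (base-changing the period rings on the syntomic side, base-changing the space on the pro-\'etale side) can be matched, and both globalize by $\eta$-\'etale descent from semistable local models, checking that evaluation at $\Lambda=C$ recovers $\alpha_r$. Up to the order of operations (the paper defines the lifted local map directly via the PD-envelope $\E^{\rm PD}_{\overline{R}_\Lambda}$ and only then brings in the Koszul diagram), this part of your plan is sound.

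The gap is in your proof that $\tau_{\leq r}{\mathbb A}_r(\Lambda)$ is a quasi-isomorphism. You propose to invoke Tsuji's computation of $p$-adic nearby cycles \cite{Ts} ``verbatim with $C$ replaced by $\Lambda$'', asserting it is insensitive to the perfectoid ground ring. No such statement is available: Tsuji's theorem concerns syntomic complexes and nearby cycles of semistable schemes over a discrete valuation ring, and there is no version of it over an arbitrary perfectoid Banach $C$-algebra; this is precisely why the paper does \emph{not} use \cite{Ts} for the lifted statement. Instead, the paper proves that the local lifted map $\alpha_{r,n}(\Lambda)$ is a $p^{cr}$-quasi-isomorphism by checking that \emph{every arrow} of the $(\phi,\Gamma)$-module diagram remains a $p^{cr}$-quasi-isomorphism after base change to $\Lambda$. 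This requires inputs that are genuinely new relative to \cite{SG}: flatness of $\Ainf(\Lambda)_n$ over $\A_{\inf,n}$ (so the completed tensor products can be taken outside the quasi-isomorphic complexes), almost \'etale descent to replace the $G_{R_\Lambda}$-cohomology of the insufficiently functorial $\overline{R}_\Lambda$ by $\Gamma_R$-cohomology, and, crucially, the base-change isomorphisms $\delta_1,\delta_2$ of Lemma~\ref{truc}, namely $F^r\A^{[u,v]}_{R^{\infty}}\wh{\otimes}_{\A^{[u,v]}}\A^{[u,v]}_{\Lambda}\stackrel{\sim}{\to}F^r\A^{[u,v]}_{R^{\infty}_{\Lambda}}$, proved by an explicit computation after tilting and reducing modulo $p^{\flat}$. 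Without this last identification the two sides of your presheaf morphism cannot be compared, so the quasi-isomorphism claim does not reduce to the already-known case $\Lambda=C$. You correctly identify the $(\phi,\Gamma)$-module interpretation as the locus of functoriality, but you stop short of using it for the quasi-isomorphism itself, which is exactly where it is indispensable.
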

The next two sections are devoted to the proof of this theorem.

\subsection{Local period morphism, lifted case}  \label{local-models}  
We start by  defining $ {\bbalpha}_r(\Lambda)$ locally,  in the simplest of cases. 

\subsubsection{Coordinates} 
Consider a frame 
\begin{equation}
\label{frame1}
R^+_{\Box}:=\so_C\{X,\tfrac{1}{X_0\ldots X_a},\tfrac{\varpi}{X_{a+1}\ldots X_{d}}\},\quad
R_{\Box}=R^+_{\Box}[\tfrac{1}{p}],
\end{equation}
where  $X=(X_0,\ldots, X_d)$ and $\varpi\in\so_C\setminus \so_C^*$, and a formal scheme $\sx=\Spf R^+$, for  an algebra $R^+$, which is the $p$-adic completion of an \'etale algebra  over $R^+_{\Box}$. 
We equip $\Spf (R^+_{\Box})$ and $\Spf (R^+)$ with the logarithmic structure\footnote{Note that we do not allow horizontal divisors at $\infty$.} induced by the special fiber.

 For $m\geq 0$, define 
   $$
R^{m,+}_{\Box}:=\so_C\{X^{{1}/{p^m}},\tfrac{1}{(X_0\ldots X_a)^{{1}/{p^m}}},\tfrac{\varpi^{{1}/{p^m}}}{(X_{a+1}\ldots X_{d})^{{1}/{p^m}}}\},\quad R^m_{\Box}=R^{m,+}_{\Box}[\tfrac{1}{p}],
$$
   and set $R^{\infty,+}_{\Box}$ equal to the $p$-adic completion of $\colim_m R^{m,+}_{\Box}$. 
Let $$R^{m,+}:=R^{m,+}_{\Box}\wh{\otimes}_{R^{+}_{\Box}}R^+, 
\quad R^{\infty,+}:= R^{\infty,+}_{\Box}\wh{\otimes}_{R^{+}_{\Box}}R^+,
\quad R^m=R^{m,+}[\tfrac{1}{p}],\quad R:=R^0,\quad R^\infty=R^{\infty,+}[\tfrac{1}{p}],$$ 
so that $R^\infty$ is a perfectoid Banach algebra.
Define $\Gamma_R:=\Gal(R^{\infty}/R)$. We have $\Gamma_R\simeq \Z_p^d$.

Choose $\varpi^\flat\in\so_C^\flat$ with $\theta([\varpi^\flat])=\varpi$.  We define 
  $$
  R^+_{\inf,\Box}:=\A_{\inf}\{X,\tfrac{1}{X_0\ldots X_a},\tfrac{[{\varpi}^{\flat}]}{X_{a+1}\ldots X_{d}}\}
  $$
 and  lift the map $R^+_{\Box}\to R^+$ to an \'etale map $R^+_{\inf,\Box}\to R^+_{\inf}$.   
Set 
$$R^+_{\crr}:=R^+_{\inf}\wh{\otimes}_{\A_{\inf}}\A_{\crr},  \quad
R^{[u,v]}:=R^+_{\inf}\wh{\otimes}_{\A_{\inf}}\A^{[u,v]}.$$
Endow everything with the log-structure coming from $\A_{\crr}$ and $\Spf(R^+)$.
 This gives us  the commutative diagram (with cartesian squares)
  $$
  \xymatrix@R=.4cm{
  \Spf(R^+) \ar@{^(->}[r] \ar[d] & \Spf (R^+_{\crr})\ar[d]\\
    \Spf(R^+_{\Box}) \ar@{^(->}[r] \ar[d] & \Spf (R^+_{\crr,\Box})\ar[d]\\
    \Spf (\so_C) \ar@{^(->}[r]  & \Spf (\A_{\crr})
  }
  $$
  We equip $ \Spf (R^+_{\crr})$ with the (unique) lift $\phi$ of the canonical Frobenius on $ \Spf (R^+_{\crr,\Box})$ 
(induced by $\varphi$ on $\A_{\crr}$ and by $X_i\mapsto X_i^p$, $0\leq i\leq d$).

 We define the filtrations $F^{\jcdot}R^+_{\crr}$ on $R^+_{\crr}$ 
and $F^{\jcdot}R^{[u,v]}$ on $R^{[u,v]}$ by inducing them from $\A_{\crr}$
and $\A^{[u,v]}$. We have the corresponding filtered de Rham complex
 $$
 F^r\Omega^{\jcdot}_{R^+_{\crr}/\A_{\crr}}:=F^rR^+_{\crr}\to F^{r-1}R^+_{\crr}\wh{\otimes}_{R^+_{\crr}}\Omega^1_{R^+_{\crr}/\A_{\crr}}\to F^{r-2}R^+_{\crr}\wh{\otimes}_{R^+_{\crr}}\Omega^2_{R^+_{\crr}/\A_{\crr}}\cdots
 $$
 The crystalline syntomic cohomology $\rg_{\synt}(\sx,\Q_p(r))$ is computed by the complex
 $
 {\rm Syn}(R^+,r)_{\Q_p},$
 where
 $$ {\rm Syn}(R^+,r):=[F^r\Omega^{\jcdot}_{R^+_{\crr}/\A_{\crr}}\lomapr{p^r-\phi}\Omega^{\jcdot}_{R^+_{\crr}/\A_{\crr}}].
 $$
 This follows from the fact that the (filtered) absolute crystalline cohomology $F^i\rg_{\crr}(\sx)\simeq F^i\rg_{\crr}(\sx/\A_{\crr})$.
   
\subsubsection{Period rings}  
    Let $\overline{R}^+$ be the maximal extension of $R^+$ that is \'etale in characteristic $0$, i.e., $\overline{R}^+$ is  the integral closure  of $R^+$ in a maximal ind-\'etale extension $\overline{R}$ of $R[\frac{1}{p}]$ inside a fixed algebraic closure of ${\rm Frac} R$. We have  
$\overline{R}=\overline{R}^+[\tfrac{1}{p}]$.
 Set $G_R:=\Gal(\overline{R}/R)$.
  For $0\leq i\leq d$, choose $X_i^{\flat}=(X_i,X_i^{1/p},\cdots)$ in $\overline{R}^{\flat}$ and define an embedding of $R^+_{\inf,\Box}$ 
in $\Ainf({\overline{R}})$ by sending $X_i\mapsto [X_i^{\flat}]$.
 This extends, for $0<u\leq v$ and $v\geq 1$, to embeddings\footnote{In what follows, all these objects will 
depend on a variable perfectoid algebra $\Lambda$; to distinguish what depends on $\Lambda$ from what does not, we often
allow ourself to write ${\mathbf W}_{R^{\rm deco}}$ instead of ${\mathbb W}(R^{\rm deco})$ to indicate
that $R^{\rm deco}$ does not depend on $\Lambda$.}:
    \begin{equation}
    \label{defremind}
    R^+_{\inf}\hookrightarrow \Ainf({\overline{R}}),\quad R^+_{\crr}\hookrightarrow 
\Acris(\overline{R}),\quad \varepsilon: R^{[u,v]}\hookrightarrow\A^{[u,v]}_{R^\infty}
\subset\A_{\overline{R}}^{[u,v]}.
    \end{equation}
    
 \subsubsection{Local period morphism $\alpha^{R^+}_{r,n}$}
\ 
\vskip.1cm
  ($\bullet$) {\em Over $C$.} Consider the following commutative diagram:
\begin{equation}
\label{lifted}
\xymatrix@R=.5cm{
 & \Spf(\E^{\rm PD}_{\overline{R}})\ar[dr]\\
 \Spf(\overline{R}^+) \ar@{^(->}[ru]\ar[d]\ar@{^(->}[rr]& & \Spf(\Acris(\overline{R})\wh{\otimes}_{\A_{\crr}} R^+_{\crr})\ar[d]\\
 \Spf(R^+) \ar[d]\ar@{^(->}[rr]&& \Spf(R^+_{\crr})\ar[d]^{\pi}\\
 \Spf(\so_C)\ar@{^(->}[rr]&&\Spf(\A_{\crr})
}
\end{equation}
Here $\E^{\rm PD}_{\overline{R}}$ is the ${\rm PD}$-envelope  of the closed embedding $\Spf(\overline{R}^+) \hookrightarrow \Spf(\Acris(\overline{R})\wh{\otimes}_{\A_{\crr}} R^+_{\crr})$. 
\begin{remark}
 (a) We take partial divided powers of level $s$, i.e., $x^{[k]}=\frac{x^k}{\lfloor k/p^s\rfloor\,!}$, 
where $s=0$ for $p\neq 2$ and $s=1$ for $p=2$. 

 (b) We induce the filtration on $\E^{\rm PD}_{\overline{R}}$ from the filtration on $R^+_{\crr}$ and $\Acris(\overline{R})$. See \cite[Sec.\,2.6.1]{CN1} for details. 
\end{remark}

   Set $\Omega^i_{\E^{\rm PD}_{\overline{R}}}:=\E^{\rm PD}_{\overline{R}}\wh{\otimes}_{R^+_{\crr}}\Omega^i_{R^+_{\crr}/\A_{\crr}}$. For $r\in \N$, we filter the de Rham complex $\Omega_{\E^{\rm PD}_{\overline{R}}}^{\jcdot}$ by subcomplexes
$$
F^r\Omega_{\E^{\rm PD}_{\overline{R}}}\kr:=F^r\E^{\rm PD}_{\overline{R}}\to F^{r-1}\E^{\rm PD}_{\overline{R}}\wh{\otimes}_{R^+_{\crr}}\Omega^1_{R^+_{\crr}/\A_{\crr}}\to 
F^{r-2}\E^{\rm PD}_{\overline{R}}\wh{\otimes}_{R^+_{\crr}}\Omega^2_{R^+_{\crr}/\A_{\crr}}\to \cdots
$$
For a continuous $G_{R}$-module $M$, let  $C(G_{R},M)$ denote the complex of continuous cochains of $G_{R}$ with values in $M$. 
The Fontaine-Messing period map\footnote{We note that ${\rm Spa}(R)$ is a $K(\pi,1)$-space hence $C(G_{R},\Z/p^n(r)^{\prime})\simeq \R\Gamma_{\proeet}({\rm Spa}(R),\Z/p^n(r)^{\prime})$.}  
\begin{equation}
\label{FM11}
 {\alpha}^{R^+}_{r,n}: \quad {\rm Syn}(R^+,r)_n \to  C(G_{R},\Z/p^n(r)^{\prime}),
 \end{equation}
  where $\Z/p^n(r)^{\prime}:=\frac{1}{p^{a(r)}}\Z/p^n(r)$, for $r=(p-1)a(r)+b(r), 0\leq b(r)\leq p-1$, 
 is defined 
as the composition 
\begin{equation}
\label{kaczka1}
\xymatrix@R=.4cm{
 {\rm Syn}(R^+,r)_n=
[F^r\Omega_{R^{+}_{\crr,n}/\A_{\crr,n}}\kr\verylomapr{\phi-p^r}\Omega_{R^{+}_{\crr,n}/\A_{\crr,n}}^{\jcdot}]
\ar@<13mm>[d]^{\sim, \tau_{\leq r}}_{\omega}\\
{\phantom{XXXX}C(G_{R},[F^r\Omega_{\E^{\rm PD}_{\overline{R},n}}\kr\verylomapr{\phi-p^r}\Omega_{\E^{\rm PD}_{\overline{R},n}}^{\jcdot}])}\\
{\phantom{XXXX}C(G_{R},[F^r \Acris(\overline{R})_n\verylomapr{\phi-p^r} \Acris(\overline{R})_n])\ar@<-13mm>[u]_-{\wr}}\\
{\phantom{XXXXXX}C(G_{R},\Z/p^n(r)^{\prime})\ar@<-13mm>[u]_-{\wr}}\\
}
\end{equation}
It is a $p^{cr}$-quasi-isomorphism\footnote{We call a morphism $f:A\to B$ in a derived category a $N$-quasi-isomorphism if the induced morphism on cohomology has kernel and cokernel annihilated by $N$.}, for a universal constant $c$, after truncation $\tau_{\leq r}$. The second quasi-isomorphism above follows from the filtered Poincar\'e Lemma, i.e., from the $p$-quasi-isomorphism
\begin{equation}
\label{pepitka1}
F^r\Acris(\overline{R})_n\stackrel{\sim}{\to}F^r\Omega_{\E^{\rm PD}_{\overline{R},n}}^{\jcdot},\quad r\geq 0,
\end{equation}
proved in \cite[Prop. 7.3]{SG}, \cite[Lemma 2.37]{CN1}. The third quasi-isomorphism follows 
from the fundamental $p^r$-exact sequence
$$
0\to \Z/p^n(r)^{\prime}\to F^r\Acris(\overline{R})_n\lomapr{\phi-p^r}\Acris(\overline{R})_n\to 0. 
$$
The first truncated quasi-isomorphism is a theorem of Tsuji \cite{Ts}.

   ($\bullet$) {\em Over a perfectoid $C$-algebra.} 
 Let $\Lambda=(\Lambda,\Lambda^+)$ be a perfectoid affinoid over $(C,\so_C)$.
 We refer the reader for a study of the basic properties of $\Ainf(\Lambda)$ to \cite{bures} or \cite[Sec.\,3]{BMS1}. 
  The following lemma is proved by the same argument as \cite[Lemma\,5.3]{SG}:
  \begin{lemma}\label{filtration}
  Let $0<u\leq v$ and $\frac{v}{p} < 1 < v$. Multiplication by $t^r$ induces $p^{3r}$-isomorphisms\footnote{A morphism of abelian groups $f:S\to T$ is called an {\em $N$-isomorphism} if its kernel and cokernel are annihilated by~$N$.}
  $$
  {\mathbb A}^{[u,v]}({\Lambda})\stackrel{\sim}{\to} F^r{\mathbb A}^{[u,v]}({\Lambda}),
\quad {\mathbb A}^{[u,v/p]}({\Lambda})\stackrel{\sim}{\to}{\mathbb A}^{[u,v/p]}({\Lambda}).
  $$
  \end{lemma}

We set
$$(R_{\Lambda},R^+_{\Lambda}):=(R,R^+)\wh{\otimes}_{(C,\so_C)}(\Lambda,\Lambda^+)$$
(by \cite[Prop. 6.18]{Sch0} this is a perfectoid algebra).
Let $\overline{R}^+_{\Lambda}$ be 
the completion of the maximal extension of $R_\Lambda$ \'etale in characteristic~$0$ and
$$\overline{R}_{\Lambda}:=\overline{R}^+_{\Lambda}[\tfrac{1}{p}],\quad
G_{R_\Lambda}={\rm Aut}(\overline{R}_{\Lambda}/R_\Lambda)$$
  For $0<u\leq v$ and $v\geq 1$, set 
$$
R^{[u,v]}_{\Lambda}  :=R^+_{\inf}\wh{\otimes}_{\A_{\inf}}{\mathbb A}^{[u,v]}({\Lambda}) $$
  equipped with the filtration induced from the one on ${\mathbb A}^{[u,v]}({\Lambda})$.

The Fontaine-Messing  morphism $ {\alpha}^{R^+}_{r,n}$ from \eqref{FM11}  
lifts to $\Lambda$. 
To show this we will
   use the commutative diagram:
\begin{equation}
\label{lambda-version}
\xymatrix@R=.5cm{
 & \Spf(\E^{\rm PD}_{\overline{R}_{\Lambda}})\ar[dr]\\
 \Spf(\overline{R}^+_{\Lambda}) \ar@{^(->}[ru]\ar[d]\ar@{^(->}[rr]& & \Spf(\Acris(\overline{R}_{\Lambda})\wh{\otimes}_{\A_{\crr}} R^+_{\crr})\ar[d]\\
 \Spf(R^+) \ar[d]\ar@{^(->}[rr]&& \Spf(R^+_{\crr})\ar[d]^{\pi}\\
 \Spf(\so_C)\ar@{^(->}[rr]&&\Spf(\A_{\crr})
}
\end{equation}
Here   $\Spf(\E^{\rm PD}_{\overline{R}_{\Lambda}})$ is the {\rm PD}-envelope of the closed embedding $ \Spf(\overline{R}^+_{\Lambda}) \hookrightarrow \Spf(\Acris(\overline{R}_{\Lambda})\wh{\otimes}_{\A_{\crr}} R^+_{\crr})$. 
 The period morphism\footnote{We note that ${\rm Spa}(R_{\Lambda})$ is a $K(\pi,1)$-space hence 
$C(G_{R_{\Lambda}},\Z/p^n(r)^{\prime})\simeq \R\Gamma_{\proeet}({\rm Spa}(R),\Z/p^n(r)^{\prime})$.} 
 $$ {\alpha}^{R^+}_{r,n}(\Lambda): \quad {\rm Syn}(R^+,r)_n(\Lambda) \to  C(G_{R_{\Lambda}},\Z/p^n(r)^{\prime}) $$ 
 is defined as the composition
$$\xymatrix@C=.2cm@R=.4cm{
{\rm Syn}(R^+,r)_n(\Lambda)\ar@{=}[r]
&[F^r\Omega\kr_{R^{+}_{\crr,n}/\A_{\crr,n}}{\otimes}_{\A_{\crr,n}}\Acris(\Lambda)_n\verylomapr{\phi-p^r}\Omega_{R^{+}_{\crr,n}/\A_{\crr,n}}^{\jcdot}\wh{\otimes}_{\A_{\crr,n}}\Acris(\Lambda)_n]\ar@<3mm>[d]_{\omega}\\
&{C(G_{R_{\Lambda}},[F^r\Omega_{\E^{\rm PD}_{\overline{R}_{\Lambda}},n}\kr\verylomapr{\phi-p^r}\Omega_{\E^{\rm PD}_{\overline{R}_{\Lambda}},n}^{\jcdot}])\phantom{XXX}}\\
&{C(G_{R_{\Lambda}},[F^r \Acris(\overline{R}_{\Lambda})_n\verylomapr{\phi-p^r}
   \Acris(\overline{R}_{\Lambda})_n])\ar@<-3mm>[u]_-{\wr}\phantom{XXX}}\\
&{\phantom{XX}C(G_{R_{\Lambda}},\Z/p^n(r)^{\prime})\ar@<-3mm>[u]_-{\wr}}
}$$
Here, the second $p$-quasi-isomorphism follows from the filtered Poincar\'e Lemma
  \begin{equation}
  \label{berkeley1}
  F^r \Acris(\overline{R}_{\Lambda})_n\stackrel{\sim}{\to} F^r\Omega_{\E^{\rm PD}_{\overline{R}_{\Lambda}},n}\kr,
  \end{equation}
  which can be proved by arguments analogous to the ones used  in the proof of \cite[Prop. 7.3]{SG}.  The third quasi-isomorphism follows from the fundamental $p^r$-exact sequence
$$
0\to \Z/p^n(r)^{\prime}\to F^r\Acris(\overline{R}_{\Lambda})_n\lomapr{\phi-p^r}\Acris(\overline{R}_{\Lambda})_n\to 0
$$

\subsubsection{Proof that ${\alpha}^{R^+}_{r,n}(\Lambda)$ is a quasi-isomorphism}
\begin{proposition}\label{poniedzialek1}
The period morphism
$$
{\alpha}^{R^+}_{r,n}(\Lambda):  \quad {\rm Syn}(R^+,r)_n(\Lambda) \to C(G_{R_{\Lambda}},\Z/p^n(r)^{\prime})
$$
is a $p^{cr}$-quasi-isomorphism, for a universal constant\footnote{In particular, independent of $R, \Lambda, n$, and $r$.} $c$, after truncation $\tau_{\leq r}$. 
\end{proposition}
\begin{proof}
It suffices to show that the morphism
$$\xymatrix@R=.4cm{
[F^r\Omega\kr_{R^{+}_{\crr,n}/\A_{\crr,n}}\wh{\otimes}_{\A_{\crr,n}}\Acris(\Lambda)_n\verylomapr{\phi-p^r}\Omega_{R^{+}_{\crr,n}/\A_{\crr,n}}^{\jcdot}\wh{\otimes}_{\A_{\crr,n}}\Acris(\Lambda)_n]\ar[d]_{\omega}\\
{  C(G_{R_{\Lambda}},[F^r\Omega_{\E^{\rm PD}_{\overline{R}_{\Lambda}},n}\kr\verylomapr{\phi-p^r}\Omega_{\E^{\rm PD}_{\overline{R}_{\Lambda}},n}^{\jcdot}])\phantom{XXXX}}
}$$
is a $p^{cr}$-quasi-isomorphism.
 We will do it by writing the Fontaine-Messing period morphism as a sequence of morphisms inspired  by the theory of  $(\phi,\Gamma)$-modules as in \cite[Th. 4.16]{CN1}, \cite[Th. 7.5]{SG} and then showing that all these morphisms are $p^{cr}$-quasi-isomorphisms after truncation $\tau_{\leq r}$.  This is done by  commutative diagram \eqref{wies1} below, where we denoted by $\omega$ the map that we want to show to be  a $p^{cr}$-quasi-isomorphism (after truncation 
     $\tau_{\leq r}$, which we indicate on the diagram but will often skip 
in the discussion to lighten up the notation). 
The key ingredient to turn the complex coming from the fundamental exact sequence 
(i.e., $C_G(K_{\phi}(F^r\Acris(\overline{R}_{\Lambda})))$ in the upper right corner)
into something which behaves
like a complex of VS's, as a functor in $\Lambda$ 
($\Lambda\mapsto \overline{R}_{\Lambda}$ is not functorial enough),
is the almost \'etale descent (map $\mu_H$).

We set 
    $u= (p-1)/p, v=p-1$ if $p\geq 3$, and $u=3/4,v=3/2$ if $p= 2$. 
   
 \vskip.1cm
  ($\bullet$) {\em Over $C$.} We will first treat the case $\Lambda=C$. 
The following commutative diagram is a simplified version\footnote{To see that note that the zig-zag in the left-bottom corner of that diagram is homotopic (via an explicit Poincar\'e Lemma homotopy) to the identity map.} of the diagram in \cite[proof of Th. 7.5]{SG}; the diagram in loc. cit. is glued from several diagrams commuting on the nose yielding the commuting homotopy for diagram \eqref{wies0}. The top row represents the Fontaine-Messing period morphism.
The diagram shows that the truncation $\tau_{\leq r}$ of the map $\omega$ is a $p^{cr}$-quasi-isomorphism.
 \begin{equation}
 \label{wies0}
\xymatrix@R=.5cm@C=.6cm{
K_{\partial,\phi}(F^rR^+_{\crr}) \ar[d]^{\wr}_{\tau_{\leq r}}  \ar[r]^-{\omega}  & C_G(K_{\partial, \phi}(F^r\E^{\rm PD}_{\overline{R}}))  & C_G(K_{\phi}(F^r\Acris(\overline{R})))\ar[d]^{\wr}\ar^{\sim}_-{\rm PL}[l] \\
K_{\partial,\phi}(F^rR^{[u,v]})  \ar[d]^-{\wr}_{t^{\jcdot},\tau_{\leq r}} &  & C_G(K_{\phi}(F^r\A^{[u,v]}_{\overline{R}})) \\
K_{\Lie \Gamma,\phi}(F^r R^{[u,v]}) & & C_{\Gamma}(K_{\phi}(F^r\A^{[u,v]}_{{R}^{\infty}}))\ar[u]_{\wr}^{\mu_H} \\
 K_{\Gamma,\phi}(F^r R^{[u,v]})\ar[r]^-{\sim}\ar[u]_{\wr}^{\mathcal Laz} &  C_{\Gamma}(K_{\phi}(F^r R^{[u,v]}))  \ar[ru]^{\sim}_{\varepsilon} 
}
\end{equation}
Here,      all the quasi-morphisms are $p^{cr}$-quasi-isomorphisms (after truncation $\tau_{\leq r}$). 
Moreover:
\begin{itemize}
\item $G$ and $\Gamma$ are $G_{R}$ and $\Gamma_R$;
\item $C_G, C_{\Gamma}$ denote the complexes of continuous cochains on the groups $G, \Gamma$, respectively;

\item[$\bullet$] $K$ denotes a complex of Koszul type:

\begin{itemize}
\item[---] the indices indicate the operators involved in the complex: 
\begin{itemize}

\item[$\diamond$] $\partial$
is a shorthand for $\big(X_1\frac{\partial}{\partial X_1},\dots, X_d\frac{\partial}{\partial X_d}\big)$, 

\item[$\diamond$] $\Gamma$ is a shorthand for $(\gamma_1-1,\dots,\gamma_d-1)$, where the $\gamma_i$'s are our
chosen topological generators of~$\Gamma$,

\item[$\diamond$] $\Lie\Gamma$ is a shorthand for $\big(\nabla_1,\dots,\nabla_d)$, where $\nabla_i=\log\gamma_i$,
so that the $\nabla_i$'s are a basis of $\Lie\Gamma$ over $\Z_p$,

\item[$\diamond$]  $\phi$ is a shorthand for $\phi-p^r$.

\end{itemize}

\item[---] only the first term of the complex is indicated: the rest is implicit and obtained from the first term
so that the maps involved make sense: $\varphi$ does not respect filtration or annulus of convergence,
and $\partial$  decrease the degrees of filtration by $1$.
\end{itemize}
   For example, choosing a basis of $\Omega_{R^+_{\crr}/\A_{\crr}}$ transforms complexes involving differentials
into complexes of Koszul type: $K_{\partial,\varphi}(F^rS)$ if $S=R^+_{{\crr}}$
or $S=R^{[u,v]}$.
\end{itemize}

  Let us now turn our attention to the maps between rows:

\begin{itemize}

\item[$\bullet$] Going from the first row to the second row just uses the injections $R^+_{\crr}\subset R^{[u,v]}$, etc.

\item[$\bullet$] Going from the third row to the second row: the map $\mu_H$  is the inflation map from $\Gamma_R$ to $G_{R_{\Lambda}}$, using the injection
  $R^\infty\subset \overline R$ (we use almost \'etale descent -- i.e., Faltings' almost purity
theorem or its extension by Scholze or Kedlaya-Liu -- to prove that it is a quasi-isomorphism); the other map is a "change of Lie algebra map"    $t^{\jcdot}$  appearing
in the proof of \cite[Lemma 5.7]{SG} (multiplication by suitable powers of $t$).

\item[$\bullet$] Going from the fourth row to the third row:   
uses the injection of $R^{[u,v]}\hookrightarrow \A^{[u,v]}_{R^\infty}$ from \eqref{defremind}; 
the map $\laz$ is  defined as in \cite[Lemma 5.8]{SG}. 
\end{itemize}

 Let us now describe the maps between columns:

\begin{itemize}
\item[$\bullet$] The bottom map from the first column to the second one is  the map connecting
continuous cohomology of $\Gamma_R$ to Koszul complex.

\item[$\bullet$] The PL-map from the third column to the second is also induced
by the canonical  injection of rings; it is a  $p^{cr}$-quasi-isomorphisms by 
\cite[Prop. 7.3 ]{SG}.
\end{itemize}

   ($\bullet$) {\em Over a perfectoid $C$-algebra.} Let $\Lambda$ be a perfectoid $C$-algebra. The relevant commutative diagram now takes the following form. Again, it shows that the truncation $\tau_{\leq r}$ of the map $\omega$ is a $p^{cr}$-quasi-isomorphism.

 \begin{equation}
 \label{wies1}
\xymatrix@R=.5cm@C=.6cm{
K_{\partial,\phi}(F^rR^+_{\crr}\wh{\otimes}\Acris(\Lambda)) \ar@{.>}[d]^{\wr}_{\tau_{\leq r}\beta}  \ar[r]^-{\omega}  & C_G(K_{\partial, \phi}(F^r\E^{\rm PD}_{\overline{R}_{\Lambda}}))  & C_G(K_{\phi}(F^r\Acris(\overline{R}_{\Lambda})))\ar[d]^{\wr}\ar^{\sim}_-{\rm PL}[l] \\
K_{\partial,\phi}(F^rR^{[u,v]}\wh{\otimes}{\mathbb A}^{[u,v]}({\Lambda}))  \ar[d]_-{\wr}^{t^{\jcdot},\tau_{\leq r}} &  & C_G(K_{\phi}(F^r{\mathbb A}^{[u,v]}({\overline{R}_{\Lambda}}))) \\
K_{\Lie \Gamma,\phi}(F^r R^{[u,v]}\wh{\otimes} \bbA^{[u,v]}(\Lambda)) & 
& C_{\Gamma}(K_{\phi}(F^r{\mathbb A}^{[u,v]}({{R}^{\infty}_{\Lambda}})))\ar[u]_{\wr}^{\mu_H} \\
 K_{\Gamma,\phi}(F^r R^{[u,v]}\wh{\otimes} \bbA^{[u,v]}(\Lambda))\ar[r]^-{\sim}\ar[u]^{\wr}_{\mathcal Laz} &  C_{\Gamma}(K_{\phi}(F^r R^{[u,v]}\wh{\otimes} \bbA^{[u,v]}(\Lambda)))  \ar[r]^{\sim}_{\varepsilon} & C_{\Gamma}(K_{\phi}(F^r\A^{[u,v]}_{{R}^{\infty}}\wh{\otimes} \bbA^{[u,v]}(\Lambda)))\ar@{.>}[u]^{\delta}_{\wr} 
}
\end{equation}
 Here,  all the quasi-morphisms are $p^{cr}$-quasi-isomorphisms (after truncation $\tau_{\leq r}$).  
{\em The arrow is plain if it is very similar to the one appearing in diagram \eqref{wies0} and dotted if it requires
additional arguments.}
Moreover (we indicate only differences with diagram \eqref{wies0}):
\begin{itemize}
\item tensor products with $\Acris(\Lambda)$ (resp~$\bbA^{[u,v]}(\Lambda)$) are over
$\A_{\crr}$ (resp.~$\A^{[u,v]}$);
\item $(R^{\infty}_{\Lambda}, R^{\infty,+}_{\Lambda})=
(R^{\infty},R^{\infty,+})\wh{\otimes}_{(C,\so_C)}(\Lambda,\Lambda^+)$; it is a perfectoid affinoid by \cite[Prop. 6.18]{Sch0};
\item $G$ and $\Gamma$ are $G_{R_{\Lambda}}$ and $\Gamma_R$;

\end{itemize}

  Let us now turn our attention to the maps between rows:

\begin{itemize}

\item[$\bullet$] The plain arrows are induced by the analogous maps in diagram \eqref{wies0} They are $p^{cr}$-quasi-isomorphisms by the same argument as in loc. cit. since tensoring with ${\mathbb A}^{[u,v]}({\Lambda})$ 
can be done outside the quasi-isomorphic complexes. We note that the tensor products 
$\wh{\otimes}_{\A^{[u,v]}}{\mathbb A}^{[u,v]}({\Lambda})$ are completed but not, a priori,  derived. This does not cause problems because
 the $\A_{\inf}$-module 
$\Ainf( \Lambda)$  is flat: $\so_C^{\flat}$ is a valuation ring hence the $\so_C^{\flat}$-module $\Lambda^{+,\flat}$, being torsion free, is flat.

\item[$\bullet$] Going from the third row to the second row: the map $\mu_H$  is the inflation map from $\Gamma_R$ to $G_{R_{\Lambda}}$, using the injection
  $R^\infty_{\Lambda}\subset \overline R_{\Lambda}$.  
We use almost \'etale descent (i.e., Faltings' almost purity
theorem or its extension by Scholze or Kedlaya-Liu) to prove that it is a quasi-isomorphism. The map $t^{\jcdot}$  is the multiplication by suitable powers of $t$ (we use here Lemma \ref{filtration}).

\end{itemize}

    Finally, the maps $\beta, \delta$ in the diagram are treated by Lemma~\ref{truc0} below.
\end{proof}
\begin{lemma}\label{truc0} 
\begin{enumerate}
\item The canonical morphism 
$$\tau_{\leq r}\beta: \tau_{\leq r}K_{\partial,\phi}(F^rR^+_{\crr}\wh{\otimes}_{\A_{\crr}}\Acris(\Lambda))_n{\to} \tau_{\leq r}K_{\partial,\phi}(F^rR^{[u,v]}\wh{\otimes}_{\A^{[u,v]}}{\mathbb A}^{[u,v]}({\Lambda}))_n $$
is a $p^{cr}$-quasi-isomorphism. 
\item The canonical morphisms
$$
\delta: F^r\A^{[u,v]}_{R^{\infty}}\wh{\otimes}_{\A^{[u,v]}}{\mathbb A}^{[u,v]}({\Lambda})
\to F^r{\mathbb A}^{[u,v]}({R^{\infty}_{\Lambda}})
$$
are isomorphisms. 
\end{enumerate}
\end{lemma}
\begin{proof} For the first claim, 
set $R^+_{\crr,\Lambda}:=R^+_{\crr}\wh{\otimes}_{\A_{\crr}}\Acris(\Lambda)$. 
This ring has the same form as $R^+_{\crr}$ (see Section \ref{local-models}) but with $\A_{\crr}$ 
replaced by $\Acris(\Lambda)$. The above morphism can be written as 
$$
\tau_{\leq r}K_{\partial,\phi}(F^rR^+_{\crr,\Lambda})_n{\to} \tau_{\leq r}K_{\partial,\phi}(F^rR^+_{\crr,\Lambda}\wh{\otimes}_{\A_{\crr}}\A^{[u,v]})_n.
$$
Now, the proof in \cite[Sec.\,4.1]{SG} goes through verbatim by changing $\A_{\crr}$ to $\Acris(\Lambda)$. 

 For the second claim of the lemma, by Lemma \ref{filtration}, we can replace the filtration by the one given by powers of $t$. Hence,  it is enough to show that the canonical map
\begin{equation} \label{A-iso}
\Ainf(R^{\infty})\wh{\otimes}_{\A_{\inf}}\Ainf(\Lambda)\to \Ainf(R^{\infty}_{\Lambda})
\end{equation}
is an isomorphism (the passage to $[u,v]$-version is obtained by taking the completed tensor product of (\ref{A-iso}) with $\A^{[u,v]}$) . Or, since both sides are $p$-adically  derived complete, 
that so is  its reduction modulo~$p$:
  $$
   R^{\infty,+,\flat}\wh{\otimes}_{\so_C^{\flat}}\Lambda^{+,\flat}\to R^{\infty,+,\flat}_{\Lambda}.
  $$
  But this can be checked  modulo $p^{\flat}$. That is, we want   the canonical map
  $$
  (R^{\infty,+,\flat}/p^{\flat})\otimes_{\so_C^{\flat}/p^{\flat}}(\Lambda^{+,\flat}/p^{\flat}) \to R^{\infty,+,\flat}_{\Lambda}/p^{\flat}
  $$
  to be  an isomorphism.   
  
    Now,  this map identifies with the canonical map
  $$
   (R^{\infty,+}/p)\otimes_{\so_C/p}(\Lambda^{+}/p) \to R^{\infty,+}_{\Lambda}/p.
$$ It suffices thus to show that the canonical map
  $$
    R^{\infty,+}\wh{\otimes}_{\so_C}\Lambda^{+} \to R^{\infty,+}_{\Lambda}
$$
is an isomorphism. But this is clear since both sides are isomorphic to the completion of the same \'etale extension of the tower
$$
\Lambda^+\{X^{{1}/{p^{\infty}}},\tfrac{1}{(X_1\ldots X_a)^{{1}/{p^{\infty}}}},\tfrac{\varpi^{{1}/{p^{\infty}}}}{(X_{a+1}\ldots X_{d})^{{1}/{p^{\infty}}}}\}.
$$
\end{proof}

\subsubsection{Modification of the period morphism ${\alpha}^{R^+}_{r,n}(\Lambda)$} 
The Fontaine-Messing period morphism 
$$
{\alpha}^{R^+}_{r,n}(\Lambda):  \quad {\rm Syn}(R^+,r)_n(\Lambda) \to C(G_{R_{\Lambda}},\Z/p^n(r)^{\prime})
$$
constructed above is neither  functorial  in $R^+$ nor in $\Lambda^+$.  By diagram \eqref{wies1}, we can replace it by the map that traces that diagram down-bottom right-up and replaces $C_G(K_{\phi}(F^r{\mathbb A}_{\crr}(\overline{R}_{\Lambda}))$ with 
$ \R\Gamma_{\proeet}({\rm Spa}(R_{\Lambda}),K_{\phi}(F^r{\mathbb A}_{\crr}))$: 

\begin{align*}
{\mathbb A}^{R^+}_{r,n}(\Lambda): {\rm Syn}(R^+,r)_n(\Lambda) & \simeq K_{\partial,\phi}(F^rR^+_{\crr}\wh{\otimes}\Acris(\Lambda))_n   \xrightarrow[\sim]{\tau_{\leq r}\beta} K_{\partial,\phi}(F^rR^{[u,v]}\wh{\otimes}\bbA^{[u,v]}(\Lambda))_n \\
  &  \xrightarrow[\sim]{t^{\jcdot},\tau_{\leq r}} K_{\Lie \Gamma,\phi}(F^r R^{[u,v]}\wh{\otimes} \bbA^{[u,v]}(\Lambda))_n 
   \xleftarrow[\sim]{\cal Laz} K_{\Gamma,\phi}(F^r R^{[u,v]}\wh{\otimes} \bbA^{[u,v]}(\Lambda))_n \\
 & \xrightarrow[\sim]{} C_{\Gamma}(K_{\phi}(F^r R^{[u,v]}\wh{\otimes} \bbA^{[u,v]}(\Lambda))_n)
  \xrightarrow[\sim]{\varepsilon} C_{\Gamma}(K_{\phi}(F^r\A^{[u,v]}_{{R}^{\infty}}\wh{\otimes} \bbA^{[u,v]}(\Lambda))_n)\\
 &  \xrightarrow[\sim]{\delta} C_{\Gamma}(K_{\phi}(F^r{\mathbb A}^{[u,v]}({{R}^{\infty}_{\Lambda}}))_n) 
   \xrightarrow[\sim]{\nu} \R\Gamma_{\proeet}({\rm Spa}(R_{\Lambda}),K_{\phi}(F^r{\mathbb A}_{\crr})_n) \\
    & \xleftarrow[\sim]{\rm FES} \R\Gamma_{\proeet}({\rm Spa}(R_{\Lambda}),\Z/p^n(r)^{\prime}).
\end{align*}
Here ${\rm FES}$ stands for "fundamental exact sequence". 
This map is   functorial in $\Lambda$ and is a $p^{cr}$-quasi-isomorphism, for a universal constant $c$, after truncation $\tau_{\leq r}$ (by Proposition \ref{poniedzialek1}). In the next section we will modify it to make it  functorial in $R^+$ as well. 

\subsection{Local period morphism, general case} 
 \subsubsection{Over $C$}
  Consider now the same local situation as above: a formal scheme $\sx=\Spf R^+$, for  an algebra $R^+$, which is the $p$-adic completion of an \'etale algebra  over a ring $R^+_{\Box}$ from \eqref{frame1}. 
We equip $\Spf (R^+_{\Box})$ and $\Spf (R^+)$ with the logarithmic structure induced by the special fiber. But now we will allow larger coordinate rings, i.e., we assume that we have
 the following commutative diagram, a relaxed version of diagram (\ref{lifted}):
\begin{equation}
\label{nonlifted}
\xymatrix@R=.5cm{
 & \Spf(\E^{\rm PD}_{\overline{R},\kappa})\ar[dd]\ar[dr]\\
 \Spf(\overline{R}^+) \ar@{^(->}[ru]\ar[dd]\ar@{^(->}[rr]^{\kappa}& & \Spf(\Acris(\overline{R})\wh{\otimes}_{\A_{\crr}} R^+_{\crr})\ar[dd]\\
 &  \sd^+_{\crr}\ar[dr]\\
 \Spf(R^+) \ar[d]\ar@{^(->}[ur]\ar@{^(->}[rr]^{\iota}&& \Spf(R^+_{\crr})\ar[d]^{\pi}\\
 \Spf(\so_C)\ar@{^(->}[rr]&&\Spf(\A_{\crr})
}
\end{equation}
The map $\pi$ is log-smooth and the map $\iota$ is a  closed immersion (and the bottom square is not necessarily cartesian as it was in diagram (\ref{lifted})). This extra degree of freedom will allow us to globalize the period map (the added variables disappear thanks to pro-\'etale techniques, see Lemma \ref{almost-there1}). We assume that $\Spf(R^+_{\crr})$ is equipped with 
a lift $\phi$ of the Frobenius on $\Spf(\A_{\crr})$. $\sd^+_{\crr}$ 
 and  $ \Spf(\E^{\rm PD}_{\overline{R},\kappa}) $ are the  (log)-{\rm PD}-envelopes of $\iota$ and
 $\kappa$, respectively.
\smallskip
 Let  $r\in \N$.  We define the filtered  de Rham complex $\Omega_{\E^{\rm PD}_{\overline{R}}}^{\jcdot}$   as in the case of lifted coordinates.  Let 
 $$
 F^r\Omega^{\jcdot}_{\sd^+_{\crr}/\A_{\crr}}:=F^r\sd^+_{\crr}\to F^{r-1}\sd^+_{\crr}\otimes_{R^+_{\crr}}\Omega^1_{R^+_{\crr}/\A_{\crr}}\to F^{r-2}\sd^+_{\crr}\otimes_{R^+_{\crr}}\Omega^2_{R^+_{\crr}/\A_{\crr}}\to \cdots
 $$
 The crystalline syntomic cohomology $\rg_{\synt}(\sx,r)$ is computed by the complex
 $$
 {\rm Syn}(R^+_{\crr},r):=[F^r\Omega^{\jcdot}_{\sd^+_{\crr}/\A_{\crr}}\lomapr{\phi-p^r}\Omega^{\jcdot}_{\sd^+_{\crr}/\A_{\crr}}]
 $$
  The Fontaine-Messing period map  \begin{equation}
 \label{zimnob0}
   {\alpha}^{R^+_{\crr}}_{r,n}: \quad {\rm Syn}(R^+_{\crr},r)_n \to  C(G_{R},\Z/p^n(r)^{\prime})
   \end{equation}
is computed by  the composition
\begin{equation}
\label{zimnob1}
\xymatrix@C=2mm@R=5mm{
 {\rm Syn}(R^+_{\crr},r)_n \ar@{=}[r] & [F^r\Omega_{\sd^{+}_{\crr,n}/\A_{\crr,n}}\kr\verylomapr{\phi-p^r}\Omega_{\sd^{+}_{\crr,n}/\A_{\crr,n}}^{\jcdot}]\ar[d]_{\omega}\\
& {C(G_{R},[F^r\Omega_{\E^{\rm PD}_{\overline{R},\kappa,n}}\kr\verylomapr{\phi-p^r}\Omega_{\E^{\rm PD}_{\overline{R},\kappa, n}}^{\jcdot}])\phantom{XXX}}\\
   &{C(G_{R},[F^r \Acris(\overline{R})_n\verylomapr{\phi-p^r} \Acris(\overline{R})_n])\ar[u]\phantom{XXX}}\\
&C(G_{R},\Z/p^n(r)^{\prime}).\ar[u]_-{\wr} }
\end{equation}

\begin{lemma}
\label{addedb1}
The period map \eqref{zimnob0} is a $p^{cr}$-quasi-isomorphism, for a universal constant~$c$, after truncation $\tau_{\leq r}$.
\end{lemma}
\begin{proof} It suffices to show that the first and the second map in the composition \eqref{zimnob1}  are  $p^{cr}$-quasi-isomorphisms, for  universal constants~$c$, after truncation $\tau_{\leq r}$.  Consider the product
$\Spf(R^+_{\crr} \wh{\otimes}_{\A_{\crr}}\wt{R}^+_{\crr})$ (we put $\widetilde{\phantom{X}}$ to distinguish diagram \eqref{lifted} from  diagram (\ref{nonlifted})) and the canonical closed immersion $\iota_1: \Spf(R^+)\hookrightarrow \Spf(R^+_{\crr} \wh{\otimes}_{\A_{\crr}}\wt{R}^+_{\crr})$. Let $\sd^+$ be the PD-envelope of $\iota_1$ and let $\E^{\rm PD}_{\overline{R}}$ be as in  diagram \eqref{nonlifted} for $\iota_1$ in place of $\iota$. Consider  the compatible maps 
\begin{align}
\label{passage10}
& p_1: \Spf(\sd^+)\to \Spf(\sd^+_{\crr}),\quad p_2: \Spf(\sd^+)\to\Spf(\wt{R}^+_{\crr}),\\
& p_1: \Spf({\E}^{\rm PD}_{\overline{R}})\to \Spf({\E}^{\rm PD}_{\overline{R},\kappa}),\quad p_2: \Spf({\E}^{\rm PD}_{\overline{R}})\to \Spf(\wt{\E}^{\rm PD}_{\overline{R}})\notag
\end{align} induced by the  two projections from 
$\Spf(\R^+_{\crr} \wh{\otimes}_{\A_{\crr}}\wt{R}^+_{\crr})$ to $\Spf(R^+_{\crr})$ and $\Spf(\wt{R}^+_{\crr})$, respectively. 
These maps are also compatible with the other maps in diagrams (\ref{nonlifted}) and   (\ref{lifted}). 
They induce  compatible maps
\begin{align}
\label{passage1}
& p_2^*: F^r\Omega_{\wt{R}^{+}_{\crr,n}/\A_{\crr,n}}\kr \stackrel{\sim}{\to}  F^r\Omega_{\sd^{+}_{n}/\A_{\crr,n}}\kr,\quad
p^*_2: F^r\Omega_{\wt{\E}^{\rm PD}_{\overline{R},n}}\kr   \stackrel{\sim}{\to} F^r\Omega_{\E^{\rm PD}_{\overline{R}, n}}\kr,\\
& p_1^*:  F^r\Omega_{\sd^{+}_{\crr,n}/\A_{\crr,n}}\kr  \stackrel{\sim}{\to}  F^r\Omega_{\sd^{+}_{n}/\A_{\crr,n}}\kr,\quad
p^*_1: F^r\Omega_{{\E}^{\rm PD}_{\overline{R},\kappa, n}}\kr   \stackrel{\sim}{\to}  F^r\Omega_{\E^{\rm PD}_{\overline{R}, n}}\kr.\notag
\end{align}
Moreover, the $\E$--maps are also compatible with the canonical maps from 
$F^r \Acris(\overline{R})_n\ $.

 The maps in \eqref{passage1}   are quasi-isomorphisms since both terms in the left maps  compute absolute crystalline cohomology of $\Spf(R^+)$ and both terms in the right map --  crystalline cohomology of 
 $\Spf(\overline{R}^+)$ over $\Acris(\overline{R})$. 
 
  Now, since the maps in \eqref{passage10} are compatible with Frobenius, the maps from \eqref{passage1} allow us to replace the maps in the composition \eqref{zimnob1} for $\sd^+_{\crr}$, first, with the ones for $\sd^+$ and, then, with the ones for 
$\wt{R}^+_{\crr}$, which we know to be    $p^{cr}$-quasi-isomorphisms, for a universal constant~$c$, after truncation $\tau_{\leq r}$
   \end{proof}
 
\subsubsection{Over a perfectoid $C$-algebra} 
Let $\Lambda$ be a perfectoid affinoid over $C$. To show that the Fontaine-Messing period map, lifted to $\Lambda$, can be globalized we will use the following commutative diagram:
    \begin{equation}
    \label{nonlifted-Lambda}
\xymatrix@R=.5cm{
 & \Spf(\E^{\rm PD}_{\overline{R}_{\Lambda,\kappa}})\ar[dd]\ar[dr]\\
 \Spf(\overline{R}^+_{\Lambda}) \ar@{^(->}[ru]\ar[dd]\ar@{^(->}[rr]^{\kappa}& & \Spf(\Acris(\overline{R}_{\Lambda})\wh{\otimes}_{\A_{\crr}} R^+_{\crr})\ar[dd]\\
 &  \sd^+_{\crr}\ar[dr]\\
 \Spf(R^+) \ar[d]\ar@{^(->}[ur]\ar@{^(->}[rr]^{\iota}&& \Spf(R^+_{\crr})\ar[d]^{\pi}\\
 \Spf(\so_C)\ar@{^(->}[rr]&&\Spf(\A_{\crr}) }
\end{equation}
The Fontaine-Messing period map  
\begin{equation}
\label{defFM}
 {\alpha}^{R^+_{\crr}}_{r,n}(\Lambda): \quad {\rm Syn}(R^+_{\crr},r)_n(\Lambda) \to  C(G_{R_{\Lambda}},\Z/p^n(r)^{\prime})
 \end{equation}
   can be defined 
by  the composition
\begin{equation}
\label{zimnob11}
\xymatrix@C=2mm@R=5mm{
 {\rm Syn}(R^+_{\crr},r)_n(\Lambda)\ar@{=}[r] &[F^r\Omega_{\sd^{+}_{\crr,n}/\A_{\crr,n}}\kr\wh{\otimes}_{\A_{\crr,n}}\Acris(\Lambda)_n\verylomapr{\phi-p^r}\Omega_{\sd^{+}_{\crr,n}/\A_{\crr,n}}^{\jcdot}\wh{\otimes}_{\A_{\crr,n}}\Acris(\Lambda)_n]\ar@<2mm>[d]_{\omega}\\
 & {C(G_{R_{\Lambda}},[F^r\Omega_{\E^{\rm PD}_{\overline{R}_{\Lambda},\kappa,n}}\kr\verylomapr{\phi-p^r}\Omega_{\E^{\rm PD}_{\overline{R}_{\Lambda},\kappa, n}}^{\jcdot}])\phantom{XXX}}\\
 &  {C(G_{R_{\Lambda}},[F^r \Acris(\overline{R}_{\Lambda})_n\verylomapr{\phi-p^r} \Acris(\overline{R}_{\Lambda})_n])\ar@<-2mm>[u]_-{\wr}\phantom{XXX}}\\
&{C(G_{R_{\Lambda}},\Z/p^n(r)^{\prime})\ar@<-2mm>[u]_-{\wr}\phantom{X}}
}
\end{equation}

\begin{lemma}
\label{addedb2}
The period map \eqref{defFM} is a $p^{cr}$-quasi-isomorphism, for a universal constant~$c$, after truncation $\tau_{\leq r}$.
\end{lemma}
\begin{proof} It suffices to show that the first and second maps in the composition \eqref{zimnob11}  are $p^{cr}$-quasi-isomorphisms, for a universal constant~$c$, after truncation $\tau_{\leq r}$.
But since the map $\wt{\pi}$ in diagram \eqref{lambda-version} is log-smooth 
(we put $\widetilde{\phantom{X}}$ to distinguish diagram \eqref{lambda-version} from  diagram (\ref{nonlifted-Lambda}))  and $\sd^+_{\crr} $ in diagram (\ref{nonlifted-Lambda}) is $I$-adically complete, for the defining PD-ideal $I$,
we have maps from $f: \Spf(\sd^+_{\crr} )$ to $\Spf(\wt{R}^+_{\crr})$ and from $\Spf(\E^{\rm PD}_{\overline{R}_{\Lambda},\kappa})$ to $\Spf(\wt{\E}^{\rm PD}_{\overline{R}_{\Lambda}})$ that are also compatible with with other maps in  diagrams (\ref{nonlifted}) and   (\ref{lambda-version}). These maps induces two compatible maps
\begin{align*}
 F^r\Omega_{\wt{R}^{+}_{\crr,n}/\A_{\crr,n}}\kr\wh{\otimes}_{\A_{\crr,n}}\Acris(\Lambda)_n  & \to  F^r\Omega_{\sd^{+}_{\crr,n}/\A_{\crr,n}}\kr\wh{\otimes}_{\A_{\crr,n}}\Acris(\Lambda)_n,\\
F^r\Omega_{\wt{\E}^{\rm PD}_{\overline{R}_{\Lambda},n}}\kr   & \to F^r\Omega_{\E^{\rm PD}_{\overline{R}_{\Lambda},\kappa,n}}\kr.
\end{align*}
 These maps are quasi-isomorphisms:   the first one by the first quasi-isomorphism from (\ref{passage1}) and flatness of $\Acris(\Lambda)$ over $\A_{\crr}$;  the second one, via the filtered Poincar\'e Lemma (note that both the domain and the target compute crystalline cohomology of $\overline{R}^+_{\Lambda,n}$ over $\Acris(\overline{R}_{\Lambda})_n$), can be identified with the identity map
 $$  \id: F^r\Acris(\overline{R}_{\Lambda})_n   \to F^r\Acris(\overline{R}_{\Lambda})_n.
 $$

 Assume now that the map $f$ is compatible with Frobenius. Then in our lemma we may take 
 $R^+_{\crr}=\wt{R}^+_{\crr}$ in which case we can use Proposition \ref{poniedzialek1}. In general, map $f$ will not be compatible with Frobenius and then we have to argue via a zig-zag of such maps as in the proof of Lemma \ref{addedb1}.
 \end{proof}
 
\subsubsection{Modification of the period morphism ${\alpha}^{R^+_{\crr}}_{r,n}(\Lambda)$}
 As in Section \ref{local-models}, we can replace the Fontaine-Messing period morphism 
$$
{\alpha}^{R^+_{\crr}}_{r,n}(\Lambda):  \quad {\rm Syn}(R^+_{\crr},r)_n(\Lambda) \to C(G_{R_{\Lambda}},\Z/p^n(r)^{\prime})
$$
constructed above (which is neither functorial  in $R^+_{\crr}$ nor in $\Lambda^+$) by a better behaved morphism. But before doing this we need to make a special choice for our coordinate system (a simpler variant of  the one used\footnote{There are notable differences: we did not separate the torus data and we allowed  coordinates  $R^{\Box}_{\delta}$ which do not satisfy point (2) below.} in \cite[Sec. 5.17]{CK}). 

 Assume that each pair of irreducible components of the special fiber of $\sx$ has nontrivial intersection (in particular, $\sx$ is connected) and that we have a closed immersion
\begin{equation}
\label{closed1}
\iota_{\Delta}: \sx=\Spf(R^+)\hookrightarrow \prod_{\delta\in\Delta}\Spf R^{\Box}_{\delta},
\end{equation}
such that
\begin{enumerate}
\item $ R^{\Box}_{\delta}:=\so_C\{X^{\pm 1}_{\delta,0},\ldots,X^{\pm 1}_{\delta,a_{\delta}}, X_{\delta,{a_{\delta}+1}},\ldots ,X_{d_{\delta}}\}/(X_{\delta,{a_{\delta}+1}}\cdots X_{d_{\delta}}-p^{b_\delta})$, where $\delta\in\Delta$, for a finite set $\Delta$, 
 and $b_{\delta}\in \Q_{\geq 0}$;
\item There exists a $\delta_0\in\Delta$ such that the morphism $\Spf(R^+)\to \Spf(R^{\Box}_{\delta_0})$ is \'etale.
\end{enumerate}
We set $$\Spf(R^{\Box}_{\Delta}):=\prod_{\delta\in\Delta}\Spf R^{\Box}_{\delta}.
$$ The formal schemes  $\Spf R^{\Box}_{\delta}$s and $\Spf(R^{\Box}_{\Delta})$ are endowed with the log-structures coming from the special fiber. 

 Let $R^{\Box,\infty}_{\delta}$ be the $p$-adic completion of the ring
$$
 \colim_n\so_C\{X^{\pm \frac{1}{p^n}}_{\delta,0}\cdots X^{\pm \frac{1}{p^n}}_{\delta,a_{\delta}},X^{ \frac{1}{p^n}}_{\delta,{a_{\delta}+1}},\ldots ,X^{ \frac{1}{p^n}}_{d_{\delta}}\}/(X^{ \frac{1}{p^n}}_{\delta,{a_{\delta}+1}}\cdots X^{ \frac{1}{p^n}}_{d_{\delta}}-p^{ \frac{b_\delta}{p^n}}).
 $$
 We denote by $R^{\Box,\infty}_{\Delta}$ the completed tensor product of the above rings and set $R^{+,\infty}_{\Delta}:=R^{\Box,\infty}_{\Delta}\wh{\otimes}_{R^{\Box}_{\Delta}}R^+$, $R^{\infty}_{\Delta}:=R^{+,\infty}_{\Delta}[1/p]$.
We consider the groups
 $$
 \Gamma_{\delta}:=\Gal(R^{\Box,\infty}_{\delta}[\tfrac{1}{p}]/R^{\Box}_{\delta}[\tfrac{1}{p}])\simeq \Z_p^{\oplus d_{\delta}},\quad 
 \Gamma_{\Delta}:=\prod_{\delta\in\Delta}\Gamma_{\delta}.
 $$
 If  $(\gamma_{\delta,i})_{0\leq i <d_{\delta}}$ are the topological generators of $\Gamma_{\delta}$, 
  the action of  $\Gamma_{\delta}$ on $R^{\Box}_{\delta}$ is given by:
  \begin{align*}
   \gamma_{\delta,i}(X_{\delta,i}) & =[\varepsilon]X_{\delta,i}\text{ and }  \gamma_{\delta,i}(X_{\delta, j})=X_{\delta,j} \text{ for } i\neq j; i,  j \leq a_{\delta};\\
    \gamma_{\delta,i}(X_{\delta,i}) & =[\varepsilon]X_{\delta,i}\text{ and }  \gamma_{\delta,i}(X_{\delta, j})=X_{\delta,j},  \gamma_{\delta,i}(X_{\delta,d_{\delta}})  =[\varepsilon]^{-1}X_{\delta,d_{\delta}} \text{ for } i\neq j, a_{\delta} <j < d_{\delta}.
 \end{align*}
 We get the  induced action of $\Gamma_{\Delta}$ on $R^{\rm PD}_{\Delta}$ (the divided power envelope of the closed immersion \eqref{closed1}).
 We note that ${\rm Spa}(R^{\Box,\infty}_{\delta}[\tfrac{1}{p}])$ is an affinoid perfectoid  pro-\'etale $\Gamma_{\delta}$-cover\footnote{We skip the $^+$-structure from the notation to lighten up the writing.} of ${\rm Spa}(R^{\Box}_{\delta}[\tfrac{1}{p}])$; similarly,  ${\rm Spa}(R^{\Box,\infty}_{\Delta}[\tfrac{1}{p}])$ is an affinoid perfectoid  pro-\'etale 
 $\Gamma_{\Delta}$-cover of ${\rm Spa}(R^{\Box}_{\Delta}[\tfrac{1}{p}])$. Its base change ${\rm Spa}(R^{\infty}_{\Delta})$  is an affinoid perfectoid pro-\'etale $\Gamma_{\Delta}$-cover of ${\rm Spa}(R)$ (by almost purity since 
 ${\rm Spa}(R^{\infty}_{\Delta})$ contains ${\rm Spa}(R^{\infty}_{\delta_0})$ as a subcover). 

   Set 
 \begin{align*}
 \A_{\crr}(R^{\Box}_{\delta})& :=\A_{\crr}\{X^{\pm 1}_{\delta,0},\ldots,X^{\pm 1}_{\delta,a_{\delta}}, X_{\delta,{a_{\delta}+1}},\ldots ,X_{d_{\delta}}\}/(X_{\delta,{a_{\delta}+1}}\cdots X_{d_{\delta}}-[(p^{\flat})^{b_\delta}]),\\
  \A_{\crr}(R^{\Box}_{\delta}) & :=\wh{\otimes}_{\delta\in\Delta} \A_{\crr}(R^{\Box}_{\delta}).
 \end{align*}
 The diagram \eqref{nonlifted-Lambda}   takes now  the following shape ($R^+_{\crr}$ changed to  $\A(R^{\Box}_{\Delta})$):
\begin{equation}
\label{nonlifted1}
\xymatrix@R=.5cm{
 & \Spf(\E^{\rm PD}_{\overline{R},\kappa})\ar[dd]\ar[dr]\\
 \Spf(\overline{R}^+_{\Lambda}) \ar@{^(->}[ru]\ar[dd]\ar@{^(->}[rr]^{\kappa}& & \Spf(\Acris(\overline{R}_{\Lambda})\wh{\otimes}_{\A_{\crr}} \A_{\crr}(R^{\Box}_{\Delta}))\ar[dd]\\
 & R^{\rm PD}_{\Delta}\ar[dr]\\
 \Spf(R^+) \ar[d]\ar@{^(->}[ur]\ar@{^(->}[rr]^{\iota}&& \Spf(\A_{\crr}(R^{\Box}_{\Delta}))\ar[d]^{\pi}\\
 \Spf(\so_C)\ar@{^(->}[rr]&&\Spf(\A_{\crr})
}
\end{equation}
The Frobenius $\phi$ on $\Spf(\A_{\crr}(R^{\Box}_{\Delta}))$ is induced by the Frobenius on $\Spf(\A_{\crr})$ and by raising to $p$'th power the coordinates. 
And, we have in this setting the following  analog  of  diagram \eqref{wies1}:
 \begin{equation}
 \label{wies1b}
\xymatrix@R=.5cm@C=.6cm{
K_{\partial,\phi}(F^r R^{\rm PD}_{\Delta}\wh{\otimes}\Acris(\Lambda)) \ar[d]^{\wr}_{\tau_{\leq r}\beta}  \ar[r]^-{\tau_{\leq r}\omega} _-{\sim} & C_G(K_{\partial, \phi}(F^r\E^{\rm PD}_{\overline{R}_{\Lambda,\kappa}}))  & C_G(K_{\phi}(F^r\Acris(\overline{R}_{\Lambda})))\ar[d]^{\wr}\ar^{\sim}_-{PL}[l] \\
K_{\partial,\phi}(F^r R_{\Delta}^{[u,v]}\wh{\otimes}\bbA^{[u,v]}(\Lambda)) \ar[d]_-{t^{\jcdot},\tau_{\leq r}}    &  & C_G(K_{\phi}(F^r{\mathbb A}^{[u,v]}({\overline{R}_{\Lambda}}))) \\
 K_{\Lie \Gamma_{\Delta},\phi}(F^r R_{\Delta}^{[u,v]}\wh{\otimes} \bbA^{[u,v]}(\Lambda)) & & C_{\Gamma_{\Delta}}(K_{\phi}(F^r{\mathbb A}^{[u,v]}({{R}^{\infty}_{\Delta, \Lambda}})))\ar[u]_{\wr}^{\mu_H}\\
  K_{\Gamma_{\Delta},\phi}(F^r R_{\Delta}^{[u,v]}\wh{\otimes} \bbA^{[u,v]}(\Lambda)) \ar[r]_-{\sim}\ar[u]_-{\wr}^-{\mathcal Laz} &  C_{\Gamma_{\Delta}}(K_{\phi}(F^r R_{\Delta}^{[u,v]}\wh{\otimes} \bbA^{[u,v]}(\Lambda)))\ar[r]^{\varepsilon}_{\sim} & C_{\Gamma_{\Delta}}(K_{\phi}(F^r\A^{[u,v]}_{R_{\Delta}^{\infty}}\wh{\otimes} \bbA^{[u,v]}(\Lambda)))\ar[u]^{\delta}_{\wr}
}
\end{equation}
  Here,  everything  is taken modulo $p^n$ and all the quasi-isomorphisms are $p^{cr}$-quasi-isomorphisms (after truncation $\tau_{\leq r}$). Indeed, for the map $\omega$ this follows by comparison with the diagram \eqref{wies1}; for the map $\beta$ -- by the same argument as the one used in the proof of Lemma \ref{truc0}. 
  The map $\varepsilon$ is induced by the maps
  \begin{align*}
  \varepsilon_0: R^{\Box}_{\Delta}\to \A_{\crr}({R^{\infty}}),\quad  \varepsilon_1: R^{\rm PD}_{\Delta}\to  \A_{\crr}({R^{\infty}}),
  \end{align*}
where the first map is defined by choosing $p$-towers of coordinates as in \eqref{defremind} and the second map is the unique extension of the first one (by the universal property of logarithmic divided power envelopes). We treat it and the map  $\delta$ with  the following lemma: 
\begin{lemma}\label{almost-there1}
The maps
\begin{align*}
& \varepsilon: C_{\Gamma_{\Delta}}(K_{\phi}(F^r R_{\Delta}^{[u,v]}\wh{\otimes} \bbA^{[u,v]}(\Lambda)))\to C_{\Gamma_{\Delta}}(K_{\phi}(F^r\A^{[u,v]}_{R_{\Delta}^{\infty}}\wh{\otimes} \bbA^{[u,v]}(\Lambda))),\\
&  \delta: C_{\Gamma_{\Delta}}(K_{\phi}(F^r\A^{[u,v]}_{R_{\Delta}^{\infty}}\wh{\otimes} \bbA^{[u,v]}(\Lambda)))\to C_{\Gamma_{\Delta}}(K_{\phi}(F^r\bbA^{[u,v]}({{R}^{\infty}_{\Delta,\Lambda}})))
\end{align*}
are $p^{cr}$-quasi-isomorphisms after truncation $\tau_{\leq r}$.
\end{lemma}
\begin{proof} We will pass to the frame $R^{\Box}_{\delta_0}$, where the statement of the lemma is known. For the first map,  arguing as in the proof of Lemma \ref{addedb2}, we find a map $f: \Spf(R^{\rm PD}_{\Delta})\to \Spf(\wt{R}_{\crr})$ compatible with the diagram \eqref{nonlifted1} and the analog of the diagram \eqref{lambda-version} for the frame $R^{\Box}_{\delta_0}$. If this map is compatible with Frobenius then it induces the vertical arrows in the commutative diagram:
$$
\xymatrix{
C_{\Gamma_{\Delta}}(K_{\phi}(F^r R_{\Delta}^{[u,v]}\wh{\otimes} \bbA^{[u,v]}(\Lambda)))\ar[r]^{\varepsilon} & C_{\Gamma_{\Delta}}(K_{\phi}(F^r\A^{[u,v]}_{R_{\Delta}^{\infty}}\wh{\otimes} \bbA^{[u,v]}(\Lambda)))\\
C_{\Gamma_{\delta_0}}(K_{\phi}(F^r \wt{R}_{\delta_0}^{[u,v]}\wh{\otimes} \bbA^{[u,v]}(\Lambda)))\ar[r]^{\varepsilon_{\delta_0}}_-{\sim}  \ar[u]^{\tau_{\leq r}}_{\wr}&
 C_{\Gamma_{\delta_0}}(K_{\phi}(F^r\A^{[u,v]}_{R_{\delta_0}^{\infty}}\wh{\otimes} \bbA^{[u,v]}(\Lambda))).\ar[u]
}
$$
The map $\varepsilon_{\delta_0}$ is a $p^{cr}$-quasi-isomorphism by diagram \eqref{wies1}, the left vertical map is a $p^{cr}$-quasi-isomorphism (after truncation $\tau_{\leq r}$) 
 because both the domain and the target compute ${\rm Syn}(R^+,r)_n(\Lambda)$, and the right vertical map is an almost quasi-isomorphism by almost \'etale descent.
It follows that  the map $\varepsilon$ from our lemma is a  $p^{cr}$-quasi-isomorphism after truncation $\tau_{\geq r}$, as wanted.  In general, the map $f$ is not compatible with Frobenius and we have to proceed by a zig-zag as in the proof of Lemma \ref{addedb1}.

For the map $\delta$, consider the commutative diagram
$$
\xymatrix{
 C_{\Gamma_{\Delta}}(K_{\phi}(F^r\A^{[u,v]}_{R_{\Delta}^{\infty}}\wh{\otimes} \bbA^{[u,v]}(\Lambda)))\ar[r]^-{\mu_H\delta} &  C_G(K_{\phi}(F^r\bbA^{[u,v]}({\overline{R}_{\Lambda}}))) \\
  C_{\Gamma_{\delta_0}}(K_{\phi}(F^r\A^{[u,v]}_{R_{\delta_0}^{\infty}}\wh{\otimes} \bbA^{[u,v]}(\Lambda)))\ar[u]^{\wr}\ar[ru]^-{\mu_H\delta}_-{\sim}
}
$$
The diagonal map is a $p^{cr}$-quasi-isomorphism by the diagram \eqref{wies1}. It follows that so is the horizontal map and then the map $\delta$, as wanted. 
\end{proof}
\begin{remark}The reader will probably notice that for what follows we did not need to prove Lemma \ref{almost-there1}: it will suffice to know that the composition $\mu_H\delta\varepsilon$ is a, truncated at $r$, 
$p^{cr}$-quasi-isomorphism and this we know since the diagram \eqref{wies1b} commutes and the top horizontal maps are truncated at $r$ $p^{cr}$-quasi-isomorphisms. 
\end{remark}

 Diagram \eqref{wies1b}  allows us to replace ${\alpha}^{R^+_{\crr}}_{r,n}(\Lambda)$ with the  map $ {\bbalpha}^{R^{\Box}_{\Delta}}_{r,n}(\Lambda)$ that traces that diagram down-bottom right-up and replaces $C_G(K_{\phi}(F^r{\mathbb A}_{\crr}(\overline{R}_{\Lambda}))$ with 
$ \R\Gamma_{\proeet}({\rm Spa}(R_{\Lambda}),K_{\phi}(F^r{\mathbb A}_{\crr}))$: 

\begin{align*}
 {\bbalpha}^{R^{\Box}_{\Delta}}_{r,n}(\Lambda): {\rm Syn}({R^{\Box}_{\Delta}},r)_n(\Lambda) \to \R\Gamma_{\proeet}({\rm Spa}(R_{\Lambda}),\Z/p^n(r)^{\prime}).
\end{align*}

\begin{align*}
 {\bbalpha}^{R^{\Box}_{\Delta}}_{r,n}(\Lambda): {\rm Syn}({R^{\Box}_{\Delta}},r)_n(\Lambda) & \simeq K_{\partial,\phi}(F^rR^{\rm PD}_{\Delta}\wh{\otimes}\Acris(\Lambda))_n   \xrightarrow[\sim]{\tau_{\leq r}\beta} K_{\partial,\phi}(F^rR^{[u,v]}_{\Delta}\wh{\otimes}\bbA^{[u,v]}(\Lambda))_n \\
  &  \xrightarrow[\sim]{t^{\jcdot},\tau_{\leq r}} K_{\Lie \Gamma_{\Delta},\phi}(F^r R^{[u,v]}_{\Delta}\wh{\otimes} \bbA^{[u,v]}(\Lambda))_n 
   \xleftarrow[\sim]{\cal Laz} K_{\Gamma_{\Delta},\phi}(F^r R^{[u,v]}_{\Delta}\wh{\otimes} \bbA^{[u,v]}(\Lambda))_n \\
 & \xrightarrow[\sim]{} C_{\Gamma_{\Delta}}(K_{\phi}(F^r R^{[u,v]}_{\Delta}\wh{\otimes} \bbA^{[u,v]}(\Lambda))_n)
  \xrightarrow[\sim]{\varepsilon} C_{\Gamma_{\Delta}}(K_{\phi}(F^r\A^{[u,v]}_{{R}^{\infty}_{\Delta}}\wh{\otimes} \bbA^{[u,v]}(\Lambda))_n)\\
 &  \xrightarrow[\sim]{\delta} C_{\Gamma_{\Delta}}(K_{\phi}(F^r\bbA^{[u,v]}({{R}^{\infty}_{\Delta,\Lambda}}))_n) 
   \xrightarrow[\sim]{\nu} \R\Gamma_{\proeet}({\rm Spa}(R_{\Lambda}),K_{\phi}(F^r{\mathbb A}_{\crr})_n) \\
    & \xleftarrow[\sim]{\rm FES} \R\Gamma_{\proeet}({\rm Spa}(R_{\Lambda}),\Z/p^n(r)^{\prime}).
\end{align*}
Here ${\rm FES}$ stands for "fundamental exact sequence". 
This map is   functorial in $\Lambda$ and is a $p^{cr}$-quasi-isomorphism, for a universal constant $c$, after truncation $\tau_{\leq r}$ (by the discussion below diagram \ref{wies1b}).  Hence its rational version
$$
 {\bbalpha}^{R^{\Box}_{\Delta}}_{r}(\Lambda): {\rm Syn}({R^{\Box}_{\Delta}},r)_{\Q_p}(\Lambda) \to \R\Gamma_{\proeet}({\rm Spa}(R_{\Lambda}),\Q_p(r))
$$
is functorial in $\Lambda$ and a strict quasi-isomorphism after truncation $\tau_{\leq r}$.

 The map $ {\bbalpha}^{R^{\Box}_{\Delta}}_{r}(\Lambda)$ is functorial in the triples $(R^+,{R^{\Box}_{\Delta}},\iota_{\Delta})$ and taking the  colimit over the filtered system of such embeddings with fixed $R^+$ we get a map in $\sd(C_{\Q_p})$
 \begin{equation}
 \label{defFM1}
 {\bbalpha}^{R^+}_{r}(\Lambda): \R\Gamma_{\synt}({\rm Spf}(R^+),r)_{\Q_p}(\Lambda) \to \R\Gamma_{\proeet}({\rm Spa}(R_{\Lambda}),\Q_p(r)),
\end{equation}
which is functorial with respect to $R^+$ and $\Lambda$ and a strict quasi-isomorphism after truncation $\tau_{\leq n}$. 

  \subsubsection{Proof of Theorem \ref{lifted-period}}
  Let now $X\in {\rm Sm}_C$. The definition (\ref{defFM1}) of the period map globalizes  using $\eta$-\'etale sheafification to a period map in $\sd(C_{\Q_p})$
  $$
      {\bbalpha}_r(\Lambda):\quad {\mathbb R}_{\synt}(X,\Q_p(r))(\Lambda)\to {\mathbb R}_{\proeet}(X,\Q_p(r))(\Lambda),\quad r\geq 0,
  $$
  which for $\Lambda=C$ recovers the period map of Fontaine-Messing. 
This is a strict quasi-isomorphism after truncation $\tau_{\leq r}$. Now, varying $\Lambda$ we get the map we wanted:
 $$
      {\bbalpha}_r:\quad {\mathbb R}_{\synt}(X,\Q_p(r))\to {\mathbb R}_{\proeet}(X,\Q_p(r)),\quad r\geq 0.
  $$

 \subsection{Dagger varieties} We will now geometrize cohomologies and period morphisms associated to dagger varieties.
  \subsubsection{Cohomologies}
 Let $X$ be a dagger  affinoid over $C$, and $\{X_h\}$ be a presentation. Define the VS:
  $$
 {\mathbb R}^{\dagger}_{\proeet}(X,\Q_p):=\LL\colim_n{\mathbb R}_{\proeet}(X_h,\Q_p).
   $$
  For a smooth dagger variety $X$ over $C$, this globalizes, via \'etale sheafification,  to the VS
  $
 {\mathbb R}_{\proeet}(X,\Q_p). 
 $
 We set $$
 \wt{\mathbb H}^i_{\proeet}(X,\Q_p),  {\mathbb H}^i_{\proeet}(X,\Q_p):\quad  \Lambda\mapsto \wt{H}^i( {\mathbb R}_{\proeet}(X,\Q_p)(\Lambda)), {H}^i( {\mathbb R}_{\proeet}(X,\Q_p)(\Lambda)).
 $$
We define similarly the Hyodo-Kato cohomology, the $\Bdr^+$-cohomology, and the syntomic cohomology:
   \begin{align*}
   & {\mathbb R}_{\hk}(X),\quad \wt{\mathbb H}^i_{\hk}(X);\quad 
   {\mathbb R}_{\dr}(X/\Bdr^+),\quad  \wt{\mathbb H}^i_{\dr}(X/\Bdr^+);\quad
   {\mathbb R}^{\dagger}_{\synt}(X,\Q_p(r)),\quad  \wt{\mathbb H}^{\dagger, i}_{\synt}(X,\Q_p(r));\\
   &  {\mathbb R}_{\synt}(X,\Q_p(r)),\quad  \wt{\mathbb H}^{i}_{\synt}(X,\Q_p(r)).
   \end{align*}
   We note that the Hyodo-Kato cohomology is the constant functor equal to $\rg_{\hk}(X), \wt{H}^i_{\hk}(X)$.

   \subsubsection{Period maps}   Let $X$ be a dagger  affinoid over $C$, and $\{X_h\}$ be a presentation. 
Let $r\geq 0$. 
     The local period morphisms of VS's (with values in $\sd(C_{\Q_p})$)
   \begin{align*}
    {\bbalpha}^{\dagger}_r:\quad  {\mathbb R}^{\dagger}_{\synt}(X,\Q_p(r))\to {\mathbb R}^{\dagger}_{\proeet}(X,\Q_p(r))
   \end{align*}
   are defined as
   $$
      {\mathbb R}^{\dagger}_{\synt}(X,\Q_p(r))=\LL\colim_h    {\mathbb R}_{\synt}(X_h,\Q_p(r))\verylomapr{\LL\colim_h  {\bbalpha}_{r,h}}\LL\colim_h{\mathbb R}_{\proeet}(X_h,\Q_p(r))={\mathbb R}^{\dagger}_{\proeet}(X,\Q_p(r)).
   $$
   For a smooth dagger variety $X$, this globalizes to  period morphisms
   $$
       {\bbalpha}^{\dagger}_r:\quad     {\mathbb R}^{\dagger}_{\synt}(X,\Q_p(r))\to {\mathbb R}_{\proeet}(X,\Q_p(r)).
$$
These are strict quasi-isomorphisms after truncation $\tau_{\leq r}$ because so are the rigid analytic period morphisms $   {\bbalpha}_{r,h}$ by Theorem \ref{lifted-period}.

  Recall now that, for $X\in{\rm Sm}^{\dagger}_C$,   the period morphisms in $\sd(C_{\Q_p})$
  $$
  \alpha_r:\quad \rg_{\synt}(X,\Q_p(r))\to \rg_{\proeet}(X,\Q_p(r))
  $$
  are defined as the compositions
  $$
   \rg_{\synt}(X,\Q_p(r)) \xleftarrow[\sim]{ \iota^{\dagger}_{\synt}}\rg^{\dagger}_{\synt}(X,\Q_p(r))\stackrel{\alpha^{\dagger}_r}{\to} \rg_{\proeet}(X,\Q_p(r)).
  $$
  These morphisms lift to VS. Indeed, it remains to show that we can lift the map $\iota^{\dagger}_{\synt}$ to a map
  $$
  {\bbiota}^{\dagger}_{\synt}: \quad {\mathbb R}^{\dagger}_{\synt}(X,\Q_p(r)) \to {\mathbb R}_{\synt}(X,\Q_p(r)),
  $$
 and that this map  is a strict quasi-isomorphism. We define the map $   {\bbiota}^{\dagger}_{\synt}$ by \'etale sheafifying   the following composition ($X$ is  a smooth dagger affinoid over $C$),
$$\xymatrix@R=4mm@C=4mm{
   {\mathbb R}_{\synt}^{\dagger}(X,\Q_p(r)) =\LL\colim_h {\mathbb R}_{\synt}(X_{h},\Q_p(r))
\ar[r]^-{\sim}&\LL\colim_h {\mathbb R}_{\synt}(X^{\rm o}_{h},\Q_p(r))\\
{\phantom{XXXXXXXX}{\mathbb R}_{\synt}(X,\Q_p(r))}
  & \LL\colim_h {\mathbb R}_{\synt}(X^{{\rm o},\dagger}_{h},\Q_p(r))\ar[l]_-{\sim}\ar[u]_-{\bbiota}^-{\wr}
}$$
 Here, the  morphism $ {\bbiota}$  needs to be defined and both it and  
the bottom morphism need to be shown to be strict quasi-isomorphisms. 
 
 \begin{proposition} {\rm (Definition of the map $ {\bbiota}$)} 
Let $X\in {\rm Sm}^\dagger_C$. We have a natural map  of VS's (with values in $\sd(C_{\Q_p})$)
 $$
 {\bbiota}:\quad {\mathbb R}_{\synt}(X,\Q_p(r))\to {\mathbb R}_{\synt}(\wh{X},\Q_p(r))
 $$
 It is a strict quasi-isomorphism for $X$ partially proper. 
 \end{proposition}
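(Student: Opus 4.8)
The plan is to mimic, in the geometrized (VS-valued) setting, the construction of the comparison map $\iota$ of Section \ref{constr1} together with its proof that it is a strict quasi-isomorphism for partially proper $X$ (Theorem \ref{zamek}). Recall that $\iota=\iota_2\iota_1$, where $\iota_1$ replaces $X$ by its completion $\wh X$ inside the Bloch-Kato type presentation and $\iota_2$ identifies the completed Bloch-Kato syntomic complex with crystalline syntomic cohomology (as in the proof of Proposition \ref{synt-locglob}). Both pieces are built out of the Hyodo-Kato, de Rham, $\B^+_{\dr}$- and absolute crystalline cohomologies, each of which we have already lifted to a VS: the functors $\Lambda\mapsto\rg_{\hk}(X)\wh\otimes^{\R}_{F^{\nr}}\wh{\mathbb B}^+_{\st}(\Lambda)$, $\Lambda\mapsto F^r{\mathbb R}_{\crr}(X)(\Lambda)$, $\Lambda\mapsto{\mathbb F}^r{\mathbb R}_{\dr}(X/\Bdr^+)(\Lambda)$, and (for dagger $X$) their $\LL\colim$ over a presentation. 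First I would write down the VS-valued version of the presentation (\ref{pres11}), i.e.\ define ${\mathbb R}_{\synt}(X,\Q_p(r))$ as the mapping fiber of ${\mathbb H}{\mathbb K}(X,r)\to{\mathbb D}{\mathbb R}(X,r)$ using the lifted Hyodo-Kato map of Theorem \ref{HK-dagger2} tensored with $\iota$; this has already been done in the preceding subsection for dagger varieties, so the map ${\mathbb I}$ is then simply the VS-incarnation of the composite $\iota_1$ followed by the VS-version of $\iota_2$.

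Concretely, the steps are: (1) Lift $\iota_1$: for each perfectoid $\Lambda$, the maps $\rg_{\hk}(X)\wh\otimes^{\R}_{F^{\nr}}\wh{\mathbb B}^+_{\st}(\Lambda)\to\rg_{\hk}(\wh X)\wh\otimes^{\R}_{F^{\nr}}\wh{\mathbb B}^+_{\st}(\Lambda)$ and ${\mathbb F}^r{\mathbb R}_{\dr}(X/\Bdr^+)(\Lambda)/F^r\to{\mathbb F}^r{\mathbb R}_{\dr}(\wh X/\Bdr^+)(\Lambda)/F^r$ are functorial in $\Lambda$ (they are just scalar extensions of the maps of Section \ref{constr1}(i)), so they assemble to a map of VS's; the $\B^+_{\st}$ vs.\ $\wh{\mathbb B}^+_{\st}$ comparison needed in $\iota_1$ is the VS-version of (\ref{Paris-hot}), which holds for each $\Lambda$ by the same argument. (2) Lift $\iota_2$: the diagram (\ref{film1}) from the proof of Proposition \ref{synt-locglob} makes sense with every term replaced by its VS-lift (using ${\mathbb F}^r{\mathbb R}_{\crr}$, ${\mathbb R}_{\crr}(\wh X/\Bdr^+)$, etc.), and the maps $\iota^1_{\rm BK},\iota^2_{\rm BK},\kappa$ are VS-morphisms because they come from the $\A_{\crr}$-linear structures; commutativity is checked $\Lambda$-pointwise, reducing to (\ref{film1}) itself. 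Composing gives ${\mathbb I}$, and evaluating at $\Lambda=(C,\so_C)$ recovers $\iota$. (3) The partial-properness statement: for $X$ partially proper, one shows ${\mathbb I}$ is a strict quasi-isomorphism exactly as in Theorem \ref{zamek}, i.e.\ ${\mathbb I}_2$ is always an equivalence of VS's (it is built from the local-global quasi-isomorphisms, which are $\eta$-\'etale-local and hence hold for each $\Lambda$), and for ${\mathbb I}_1$ one invokes the VS-version of Corollary \ref{roznosci1}: the maps $\rg_{\hk,\breve F}(X)\to\rg_{\hk,\breve F}(\wh X)$ and $\rg_{\dr}(X/\B^+_{\dr})\to\rg_{\dr}(\wh X/\B^+_{\dr})$ are strict quasi-isomorphisms, and tensoring with the Banach/Fréchet space $\wh{\mathbb B}^+_{\st}(\Lambda)$ resp.\ $\Bdr^+(\Lambda)$ preserves this by (the VS-analogue of) Lemma \ref{trick1} and Proposition \ref{prison}(3).

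The main obstacle I anticipate is bookkeeping rather than a genuine new idea: one must make sure that all the nonstandard completed tensor products $\wh\otimes^{\R}$, the $\eta$-\'etale sheafifications, and the $\LL\colim_h$ over presentations are set up so that they commute with the functor $\Lambda\mapsto(-)\wh\otimes\text{(period ring)}(\Lambda)$, so that "evaluate at $\Lambda$" and "take the mapping fiber / sheafify / colimit" can be interchanged. This is the same flatness point flagged in the proof of the lifted period map (the $\A_{\inf,n}$-module $\A_{\inf}(\Lambda)_n$ is flat since $\so_C^\flat$ is a valuation ring, so $\Lambda^{+,\flat}$ is torsion-free hence flat), and the same warning about projective vs.\ inductive tensor products that appears in Remark \ref{wrzask1}; once these are handled $\Lambda$-uniformly, every quasi-isomorphism claim follows from its already-proved $C$-point version applied to each $\Lambda$. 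A secondary point is that strictness of quasi-isomorphisms must be checked in $\sd(C_{\Q_p})$-valued VS's, but since strictness is detected after $\eta$-\'etale descent on affinoids defined over finite extensions of $K$, where the cohomologies are finite rank (Proposition \ref{tluczenie1}, Proposition \ref{ciemno1}), this reduces to the finite-dimensional situation exactly as before.

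\medskip
With ${\mathbb I}$ constructed, one finishes: the composite ${\mathbb I}^{\dagger}_{\synt}:{\mathbb R}^{\dagger}_{\synt}(X,\Q_p(r))\to{\mathbb R}_{\synt}(X,\Q_p(r))$ is a strict quasi-isomorphism since $X^{\rm o}_h$ is partially proper so ${\mathbb I}$ applies to it, and then ${\mathbb A}_r:={\mathbb A}^{\dagger}_r\circ({\mathbb I}^{\dagger}_{\synt})^{-1}$ is the desired lift of $\alpha_r$, which is a strict quasi-isomorphism after $\tau_{\leq r}$ by Theorem \ref{lifted-period}. This proves the final statement and, together with the lifted Hyodo-Kato and $\B^+_{\dr}$-comparisons of Theorem \ref{HK-dagger2} and Proposition \ref{prison}, Theorem \ref{cosgeo}.
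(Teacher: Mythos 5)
Your proposal matches the paper's own proof: the paper also defines ${\mathbb I}={\mathbb I}_2{\mathbb I}_1$, with ${\mathbb I}_1$ obtained by lifting the comparison of Section \ref{constr1} (including the ${\mathbb B}^+_{\st}$ versus $\wh{\mathbb B}^+_{\st}$ step, i.e.\ the VS-version of (\ref{Paris-hot})) and ${\mathbb I}_2$ obtained by lifting the diagram (\ref{film1}) to its VS-incarnation, the partially proper case reducing to the geometrized analogues of Theorem \ref{zamek} and Corollary \ref{roznosci1}. The bookkeeping points you flag (flatness of $\Ainf(\Lambda)_n$, compatibility of the completed tensor products and colimits with evaluation at $\Lambda$) are exactly the ones the paper relies on, so there is nothing substantive to add.
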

 \begin{proof}
 We will set $ {\bbiota}:= {\bbiota}_2 {\bbiota}_1$, with the maps $ {\bbiota}_1,  {\bbiota}_2$ defined as follows. 
 
 (i) {\em The map $ {\bbiota}_1$.}  The map $ {\bbiota}_1$ is defined as the following composition:
$$\xymatrix@R=3mm@C=2mm{
 {\mathbb R}_{\synt}(X,\Q_p(r))\ar@{=}[r] & \big[[{\mathbb R}_{\hk}(X)\wh{\otimes}^{\R}_{F^{\nr}}\wh{\mathbb B}^+_{\st}]^{N=0,\phi=p^r}\verylomapr{\iota_{\hk}\otimes\iota}{\mathbb R}_{\dr}(X/\Bdr^+)/F^r\big]\ar@<6mm>[d]\\
   & \big[[{\mathbb R}_{\hk}(\wh{X})\wh{\otimes}_{F^{\nr}}{\wh{\mathbb B}}^+_{\st}]^{N=0,\phi=p^r}\verylomapr{\iota_{\hk}\otimes\iota}{\mathbb R}_{\dr}(\wh{X}/\Bdr^+)/F^r)\big]\\
   & \big[[{\mathbb R}_{\hk}(\wh{X})_{F}\wh{\otimes}_{F^{\nr}}{\mathbb B}^+_{\st}]^{N=0,\phi=p^r}\verylomapr{\iota_{\hk}\otimes\iota}{\mathbb R}_{\dr}(\wh{X}/\Bdr^+)/F^r)\big]\ar@<-6mm>[u]_{\wr}
}$$
    It is a strict quasi-isomorphism. 
Indeed, for that it suffices to show that the canonical map
$$
[{\mathbb R}_{\hk}(\wh{X})\wh{\otimes}_{F^{\nr}}{\mathbb B}^+_{\st}]^{N=0}\to [{\mathbb R}_{\hk}(\wh{X})\wh{\otimes}^{\R}_{F^{\nr}}\wh{\mathbb B}^+_{\st}]^{N=0}
$$
is a  strict quasi-isomorphism. But this can be shown exactly as in Section \ref{constr1}.  

  (ii) {\em The map $ {\bbiota}_2$.}  
Now, we define a natural strict quasi-isomorphism $ {\bbiota}_2$ by
$$\xymatrix@R=3mm@C=-2mm{
    \big[ [{\mathbb R}_{\hk}(\wh{X})_{F}\wh{\otimes}_{F^{\nr}}{\mathbb B}^+_{\st}]^{N=0,\phi=p^r}\verylomapr{\iota_{\hk}\otimes\iota} {\mathbb R}_{\dr}(\wh{X}/\Bdr^+)/F^r\big]\ar@<8mm>[d]\\
{\phantom{XXXX}[[{\mathbb R}_{\crr}(\wh{X})]^{\phi=p^r}\verylomapr{\can} {\mathbb R}_{\crr}(\wh{X})/F^r]}
\ar@{=}[r]
& {\mathbb R}_{\synt}(\wh{X},\Q_p(r)).
}$$
    For that, it suffices to define the maps $\iota_{\rm BK}^1$ and $\iota_{\rm BK}^2$ in the following diagram and to show that this diagram commutes:
    \begin{equation}
    \label{film11}
\xymatrix{
[{\mathbb R}_{\hk}(\wh{X})\wh{\otimes}_{F^{\nr}}{\mathbb B}^+_{\st}]^{N=0}\ar[d]^{\epsilon^{\hk}_{\st}\otimes\id}_{\wr} \ar@/_80pt/[dd]^{\wr}_{\iota_{\rm BK}^1}
\ar[r]^-{\iota_{\hk}\otimes \iota}   &   
   ({\mathbb R}_{\dr}(\wh{X}/\Bdr^+)\wh{\otimes}^{\R}_{\B^+_{\dr}}{\mathbb B}^+_{\dr})/F^r
 \ar@/^80pt/[dd]^{\iota_{\rm BK}^2}_{\wr}\\
     [{\mathbb R}_{\crr}(\wh{X})\wh{\otimes}_{\B^+_{\crr}}{\mathbb B}^+_{\st}]^{N=0}\ar[ru]^{\kappa\otimes\iota} \\
         {\mathbb R}_{\crr}(\wh{X})\wh{\otimes}_{\B^+_{\crr}}{\mathbb B}^+_{\crr}\ar[u]^{\wr}\ar[r]^-{\can}  &        ({\mathbb R}_{\crr}(\wh{X})\wh{\otimes}_{\B^+_{\crr}}{\mathbb B}^+_{\crr})/F^r\ar[uu]^{\kappa\otimes\can}_{\wr}.
}
\end{equation}  
  We define the maps $\iota_{\rm BK}^1$ and $\iota_{\rm BK}^2$ to make the left  and the right triangles in the diagram commute. They are strict quasi-isomorphisms.  The remaining pieces of the diagram commute by definition.
  \end{proof}
 Let $X\in {\rm Sm}^{\dagger}_C$. We define the global period morphism  of VS's (with values in $\sd(C_{\Q_p})$)
 $$
  {\bbalpha}_r: {\mathbb R}_{\synt}(X,\Q_p(r))\to {\mathbb R}_{\proeet}(X,\Q_p(r))
 $$
 as the composition $ {\bbalpha}^{\dagger}_r({\bbiota}^{\dagger}_{\synt})^{-1} $. From what we have shown above, it follows that:
 \begin{corollary}
 The natural map  of VS's (with values in $\sd(C_{\Q_p})$)
 $$
 \tau_{\leq r}  {\bbalpha}_r: \tau_{\leq r}{\mathbb R}_{\synt}(X,\Q_p(r))\to \tau_{\leq r}{\mathbb R}_{\proeet}(X,\Q_p(r))
 $$
 is a strict quasi-isomorphism.
 \end{corollary}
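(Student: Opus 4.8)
The final statement is the Corollary asserting that $\tau_{\leq r}{\mathbb A}_r$ is a strict quasi-isomorphism for $X\in{\rm Sm}^\dagger_C$. The plan is to assemble it from the two pieces already constructed in the immediately preceding discussion. First I would recall that the global period morphism ${\mathbb A}_r$ was defined as the composition ${\mathbb A}_r={\mathbb A}^\dagger_r({\mathbb I}^\dagger_{\synt})^{-1}$, so the claim reduces to showing separately that $\tau_{\leq r}{\mathbb A}^\dagger_r$ is a strict quasi-isomorphism and that ${\mathbb I}^\dagger_{\synt}$ (hence $\tau_{\leq r}{\mathbb I}^\dagger_{\synt}$) is a strict quasi-isomorphism.

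For the map $\tau_{\leq r}{\mathbb A}^\dagger_r$, I would argue by reduction to the rigid analytic case: by construction ${\mathbb A}^\dagger_r$ is obtained by $\eta$-\'etale sheafification of the local maps $\LL\colim_h{\mathbb A}_{r,h}$, where $\{X_h\}$ is a presentation of a dagger affinoid. Since $\tau_{\leq r}$ and homotopy colimits over the filtered index set $h$ commute, and since each $\tau_{\leq r}{\mathbb A}_{r,h}\colon\tau_{\leq r}{\mathbb R}_{\synt}(X_h,\Q_p(r))\to\tau_{\leq r}{\mathbb R}_{\proeet}(X_h,\Q_p(r))$ is a strict quasi-isomorphism by Theorem~\ref{lifted-period}, the local maps $\tau_{\leq r}\LL\colim_h{\mathbb A}_{r,h}$ are strict quasi-isomorphisms; these assemble to a strict quasi-isomorphism after sheafification. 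For the map ${\mathbb I}^\dagger_{\synt}$, I would invoke the Proposition just proved (definition of ${\mathbb I}$) together with the local description: on a dagger affinoid $X$ with presentation $\{X_h\}$, ${\mathbb I}^\dagger_{\synt}$ is the $\eta$-\'etale sheafification of the composition ${\mathbb R}^\dagger_{\synt}(X,\Q_p(r))\xrightarrow{\sim}\LL\colim_h{\mathbb R}_{\synt}(X^{\rm o}_h,\Q_p(r))\xleftarrow[\sim]{{\mathbb I}}\LL\colim_h{\mathbb R}_{\synt}(X^{{\rm o},\dagger}_h,\Q_p(r))\xrightarrow{\sim}{\mathbb R}_{\synt}(X,\Q_p(r))$, where the middle arrow is a strict quasi-isomorphism because each interior $X^{\rm o}_h$ is partially proper so the Proposition applies, and the outer two are the obvious comparison maps. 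This is the exact geometrization of the non-geometric statement in Proposition~\ref{zamek3}.

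Thus the corollary follows by composing: $\tau_{\leq r}{\mathbb A}_r=\tau_{\leq r}{\mathbb A}^\dagger_r\circ(\tau_{\leq r}{\mathbb I}^\dagger_{\synt})^{-1}$ is a composition of strict quasi-isomorphisms, hence a strict quasi-isomorphism. I expect the main obstacle to be purely bookkeeping rather than conceptual: one must check that the various commutations ($\tau_{\leq r}$ with $\LL\colim$, $\LL\colim$ with the ${\mathbb B}^+_{\st}$-versus-$\wh{\mathbb B}^+_{\st}$ comparison of Section~\ref{constr1}, and $\LL\colim$ with $\eta$-\'etale sheafification) hold at the level of VS's — i.e., functorially in the perfectoid test algebra $\Lambda$ — but these all follow because colimits of VS's are computed objectwise and all the intervening strict quasi-isomorphisms (from Theorem~\ref{lifted-period}, the Proposition defining ${\mathbb I}$, and Remark~\ref{identification}) were established with the required functoriality in $\Lambda$.

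\begin{proof}
By construction (see the discussion preceding the statement), the global period morphism is the composition ${\mathbb A}_r={\mathbb A}^\dagger_r\circ({\mathbb I}^\dagger_{\synt})^{-1}$. We treat the two factors in turn.

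First, ${\mathbb I}^\dagger_{\synt}$ is a strict quasi-isomorphism. By \'etale descent we may assume $X$ is a smooth dagger affinoid with presentation $\{X_h\}$, and then ${\mathbb I}^\dagger_{\synt}$ is the composition
$$
{\mathbb R}^\dagger_{\synt}(X,\Q_p(r))=\LL\colim_h{\mathbb R}_{\synt}(X_h,\Q_p(r))\xrightarrow{\ \sim\ }\LL\colim_h{\mathbb R}_{\synt}(X^{\rm o}_h,\Q_p(r))\xleftarrow[\ \sim\ ]{\ {\mathbb I}\ }\LL\colim_h{\mathbb R}_{\synt}(X^{{\rm o},\dagger}_h,\Q_p(r))\xrightarrow{\ \sim\ }{\mathbb R}_{\synt}(X,\Q_p(r)),
$$
where the middle arrow is a strict quasi-isomorphism because each interior $X^{\rm o}_h$ is partially proper, so the Proposition defining ${\mathbb I}$ applies, and the outer two arrows are the obvious comparison maps (strict quasi-isomorphisms as in the proof of Proposition~\ref{zamek3}). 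Since these local maps are compatible with restriction, the globalized map is a strict quasi-isomorphism as well.

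Second, $\tau_{\leq r}{\mathbb A}^\dagger_r$ is a strict quasi-isomorphism. Again by \'etale descent we may assume $X$ is a smooth dagger affinoid with presentation $\{X_h\}$, so that ${\mathbb A}^\dagger_r=\LL\colim_h{\mathbb A}_{r,h}$. By Theorem~\ref{lifted-period}, for each $h$ the map $\tau_{\leq r}{\mathbb A}_{r,h}$ is a strict quasi-isomorphism. Since truncation $\tau_{\leq r}$ commutes with filtered homotopy colimits, $\tau_{\leq r}\LL\colim_h{\mathbb A}_{r,h}$ is a strict quasi-isomorphism, and this property is preserved by $\eta$-\'etale sheafification.

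Therefore $\tau_{\leq r}{\mathbb A}_r=\tau_{\leq r}{\mathbb A}^\dagger_r\circ(\tau_{\leq r}{\mathbb I}^\dagger_{\synt})^{-1}$ is a composition of strict quasi-isomorphisms, hence a strict quasi-isomorphism.
\end{proof}
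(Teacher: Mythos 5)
Your proof is correct and follows exactly the paper's (implicit) argument: the paper also defines ${\mathbb A}_r={\mathbb A}^{\dagger}_r({\mathbb I}^{\dagger}_{\synt})^{-1}$ and derives the corollary by combining the fact that $\tau_{\leq r}{\mathbb A}^{\dagger}_r$ is a strict quasi-isomorphism (reduced via presentations to Theorem~\ref{lifted-period}) with the strict quasi-isomorphism property of ${\mathbb I}^{\dagger}_{\synt}$ established through the Proposition defining ${\mathbb I}$ and the geometrized analogue of Proposition~\ref{zamek3}. No gaps.
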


\printindex

\end{document}